\documentclass[reqno,11pt]{amsart}
\usepackage{amsfonts}
\usepackage{graphicx}
\usepackage{amsfonts,amsmath, amssymb,mathrsfs}
\usepackage{mathtools}
\usepackage{dsfont}
\usepackage{hyperref}
\usepackage{marginnote}
\usepackage{color}
\usepackage[numbers,sort&compress]{natbib} 

\numberwithin{equation}{section}

\topmargin -0.09in%
\oddsidemargin 0.11in%
\evensidemargin 0.11in%
\textwidth 16.5cm%
\textheight 22.8cm%

\newcommand{\R}{\mathbb{R}}

\arraycolsep=1.5pt

\newtheorem{theorem}{Theorem}[section]
\newtheorem{corollary}[theorem]{Corollary}
\newtheorem{lemma}[theorem]{Lemma}

\newtheorem{remark}[theorem]{Remark}
\newtheorem{definition}[theorem]{Definition}

\def\v{\varepsilon}

\def\b{\beta}

\def\d{{\rm d}}

\def\r{\rho}

\def\dd{{\, \rm d}}
\def\M{{\mathcal{M}}}

\def\bp{\boldsymbol{\psi}}
\def\supp{{\rm supp}}
\newcommand{\longrightharpoonup}{\rm - \!\!\! \rightharpoonup}
\usepackage{tikz}

\usepackage{pgfplots}
\pgfplotsset{compat=1.18}

\usepgfplotslibrary{external}
\tikzexternalize

\usepackage{float}

\begin{document}

\title[Global Solutions of the Compressible Euler-Riesz Equations]{Global Existence and Nonlinear Stability of Finite-Energy Solutions of the Compressible Euler-Riesz Equations with Large Initial Data of Spherical Symmetry}

\author[J. A. Carrillo]{Jos\'e A. Carrillo}
\address[J. A. Carrillo]{Mathematical Institute, University of Oxford,
 Oxford OX2 6GG, UK}
\email{carrillo@maths.ox.ac.uk}

\author[S. R. Charles]{Samuel R. Charles}
\address[S. R. Charles]{Mathematical Institute, University of Oxford,
 Oxford OX2 6GG, UK}
\email{samuel.charles@maths.ox.ac.uk}

\author[G.-Q. Chen]{Gui-Qiang G. Chen}
\address[G.-Q. Chen]{Mathematical Institute, University of Oxford,
 Oxford OX2 6GG, UK, and
 Academy of Mathematics and Systems Science, Chinese Academy of Sciences,
 Beijing 100190, China}
\email{chengq@maths.ox.ac.uk}

\author[D.~F. Yuan]{Difan Yuan}
\address[D.~F. Yuan]{School of Mathematical Sciences, Beijing Normal University and
 Laboratory of Mathematics, and Complex Systems, Ministry of Education, Beijing 100875, China; Mathematical Institute, University of Oxford,
 Oxford OX2 6GG, UK}
\email{yuandf@amss.ac.cn}

\begin{abstract}
The compressible Euler-Riesz equations are fundamental
with wide applications in astrophysics, plasma physics, and mathematical biology. In this paper,
we are concerned with the global existence and nonlinear stability of finite-energy solutions of the
multidimensional (M-D) Euler-Riesz equations with large initial data of spherical symmetry.
We consider both attractive and repulsive interactions for a wide range of $\it{Riesz}$ and $\it{logarithmic}$ potentials
for dimensions larger than or equal to two.
This is achieved by the inviscid limit of the solutions of the corresponding Cauchy problem
for the Navier-Stokes-Riesz equations.
The strong convergence of the vanishing viscosity solutions is achieved through delicate uniform estimates in $L^p$.
It is observed that, even if the attractive potential is super-Coulomb,
no concentration is formed near the origin in the inviscid limit.
Moreover, we prove that the nonlinear stability of global finite-energy solutions for the Euler-Riesz equations
is $\it{unconditional}$ under a spherically symmetric perturbation around the steady solutions.
Unlike the Coulomb case where the potential can be represented locally, the singularity and regularity of
the nonlocal radial Riesz potential near the origin require careful analysis,
which is a crucial step. We also establish uniform energy estimates of global solutions
for not only the Riesz potential
but also the limiting end-point case: the logarithmic potential.
Finally, unlike the Coulomb case, a Gr\"onwall type estimate is required to overcome the difficulty
of the appearance of boundary terms in the sub-Coulomb case and the singularity of the super-Coulomb potential.
Furthermore, we prove the nonlinear stability of global finite-energy solutions for the compressible Euler-Riesz equations
around steady states by employing concentration compactness arguments. Steady states properties are obtained by variational
arguments connecting to recent advances in aggregation-diffusion equations.
\end{abstract}

\keywords{Euler-Riesz equations, compressible flows, spherical symmetry,
large data, finite energy, concentration,
Navier-Stokes-Riesz equations, {\it a priori} estimates,  higher integrability, vanishing viscosity,
compactness framework, approximate solutions, free boundary, density-dependent viscosity, nonlinear stability.}
\subjclass[2010]{\, 35Q35, 35Q31, 35B25, 35B44, 35L65, 35L67, 76N10, 35R09, 35R35, 35D30, 76X05, 76N17}
\date{\today}
\maketitle

\setcounter{tocdepth}{1}
\tableofcontents
\thispagestyle{empty}

\section{Introduction}

We are concerned with the global existence and nonlinear stability of spherically symmetric solutions
of the multi-dimensional (M-D) compressible Euler-Riesz equations (CEREs) with large initial data.
The equations are of the form:
\begin{align}\label{0.0}
	\begin{cases}
            \displaystyle
		\partial_t \rho+\nabla \cdot \M=0, \\[1mm]
            \displaystyle
		\partial_t \M+ \nabla \cdot \Big(\frac{\M\otimes\M}{\rho}\Big)
        +\nabla p + \kappa \rho \nabla \mathcal{W}_\alpha = \boldsymbol{0},
	\end{cases}
 \end{align}
 for $t>0$, $\boldsymbol{x} \in \mathbb{R}^n$, and $n \geq 2$,
 where $\rho: [0,\infty) \times \mathbb{R}^n \to \mathbb{R}_+$ is the density,
 $\M: [0,\infty) \times \mathbb{R}^n \to \mathbb{R}^n$ is the momentum,
 and $p: [0,\infty) \times \mathbb{R}^n \to \mathbb{R}$ is the pressure that is related to
 the density via a power law:
 \[
 p = p(\rho) = a_0 \rho^\gamma
 \]
 for the adiabatic exponent $\gamma > 1$.
 By scaling, we may choose
 $a_0 = \frac{(\gamma-1)^2}{4 \gamma} > 0$.
 $\mathcal{W}_\alpha : \mathbb{R}^n \to \mathbb{R}$ is the Riesz potential for $\alpha > 0$
 and the logarithmic potential for $\alpha = 0$.
 In this paper, we are particularly interested in a specific region $\alpha \in (-1,n)$ so that
 $\mathcal{W}_\alpha$ is given by $\mathcal{W}_\alpha=\Phi_\alpha *\rho$ and
 \[
 \nabla \mathcal{W}_\alpha(\boldsymbol{x}) =
 \begin{cases}
    \displaystyle
     (\nabla \Phi_\alpha * \rho) (\boldsymbol{x}) & \text{for } \alpha \in (-1,n-1), \\[1mm]
     \displaystyle
     \int_{\mathbb{R}^n} \nabla \Phi_\alpha(\boldsymbol{x} - \boldsymbol{y}) (\rho(\boldsymbol{y}) - \rho(\boldsymbol{x})) \dd \boldsymbol{y} \quad & \text{for } \alpha \in [n-1,n),
 \end{cases}
 \]
 where $\Phi_\alpha: \mathbb{R}^n \to \mathbb{R}$ is a Riesz kernel for $\alpha \in (0,n)$
 and a logarithmic kernel for $\alpha = 0$; that is, the kernel given by
 \[
 \Phi_\alpha(\boldsymbol{x}) =
 \begin{cases}
     -\frac{|\boldsymbol{x}|^{-\alpha}}{\alpha} &\quad \text{for } \alpha \neq 0, \\
     \log|\boldsymbol{x}| &\quad \text{for } \alpha = 0.
 \end{cases}
 \]
 Here, the logarithmic kernel can be obtained as a limit of the Riesz kernels, as $\alpha \to 0$,
 in the sense that
 \begin{equation}\label{kernelcon}
    \lim_{\alpha \to 0} \big(\Phi_\alpha (\boldsymbol{x})+ \frac1{\alpha}\big)
    = \lim_{\alpha \to 0} \frac{1-|\boldsymbol{x}|^{-\alpha}}{\alpha}
    = \log |\boldsymbol{x}|.
\end{equation}
Whenever $\alpha$ appears, we use the convention that $\alpha = 0$ represents the logarithmic case.
The range $\alpha \in (-1,n)$ includes the important Coulomb case $\alpha = n-2$.
$\kappa > 0$ represents the attractive potential case, and $\kappa < 0$ represents the repulsive potential case.
By scaling without loss of generality, we can choose $\kappa \in \{-1,1\}$.
When $\alpha < n$, the kernel $\Phi_\alpha$ is integrable near $0$ and hence can be analyzed
via the mean-field theory and the potential theory.
However, for $\alpha \geq n$, the kernel $\Phi_\alpha$ is not integrable near the origin so that
the potential theory is not amenable, in which case the kernel is called $\it{hypersingular}$
and behaves more like a short-range interaction problem; see \cite{Hardin_2018}.

\smallskip
Riesz gases arise
in many different areas of physics:
To list a few, plasma physics, star or galaxy dynamics, solid state physics, ferrofluids,
elasticity, and random matrices; see \cite{Campa_2014}.
The jellium and uniform electron gas have also been studied for the Riesz interaction;
see \cite{Lewin_2018,Cotar_2019}.
We refer to the recent survey \cite{Lewin_2022} with many open problems.
There are also applications in mathematical biology such as swarming models;
see \cite{Carrillo_2021,Carrillo_2017}.

\smallskip
 In the case of the super-Coulomb Riesz potential, for $\alpha \in (n-2,n)$, $\Phi_\alpha$ is known to be the kernel of the fractional Laplacian operator:
\[
    (-\Delta)^\frac{n-\alpha}{2} \mathcal{W}_\alpha = - c_{n,\alpha} \rho
\]
in the sense of distributions, where
\[
c_{n,\alpha} = \begin{cases}
    \frac{2^{n-\alpha} \pi^\frac{n}{2} \Gamma\left (\frac{n-\alpha}{2} \right )}{\alpha \Gamma\left (\frac{\alpha}{2} \right )} &\quad\mbox{for $\alpha > \max\{0, n-2\}$}, \\
    2\pi  &\quad\mbox{for $\alpha = 0$ and $n = 2$},
\end{cases}
\]
with $\Gamma(\cdot)$ as the gamma function;
see \cite{Gelfand_1958} for the details of the calculation.
The fractional Laplacian is a non-local operator and can be defined by Fourier multipliers
or in the real space through a singular integral.
Serfaty \cite{Serfaty_2020} first proved
that the Euler equations of form \eqref{0.0} in the pressure-less case
and repulsive regime, with Coulomb or super-Coulomb Riesz potential,
can be derived via the mean-field limits of particle systems.
Then
Nguyen-Rosenzweig-Serfaty \cite{Nguyen_2022} treated the whole range $\alpha \in [0,n)$
by developing a modulated-energy approach; see also Serfaty \cite{Serfaty:2024ojp}.

\smallskip
For $\alpha = 0$, $\Phi_0(\boldsymbol{x}) = \log |\boldsymbol{x}|$.
This interaction potential is called the logarithmic potential.
This case is ubiquitous, with many
applications in statistical and quantum mechanics, random matrix theory, self-avoiding random walks, random tilings, and the proofs of functional inequalities;
see \cite{Lewin_2022,Anderson_2010} for more details.

\smallskip
For $\alpha = n-2$ and $n\geq 3$, the interaction potential is called the Coulomb potential.
The Coulomb gas can be used as a toy model to study classical matters without the effects of
quantum mechanics. To give an example, Gamov's {\it liquid drop model} is a simplified model
used to study atomic nuclei, electrons, and atoms; see \cite{Serfaty:2024ojp}.
In the case when one phase is dominant, this can be reduced to a system of particles
interacting via a Coulomb potential; see \cite{Alberti_2009}.
Since the 1970s, the Coulomb gas has been regarded as an incredibly important area of study
in the field of statistical mechanics; this can be seen in
\cite{Lieb_1969} and the references cited therein.
The Coulomb potential has also offered many incredibly important applications
in the density functional theory;
see \cite{Lieb_2010,Lewin_2018,Cotar_2013,Cotar_2019}.
In particular, the {\it indirect Coulomb energy} and its bounds have been obtained
via the Lieb-Oxford inequality and the $n$-marginal optimal transport with Coulomb costs.
When $n=2$, the Coulomb case coincides with the logarithmic case;
this is also known as \textit{log gas, {\rm 2-D} one-component plasma, {\rm 2-D} jelium, or Dyson gas}
in physics.
The 2-D one-component plasma is seen as a natural toy model for plasmas
in 2-D statistical physics, the fractional quantum Hall effect, and random matrices.

\smallskip
We consider the Cauchy problem for \eqref{0.0} with
the initial data
$\rho_0 : \mathbb{R}^n \to \mathbb{R}$ and $\mathcal{M}_0 : \mathbb{R}^n \to \mathbb{R}^n$
that satisfy
 \begin{equation}\label{0.1}
     (\rho,\M)|_{t=0}
     = (\rho_0,\M_0)(\boldsymbol{x}) \longrightarrow (0,\boldsymbol{0})
     \qquad \mbox{ as $|\boldsymbol{x}| \to \infty$}.
 \end{equation}
 Since the global solutions of CEREs can contain vacuum states
 $\{ (t,\boldsymbol{x}) \, : \, \rho(t,\boldsymbol{x}) = 0 \}$ where the fluid velocity is ill-defined, we use the momentum
 $\M(t,\boldsymbol{x})$, which will be shown to be well-defined globally,
 instead of the fluid velocity $u(t,\boldsymbol{x})$.
 Under these assumptions, the energy of CEREs is given by
\begin{equation*}
     \mathcal{E}(\rho,\mathcal{M})(t):
     =\int_{\mathbb{R}^n}\Big(\rho(t, \boldsymbol{x}) e(\rho(t, \boldsymbol{x}))
     + \frac{1}{2} \Big|\frac{\mathcal{M}}{\sqrt{\rho}}\Big|^2(t, \boldsymbol{x})
     + \frac{\kappa}{2}\rho(\boldsymbol{x})(\Phi_\alpha \ast \rho)(t, \boldsymbol{x})\Big)\,\
     \dd\boldsymbol{x}.
\end{equation*}
Much work has been done on this in the Coulomb case or Newtonian potential ($\alpha = n-2$),
known as the compressible Euler-Poisson equations (CEPEs).
In the case of plasma ($\kappa = -1$, the repulsive case),
in a series of work by Guo \cite{Guo_1998}, Guo-Pausader \cite{Guo_2011},
Guo-Ionescu-Pausader \cite{Guo_2016}, and Ionescu-Pausader \cite{Ionescu2011},
it was proved that there exist global smooth irrotational solutions close to some constant background with small amplitude.
Meanwhile, for the gravitational gaseous star problem ($\kappa = 1$, the attractive case),
in Goldreich-Weber \cite{Goldreich_1980}, a family of smooth compact polytropic collapsing star
solutions of the 3-D CEPEs was found in the critical case of $\gamma = \frac{4}{3}$.
It was then proved that such solutions are
nonlinearly stable
under a small radially symmetric perturbation in
Hadzi\'{c}-Jang \cite{Had_i__2017}.
It was also proved by Hadzi\'c-Jang \cite{Had_i__2019} in the case of
both plasma and gaseous stars ($\kappa=\pm 1$)
that there exists a family of expanding
global-in-time solutions in Lagrangian coordinates
to the 3-D CEPEs in the case that $\gamma = 1 + \frac{1}{k}$ for $k\in \mathbb{N}$.

\smallskip
However, since CEPEs are strongly hyperbolic and nonlinear,
there is a breakdown of smooth solutions in finite time for large initial data, in general.
The challenging behaviors we might encounter are the formation of shock waves causing discontinuities and the occurrence of the cavitation and concentration of the density, which are a major area of interest,
still not fully understood.
In particular,
concentration
of the density at the origin for spherically symmetric solutions exhibiting gravitational collapse
can occur for
$\gamma \in [1, \frac{4}{3})$
in the gaseous star case ({\it cf}. \cite{Guo_2020,Guo_2021,Guo_2022}).
As such, we have to focus on the weak solutions of CEPEs.
Indeed, it was proved by Chen-He-Wang-Yuan \cite{Chen2021} that, for $\gamma > 1$
in the repulsive case $(\kappa = -1)$ and $\gamma > \frac{2n}{n + 2}$ with some conditions
on the initial mass $M < M_{\rm c}$ for $\gamma \in (\frac{2n}{n + 2}, \frac{2n - 2}{n})$
in the attractive case $(\kappa = 1)$, given the initial data of finite energy,
there exist global finite-energy solutions of CEPEs for $n\ge 3$, which implies that no concentration for the density is formed at the origin.

\smallskip
The difficulty of studying the general case of CEREs is that
the Riesz potential can no longer
be represented by using a local formula, due to the non-locality of the potential.
This provides us with multiple issues when trying to generalize from the Coulomb case to the general Riesz case.
As such, the existence results for CEREs were only recently proved,
starting with that of Choi-Jeong \cite{Choi_2022a}, where they proved that CEREs are locally well-posed away from the vacuum for the attractive and repulsive cases
for $\alpha \in (n-2,n)$ and showed the finite-time blowup of classical solutions.
Danchin-Ducomet \cite{Danchin_2022} proved that,
under the assumption that the initial density is small enough
and the initial velocity is close to some vector field
$v_0$
such that the spectrum of $Dv_0$ is positive and bounded away from zero,
there exists a global classical solution for $\alpha \in [n-2,n-1]$.
Choi-Jung \cite{Choi_2022b} proved that, in the pressureless case, under the conditions that the initial data
are small enough in some regular enough Sobolev space, there exists a unique global classical solution
for $\alpha \in (n-2,n)$.
Choi-Jung-Lee \cite{Choi_2024} extended the pressureless case to isentropic flows with pressure and proved that, under the condition that the initial density is small,
the initial velocity is close enough to some reference velocity $v_0$ in some regular enough Sobolev
space and, for a range of $\gamma$, there exists a unique global-in-time solution.
We also refer to \cite{Bobylev,Cercignani,Illner} for the critical value $\alpha=n-1,$
which enjoys similar physical phenomena
with the Vlasov-Riesz system.

\smallskip
It is of physical interest to understand if the solutions of CEREs are stable close to some steady
states of the equations.
For a fixed $\alpha$, such a steady state is usually given by a minimizer of
the energy functional:
 \begin{equation}\label{funcy}
\mathcal{G}(\varrho)
=\int_{\mathbb{R}^n} \big(\varrho(\boldsymbol{x}) e(\varrho(\boldsymbol{x}))
+\frac{1}{2}\varrho(\boldsymbol{x})(\Phi_\alpha\ast \varrho)(\boldsymbol{x})\big)\,
\dd\boldsymbol{x},
\end{equation}
when it exists.
Much work has been done towards this in the Coulomb case, that is,
CEPEs. Firstly, Rein \cite{Rein_2003} proved that, under the condition that the minimizer is unique
and a solution exists to the 3-D CEPEs with a condition on the pressure (which equates to
$\gamma > \frac{2n - 2}{n}$ in the isentropic case) and is {\it close} to the minimizer
at time $t=0$ in some sense, then the solution remains {\it close} to the unique minimizer for all time.
This is known as a nonlinear stability result; in this case, we refer to the unique minimizer
as nonlinearly stable.
In
Luo-Smoller \cite{Luo_2008,Luo_2009}, they obtained the nonlinear stability
results for the 3-D rotating Euler-Poisson equations and proved the existence of the solutions
and minimizers.
Recently, Lin-Zeng \cite{Lin_2022} established a turning point principle, which provides
sharp linear stability and instability criteria for the non-rotating gaseous stars.
Based on the linear stability criterion, Lin-Wang-Zhu \cite{Lin_2023} obtained nonlinear stability
results for CEPEs with a general pressure law.
We also refer to \cite{Deng,Jang2008,Jang2014} for the nonlinear instability
of the polytropic stars $(n=3,\alpha=n-2)$
for $\gamma\in[\frac{6}{5},\frac{4}{3}]$.
It is well-known that the steady states of CEREs relate to the steady states
of the aggregation-diffusion equation with a Riesz potential,
which can be seen by the fact that the corresponding energy functional is identical.
Carrillo-Hittmeir-Volzone-Yao \cite{Carrillo_2019}
proved that stationary states of a large class of aggregation-diffusion equations must be radial.
As a consequence, they proved that there exists a unique radially symmetric stationary state,
that is the unique radial minimizer,
to the Keller-Segel equations, a special case of the aggregation-diffusion equations,
where the non-local potential is given by the Coulomb potential.
We refer to \cite{CCY19} and the references therein for further properties of
the aggregation-diffusion equations.

\smallskip
The occurrence of cavitation and the formation of shock waves can lead
to difficulties with the global existence of problem \eqref{0.0}--\eqref{0.1},
as well as a lack of regularity of the solutions.
Hence, in order to prove the global existence of finite-energy solutions with spherical symmetry,
we use a vanishing physical viscosity method through the approximate PDEs,
called the compressible Navier-Stokes-Riesz equations (CNSREs), of the form:
\begin{align}\label{0.2}
\begin{cases}
\displaystyle
\partial_t \rho+\nabla \cdot \M=0, \\[1mm]
\displaystyle
\partial_t \M+ \nabla \cdot \Big(\frac{\M\otimes\M}{\rho}\Big)
+\nabla p + \kappa \rho \nabla \mathcal{W}_\alpha
= \varepsilon \nabla \cdot \Big(\mu(\rho)D \big(\frac{\M}{\rho} \big) \Big)
+ \varepsilon \nabla \Big( \lambda(\rho) \nabla \cdot \big(\frac{\M}{\rho} \big) \Big),
\end{cases}
\end{align}
 where $D\big( \frac{\M}{\rho} \big)
 = \frac{1}{2} \big( \nabla (\frac{\M}{\rho})
  + \big(\nabla ( \frac{\M}{\rho}) \big)^\intercal \big)$
is the stress tensor,
$\varepsilon > 0$ can be seen as the inverse of the Reynolds number,
and $\mu(\rho)$ and $\lambda(\rho)$ are the shear and bulk viscosity coefficients respectively
that satisfy
 \[
 \mu(\rho) \geq 0, \quad \mu(\rho) + n\lambda(\rho) \geq 0 \qquad \text{ for $\rho \geq 0$}.
 \]
Here the viscosity coefficients satisfy the BD relation: $\lambda(\rho) = \rho \mu'(\rho) - \mu(\rho)$,
needed to derive a BD-type entropy estimate to gain information on the derivatives of the density;
see Bresch-Desjardins \cite{Bresch02} and Bresch-Desjardins-Lin \cite{Bresch_2003}.
We will choose $(\mu(\rho),\lambda(\rho)) = (\rho,0)$, which satisfies the required properties,
for simplicity.
Formally, one can see that, as $\varepsilon \to 0$, solutions of \eqref{0.2} converge to solutions of \eqref{0.0}.
However, the rigorous proof of this has remained open; see
Chen-Feldman \cite{Chen-Feldman} and Dafermos \cite{Dafermos_2010} for the details.
Significant progress had been made towards this in the Coulomb case $\alpha = n-2$.

\smallskip
Since system \eqref{0.0} is an M-D system of hyperbolic balance laws, not much is understood about such equations.
Such equations usually do not always admit classical solutions due
to the formation of discontinuities and singularities.
A more classical theory in the area was established by Glimm \cite{Glimm_1965},
which provides the global-in-time existence of entropy solutions of bounded variation
for the 1-D general hyperbolic systems of conservation laws
in the case of initial conditions close to some constant function.
In Bianchini-Bressan \cite{Bianchini_2005}, the existence of entropy solutions of bounded variation
was established for the 1-D general hyperbolic systems of
conservation laws via the artificial vanishing viscosity method.
DiPerna \cite{DiPerna83} first proved the existence of entropy solutions for the isentropic Euler equations
for polytropic gases with $\gamma=1+\frac{2}{2k+1}$, with integer $k\geq2$,
by developing the method of compensated compactness ({\it cf}. Murat \cite{Murat} and Tartar \cite{Tartar}).
Chen \cite{Chen1988} and Ding-Chen-Luo \cite{Ding1989} established the global existence of entropy solutions
in $L^\infty$
for the general case $\gamma\in(1,\frac{5}{3}]$ by developing new techniques through careful entropy analysis
that involves fractional derivatives, the Hilbert transform,
and the compensated compactness argument.
Then Lions-Perthame-Tadmor \cite{Lions_1994} proved the global existence result
for entropy solutions in $L^\infty$
for $\gamma \in [3,\infty)$.
The gap $\gamma \in (\frac{5}{3},3)$ between the ranges was then bridged
by Lions-Perthame-Souganidis \cite{Lions_1998}.
The existence result for entropy solutions in $L^\infty$
was then proved by Huang-Wang \cite{Huang_2002} for $\gamma = 1$.

\smallskip
The idea of using the vanishing viscosity method to study
solutions
of hyperbolic problems goes back decades.
Gilbarg \cite{Gilbarg} proved that, for the viscous approximation for
1-D heat-conducting fluids,
there exists a unique shock layer that converges to a shock wave of the inviscid equations
in the inviscid limit.
Hoff-Liu \cite{Hoff-Liu} proved that the solutions of the 1-D Navier-Stokes equations for compressible isentropic flow
converge to an entropy-satisfying shock-wave solution in the inviscid limit.
Chen-Perepelitsa \cite{Chen_2010} first completed the proof of the inviscid limit rigorously of
the solutions of the 1-D Navier-Stokes
equations to the solutions of the isentropic Euler equations with general large initial data and
relative finite-energy for all the range of $\gamma>1$.
This is achieved by the compensated compactness framework in $L^p$,
studied first by LeFloch-Westdickenberg \cite{LeFloch2007}
for $\gamma\in (1,\frac{5}{3})$ and later further developed by Chen-Perepelitsa \cite{Chen_2010}
to handle all the adiabatic exponent $\gamma>1$ with a simplified proof.
Such a compensated compactness framework has been further studied
in  \cite{Chen_2015,Chen-Schrecker-2018,Schrecker,Chen-Wang-2018}
to prove spherically symmetric solutions of the M-D isentropic Euler equations.

\smallskip
Our aim is to establish the existence of global finite-energy solutions of \eqref{0.0}--\eqref{0.1}
for large spherically symmetric initial data. To this end, we consider the solutions of form:
\begin{equation}\label{0.3}
    \rho(t,\boldsymbol{x}) = \rho(t,r), \quad \M(t,\boldsymbol{x}) = m(t,r) \frac{\boldsymbol{x}}{r}
    \qquad \,\, \text{ for $\,r = |\boldsymbol{x}|$},
\end{equation}
with initial conditions:
\begin{equation}\label{0.4}
    (\rho, \M)|_{t=0}
    = (\rho_0, \M_0)(\boldsymbol{x}),
\end{equation}
where $\rho_0 \in L^1_+(\mathbb{R}^n)$ and $\frac{\M_0}{\sqrt{\rho_0}} \in L^2(\mathbb{R}^n)$.
Under these assumptions, \eqref{0.0} can be written in the form:
 \begin{align*}
	\begin{cases}
            \displaystyle
            \rho_t + m_r + \frac{n-1}{r}m = 0, \\
            \displaystyle
            m_t + \Big(\frac{m^2}{\rho} + p(\rho)\Big)_r
            + \frac{n-1}{r}\frac{m^2}{\rho} + \kappa \rho\,(\mathcal{W}_\alpha)_r = 0.
        \end{cases}
 \end{align*}
Similarly,
in the case of CNSREs, \eqref{0.2} becomes
 \begin{align}\label{0.6}
	\begin{cases}
            \displaystyle
            \rho_t + m_r + \frac{n-1}{r}m = 0, \\
            \displaystyle
            m_t + \Big(\frac{m^2}{\rho} + p(\rho)\Big)_r
            + \frac{n-1}{r}\frac{m^2}{\rho} + \kappa \rho\,(\mathcal{W}_\alpha)_r
             = \varepsilon\Big( (\mu + \lambda) \big( (\frac{m}{\rho})_r
               + \frac{n-1}{r}\frac{m}{\rho} \big) \Big)_r- \varepsilon\frac{n-1}{r}\mu_r\frac{m}{\rho}.
        \end{cases}
 \end{align}

Using the form of spherically symmetric solutions allows us to reduce our equations
from a system of $n+1$ equations for $n+1$ variables to a system of $2$ equations with $2$ variables.
However, this system is still tricky to work with.
For the Coulomb case ($\alpha = n-2$), considerable work has been done toward
the global existence of the compressible Navier-Stokes-Poisson equations (CNSPEs).
It was proved by Duan-Li \cite{Duan_2015} that, for spherically symmetric compactly supported
arbitrarily large initial data, there exist global weak spherically symmetric solutions
of CNSPEs with a stress-free boundary condition and $\gamma \in (\frac{6}{5},\frac{4}{3}]$.
An important result in Chen-He-Wang-Yuan \cite{Chen2021} is the global existence
of spherically symmetric solutions of
CEPEs ($i.e.,$ the Coulomb case $\alpha = n-2$ for $n\ge 3$).
Different from \cite{Chen2021}, in this paper, we prove the global existence and nonlinear stability of finite-energy
solutions for CEREs with a
general class of Riesz potentials and the logarithmic potential
for dimension $n \geq 2$.

\smallskip
One of the main difficulties we have to overcome has been mentioned above as the
non-locality of the potential for CEREs.
The potential for CEPEs with the Coulomb potential $\alpha = n-2$ can be represented locally as an exact formula
given by
\[
(\Phi_{n-2} * \rho)_r(t,r) = \frac{\omega_n}{r^{n-1}} \int^r_0 \rho(t,\eta) \eta^{n-1} \dd \eta,
\]
where $r$ is the radial coordinate and $\omega_n$ is the surface area of the $n$-D unit ball.
This is fundamental in proving the uniform energy estimates, the BD-type entropy,
and the higher integrability
in \cite{Chen2021}. Such a formula is derived from the fact that the Coulomb potential satisfies
the following Poisson equation:
\[
\Delta (\Phi_{n-2} * \rho) = \omega_n \rho,
\]
which is a local equation.
However, for general $\alpha$, we no longer have this type of structure, hence such a local formula
is not possible.
Even in the case $\alpha \in (n-2,n)$, the potential solves a fractional Poisson-type equation,
which is a non-local equation and so cannot be employed to derive a local estimate.
However, to overcome this problem, we use a radial representation for the Riesz potential
to obtain the estimates on the potential (see Lemma 3.1), which is incredibly important
in obtaining the estimates on the approximate solutions.
This is particularly so in the case of higher integrability estimates,
due to the non-locality of the potential, where the local estimates are required.
\smallskip

Moreover, when analyzing the convergence of the potential energies
as solutions transit from the free boundary problem to the Cauchy problem of global weak solutions,
the absence of the local representation formula for the potential terms necessitates
an alternative approach
to that of \cite{Chen2021}.
Specifically, to understand the behavior near the origin, we adopt a method distinct from the previous work.
Although the earlier studies provided the local convergence of the density away from the origin in the limit,
we establish a stronger result by proving the global convergence of the density in this paper; see Lemma \ref{Lq}.
The key point stems from the combination of the local convergence and the mass conservation in the limit,
which ensures that there is neither a blow-up of mass at the origin nor a loss of mass at infinity.
These enable us to derive the global convergence result.

\smallskip
Other important technical novelties of our work concern the BD-type entropy estimates.
When the Riesz potential is sub-Coulombic or Coulombic,
{\it i.e.}, $\alpha \leq n - 2$, we want to obtain a BD-type entropy estimate.
However, due to the nonlocality of the potential, we discover that,
when the integration by parts is applied,
we obtain a nonzero boundary term only in the case that $\alpha < n - 2$,
as observed in \eqref{1.22}; see also Remark \ref{3.10a}.
In the repulsive case $\kappa = -1$, the boundary term has a favorable sign
and so can be ignored.
However, in the attractive case $\kappa = 1$,
the boundary term cannot be controlled.
Thus, we use a Gr\"onwall-type inequality to bypass the use of
the
integration by parts
method
that gives us the problematic boundary term, which allows us to obtain a BD-type entropy estimate; see Lemma \ref{oldBD}.
In the case of the super-Coulombic Riesz potentials when $\alpha > n-2$,
the potential is no longer twice differentiable, so it is difficult to apply the method of integration by parts
of \cite{Chen2021} to obtain the BD entropy estimate.
In this paper, we are able to overcome such a problem by applying the same Gr\"{o}nwall-type inequality
as before to obtain the estimate for not only the attractive case but also the repulsive case;
see Lemma \ref{newbd}.

\smallskip
Finally, we deal with the potential for the case that $\alpha \in (-1,0]$, in particular the logarithmic potential $\alpha = 0$. This logarithmic case appears in many applications in the physical and mathematical biology literature.
However, it has been very problematic when trying to resolve the well-posedness of not only CEREs in general,
but also CEPEs in the $2$-D case.
Unlike the Riesz potential, there is no longer decay at infinity and, in the logarithmic case,
the potential has no definitive sign.
In the case $\alpha \in (0,n)$, we can employ the Hardy-Littlewood-Sobolev (HLS) inequality
to derive the $L^p$ estimate for $\Phi_\alpha * \rho$.
The parallel in the logarithmic case $\alpha = 0$ is that of the logarithmic
Hardy-Littlewood-Sobolev (logHLS) inequality, where we only obtain a one-sided bound
and which no longer gives us an $L^p$ bound.
In the case $\alpha \in (-1,0)$, we have the reverse HLS inequality that gives a lower bound
estimate for the convolution in terms of an $L^q$ bound for $q \in (0,1)$.
Such difficulties have made the study of CEREs indubitably difficult.
However, by studying the behavior of the logarithmic potential around the origin
and at the far-field separately,
we are able to locally control the $L^\infty$ norm of $\Phi_0 * \rho$,
allowing us to overcome the previous difficulties that arose by the use of the logHLS inequality only.
It is also shown that the energy estimates for the logarithmic potential can, at least formally,
be seen to be a limit of the energy estimates for the solutions of the general Riesz potentials
as $\alpha \to 0$, showing the connection between the Riesz and logarithmic potentials;
see Remark \ref{riesztolog}.

\smallskip
We note that the previous results were focused on proving the conditional stability
for radial perturbations of stationary solutions that rely
on the pre-assumed global existence of weak solutions.
One of our novelties here is that we can employ the first part of this paper to show that
the assumption of the global existence of finite-energy solutions close to such stationary solutions
is non-empty for the whole family of CEREs.
The nonlinear stability of stationary solutions for CEPEs
has been analyzed in \cite{Rein_2003,Luo_2008,Luo_2009,Lin_2023}.
Notice further that our results for CEREs are {\it unconditional} now,
by using the global existence results
obtained in this paper (see also \cite{Chen2021}). Moreover, we obtain the properties
of steady states by variational arguments building the connection
with the aggregation-diffusion equations; see Remark 2.7
and Theorem \ref{uniqueness of steady states}.

\smallskip
The main strategy and structure of the paper are as follows:
In \S \ref{S2}, we present the main results of this paper.
In \S \ref{S3}, we first derive the local estimates
for the radial representation of the Riesz potential and its derivatives.
Subsequently, we prove the uniform energy estimates of solutions of the approximate
free boundary problem to obtain the uniform $L^p$ estimates.
The next step is to prove a BD-type inequality to derive
the uniform $L^p$ estimates on the derivative of the density.
Finally, we use the local estimates for the Riesz potential
to prove the uniform local higher-integrability estimates
for the density and the velocity.
In \S \ref{S4}, we prove the main results concerning the global finite-energy solutions.
We also apply the Aubin-Lions lemma via the use of the BD-type estimates
to pass from the solutions
of the free boundary problem to the global weak solutions of CNSREs,
which satisfy the same uniform estimates.
We also employ the higher integrability estimates
in order to use the compensated compactness argument
from Chen-Perepelitsa \cite{Chen_2010} to pass from the global weak solutions of CNSREs to
the global weak solutions of CEREs.
Finally, in \S \ref{sec:stab}, we develop a simplified variational approach to deal with the nonlinear stability. We employ the concentration compactness argument from Lions \cite{Lions_1984}
to prove the existence of minimizers of \eqref{funcy} and the nonlinear stability near such minimizers.
The appendices contain details on the convolution inequalities used in the main text
and the construction of the approximate initial data.

\section{Mathematical Problems and Construction of Approximate Solutions}\label{S2}
In this section, we first introduce the notion of weak solutions and steady states of CERES and CNSRES,
then describe the main results of this paper, and finally
present the construction of approximate solutions
to obtain the global weak solutions.

\subsection{Weak solutions and steady states of CEREs and CNSREs}

We first assume that the given initial data function $(\rho_0,\M_0)(\boldsymbol{x})$ has finite total energy and
total mass:
\begin{align}
&E_0 := \int_{\mathbb{R}^n}\rho_0 \Big( \frac{1}{2} \Big| \frac{\M_0}{\rho_0}\Big|^2
+ e(\rho_0) + \frac{\kappa}{2} (\Phi_\alpha * \rho_0) \Big)(\boldsymbol{x})
\dd \boldsymbol{x} < \infty,\label{0.7}\\[2mm]
&M := \int_{\mathbb{R}^n} \rho_0(\boldsymbol{x}) \dd \boldsymbol{x} = \omega_n \int^\infty_0 \rho_0(r) r^{n-1} \dd r < \infty.\label{0.8}
\end{align}
When
$\alpha \in (-1,0]$, we also assume that $(\rho_0,\M_0)(\boldsymbol{x})$ has
finite $(-\alpha)$-moment:
\begin{equation}\label{0.81}
\int_{\mathbb{R}^n} \rho_0(\boldsymbol{x}) k_\alpha (1 + |\boldsymbol{x}|^2) \dd \boldsymbol{x} = \omega_n \int^\infty_0 \rho_0(r) k_\alpha (1 + r^2) r^{n-1} \dd r < \infty,
\end{equation}
where
\begin{equation*}
    k_\alpha(r) = \begin{cases}
        r^{- \frac{\alpha}{2}} \quad &\mbox{for $\alpha \in (-1,0)$}, \\
        \log(r) &\mbox{for $\alpha = 0$},
    \end{cases}
\end{equation*}
$\omega_n$ represents the surface area of the unit sphere in $\mathbb{R}^n$,
and $e(\rho) = \frac{a_0}{\gamma - 1}\rho^{\gamma - 1}$
is the internal energy satisfying the relation: $p(\rho) = \rho^2e'(\rho)$.
Here we require the initial data
such that the convolution $\Phi_\alpha*\rho_0$
is well-defined and differentiable.
We now give the definition of global finite-energy weak solutions
of \eqref{0.0}--\eqref{0.1}.

\begin{definition}\label{definition}
A measurable vector function $(\rho,\M)(t, \boldsymbol{x})$ is a finite-energy solution
of the Cauchy problem \eqref{0.0}--\eqref{0.1} if
$(\rho,\M)(t, \boldsymbol{x})$ satisfies the following conditions{\rm :}
\begin{enumerate}
\item[(a)] $\rho(t,\boldsymbol{x}) \geq 0$ a.e. and $(\M, \frac{\M}{\sqrt{\rho}})(t,\boldsymbol{x}) = \boldsymbol{0}$ a.e. on the vacuum states $\{ (t,\boldsymbol{x}) \, : \, \rho(t,\boldsymbol{x}) = 0 \}$,
\begin{align*}
			&\rho\in L^{\infty}(0,T; L^1\cap L^\gamma(\mathbb{R}^n)), \quad\,\,\,
			\frac{\M}{\sqrt{\rho}}\in L^{\infty}(0,T; L^2(\mathbb{R}^n)),\\
             &\Phi_\alpha * \rho\in L^\infty(0,T;L^{\frac{2n}{\alpha}}(\mathbb{R}^n) ),\quad
			\nabla\Phi_\alpha * \rho\in L^\infty(0,T;L^{\frac{2n}{\alpha + 2}}(\mathbb{R}^n)).
    \end{align*}
\item[(b)] For a.e. $t>0$, the total energy is finite{\rm :}

\smallskip
\begin{itemize}
 \item For $\kappa = -1$,
\begin{equation*}
 \int_{\mathbb{R}^n} \Big( \frac{1}{2} \Big| \frac{\M}{\sqrt{\rho}}\Big|^2
 + \rho e(\rho) - \frac{1}{2} \rho (\Phi_\alpha * \rho) \Big)(t,\boldsymbol{x})
 \dd \boldsymbol{x} \leq E_0.
\end{equation*}
\item For $\kappa = 1$,
\begin{align*}
\begin{cases}
\displaystyle
\int_{\mathbb{R}^n} \Big( \frac{1}{2} \Big| \frac{\M}{\sqrt{\rho}}\Big|^2
+ \rho e(\rho) - \frac{1}{2} \rho (\Phi_\alpha * \rho) \Big)(t,\boldsymbol{x})
\dd \boldsymbol{x} \leq C(E_0,M), \\[4mm]
\displaystyle
\int_{\mathbb{R}^n} \Big( \frac{1}{2}\Big|\frac{\M}{\sqrt{\rho}}\Big|^2
+ \rho e(\rho) + \frac{1}{2} \rho (\Phi_\alpha * \rho) \Big)(t,\boldsymbol{x})
\dd \boldsymbol{x} \leq E_0.
\end{cases}
\end{align*}
\end{itemize}
\item[(c)] For any $\zeta \in C^1_0(\mathbb{R} \times \mathbb{R}^n)$,
\begin{equation*}
\int_{\mathbb{R}_+} \int_{\mathbb{R}^n} \big(\rho \zeta_t
+ \M \cdot \nabla \zeta\big) \dd \boldsymbol{x} \dd t
+ \int_{\mathbb{R}^n} (\rho_0 \zeta)(0,\boldsymbol{x}) \dd \boldsymbol{x} = 0.
\end{equation*}
\item[(d)] For any $\boldsymbol{\psi} \in (C^1_0(\mathbb{R} \times \mathbb{R}^n))^n$,
    \begin{align*}
        & \int_{\mathbb{R}_+} \int_{\mathbb{R}^n}
        \Big(\M \cdot \boldsymbol{\psi}_t
        + \frac{\M}{\sqrt{\rho}} \cdot \big( \frac{\M}{\sqrt{\rho}} \cdot \nabla \big)
        \boldsymbol{\psi} + p(\rho)\nabla \cdot \boldsymbol{\psi} \Big)(t,\boldsymbol{x})
        \dd \boldsymbol{x} \dd t
        + \int_{\mathbb{R}^n} \M_0(\boldsymbol{x}) \cdot \boldsymbol{\psi}(0,\boldsymbol{x}) \dd \boldsymbol{x} \\
        & = \int_{\mathbb{R}_+} \int_{\mathbb{R}^n} \kappa(\rho \nabla \mathcal{W}_\alpha \cdot \boldsymbol{\psi})(t,\boldsymbol{x}) \dd \boldsymbol{x} \dd t.
    \end{align*}
\end{enumerate}
\end{definition}

We first construct global weak solutions of CNSREs with appropriately adapted density-dependent
viscosity terms and approximate initial conditions, constructed in Appendix B
with the properties as in Lemmas \ref{appendix1}--\ref{appendix5}, given by
\begin{equation}\label{initialns}
    (\rho^\varepsilon, \M^\varepsilon)(0,\boldsymbol{x}) = (\rho^\varepsilon_0, \M^\varepsilon_0)(\boldsymbol{x}).
\end{equation}
Now we provide the definition of global weak solutions of the Cauchy problem \eqref{0.2} and \eqref{initialns}.

\begin{definition}\label{CNSREs}
A measurable vector function $(\rho^\varepsilon,\M^\varepsilon)(t,\boldsymbol{x})$
is a weak solution
of the Cauchy problem \eqref{0.2} and \eqref{initialns} with $(\mu,\lambda) = (\rho,0)$
if $(\rho^\varepsilon,\M^\varepsilon)(t,\boldsymbol{x})$ satisfies
the following conditions{\rm :}
\begin{enumerate}
\item[(a)] $\rho^\varepsilon(t,\boldsymbol{x}) \geq 0$ a.e. and
$(\M^\varepsilon, \frac{\M^\varepsilon}{\sqrt{\rho^\varepsilon}})(t,\boldsymbol{x})
= \boldsymbol{0}$ a.e. on the vacuum states
$\{ (t,\boldsymbol{x}) \, : \, \rho^\varepsilon(t,\boldsymbol{x}) = 0 \}$,
    \begin{align*}
			&\rho^\v\in L^{\infty}(0,T; L^1\cap L^\gamma(\mathbb{R}^n)),
			\quad   \nabla\sqrt{\rho^\v}\in  L^{\infty}(0,T; L^2(\mathbb{R}^n)),\\
			&\frac{\M^\v}{\sqrt{\rho^\v}}\in L^{\infty}(0,T; L^2(\mathbb{R}^n)),
            \quad \Phi_\alpha * \rho^\v\in L^\infty(0,T;L^{\frac{2n}{\alpha}}(\mathbb{R}^n) ),\\
			&\nabla\Phi_\alpha * \rho^\v\in L^\infty(0,T;L^{\frac{2n}{\alpha + 2}}(\mathbb{R}^n)).
    \end{align*}
\item[(b)] For any $\zeta \in C^1_0(\mathbb{R} \times \mathbb{R}^n)$ and $t_2 \geq t_1 \geq 0$,
    \begin{equation*}
        \int_{t_1}^{t_2} \int_{\mathbb{R}^n} (\rho^\v \zeta_t + \M^\v \cdot \nabla \zeta) \dd \boldsymbol{x} \dd t = \int_{\mathbb{R}^n} (\rho^\v \zeta)(t_2,\boldsymbol{x}) \dd \boldsymbol{x} - \int_{\mathbb{R}^n} (\rho^\v \zeta)(t_1,\boldsymbol{x}) \dd \boldsymbol{x}.
    \end{equation*}
\item[(c)] For any $\boldsymbol{\psi} \in (C^1_0(\mathbb{R} \times \mathbb{R}^n))^n$,
    \end{enumerate}
    \begin{align*}
        \begin{split}
            & \int_{\mathbb{R}_+} \int_{\mathbb{R}^n}
            \Big( \M^\v \cdot \boldsymbol{\psi}_t
            + \frac{\M^\v}{\sqrt{\rho^\v}} \cdot \big( \frac{\M^\v}{\sqrt{\rho^\v}} \cdot \nabla\big) \boldsymbol{\psi} + p(\rho^\v)\nabla \cdot \boldsymbol{\psi} \Big)(t,\boldsymbol{x}) \dd \boldsymbol{x} \dd t + \int_{\mathbb{R}^n} \M^\v_0(\boldsymbol{x}) \cdot \boldsymbol{\psi}(0,\boldsymbol{x}) \dd \boldsymbol{x} \\
            & = - \v \int_{\mathbb{R}_+} \int_{\mathbb{R}^n}
            \Big( \frac{1}{2} \M^\v \cdot \big( \Delta \boldsymbol{\psi} + \nabla (\nabla \cdot \boldsymbol{\psi} ) \big)
            + \frac{\M^\v}{\sqrt{\rho^\v}} \cdot (\nabla \sqrt{\rho^\v} \cdot \nabla) \boldsymbol{\psi} + \nabla \sqrt{\rho^\v} \cdot \big( \frac{\M^\v}{\sqrt{\rho^\v}} \cdot \nabla \big) \boldsymbol{\psi} \Big) \dd \boldsymbol{x} \dd t \\
            & \quad\, + \int_{\mathbb{R}_+} \int_{\mathbb{R}^n} \kappa(\rho^\v \nabla \mathcal{W}^\v_\alpha \cdot \boldsymbol{\psi})(t,\boldsymbol{x}) \dd \boldsymbol{x} \dd t,
        \end{split}
    \end{align*}
    where
     \[
 \nabla \mathcal{W}^\v_\alpha(\boldsymbol{x}) =
 \begin{cases}
 \displaystyle
     (\nabla \Phi_\alpha * \rho^\v) (\boldsymbol{x}) & \text{for } \alpha \in (-1,n-1), \\[1mm]
     \displaystyle
     \int_{\mathbb{R}^n} \nabla \Phi_\alpha(\boldsymbol{x} - \boldsymbol{y}) \big(\rho^\v(\boldsymbol{y}) - \rho^\v(\boldsymbol{x})\big) \dd \boldsymbol{y}
     \quad& \text{for } \alpha \in [n-1,n).
 \end{cases}
 \]
\end{definition}

One of the important results of this paper is the nonlinear stability
of the unique minimizer to the energy functional relating to CEREs.
For $\alpha \in (0,n)$ and $0<M<\infty$, define the set of admissible
functions{\rm:}
\begin{equation}\label{2.3a}
X_M := \Big\{ \varrho \in L^1_+(\mathbb{R}^n) \cap L^\frac{n + \alpha}{n}(\mathbb{R}^n)
\, : \, \int_{\mathbb{R}^n} \varrho(\boldsymbol{x}) \, \dd \boldsymbol{x} = M \Big\}.
\end{equation}
For $\varrho\in X_M$, define the energy functional $\mathcal{G}$:
\begin{equation}\label{2.4a}
\mathcal{G}(\varrho)=\int_{\mathbb{R}^n} \Big(\varrho(\boldsymbol{x}) e(\varrho(\boldsymbol{x}))+\frac{1}{2}\varrho(\boldsymbol{x})(\Phi_\alpha\ast \varrho)(\boldsymbol{x})\Big)\,\dd\boldsymbol{x}.
\end{equation}
For $\varrho, \tilde{\rho} \in X_M,$  define the distance:
\begin{equation}\label{2.5a}
d(\varrho,\tilde{\rho}) =\int_{\mathbb{R}^n}\big((\varrho e(\varrho)-\tilde{\rho}e(\tilde{\rho}))+(\Phi_\alpha\ast\tilde{\rho})(\varrho-\tilde{\rho})\big)\,\dd \boldsymbol{x}.
\end{equation}
Moreover, it can be shown that, in some cases, the unique minimizer of the energy
functional $\mathcal{G}$ in $X_M$ exactly corresponds to the unique steady state
of \eqref{0.0}, that is,
\begin{equation*}
    \nabla p + \kappa \varrho \nabla \mathcal{W}_\alpha = \boldsymbol{0};
\end{equation*}
see \S \ref{sec:stab} for more details.
Then this, paired with a nonlinear stability result for the minimizer of $\mathcal{G}$,
allows us to show that the steady state is nonlinearly stable.

\begin{definition}\label{steady}
$\tilde{\rho} \in L^1_+(\mathbb{R}^n) \cap L^\infty(\mathbb{R}^n)$ is
called a steady state of \eqref{0.0}
if $\tilde{\rho}^\gamma \in {W}^{1,2}_\text{\rm loc}(\mathbb{R}^n)$ and $\mathcal{\tilde{W}}_\alpha=\Phi_\alpha\ast \tilde{\rho}$ with
$\nabla \mathcal{\tilde{W}}_\alpha \in L^1_\text{\rm loc}(\mathbb{R}^n)$, and
\[
\nabla p(\tilde{\rho}) + \kappa \tilde{\rho} \nabla \mathcal{\tilde{W}}_\alpha = \boldsymbol{0}
\]
is satisfied in the sense of distributions{\rm ;} in addition,
when $\alpha \in [n-1,n)$,
$\tilde{\rho} \in C^{0,\beta}(\mathbb{R}^n)$ for some $\beta \in (1 + \alpha - n,1)$.
\end{definition}

Notice that the notion of being a steady state of \eqref{0.0} is equivalent to the notion of being a steady state of the aggregation-diffusion equation
\[
\frac{\partial \rho}{\partial t}=\nabla\cdot(\nabla p(\rho) + \kappa \rho \nabla \mathcal{W}_\alpha)),
\]
see for instance \cite{Carrillo_2018,Carrillo_2019,CCY19}. We will summarize the state-of-the-art for steady states in Theorem \ref{uniqueness of steady states} in \S\ref{sec:stab}.

In the case that, for any fixed $\alpha$, there exists a unique steady solution $\rho_s$ modulo mass normalization and translations, then $\rho_s$ is referred to as
the Lane-Emden-Fowler type solution.
A detailed discussion on the case of the existence and uniqueness modulo symmetries
is summarized in Theorem \ref{exisunisteady} and postponed to \S \ref{sec:stab}.
The steady state
$\rho_s(|\boldsymbol{x}|)$ has compact support and is determined by the equation:
\begin{equation*}
\nabla p_s + \kappa \rho_s\,\nabla \Phi_{\alpha} * \rho_s=\boldsymbol{0},
\end{equation*}
where $p_s(|\boldsymbol{x}|)=a_0(\rho_s(|\boldsymbol{x}|))^{\frac{n + \alpha}{n}}$.
In particular, for $\alpha = n-2$, the steady solution is the Lane-Emden solution.

\subsection{Main results}
We first have the following global existence theorem:

\begin{theorem}[Existence of Spherically Symmetric Solutions of CEREs]\label{existence}
Consider problem \eqref{0.0}--\eqref{0.1} with $\alpha \in (-1,n-1)$
and the initial data of spherical symmetry as defined in \eqref{0.3}--\eqref{0.4}.
Assume that $(\rho_0,\M_0)$ satisfy \eqref{0.7}--\eqref{0.8}, $\gamma > \frac{1}{n-1-\alpha}$, and
\begin{itemize}
    \item When $\kappa = -1$,  $\,\,\rho_0 \in L^{\frac{2n}{2n - \alpha}}(\mathbb{R}^n)$ and
    \begin{itemize}
        \item [(a)] $\gamma > 1$ for $\alpha \in (-1,n-2]$,
        \item[(b)] $\gamma \geq \frac{3n}{3n - 2(1+\alpha)}$ for $\alpha \in (n-2,n-1)${\rm ;}
    \end{itemize}
    \item When $\kappa = 1$, $\,\,\gamma \geq \frac{3n}{3n - 2(1+\alpha)}$
when $\alpha \neq n-2$ and
    \begin{itemize}
        \item[(c)]  $\gamma > 1$ for $\alpha \in (-1,0]$,
        \item [(d)] $\gamma > \frac{n + \alpha}{n}$ for $\alpha \in (0,n-1)$,
        \item[(e)] $M < M_{\rm c}^\alpha(\gamma)$ for
         $\gamma \in (\frac{2n}{2n - \alpha}, \frac{n + \alpha}{n}]$ and $\alpha \in (0,n-1)$.
    \end{itemize}
\end{itemize}
Then there exists a global spherically symmetric finite-energy solution
$(\rho,\M)(t,\boldsymbol{x})$ of problem \eqref{0.0}--\eqref{0.1}
in the sense of {\rm Definition \ref{definition}}.
\end{theorem}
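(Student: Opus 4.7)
The plan is to use the vanishing physical viscosity method: construct approximate solutions to the CNSREs \eqref{0.2} with $(\mu,\lambda)=(\rho,0)$, extract uniform-in-$\v$ estimates, and then pass to the inviscid limit $\v\to 0$. Because the target domain is unbounded and vacuum may appear, I would first solve an auxiliary free-boundary problem for the radial system \eqref{0.6} on a finite annulus $[a,b]\subset(0,\infty)$ with stress-free outer boundary, using the approximate initial data $(\rho_0^\v,\M_0^\v)$ from Appendix B, which are smooth, bounded away from vacuum on compact sets, and carry uniformly bounded energy and mass converging to those of $(\rho_0,\M_0)$.

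The core of the argument is a chain of uniform estimates. Step one is the basic energy identity, giving uniform control of $\rho^\v e(\rho^\v)$ in $L^\infty_t L^1_x$ and of $\M^\v/\sqrt{\rho^\v}$ in $L^\infty_t L^2_x$. Combining the radial representation of the Riesz potential (Lemma 3.1 in the text) with the HLS inequality for $\a\in(0,n-1)$ controls $\int\rho(\Phi_\a\ast\rho)$; the attractive hypotheses in (c)--(e) arise here, with $M<M_{\rm c}^\a(\g)$ needed exactly in the subcritical range $\g\in(\tfrac{2n}{2n-\a},\tfrac{n+\a}{n}]$ to keep the total energy bounded below. The threshold $\g>\tfrac{1}{n-1-\a}$ (together with $\g\geq\tfrac{3n}{3n-2(1+\a)}$ when needed) is what makes $\rho\nabla\mathcal{W}_\a$ locally integrable uniformly in $\v$. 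Step two is a BD-type entropy estimate yielding uniform control of $\nabla\sqrt{\rho^\v}$ in $L^\infty_t L^2_x$: as outlined in the introduction, I would follow Lemmas \ref{oldBD} and \ref{newbd} and close the estimate by a Gr\"onwall-type inequality against the energy, thereby avoiding the integration by parts that would produce an uncontrollable boundary term in the attractive sub-Coulomb case and which is simply unavailable in the super-Coulomb regime. Step three is a uniform local higher-integrability estimate for $\rho^\v$ and $\M^\v/\sqrt{\rho^\v}$, again driven by the local Riesz-potential bounds of Lemma 3.1.

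With these estimates I would pass to the limit in two stages. For fixed $\v>0$, letting the inner and outer radii of the annulus tend to $0$ and $\infty$, the $\nabla\sqrt{\rho^\v}$ control supplies the compactness needed for Aubin--Lions to produce a global weak solution $(\rho^\v,\M^\v)$ of the Cauchy problem for CNSREs in the sense of Definition \ref{CNSREs}, inheriting the uniform bounds. For the inviscid limit $\v\to 0$, the higher integrability of the density and the flux function, combined with the compensated compactness framework of Chen--Perepelitsa \cite{Chen_2010} extended to the spherically symmetric setting as in \cite{Chen_2015,Chen-Schrecker-2018,Chen-Wang-2018}, yields a.e.\ convergence of $(\rho^\v,\M^\v/\sqrt{\rho^\v})$, which suffices for the local flux and pressure terms. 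The identification of the limit of the nonlocal term $\rho^\v\nabla\mathcal{W}_\a^\v$ is handled via the radial representation of Lemma 3.1 together with the global (not merely local) strong convergence of $\rho^\v$ provided by Lemma \ref{Lq}, which combines local convergence away from the origin with mass conservation to exclude both concentration at $r=0$ and loss of mass at infinity.

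The main obstacle I anticipate is precisely the nonlocal potential near the origin in the attractive super-Coulomb regime $\a\in(n-2,n-1)$. The singular kernel $\nabla\Phi_\a$ becomes sharper as $\a\uparrow n-1$, the Coulomb reduction of $(\Phi_{n-2}\ast\rho)_r$ to a monotone mass function is no longer available, and the second derivatives needed for a naive BD estimate do not exist; this is what forces the Gr\"onwall closure and the hypothesis $\g>\tfrac{1}{n-1-\a}$, which is exactly the integrability threshold making the boundary contribution vanish in Lemma 3.1. A secondary difficulty is the $\a\in(-1,0]$ regime, where the logarithmic or sub-zero kernel has no definitive sign and requires the auxiliary moment bound \eqref{0.81} together with a careful near-origin/far-field split of $\Phi_0\ast\rho$ to recover a local $L^\infty$ control replacing the HLS inequality; the limit $\a\to 0$ from the Riesz side, as mentioned in Remark \ref{riesztolog}, serves as a consistency check for the logarithmic estimates.
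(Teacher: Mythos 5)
Your proposal follows essentially the same route as the paper: the free-boundary approximation on an annulus, the energy/BD-entropy/higher-integrability chain of uniform estimates with the Gr\"onwall closure replacing integration by parts, the two-stage limit via Aubin--Lions and the Chen--Perepelitsa compensated compactness framework, and the identification of the nonlocal term through the radial kernel bounds and the global $L^q$ convergence of the density. The only slight inaccuracy is your attribution of the hypothesis $\gamma>\frac{1}{n-1-\alpha}$: in the paper it enters through the pointwise (H\"older-type) estimate of Theorem \ref{pointwise} used in the higher-integrability argument for the potential term, rather than through a vanishing boundary contribution in Lemma \ref{potentially}.
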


In Theorem \ref{existence}, $M_{\rm c}^\alpha(\frac{n + \alpha}{n})$ for $\alpha \in (0,n)$
is the sharp critical mass, analogous to that of the Chandrasekhar limit
mass $M_{\rm ch}:=M_{\rm c}^{n-2}(\frac{2(n-1)}{n})$ for steady gaseous
stars \cite{Chandrabook}.
It is known that there is no steady white dwarf star of total mass $M > M_{\rm ch}$;
that is, there is no steady solution $(\rho_s(|\boldsymbol{x}|),\boldsymbol{0})$ for $p(\rho)=a_0\rho^{\frac{2n - 2}{n}}$ for this case.
The analogous dichotomy and the sharp critical mass for all $\alpha \in (0,n)$
were obtained in \cite{Carrillo_2018} in relation to variants of the HLS inequalities;
see \cite{Blanchet_2008} for the Coulomb potential case $\alpha=n-2$.

The critical mass for $\alpha > 0$ and $\gamma \in (\frac{2n}{2n-\alpha}, \frac{n + \alpha}{n}]$
is defined by
\begin{equation*}
 M^{\alpha}_{\rm c}(\gamma) = \begin{cases}
    \displaystyle
    B_{n,\gamma,\alpha}^{-\frac{n}{n - \alpha}} & \mbox{when $\gamma = \frac{n + \alpha}{n}$},\\[1mm]
    \displaystyle
     \Big(\frac{\alpha B_{n,\gamma,\alpha}}{n(\gamma - 1)} \Big)^{\frac{n(\gamma - 1)}{2n - \gamma(2n-\alpha)}} \Big(\frac{\alpha E_0}{\alpha - n(\gamma - 1)}\Big)^{\frac{\alpha - n(\gamma - 1)}{2n - \gamma(2n - \alpha)}}
     \quad& \mbox{when $\gamma \in (\frac{2n}{2n - \alpha}, \frac{n + \alpha}{n})$},
\end{cases}
\end{equation*}
where
\begin{equation*}
B_{n,\gamma,\alpha} = \frac{C_*(\alpha,\gamma,n)}{2\alpha}\omega_n^{\frac{\alpha}{n(\gamma - 1)}-1}
\Big(\frac{\gamma - 1}{a_0}\Big)^{\frac{\alpha}{n(\gamma - 1)}},
\end{equation*}
and $C_*(\alpha,\gamma,n) > 0$ is the sharp constant given by the variation of the HLS inequality from Lemma \ref{HLSineq}. For $\alpha=n-2$, the bound on the critical mass
$M_{\rm c}^{n-2}(\gamma)=B_{n,\gamma,n-2}^{-\frac{n}{2}}$ for $\gamma<\frac{2(n-1)}{n}$
was obtained in \cite{Chen2021}.

Furthermore, our results cover the special case that $\alpha=n-2$ and the whole range of adiabatic exponents $\gamma>1$ when $n=2$. In this case, the critical mass condition is not
required even for the attractive case, and there is no concentration formed for the density at the origin.


\begin{remark}
In {\rm Theorem \ref{existence}}, we see that
$\frac{2n}{2n - \alpha} < \frac{3n}{3n - 2(1+\alpha)}$
for all $\alpha \in (-1,n-1)$ and there only exists $\alpha$ such that
$\frac{n + \alpha}{n} > \frac{3n}{3n - 2(1+\alpha)}$ for $n \geq 20$.
In this case, $\frac{n + \alpha}{n} > \frac{3n}{3n - 2(1+\alpha)}$
for $\alpha \in (\alpha_-,\alpha_+)$,
where $\alpha_{\pm} = \frac{n-2 \pm \sqrt{n^2 - 20n + 4}}{4}$ with
$\frac{\alpha_-}{n}\to 0$ and $\frac{\alpha_+}{n} \to \frac{1}{2}$
as $n \to \infty$.
Thus, assumption {\rm (e)} of {\rm  Theorem \ref{existence}} is only relevant for $n \geq 20$
and $\alpha \in (\alpha_-,\alpha_+)$. This can be observed in {\rm Fig. 1}.
\end{remark}

\begin{figure}
    \centering
    \includegraphics[width=0.6\linewidth]{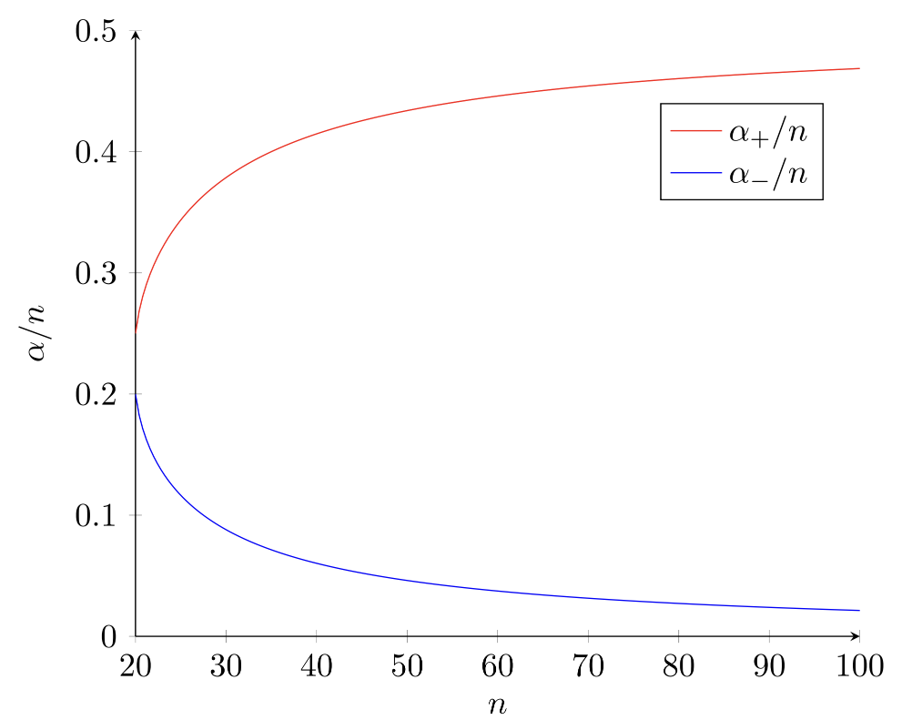}
    \caption{Relation between $\alpha$ and $n$.}
\end{figure}

The nonlinear stability theorem is as follows:

\begin{theorem}[Nonlinear Stability of Steady States for CEREs]\label{ns}
Suppose that $\tilde{\rho}$ is a unique minimizer $($up to translation$)$
of the functional $\mathcal{G}$ in $X_M$.
Let $\alpha \in (0,n - 1)$ with $\gamma > \frac{n + \alpha}{n}$.
Let $(\rho,\mathcal{M})(t,\boldsymbol{x})$ be global finite-energy solutions of the attractive
CEREs {\rm (}$\kappa = 1${\rm )} in the sense of {\rm Definition \ref{definition}}
satisfying
$$
\int_{\mathbb{R}^n} \rho_0(\boldsymbol{x})\,\dd \boldsymbol{x}=\int_{\mathbb{R}^n} \tilde{\rho}(\boldsymbol{x})\,\dd \boldsymbol{x}=M.
$$
Then, for any $\v>0$, there exists $\delta = \delta(\varepsilon)>0$ such that, if
\begin{align}\label{close}
d(\rho_0,\tilde{\rho})+ \| \rho_0 - \tilde{\rho} \|_{L^\frac{2n}{2n - \alpha}}^2
+\frac{1}{2}\int_{\mathbb{R}^n} \Big|\frac{\mathcal{M}_0}{\sqrt{\rho_0}}\Big|^2\,\dd \boldsymbol{x}<\delta,
\end{align}
there exists a translation $\boldsymbol{y}\in \R^n$ such that
\begin{align}\label{closer}
d(\rho(t),T^{\boldsymbol{y}}\tilde{\rho})+ \| \rho(t) - T^{\boldsymbol{y}}\tilde{\rho} \|_{L^\frac{2n}{2n - \alpha}}^2+\frac{1}{2}\int_{\mathbb{R}^n} \Big|\frac{\mathcal{M}}{\sqrt{\rho}}\Big|^2(t)\,\dd \boldsymbol{x}<\v
\qquad \mbox{for a.e. $t > 0$},
\end{align}
 where $T^{\boldsymbol{y}}\tilde{\rho}(\boldsymbol{x})=\tilde{\rho}(\boldsymbol{x}+\boldsymbol{y})$
 and, for a.e. $t>0$, $\rho(t):=\rho(t,\cdot)\in X_M$ for the density $\rho(t,\boldsymbol{x})$ of the global finite-energy solution.
\end{theorem}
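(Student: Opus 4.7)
The plan is to adapt the Cazenave--Lions variational scheme for orbital stability: argue by contradiction, show that the densities along a hypothetical bad trajectory form at the ``bad'' times a minimizing sequence of $\mathcal{G}$ in $X_M$, and then invoke concentration--compactness to extract strong convergence, up to translation, to the unique minimizer $\tilde{\rho}$, thereby contradicting the assumed bad behavior. Concretely, suppose the conclusion fails; then there exist $\v_0>0$, initial data $(\rho_0^k,\mathcal{M}_0^k)$ satisfying \eqref{close} with $\delta_k\to 0$, the corresponding global finite-energy solutions $(\rho^k,\mathcal{M}^k)$ provided by Theorem \ref{existence}, and times $t_k>0$ such that the left-hand side of \eqref{closer} at $t_k$ exceeds $\v_0$ uniformly in $\boldsymbol{y}\in\mathbb{R}^n$.

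The first step is to convert initial closeness in the metric of \eqref{close} into closeness of energies to $I_M:=\inf_{X_M}\mathcal{G}=\mathcal{G}(\tilde{\rho})$. Expanding $\mathcal{G}$ and using the symmetry of $\Phi_\alpha$ yields the identity
\begin{equation*}
d(\varrho,\tilde{\rho})=\mathcal{G}(\varrho)-\mathcal{G}(\tilde{\rho})-\frac{1}{2}\int_{\mathbb{R}^n}(\varrho-\tilde{\rho})\,\Phi_\alpha\ast(\varrho-\tilde{\rho})\,\dd\boldsymbol{x},
\end{equation*}
while HLS (Lemma \ref{HLSineq}) controls the last term by $C\|\varrho-\tilde{\rho}\|_{L^{2n/(2n-\alpha)}}^2$. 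Together with \eqref{close}, this gives $\mathcal{E}(\rho_0^k,\mathcal{M}_0^k)\to I_M$. The energy inequality in Definition \ref{definition}(b), combined with mass conservation ($\rho^k(t_k)\in X_M$, so $\mathcal{G}(\rho^k(t_k))\ge I_M$), then forces
\begin{equation*}
\mathcal{G}(\rho^k(t_k))\longrightarrow I_M,\qquad \int_{\mathbb{R}^n}\Big|\frac{\mathcal{M}^k(t_k)}{\sqrt{\rho^k(t_k)}}\Big|^2\,\dd\boldsymbol{x}\longrightarrow 0,
\end{equation*}
so $\{\rho^k(t_k)\}$ is a minimizing sequence of $\mathcal{G}$ over $X_M$.

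The heart of the argument is a concentration--compactness analysis for minimizers of $\mathcal{G}$ on $X_M$, developed in \S\ref{sec:stab}. In the regime $\alpha\in(0,n-1)$ and $\gamma>(n+\alpha)/n$, a scaling argument exploiting the supercriticality of $\gamma$ delivers the strict subadditivity $I_M<I_{M_1}+I_{M-M_1}$ for all $M_1\in(0,M)$, excluding dichotomy; HLS bounds coupled with coercivity of the internal-energy term exclude vanishing. The surviving concentration scenario produces translations $\boldsymbol{y}_k\in\mathbb{R}^n$ along which, passing to a subsequence, $T^{\boldsymbol{y}_k}\rho^k(t_k)\rightharpoonup\rho_\infty$ weakly in $L^1\cap L^{(n+\alpha)/n}$, with $\rho_\infty\in X_M$. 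Weak lower semicontinuity of $\mathcal{G}$ forces $\rho_\infty$ to be a minimizer, hence $\rho_\infty=\tilde{\rho}$ by the assumed uniqueness. Convergence of $\mathcal{G}(T^{\boldsymbol{y}_k}\rho^k(t_k))$ to $\mathcal{G}(\tilde{\rho})$, combined with the strict convexity of $\rho\mapsto\rho e(\rho)$ and the interpolation $L^1\cap L^\gamma\hookrightarrow L^{2n/(2n-\alpha)}$ (valid since $(n+\alpha)/n\ge 2n/(2n-\alpha)$), upgrades weak to strong convergence in $L^1\cap L^{2n/(2n-\alpha)}$. Inserting this into the identity of the first step, using HLS once more for the quadratic nonlocal term, and recalling the kinetic vanishing gives
\begin{equation*}
d(\rho^k(t_k),T^{-\boldsymbol{y}_k}\tilde{\rho})+\|\rho^k(t_k)-T^{-\boldsymbol{y}_k}\tilde{\rho}\|_{L^{2n/(2n-\alpha)}}^2+\frac{1}{2}\int_{\mathbb{R}^n}\Big|\frac{\mathcal{M}^k(t_k)}{\sqrt{\rho^k(t_k)}}\Big|^2\,\dd\boldsymbol{x}\longrightarrow 0,
\end{equation*}
contradicting the standing hypothesis.

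I expect the concentration--compactness step to be the main obstacle. Ruling out dichotomy rests on the strict subadditivity $I_M<I_{M_1}+I_{M-M_1}$, whose proof requires a delicate rescaling exploiting $\gamma>(n+\alpha)/n$ to dominate the focusing Riesz interaction; ruling out vanishing requires quantitative HLS-type lower bounds on the nonlocal term tailored to the non-Coulomb regime, where no local representation of the potential is available. A secondary subtlety is promoting weak to strong convergence in $L^{2n/(2n-\alpha)}$, so that the distance $d$ actually collapses; this relies on separate convergence of the internal and interaction parts of $\mathcal{G}$, combined with strict convexity of the pressure density. These ingredients are assembled in \S\ref{sec:stab}, together with Theorem \ref{uniqueness of steady states} which identifies minimizers of $\mathcal{G}$ with steady states of \eqref{0.0}, making the nonlinear stability statement unconditional once the existence Theorem \ref{existence} is in hand.
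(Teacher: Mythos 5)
Your proposal is correct and follows essentially the same route as the paper: a contradiction argument in the style of Rein, showing that the densities at the bad times form a minimizing sequence of $\mathcal{G}$ in $X_M$, invoking Lions' concentration--compactness (with strict subadditivity from the scaling argument and $g_M<0$) to get strong convergence up to translation to the unique minimizer, interpolating into $L^{2n/(2n-\alpha)}$, and using HLS on the quadratic interaction remainder together with the energy identity \eqref{relative} to collapse $d$ and the kinetic term. The only cosmetic difference is that you re-derive parts of the compactness machinery (exclusion of dichotomy/vanishing, upgrading weak to strong convergence) that the paper simply cites from Theorem \ref{minimizer} and \cite{Carrillo_2018}.
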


\begin{remark} For {\rm Theorem \ref{ns}}, we have the following comments{\rm :}
\begin{itemize}
    \item[i)] The proof follows the same strategy of {\rm \cite{Rein_2003}} for CEPEs. Concerning the existence proof of the minimizers of the free energy with homogeneous interaction potentials,  different from Luo-Smoller {\rm \cite{Luo_2008,Luo_2009}}, we develop a simplified approach by employing the work of {\rm \cite{Lions_1984}}, as it was done in {\rm \cite{Carrillo_2018}}.
    These methods may be further developed to handle the general pressure law case and other related problems, which will be explored elsewhere.
\item[ii)]
We summarize the properties of global minimizers and the uniqueness of steady states
in {\rm Theorem \ref{uniqueness of steady states}}. As already mentioned, steady states of aggregation-diffusion equations are equivalent to steady states of CEREs or CEPEs.
The results we have obtained cover a wider range of potentials than
the previous results for CEREs and CEPEs, in particular, including $n=2.$
\end{itemize}
\end{remark}

\subsection{Approximate PDEs}
Some of the main difficulties we face for problem \eqref{0.0}--\eqref{0.1} and \eqref{0.2} in the whole space $\mathbb{R}^n$ are that the solutions may involve cavitation, concentration,
and shock waves.
In order to avoid these difficulties, we propose the following formation
of a free boundary problem for our approximate solutions:
\begin{align}\label{Eul}\begin{cases}
    \displaystyle
    \rho_t + (\rho u)_r + \frac{n-1}{r}\rho u = 0, \\
    \displaystyle
    (\rho u )_t + (\rho u^2 + p(\rho))_r + \frac{n-1}{r}\rho u^2 + \kappa \rho(\Phi_\alpha * \rho)_r
    = \varepsilon\Big( \rho \big (u_r + \frac{n-1}{r}u \big ) \Big)_r - \varepsilon\frac{n-1}{r}\rho_ru.
\end{cases}
\end{align}
This is defined on the moving domain:
$$
\Omega^T = \big\{ (t,r) \, :\, 0 \leq t \leq T, \, a \leq r \leq b(t)\big\},
$$
where $b(t)$ is a free boundary that is subject to:
\begin{equation}\label{0.19}
\begin{cases}
    b' (t) = u(t,b(t)) & \mbox{ for $t>0$}, \\[1mm]
    b(0) = b,
\end{cases}
\end{equation}
with boundary conditions:
\begin{equation}\label{boundary}
u(t,a) = 0, \quad
\big(p(\rho)-\varepsilon \rho (u_r + \frac{n-1}{r}u) \big)(t,b(t)) = 0
\qquad\,\,\, \text{for $t >0$},
\end{equation}
for $a:= b^{-1}$ with $b \gg 1$.

The initial conditions are prescribed by
\begin{equation}\label{0.20}
    (\rho,u)|_{t=0}(r) = (\rho_0^{\v,b},u_0^{\v,b})(r) \qquad \text{ for $r \in [a,b]$},
\end{equation}
where $(\rho_0^{\v,b},u_0^{\v,b})(r)$ are defined in the appendix
and satisfy that, for each $(\v,b)$, there exists some $C_{\v,b} > 0$ such that
\begin{equation}\label{0.21}
    0 < C_{\v,b}^{-1} \leq \rho_0^{\v,b}(r) \leq C_{\v,b} < \infty.
\end{equation}
We define the spatial domain $\Omega_t$ of the free boundary problem at time $t \geq 0$ to be
\[
\Omega_t = \big\{ \boldsymbol{x} \in \mathbb{R}^n \, : \, a \leq |\boldsymbol{x}| \leq b(t)\big\}.
\]
Here, system \eqref{Eul} is just system \eqref{0.6} with $(\mu,\lambda) = (\rho,0)$
written with $u = \frac{m}{\rho}$.
Owing to the fact that, under this construction as a free boundary problem,
we can prove the existence of smooth solutions via a similar approach to \cite{Duan_2015},
which have a strictly positive density $\rho^{\v,b} > 0$ due to \eqref{0.21}
so that the fluid velocity $u^{\v,b}$ is well defined and hence this construction is equivalent.
More precisely, there exists $\tilde{C}_{\v,b,T} > 0$ such that, for $t \in [0,T]$,
\begin{equation}\label{away}
    0 < \tilde{C}_{\v,b,T}^{-1} \leq \rho^{\v,b}(t,r) \leq \tilde{C}_{\v,b,T} < \infty.
\end{equation}
Note that the boundary condition in \eqref{boundary} on the free boundary is
a stress-free boundary condition, making the problem physically relevant,
due to the fact that, under such an assumption, the boundary moves with
the velocity of the fluid as modeled by the free boundary.
We denote the solution of the free boundary problem \eqref{Eul}--\eqref{0.20}
by $(\rho^{\v,b},u^{\v,b})$, but will suppress the superscript
when no confusion arises until \S 4 for simplicity.

\begin{remark}
From $\eqref{Eul}_1$ and \eqref{boundary}, as in {\rm \cite{Chen2021}}, we have
\begin{equation}\label{ineq}
    \rho(t,b(t)) \leq \rho_0(b).
\end{equation}
\end{remark}

The critical mass, as before, is defined for $\alpha > 0$ as
\begin{align}\label{critical}
M^{\varepsilon,b,\alpha}_{\rm c}(\gamma)
= \begin{cases}
 B_{n,\gamma,\alpha}^{-\frac{n}{n - \alpha}} & \mbox{ for $\gamma = \frac{n + \alpha}{n}$},\\
 \Big(\frac{\alpha B_{n,\gamma,\alpha}}{n(\gamma - 1)} \Big)^{\frac{n(\gamma - 1)}{2n - \gamma(2n-\alpha)}} \Big(\frac{\alpha E^{\varepsilon,b}_0}{\alpha - n(\gamma - 1)}\Big)^{\frac{\alpha - n(\gamma - 1)}{2n - \gamma(2n - \alpha)}} & \mbox{ for $\gamma \in (\frac{2n}{2n - \alpha}, \frac{n + \alpha}{n})$},
\end{cases}
\end{align}
where the total energy $E^{\varepsilon,b}_0$ is defined in Definition \ref{approxenergy}. $M^{\varepsilon,\alpha}_{\rm c}(\gamma)$ is defined the same way as $M^{\varepsilon,b,\alpha}_{\rm c}(\gamma)$ in \eqref{critical}, except $E^{\varepsilon,b}_0$ is replaced by $E^{\varepsilon}_0$, where $E^{\varepsilon}_0$ is defined in Definition \ref{approxenergy}.
The construction of $\rho^{\v,b}_0$ implies that the following equality for mass holds:
\begin{equation*}
    \int^b_a \rho^{\v,b}_0(r)\, r^{n-1} \dd r = \frac{M}{\omega_n}.
\end{equation*}
We can simply use $\eqref{Eul}_1$ and \eqref{0.19}--\eqref{boundary} to prove the
conservation of mass:
\begin{equation*}
    \frac{\d}{\d t} \int^{b(t)}_a \rho(t,r)\, r^{n-1} \dd r = (\rho u)(t,b(t))b(t)^{n-1} - \int^{b(t)}_a (\rho u r^{n-1})_r(t,r) \dd r = 0.
\end{equation*}
Thus,  for all $t \geq 0$,
\begin{equation*}
    \int^{b(t)}_a \rho(t,r)\, r^{n-1} \dd r = \frac{M}{\omega_n}.
\end{equation*}
Using this, we can define the Lagrangian coordinates $(\tau,x)=(t, x(t,r))$ as in \cite{Chen2021} by
\begin{equation*}
    x(t,r) = \int^r_a \rho (t,y)\,y^{n-1} \dd y.
\end{equation*}
It is direct to see that this is a smooth bijective mapping from $[0,T] \times [a,b(t)]$
to $[0,T] \times [0, \frac{M}{\omega_n}]$, which is incredibly useful
due to the fact that the latter domain is fixed.
Then we have
\begin{equation}\label{deriv}
\nabla_{(t,r)} x = (-\rho u r^{n-1},\rho r^{n-1}), \,\,\,\, \nabla_{(t,r)} \tau = (1,0),
\,\,\,\, \nabla_{(\tau,x)} r = (u, \rho^{-1}r^{1-n}), \,\,\,\,\nabla_{(\tau,x)} t = (1,0).
\end{equation}
Under the changes of variables, the boundary conditions become
\begin{equation}\label{bound}
    u(\tau,0) = 0, \quad \big(p-\varepsilon\rho^2(r^{n-1}u)_x\big)(\tau,\frac{M}{\omega_n}) = 0
    \qquad\,\, \text{ for $\tau \in [0,T]$},
\end{equation}
and \eqref{Eul} can be written in the form:
\begin{align}\label{lag}
\begin{cases}
    \rho_\tau + \rho^2(r^{n-1}u)_x = 0,\\[1mm]
    u_\tau + r^{n-1}p_x + \kappa \rho r^{n-1}(\Phi_\alpha * \rho)_x
    = \varepsilon r^{n-1}\big(\rho^2(r^{n-1}u)_x\big)_x - (n-1)\varepsilon r^{n-2}\rho_xu.
\end{cases}
\end{align}

\section{Uniform Estimates of the Approximate Solutions}\label{S3}

In this section, we carefully make uniform estimates of the approximate solutions
constructed in \S 2.3 above.

\subsection{Estimates on the radial non-local potential}
Given a radial function
$f \in L^1(\mathbb{R}^n) \cap L^q_{\rm loc}(\mathbb{R}^n \setminus \{\boldsymbol{0}\})$
for $q \in (\frac{n}{n - 1 - \alpha}, \infty)$,
$(\Phi_\alpha*f)(\boldsymbol{x})$ is a radial function so that we are able to represent
it as a convolution of some potentials with the radial function.
Like in \cite{BALAGUE20135}, we can represent the convolution by
\begin{equation*}
    (\Phi_\alpha*f)(\boldsymbol{x}) = \int^{\infty}_a K(|\boldsymbol{x}|,\eta) f(\eta) \eta^{n-1}
    \dd \eta,
\end{equation*}
where
\begin{equation*}
    K(r,\eta) := \begin{cases}
                        \displaystyle
                        \int_{\mathbb{S}^{n-1}} \frac{|r\boldsymbol{e_1}-\eta \boldsymbol{y}|^{-\alpha}}{-\alpha} \dd \sigma (\boldsymbol{y}) & \mbox{ for $\alpha \neq 0$}, \\[4mm]
                        \displaystyle
                        \int_{\mathbb{S}^{n-1}} \log(|r\boldsymbol{e_1}-\eta \boldsymbol{y}|) \dd \sigma (\boldsymbol{y}) \quad &\mbox{ for $\alpha = 0$}, \\
                \end{cases}
\end{equation*}
with $\boldsymbol{e_1} = (1,0,\cdots,0)$.
We now consider the differentiability of such a convolution and the potential that can be used to represent the derivative.

\begin{lemma}[Continuity of the Potential]\label{potentially}
Let $\alpha \in (-1,n-1)$.
Given a radial function $f \in L^1(\mathbb{R}^n) \cap L^q_{\rm loc}(\mathbb{R}^n \setminus \{\boldsymbol{0}\})$ for some $q \in (\frac{n}{n - 1 - \alpha}, \infty)$ such that $\Phi_\alpha*f$
and $\nabla\Phi_\alpha*f$ are well-defined,
then $(\Phi_\alpha*f)(\boldsymbol{x})$ is continuously differentiable on $\mathbb{R}^n \setminus \{\boldsymbol{0}\}$, and
    \begin{equation*}
        (\nabla \Phi_\alpha*f)(\boldsymbol{x}) \cdot \frac{\boldsymbol{x}}{|\boldsymbol{x}|}
        = (\Phi_\alpha*f)_r(|\boldsymbol{x}|)
        = \int^{\infty}_0 \omega(|\boldsymbol{x}|,\eta) f(\eta)\,\eta^{n-1}\dd \eta
    \end{equation*} is a radial function, where
    \[
    \omega(r,\eta) := \int_{\mathbb{S}^{n-1}} \nabla \Phi_\alpha(r\boldsymbol{e_1} - \eta \boldsymbol{y}) \cdot \boldsymbol{e_1} \dd \sigma(\boldsymbol{y}).
    \]
Moreover, there exists $C = C(\alpha,n) > 0$ such that
\begin{align}\label{continuity.8}
\begin{split}
\begin{cases}
\displaystyle
|\omega(r,\eta)| \leq C (r\eta)^{-\frac{\alpha + 1}{2}} & \mbox{ for $\alpha \in(-1,n-2)$}, \\[1mm]
                \displaystyle
                \omega(r,\eta) = \frac{\omega_n \mathds{1}_{(0,r)}(\eta)}{r^{n-1}}
                 & \mbox{ for $\alpha = n-2$}, \\[1mm]
                \displaystyle
                |\omega(r,\eta)| \leq C(r\eta)^{-\frac{n-1}{2}} |r-\eta|^{n-2-\alpha}
                & \mbox{ for $\alpha \in (n-2,n-1)$},
\end{cases}
\end{split}
\end{align}
    where $\mathds{1}_{(0,r)}$ is the indicator function on interval $(0,r)$.
\end{lemma}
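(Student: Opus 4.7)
The plan is to establish three things in order: the radial symmetry and integral representation of $\Phi_\alpha * f$, the formula for its radial derivative via differentiation under the integral sign, and the three pointwise bounds on $\omega(r,\eta)$ according to whether $\alpha$ is sub-Coulomb, Coulomb, or super-Coulomb.

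The radial symmetry of $\Phi_\alpha * f$ is immediate from the rotation invariance of both factors: changing variables by $R\in O(n)$ in the convolution integral gives $(\Phi_\alpha * f)(R\boldsymbol{x}) = (\Phi_\alpha * f)(\boldsymbol{x})$. Rotating $\boldsymbol{x}$ to $r\boldsymbol{e}_1$ and passing to polar coordinates $\boldsymbol{y} = \eta\boldsymbol{z}$ with $\boldsymbol{z}\in\mathbb{S}^{n-1}$ produces the stated kernel representation. To justify passing the derivative inside, I would fix $r_0 > 0$ and work on a small annulus around $r_0$; the hypothesis $f\in L^q_{\rm loc}$ with $q > n/(n-1-\alpha)$ is exactly what makes $|\cdot|^{-\alpha-1}$ belong to $L^{q'}_{\rm loc}$, so the bounds on $\omega$ derived below (combined with the $L^1$ decay of $f$ at infinity) furnish a majorant uniformly in $r$ near $r_0$, which is integrable against $|f|\eta^{n-1}$. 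Dominated convergence then yields $\nabla\Phi_\alpha * f$ as a continuous radial vector field on $\mathbb{R}^n\setminus\{\boldsymbol{0}\}$, equal to $(\Phi_\alpha * f)_r\,\boldsymbol{x}/|\boldsymbol{x}|$; evaluating at $\boldsymbol{x} = r\boldsymbol{e}_1$ and taking the $\boldsymbol{e}_1$-component recovers the claimed radial formula.

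For the three bounds on $\omega(r,\eta)$, I would parametrize $\boldsymbol{z}\in\mathbb{S}^{n-1}$ by the polar angle $\theta = \arccos(z_1)$ to write
\[
\omega(r,\eta) = \omega_{n-1}\int_0^\pi \frac{r - \eta\cos\theta}{(r^2+\eta^2 - 2r\eta\cos\theta)^{(\alpha+2)/2}}\, \sin^{n-2}\theta\, d\theta.
\]
The Coulomb case $\alpha = n-2$ is Newton's shell theorem, a direct consequence of $\Delta\Phi_{n-2} = \omega_n\delta_0$ and the divergence theorem on $B_r$, yielding $\omega(r,\eta) = \omega_n\mathds{1}_{(0,r)}(\eta)/r^{n-1}$. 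In the sub-Coulomb case $\alpha\in(-1,n-2)$, I would use $|r-\eta\cos\theta|\leq\sqrt{r^2+\eta^2-2r\eta\cos\theta}$ combined with the identity $r^2+\eta^2-2r\eta\cos\theta = (r-\eta)^2 + 4r\eta\sin^2(\theta/2)\geq 4r\eta\sin^2(\theta/2)$ to factor out $(r\eta)^{-(\alpha+1)/2}$; the residual integral $\int_0^\pi \sin^{-(\alpha+1)}(\theta/2)\sin^{n-2}\theta\, d\theta$ is a Beta-type integral convergent precisely when $\alpha < n-2$. In the super-Coulomb case $\alpha\in(n-2,n-1)$, this Beta integral diverges, so I would substitute $u = 2\sqrt{r\eta}\sin(\theta/2)$, which turns the denominator into $((r-\eta)^2+u^2)^{(\alpha+1)/2}$ and produces the prefactor $(r\eta)^{-(n-1)/2}$, and then rescale $u = |r-\eta|v$ to extract exactly the factor $|r-\eta|^{n-2-\alpha}$; the remaining integral $\int_0^\infty (1+v^2)^{-(\alpha+1)/2}v^{n-2}\, dv$ converges precisely when $\alpha > n-2$, giving the claimed bound.

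The principal obstacle is the super-Coulomb regime: one must resist the temptation to apply the sub-Coulomb bound after taking absolute values (as the resulting Beta integral diverges) and instead carry out the two-step substitution delicately, tracking the $\sqrt{r\eta}$ scaling together with the remaining smooth factor $\cos^{n-3}(\theta/2)$. A secondary difficulty is the rigorous justification of differentiation under the integral sign, because the nature of the singularity of $\omega$ changes across $\alpha = n-2$; the hypothesis $q > n/(n-1-\alpha)$ is essential precisely to dominate the singularity of $\omega$ along the diagonal $\eta = r$ in the super-Coulomb case, while the sub-Coulomb bound controls the dependence on $r$ uniformly on any compact subset of $(0,\infty)$.
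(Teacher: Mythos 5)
Your treatment of the kernel bounds \eqref{continuity.8} is essentially the paper's: the same polar-angle reduction of $\omega(r,\eta)$ to a one-dimensional integral, the same use of $r^2+\eta^2-2r\eta\cos\theta=(r-\eta)^2+4r\eta\sin^2(\theta/2)$, a Beta-type integral convergent exactly for $\alpha<n-2$ in the sub-Coulomb case, and Newton's shell theorem for $\alpha=n-2$. In the super-Coulomb case your two-step substitution $u=2\sqrt{r\eta}\sin(\theta/2)$, $u=|r-\eta|v$ is an equivalent repackaging of the paper's device of splitting the $\theta$-integral at $\zeta=\min\{\pi,|r-\eta|/\sqrt{r\eta}\}$; both produce $(r\eta)^{-\frac{n-1}{2}}|r-\eta|^{n-2-\alpha}$ with convergence exactly for $\alpha>n-2$. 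That part is correct and I have no objection.

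The gap is in your justification of $\nabla(\Phi_\alpha*f)=\nabla\Phi_\alpha*f$ and of the continuity of $(\Phi_\alpha*f)_r$. Dominated convergence with "a majorant uniformly in $r$ near $r_0$" cannot work as stated in the super-Coulomb regime: the bound $|\omega(s,\eta)|\leq C(s\eta)^{-\frac{n-1}{2}}|s-\eta|^{n-2-\alpha}$ has a singularity at $\eta=s$ that \emph{moves} with $s$, so $\sup_{|s-r_0|<\delta}|\omega(s,\eta)|=+\infty$ for every $\eta$ in the interval $(r_0-\delta,r_0+\delta)$, and no integrable majorant independent of the increment exists (the same problem afflicts the difference quotients of $\Phi_\alpha$ in $\mathbb{R}^n$: the sup over the segment $\boldsymbol{x}+s\boldsymbol{z}-\boldsymbol{y}$ blows up on a whole ball). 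The condition $q>\frac{n}{n-1-\alpha}$ does not let you \emph{dominate} this singularity; it lets you show that its contribution is \emph{uniformly small}. The argument that actually closes this is a near/far decomposition: on $B_\varepsilon(\boldsymbol{x})$ one uses the $C^{0,1}$ (resp. $C^0$) estimate for $|\cdot|^{-\alpha}*(\mathds{1}_{B_\varepsilon}f)$ in terms of $\|f\|_{L^q(B_\varepsilon)}$ (Theorem \ref{pointwise}) to see that the near part of the difference quotient is $O(\|f\|_{L^q(B_\varepsilon)})\to 0$ uniformly in the increment, while on the complement $\Phi_\alpha$ is smooth and $f\in L^1$ suffices; continuity of the derivative is then obtained the same way in the radial variable, using $\int_{r-\varepsilon}^{r+\varepsilon}|s-\eta|^{n-2-\alpha}\,\dd\eta\leq C\varepsilon^{n-1-\alpha}$ together with Hölder against $\|f\|_{L^q}$ for the diagonal strip. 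You correctly identify that the diagonal singularity of $\omega$ is the obstruction and that $q>\frac{n}{n-1-\alpha}$ is the relevant hypothesis, but the mechanism you propose (domination) must be replaced by this uniform-smallness argument for the proof to go through.
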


\begin{proof}  We divide the proof into five steps.

\smallskip
1. For $\alpha \in (0,n-1)$, since $q > \frac{n}{n - 1 - \alpha}$, then, by Theorem \ref{pointwise},
there exists $C_1 > 0$ such that, for $\varepsilon > 0$, the function:
\begin{align*}
    \boldsymbol{z}\, \mapsto\, \int_{B_1(\boldsymbol{x})} \Phi_\alpha(\boldsymbol{z} - \boldsymbol{y}) \mathds{1}_{B_\v(\boldsymbol{x})}(\boldsymbol{y}) f(\boldsymbol{y}) \dd \boldsymbol{y}
\end{align*}
is in $C^{0,1}(\overline{B_1(\boldsymbol{x})})$ for $|\boldsymbol{x}| > 0$, with Lipschitz constant
bounded by $C_1 \| f \|_{L^q(B_\v(\boldsymbol{x}))}$ for $\v > 0$ small enough,
where $\overline{B_1(\boldsymbol{x})}$ denotes the closure of the open ball $B_1(\boldsymbol{x})$ of radius 1 centered at $\boldsymbol{x}$.
For $\alpha \in (0,n)$ and $0<|\boldsymbol{z}| \leq 1$,
\begin{align*}
\begin{split}
& \frac{1}{|\boldsymbol{z}|} \bigg|\int_{B_\v(\boldsymbol{x})} (\Phi_\alpha(\boldsymbol{x} + \boldsymbol{z} - \boldsymbol{y}) - \Phi_\alpha(\boldsymbol{x} - \boldsymbol{y})) f(\boldsymbol{y}) \dd \boldsymbol{y} \bigg| \\
& = \frac{1}{\alpha |\boldsymbol{z}|} \bigg| \int_{B_1(\boldsymbol{x})} \left ( |\boldsymbol{x} + \boldsymbol{z} - \boldsymbol{y}|^{-\alpha} - |\boldsymbol{x} - \boldsymbol{y}|^{-\alpha} \right ) \mathds{1}_{B_\v(\boldsymbol{x})}(\boldsymbol{y}) f(\boldsymbol{y}) \dd \boldsymbol{y} \bigg| \\
& \leq \frac{C_1}{\alpha} \| f \|_{L^q(B_\v(\boldsymbol{x}))} \rightarrow 0
\qquad\,\, \mbox{ uniformly in $\boldsymbol{z}\,\,$ as $\v \to 0$}.
\end{split}
\end{align*}

\smallskip
2. For $\alpha \in (-1,0]$, since $q > \frac{n}{n - 1 - \alpha}$, then,  by Theorem \ref{pointwise},
there exists $C_2 > 0$ such that, for $\varepsilon > 0$, the function:
\begin{align*}
    \boldsymbol{z}\, \mapsto\, \int_{B_1(\boldsymbol{x})} |\boldsymbol{z} - \boldsymbol{y}|^{-(\alpha + 1)} \mathds{1}_{B_\v(\boldsymbol{x})}(\boldsymbol{y}) f(\boldsymbol{y}) \dd \boldsymbol{y}
\end{align*}
is in $C(\overline{B_1(\boldsymbol{x})})$ for $|\boldsymbol{x}| > 0$ such that,
for $\v$ small enough,
\begin{align*}
    \Big\|\int_{B_1(\boldsymbol{x})} |\cdot - \boldsymbol{y}|^{-(\alpha + 1)} \mathds{1}_{B_\v(\boldsymbol{x})}(\boldsymbol{y}) f(\boldsymbol{y}) \dd \boldsymbol{y} \Big\|_{C(B_1(\boldsymbol{x}))}
    \leq C_2 \|f\|_{L^q(B_\varepsilon(\boldsymbol{x}))}.
\end{align*}
For $0<|\boldsymbol{z}| \leq 1$,
$\boldsymbol{0} \not \in \{ \boldsymbol{x} + s \boldsymbol{z} - \boldsymbol{y} \, : \, s \in [0,1] \}$
for {\it a.e.} $\mathbf{y}\in B_1(\boldsymbol{x})$.
Thus, for {\it a.e.} $\mathbf{y}\in B_1(\boldsymbol{x})$ (that is,
for $\boldsymbol{y}$ such that $\boldsymbol{0} \not \in \{ \boldsymbol{x} + s \boldsymbol{z} - \boldsymbol{y} \, : \, s \in [0,1] \}$), we have
\begin{equation*}
    |\Phi_\alpha(\boldsymbol{x} + \boldsymbol{z} - \boldsymbol{y}) - \Phi_\alpha(\boldsymbol{x} - \boldsymbol{y})|
    = \Big|\int^1_0 \frac{\d}{\d s} \Phi_\alpha(\boldsymbol{x} + s \boldsymbol{z} - \boldsymbol{y}) \dd s \Big|
    = \Big| \boldsymbol{z} \cdot \int^1_0 \frac{\boldsymbol{x} + s \boldsymbol{z} - \boldsymbol{y}}{|\boldsymbol{x}
    + s \boldsymbol{z} - \boldsymbol{y}|^{\alpha + 2}} \dd s \Big|.
\end{equation*}
Therefore, for $\alpha \in (-1,0]$ and $0<|\boldsymbol{z}| \leq 1$,
\begin{align*}
    \begin{split}
        & \frac{1}{|\boldsymbol{z}|}
        \bigg| \int_{B_\v(\boldsymbol{x})} (\Phi_\alpha(\boldsymbol{x} + \boldsymbol{z} - \boldsymbol{y}) - \Phi_\alpha(\boldsymbol{x} - \boldsymbol{y})) f(\boldsymbol{y}) \dd \boldsymbol{y} \bigg| \\
        & \leq \int^1_0 \int_{B_1(\boldsymbol{x})}|\boldsymbol{x} + s \boldsymbol{z} - \boldsymbol{y}|^{-(\alpha + 1)} \mathds{1}_{B_\v(\boldsymbol{x})}(\boldsymbol{y}) |f(\boldsymbol{y})| \dd \boldsymbol{y} \dd s \\
        & \leq C_2 \| f \|_{L^q(B_\v(\boldsymbol{x}))} \rightarrow 0
        \qquad\,\,\mbox{ as $\v \to 0$.}
    \end{split}
\end{align*}

\smallskip
3. For $\alpha \in (-1,n - 1)$, we see that, for fixed $\v > 0$,
\begin{align*}
    \frac{|\Phi_\alpha(\boldsymbol{x} + \boldsymbol{z} - \boldsymbol{y}) - \Phi_\alpha(\boldsymbol{x}
    - \boldsymbol{y}) - \boldsymbol{z} \cdot \nabla \Phi_\alpha(\boldsymbol{x} - \boldsymbol{y})|}{|\boldsymbol{z}|} \longrightarrow 0
\end{align*}
uniformly for $\boldsymbol{y} \in \mathbb{R}^n \setminus B_\v(\boldsymbol{x})$ as $\boldsymbol{z} \to \boldsymbol{0}$.
Since $f \in L^1$, we obtain
\begin{align*}
\begin{split}
    & \frac{1}{|\boldsymbol{z}|} \bigg| \int_{B_\v^c(\boldsymbol{x})}
    \big(\Phi_\alpha(\boldsymbol{x} + \boldsymbol{z} - \boldsymbol{y})-\Phi_\alpha(\boldsymbol{x}- \boldsymbol{y})\big) f(\boldsymbol{y}) \dd \boldsymbol{y} - \boldsymbol{z} \cdot \int_{B^c_\v(\boldsymbol{x})} \nabla \Phi_\alpha (\boldsymbol{x} - \boldsymbol{y}) f(\boldsymbol{y}) \dd \boldsymbol{y} \bigg| \\[2mm]
    &\,\,\longrightarrow 0 \qquad \text{ as $\boldsymbol{z} \to 0$}.
    \end{split}
\end{align*}
Then we conclude that $(\Phi_\alpha*f)(\boldsymbol{x})$ is differentiable on
$\mathbb{R}^n \setminus \{\mathbf{0}\}$ with $\nabla(\Phi_\alpha*f)(\boldsymbol{x})
= (\nabla \Phi_\alpha*f)(\boldsymbol{x})$.

\smallskip
4. As in \cite{BALAGUE20135}, we can write
    \[
    (\nabla \Phi_\alpha*f)(\boldsymbol{x}) = \int^{\infty}_0 \omega(|\boldsymbol{x}|,\eta) f(\eta) \eta^{n-1}\dd \eta \,  \frac{\boldsymbol{x}}{|\boldsymbol{x}|}.
    \]
Then we can see that
$(\nabla \Phi_\alpha*f)(\boldsymbol{x}) \cdot \frac{\boldsymbol{x}}{|\boldsymbol{x}|}
=(\Phi_\alpha*f)_r(|\boldsymbol{x}|)$
is a radial function.
Now, as in \cite{Carrillo_2022}, we see that, for $r > 0$,
\begin{align*}
(\Phi_\alpha*f)_r( r) = \int^{\infty}_0 \Big( \int_{\mathbb{S}^{n-1}} \frac{r\boldsymbol{e_1} - \eta \boldsymbol{y}}{|r\boldsymbol{e_1} - \eta \boldsymbol{y}|^{\alpha + 2}} \cdot \boldsymbol{e_1} \dd \sigma(\boldsymbol{y}) \Big) f(\eta) \eta^{n-1} \dd\eta.
\end{align*}
Now, considering the inner integral and applying a change of coordinates to
the polyspherical coordinates given by
    \[
    \boldsymbol{y} = (\cos\theta, \sin{\theta} \,\hat{\boldsymbol{z}})
    \]
with $\theta \in [0,\pi)$ and $\hat{\boldsymbol{z}} \in \mathbb{S}^{n-2}$, we have
\begin{align*}
\begin{split}
& \int_{\mathbb{S}^{n-1}} \frac{r\boldsymbol{e_1} - \eta \boldsymbol{y}}{|r\boldsymbol{e_1} - \eta \boldsymbol{y}|^{\alpha + 2}} \cdot\boldsymbol{e_1} \dd \sigma(\boldsymbol{y}) \\
& = \frac{\omega_{n-1}}{n-1} \int^\pi_0 \big((r - \eta \cos{\theta})^2 + (\eta \sin{\theta})^2 \big)^{-\frac{\alpha + 2}{2}}(r - \eta \cos{\theta})|\sin{\theta}|^{n-2}\dd \theta.
\end{split}
\end{align*}
Looking at the integrand, we have
\begin{align}
& \Big| \big((r - \eta \cos{\theta})^2 + (\eta \sin{\theta})^2 \big)^{-\frac{\alpha + 2}{2}}(r - \eta \cos{\theta})|\sin{\theta}|^{n-2} \Big| \nonumber\\
& \leq \big((r - \eta)^2 + 2r\eta(1-\cos{\theta}) \big)^{-\frac{\alpha + 1}{2}}|\sin{\theta}|^{n-2} \nonumber\\
& \leq \big((r - \eta)^2 + r\eta \theta^2(1 - \frac{\pi^2}{12}) \big)^{-\frac{\alpha + 1}{2}}\theta^{n-2}.\label{continuity.3}
\end{align}
For $\zeta > 0$, using \eqref{continuity.3},  we obtain that, for $\alpha > n-2$,
\begin{align}
& \Big| \int^\pi_0 \big((r - \eta \cos{\theta})^2 + (\eta \sin{\theta})^2 \big)^{-\frac{\alpha + 2}{2}}(r - \eta \cos{\theta})|\sin{\theta}|^{n-2}\dd \theta \Big| \nonumber\\
& \leq \int^\pi_0 \big((r - \eta)^2 + r\eta \theta^2(1 - \frac{\pi^2}{12}) \big)^{-\frac{\alpha + 1}{2}}\theta^{n-2}\dd \theta \nonumber\\
 & \leq \int^\pi_\zeta \big(r\eta \theta^2(1 - \frac{\pi^2}{12}) \big)^{-\frac{\alpha + 1}{2}}\theta^{n-2}\dd \theta + \int^\zeta_0 |r - \eta|^{-(\alpha + 1)}\theta^{n-2}\dd \theta\nonumber \\
 & =\big(r\eta (1 - \frac{\pi^2}{12}) \big)^{-\frac{\alpha + 1}{2}}\Big[ \frac{\theta^{n-2-\alpha}}{n-2-\alpha} \Big]^\pi_\zeta + |r - \eta|^{-(\alpha + 1)}\Big[\frac{\theta^{n-1}}{n-1} \Big]^\zeta_0. \label{continuity.4}
\end{align}
Letting $\zeta = \min\{\pi, \frac{|r-\eta|}{\sqrt{r\eta}}\}$,
then there exists $C>0$ such that
\begin{align}\label{continuity.5}
& \Big| \int^\pi_0 \big((r - \eta \cos{\theta})^2 + (\eta \sin{\theta})^2 \big)^{-\frac{\alpha + 2}{2}}
(r - \eta \cos{\theta})|\sin{\theta}|^{n-2}\dd \theta \Big| \nonumber\\
& \leq C(r\eta)^{-\frac{n-1}{2}} |r-\eta|^{n-2-\alpha}.
\end{align}

\medskip
\noindent
5. Let $r>0$. Then, for $r \gg \varepsilon > 0$, we use
\eqref{continuity.3} to obtain that, for $s \in (r-\frac{\varepsilon}{2},r+\frac{\varepsilon}{2})$
and $\eta \in (0,\infty)\setminus (r - \varepsilon, r + \varepsilon)$,
\begin{equation*}
\Big|\big((s - \eta \cos{\theta})^2 + (\eta \sin{\theta})^2 \big)^{-\frac{\alpha + 2}{2}}(s - \eta \cos{\theta})|\sin{\theta}|^{n-2} \Big|
\leq C(\varepsilon).
\end{equation*}
Hence, by the Lebesgue dominated convergence theorem, we have
\begin{align*}
\begin{split}
\lim_{s \to r} \int_{(0,\infty)\setminus(r - \varepsilon, r +\varepsilon)}
\omega(s,\eta) f(\eta)\,\eta^{n-1} \dd\eta
= \int_{(0,\infty)\setminus(r - \varepsilon,r + \varepsilon)} \omega(r,\eta) f(\eta)
\,\eta^{n-1} \dd\eta.
\end{split}
\end{align*}
By \eqref{continuity.5}, we have
        \begin{align*}
                \left |\int^{r + \varepsilon}_{r - \varepsilon} \omega(s,\eta) f(\eta) \eta^{n-1} \dd\eta \right| \leq C \int^{r + \varepsilon}_{r - \varepsilon} |s-\eta|^{n-2-\alpha} \dd\eta          \leq C \varepsilon^{n-1-\alpha}.
        \end{align*}
Since
$\varepsilon^{n-1-\alpha} \to 0$ as $\varepsilon \to 0$, and
     \begin{equation*}
         \lim_{\varepsilon \to 0} \int_{(0,\infty)\setminus(r - \varepsilon,r + \varepsilon)} \omega(r,\eta) f(\eta)\,\eta^{n-1} \dd\eta
         = \int^{\infty}_0 \omega(r,\eta) f(\eta)\,\eta^{n-1} \dd\eta,
     \end{equation*}
we conclude that $\lim_{s \to r} (\Phi_\alpha*f)_r(s) = (\Phi_\alpha*f)_r(r)$.

\smallskip
For $\alpha \in (-1,n-2)$, following the same approach as \eqref{continuity.4}, we have
    \begin{align*}
        \begin{split}
            |\omega(r,\eta)| \leq C \int^\pi_0 \big(r\eta \theta^2 \big)^{-\frac{\alpha + 1}{2}}\theta^{n-2}\dd \theta \leq C (r\eta)^{-\frac{\alpha + 1}{2}}.
        \end{split}
    \end{align*}
    Thus, it follows from the same approach as for $\alpha > n-2$ that $(\Phi_\alpha*f)_r$ is continuous for $\alpha \in (-1,n-2)$.

\smallskip
For $\alpha = n-2$,
since $\Phi_{n-2}$ is the fundamental solution of the Laplace equation up to a constant,
we see that, for any radial function $f \in C^\infty_{\rm c}(\mathbb{R}^n)$,
    \begin{equation*}
        \big(r^{n-1}(\Phi_{n-2} * f)_r\big)_r(r)
        = r^{n-1} \Delta (\Phi_{n-2} * f)(r)
        = \omega_n r^{n-1} f(r).
    \end{equation*}
Since it is classically differentiable, then $r^{n-1}(\Phi_{n-2} * f)_r$ and hence $(\Phi_{n-2} * f)_r$
are continuous. Furthermore, we integrate it over $r$ to obtain
    \[
        \int^{\infty}_0 \omega(r,\eta) f(\eta) \eta^{n-1}\dd \eta = (\Phi_{n-2} * f)_r(r)
        = \frac{\omega_n}{r^{n-1}} \int^r_0 f(\eta) \eta^{n-1} \dd \eta.
    \]
This implies
\[
\int^{\infty}_0 \Big( \omega(r,\eta) - \frac{\omega_n \mathds{1}_{(0,r)}(\eta)}{r^{n-1}} \Big) f(\eta) \eta^{n-1} \dd \eta = 0
\]
for all radial functions $f \in C^\infty_{\rm c}(\mathbb{R}^n)$.
Then, by the fundamental lemma of the calculus of variations, we conclude
    \begin{equation*}
        \omega(r,\eta) = \frac{\omega_n \mathds{1}_{(0,r)}(\eta)}{r^{n-1}}.
    \end{equation*}
\end{proof}

\begin{remark}
Notice that, for fixed $t>0$,
density $\rho(t,\boldsymbol{x})$ satisfies the assumption on $f$ in {\rm Lemma \ref{potentially}},
as $\rho(t,\boldsymbol{x})$ is smooth in $\Omega_t$ and $0$ everywhere else by convention,
$\rho(t,\cdot) \in L^q_{\rm loc}(\mathbb{R}^n \setminus \{\boldsymbol{0}\})$ for all $q \in [1,\infty]$,
and $\rho(t,\cdot)\in L^1(\mathbb{R}^n)$ since it has finite mass $M$.
Thus, we can apply {\rm Lemma \ref{potentially}} for $\rho(t,\boldsymbol{x})$ regarding
the solution of the free boundary problem.
\end{remark}

\begin{remark}
Alves-Grafakos-Tzavaras in {\rm \cite{alves_2024}} proved the uniform estimates
for a bilinear operator that relates to a divergence form of the Riesz potential.
That is, for sufficiently regular $f$ on $\mathbb{R}^n$, it is shown that
there exists $S_\alpha(f) = I_\alpha(f,f)$ such that
    \[
    f \nabla (\Phi_\alpha * f) = \nabla \cdot S_{n-\alpha}(f).
    \]
Moreover, for $1 < p,q < \frac{n}{\alpha}$ and $r \geq 1$ such that
    \[
    \frac{1}{p} + \frac{1}{q} =  \frac{1}{r} + \frac{\alpha}{n},
    \]
    then, for $f \in L^p(\mathbb{R}^n)$ and $g \in L^q(\mathbb{R}^n)$,
    \[
    \| I_\alpha(f,g) \|_{L^r(\mathbb{R}^n)} \leq C \| f \|_{L^p(\mathbb{R}^n)} \| g \|_{L^q(\mathbb{R}^n)}.
    \]
With this,
some compensated integrability can be obtained by using the HLS inequality
to estimate $f \nabla (\Phi_\alpha * f)$, so that
the weak solutions in the divergence form can be framed.
However, our approximate solutions $(\rho,u)$ of the free boundary problem are only defined and differentiable on $\Omega_t$, as such the construction of $S_\alpha$ no longer works in the context of our problem, where it can be seen that $S_\alpha(\rho)$ defined in {\rm \cite{alves_2024}} is not differentiable. Moreover, since we have no control over the derivatives of the gradient of $u$, unlike {\rm \cite{alves_2024}} where it is assumed in the definition of the weak solutions,
it is difficult to control the potential term in the same way.
\end{remark}

\smallskip
\subsection{Energy estimates}
Now we make some uniform energy estimates of the solutions of \eqref{Eul}--\eqref{0.20}.

\begin{lemma}[Energy Estimates]\label{Energy}
For $\alpha \in [0,n - 1)$, then any smooth solution $(\rho,u)(t,r)$ of the free boundary
problem \eqref{Eul}--\eqref{0.20} satisfies the following energy estimate{\rm :}
\begin{align}\label{firstest}
&\int^{b(t)}_a \rho \Big(\frac{1}{2}u^2 + e(\rho) + \frac{\kappa}{2} (\Phi_\alpha * \rho) \Big) r^{n-1} \dd r
    + \varepsilon \int^t_0\int^{b(s)}_a \rho \Big(u_r^2 + (n-1)\frac{u^2}{r^2} \Big) r^{n-1} \dd r\dd s
    \nonumber\\
    &\quad\, + \varepsilon (n-1) \int^t_0 (\rho u^2)(s,b(s))b(s)^{n-2} \dd s\nonumber\\
    &= \int^b_a \rho_0 \Big(\frac{1}{2}u_0^2 + e(\rho_0) + \frac{\kappa}{2} (\Phi_\alpha * \rho_0) \Big)r^{n-1} \dd r.
    \end{align}
In particular, we have the following estimates{\rm :}

\smallskip
\noindent{Case} {\rm 1:} $\alpha \in (0,n-1)$ and $\kappa = -1$ with $\gamma > 1$,
\begin{align*}
&\int^{b(t)}_a \rho \Big(\frac{1}{2}u^2 + e(\rho) - \frac{1}{2} (\Phi_\alpha * \rho) \Big)r^{n-1} \dd r
+ \varepsilon \int^t_0\int^{b(s)}_a \rho \Big(u_r^2 + (n-1)\frac{u^2}{r^2}\Big) r^{n-1} \dd r\dd s \\
&\quad + \varepsilon (n-1) \int^t_0 (\rho u^2)(s,b(s))b(s)^{n-2} \dd s\\
&= \int^b_a \rho_0 \Big(\frac{1}{2}u_0^2 + e(\rho_0) - \frac{1}{2} (\Phi_\alpha * \rho_0)\Big)r^{n-1} \dd r
= E^{\varepsilon,b}_0.
        \end{align*}

\noindent{Case} {\rm 2:} $\alpha \in (0,n-1)$ and $\kappa = 1$
with $\gamma \in (\frac{2n}{2n - \alpha}, \frac{n + \alpha}{n} ]$
and $M < M^{\varepsilon,b,\alpha}_{\rm c}(\gamma)$,
\begin{align}\label{eq2}
&\int^{b(t)}_a \rho \Big(\frac{1}{2}u^2 + C_\gamma e(\rho) \Big)r^{n-1} \dd r
 + \varepsilon \int^t_0\int^{b(s)}_a \rho \Big(u_r^2 + (n-1)\frac{u^2}{r^2} \Big) r^{n-1} \dd r\dd s\nonumber \\
&\quad + \varepsilon(n-1)\int^t_0 (\rho u^2)(s,b(s))b(s)^{n-2} \dd s \nonumber\\
&\leq \int^b_a \rho_0
\Big(\frac{1}{2}u_0^2 + e(\rho_0) + \frac{\kappa}{2} (\Phi_\alpha * \rho_0) \Big)r^{n-1} \dd r
= E^{\varepsilon,b}_0,
\end{align}
where the positive constant $C_\gamma > 0$ is defined as
        \[
        C_\gamma := \begin{cases}
            1 - B_{n,\gamma,\alpha}M^{\frac{n - \alpha}{n}} \quad &
             \mbox{for $\gamma = \frac{n + \alpha}{n}$},\\[1mm]
            \frac{\alpha - n(\gamma-1)}{\alpha} &
            \mbox{for $\gamma \in (\frac{2n}{2n - \alpha}, \frac{n + \alpha}{n} )$}.
        \end{cases}
        \]

    \noindent{Case} {\rm 3:} $\alpha \in (0,n-1)$ and $\kappa = 1$ with $\gamma > \frac{n + \alpha}{n}$,
    \begin{align*}
        & \frac{1}{2} \int^{b(t)}_a \rho \left (u^2 + e(\rho) \right )r^{n-1} \dd r + \varepsilon \int^t_0\int^{b(s)}_a \rho \Big(u_r^2 + (n-1)\frac{u^2}{r^2} \Big) r^{n-1} \dd r\dd s \\
        &\,\, + \varepsilon (n-1)\int^t_0 (\rho u^2)(s,b(s))b(s)^{n-2} \dd s \leq C(M,E^{\varepsilon,b}_0).
    \end{align*}

    \noindent{Case} {\rm 4:} $\alpha \in (-1,0]$, $\kappa \in \{-1,1\}$ with $\gamma > 1$,
    \begin{align}\label{eq4}
        & \frac{1}{2} \int^{b(t)}_a \rho \left (u^2 + e(\rho) + k_\alpha(1 + r^2) \right )r^{n-1} \dd r + \varepsilon \int^t_0\int^{b(s)}_a \rho \Big(u_r^2 + \frac{u^2}{r^2} \Big) r^{n-1} \dd r \dd s\nonumber \\
        & \,\,+ \varepsilon (n-1) \int^t_0 (\rho u^2)(s,b(s)) b(s)^{n-2} \dd s \leq C(M,E^{\varepsilon,b}_0,T).
    \end{align}
\end{lemma}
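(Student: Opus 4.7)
The plan is to first derive the fundamental energy identity \eqref{firstest} and then, in each of the four cases, bound the potential-energy contribution so that the remaining quantities carry the right signs.

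First I would multiply the radial momentum equation $\eqref{Eul}_2$ by $u\,r^{n-1}$ and integrate on $[a,b(t)]$. After using the continuity equation $\eqref{Eul}_1$ to combine $(\rho u)_t u$ and $(\rho u^2)_r u$ into $\partial_t(\tfrac{1}{2}\rho u^2)+\partial_r(\tfrac{1}{2}\rho u^3)$ (modulo lower-order terms that cancel with the $\tfrac{n-1}{r}\rho u^2$ term), the computation reduces to the standard radial energy balance
\[
\partial_t(\tfrac{1}{2}\rho u^2+\rho e)+\tfrac{1}{r^{n-1}}\partial_r\big(r^{n-1}(\tfrac{1}{2}\rho u^2+\rho e+p)u\big)+\kappa\rho u(\mathcal{W}_\alpha)_r = \varepsilon(\text{viscous}).
\]
Integrating with $r^{n-1}\dd r$ and applying Reynolds transport with $b'(t)=u(t,b(t))$, the pressure-flux boundary term at $r=b(t)$ cancels exactly with the one produced by the stress-free condition in \eqref{boundary}, while the $r=a$ terms vanish since $u(t,a)=0$. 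After one more integration by parts on the two viscous pieces $\varepsilon(\rho(u_r+\tfrac{n-1}{r}u))_r$ and $-\varepsilon\tfrac{n-1}{r}\rho_ru$, the surviving dissipation is precisely $\varepsilon\int\rho(u_r^2+(n-1)u^2/r^2)r^{n-1}\dd r$ together with the nonnegative boundary dissipation $\varepsilon(n-1)(\rho u^2)(t,b(t))b(t)^{n-2}$.

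To close the identity I would turn the potential work into a total time derivative. Using the symmetry $\Phi_\alpha(\boldsymbol{x}-\boldsymbol{y})=\Phi_\alpha(\boldsymbol{y}-\boldsymbol{x})$, one has $\tfrac{d}{dt}\tfrac12\int_{\mathbb{R}^n}\rho\mathcal{W}_\alpha\dd\boldsymbol{x}=\int_{\mathbb{R}^n}\rho_t\mathcal{W}_\alpha\dd\boldsymbol{x}$. Writing this radially with $\rho$ extended by zero outside $[a,b(t)]$, the Reynolds-transport contribution at the moving outer boundary cancels exactly the boundary term produced by integrating $(\tilde{m}r^{n-1})_r \mathcal{W}_\alpha$ by parts (thanks to $b'(t)=u(t,b(t))$), leaving $\tfrac{d}{dt}\int^{b(t)}_a\tfrac12\rho\mathcal{W}_\alpha r^{n-1}\dd r=\int^{b(t)}_a\rho u(\mathcal{W}_\alpha)_r r^{n-1}\dd r$. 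Substituting into the energy balance and integrating in time gives \eqref{firstest}. Case 1 ($\kappa=-1$, $\alpha>0$) is then immediate, since $\rho\mathcal{W}_\alpha\le 0$ makes $-\tfrac12\rho\mathcal{W}_\alpha\ge 0$ and all remaining terms are nonnegative.

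For the attractive cases I would invoke the HLS inequality (Lemma \ref{HLSineq}) together with the interpolation $\|\rho\|_{L^{2n/(2n-\alpha)}}^{2}\le M^{2\theta}\|\rho\|_{L^{\gamma}}^{2(1-\theta)}$, where the algebra $1/p=\theta+(1-\theta)/\gamma$ with $p=2n/(2n-\alpha)$ gives $\sigma:=2(1-\theta)/\gamma=\alpha/(n(\gamma-1))$. For Case 3 ($\gamma>(n+\alpha)/n$) this exponent is strictly less than $1$, so Young's inequality absorbs a small multiple of $\int\rho^\gamma\dd r$ (equivalently $\int\rho e$) into the LHS, leaving only a constant depending on $(M,E^{\varepsilon,b}_0)$. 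Case 4 ($\alpha\in(-1,0]$) is handled by splitting $\Phi_\alpha\ast\rho$ near the origin and near infinity: on a ball the integrand is bounded via the local $L^{q}$-estimate (as in the proof of Lemma \ref{potentially}), while in the far field the assumed $k_\alpha(1+|\boldsymbol{x}|^2)$-moment \eqref{0.81} dominates; propagation of this moment is obtained by multiplying $\eqref{Eul}_1$ by $k_\alpha(1+r^2)$, integrating by parts, and applying Cauchy--Schwarz and Gr\"onwall in time, which is the origin of the $T$-dependence in \eqref{eq4}.

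The main obstacle I expect is Case 2, the critical window $\gamma\in(\tfrac{2n}{2n-\alpha},\tfrac{n+\alpha}{n}]$, where $\sigma\ge 1$ and Young's inequality is not freely available. At the endpoint $\gamma=(n+\alpha)/n$ the HLS bound is linear in $\int\rho^\gamma\dd\boldsymbol{x}$ and can be absorbed into the LHS iff the coefficient $C_\gamma=1-B_{n,\gamma,\alpha}M^{(n-\alpha)/n}$ is strictly positive, which is precisely the critical mass condition $M<M_{\mathrm c}^{\varepsilon,b,\alpha}$. For $\gamma\in(\tfrac{2n}{2n-\alpha},\tfrac{n+\alpha}{n})$ I would run a continuity argument on $Y(t):=\int\rho^\gamma r^{n-1}\dd r$: energy conservation plus HLS gives an inequality of the form $A Y\le E^{\varepsilon,b}_0+a Y^\sigma$ with $\sigma>1$; the explicit form of $M_{\mathrm c}^{\varepsilon,b,\alpha}(\gamma)$ in \eqref{critical} is precisely the threshold at which this polynomial inequality has two positive roots $Y_-<Y_+$, with $Y(0)<Y_-$. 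Continuity of $Y(t)$ (the approximate solutions are smooth with $\rho$ bounded away from zero by \eqref{away}) prevents $Y(t)$ from crossing $Y_-$, which yields $Y(t)\le Y_-$ and therefore \eqref{eq2} with $C_\gamma=(\alpha-n(\gamma-1))/\alpha$.
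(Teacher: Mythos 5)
Your proposal is correct and follows essentially the same route as the paper: the identity \eqref{firstest} is obtained by symmetrizing the potential work into a total time derivative and cancelling the moving-boundary flux against the stress-free condition (the paper runs this computation in Lagrangian coordinates, but it is equivalent), and the four cases are then handled exactly as in the paper — sign of the potential for Case 1, the sharp HLS variation plus interpolation and Young for Case 3, kernel splitting with propagation of the $k_\alpha$-moment and Gr\"onwall for Case 4. Your two-roots continuity argument for Case 2 is the paper's concave-barrier argument ($F(s)=s-B_{n,\gamma,\alpha}M^{\cdot}s^{\sigma}$ with $F(s_*)>E^{\varepsilon,b}_0$ under $M<M^{\varepsilon,b,\alpha}_{\rm c}(\gamma)$) in different clothing.
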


\begin{proof}  We divide the proof into six steps.

\medskip
1. Multiplying $\eqref{lag}_2$ in Lagrangian coordinates by $u$ and integrating over $x$ yield
\begin{align}\label{1.1}
& \frac{1}{2}\frac{\d}{\d \tau}\int^{\frac{M}{\omega_n}}_0 u^2 \dd x
+ \int^{\frac{M}{\omega_n}}_0 \big(p-\varepsilon \rho^2(r^{n-1}u)_x\big)_xur^{n-1} \dd x
   \nonumber\\
& = -\varepsilon(n-1)\int^{\frac{M}{\omega_n}}_0 \rho_xu^2r^{n-2} \dd x - \kappa \int^{\frac{M}{\omega_n}}_0 \rho u r^{n-1} (\Phi_\alpha * \rho)_x \dd x.
\end{align}
For the last term on the left-hand side (LHS) of \eqref{1.1},
using the change of variables \eqref{deriv}, the boundary conditions \eqref{bound},
the conservation of mass $\eqref{lag}_1$, and integration by parts, we have
\begin{align}\label{1.2}
	&\int_0^{\frac{M}{\omega_n}}\big(p(\rho)-\varepsilon\rho^2(r^{n-1}u)_x\big)_xu\,r^{n-1} \dd x\nonumber\\
	&=-\int_0^{\frac{M}{\omega_n}}\big(p(\rho)-\varepsilon\rho^2(r^{n-1}u)_x\big)(r^{n-1}u)_x \dd x \nonumber\\
	&=-\int_0^{\frac{M}{\omega_n}}p(\rho)(r^{n-1}u)_x \dd x
	+\varepsilon\int_0^{\frac{M}{\omega_n}}\rho^2\big((r^{n-1}u)_x\big)^2\dd x \nonumber\\
    &=a_0\int_0^{\frac{M}{\omega_n}}\rho^{\gamma-2}\rho_\tau\dd x
	+\varepsilon\int_0^{\frac{M}{\omega_n}}\rho^2\big(r^{n-1}u_x+(n-1)r^{n-2}r_xu\big)^2\dd x\nonumber\\
	&=\frac{\d}{\d \tau}\int_0^{\frac{M}{\omega_n}} e(\rho)\dd x+\varepsilon\int_0^{\frac{M}{\omega_n}}\Big(\rho^2(r^{n-1}u_x)^2
	+(n-1)^2\frac{u^2}{r^2}+2(n-1)r^{n-2}\rho u u_x\Big)\dd x.
\end{align}
For the first term on the right-hand side (RHS) of \eqref{1.1}, integrating by parts yields
\begin{align}\label{1.3}
	&(n-1)\varepsilon\int_0^{\frac{M}{\omega_n}}\rho_xu^2r^{n-2} \dd x \nonumber\\
	&=-(n-1)\varepsilon\int_0^{\frac{M}{\omega_n}}\big(2 r^{n-2}\rho uu_x +(n-2)\frac{ u^2}{r^2}\big)\dd x + \varepsilon (n-1)(\rho u^2 r^{n-2})(\tau,\frac{M}{\omega_n}).
\end{align}
For the last term on the RHS of \eqref{1.1},
converting back to the Eulerian coordinates, and applying \eqref{bound} and the conservation of mass $\eqref{Eul}_1$, we have
\begin{align}\label{1.4}
        & \kappa \int^{\frac{M}{\omega_n}}_0  \rho u\, (\Phi_\alpha * \rho)_x\, r^{n-1} \dd x\nonumber \\
        & = \kappa \int^{b(t)}_a  \rho u\, (\Phi_\alpha * \rho)_r\, r^{n-1} \dd r \nonumber\\
        & = \kappa \big (\rho u r^{n-1} (\Phi_\alpha * \rho) \big)(t,b(t))
        - \kappa \int^{b(t)}_a (\rho u r^{n-1})_r (\Phi_\alpha * \rho) \dd r\nonumber \\
        & = \kappa \big (\rho u r^{n-1} (\Phi_\alpha * \rho) \big)(t,b(t)) + \kappa \int^{b(t)}_a \rho_t (\Phi_\alpha * \rho)\,r^{n-1}\dd r\nonumber \\
        & = \frac{\kappa}{2} \big (\rho u r^{n-1} (\Phi_\alpha * \rho) \big)(t,b(t))
        + \frac{\kappa}{2} \int^{b(t)}_a \frac{\partial}{\partial t} \big(\rho (\Phi_\alpha * \rho) \big)\,r^{n-1}\dd r\nonumber \\
        & = \frac{\kappa}{2} \frac{\d}{\d t} \int^{b(t)}_a\rho (\Phi_\alpha * \rho)\, r^{n-1}\dd r\nonumber \\
        & = \frac{\kappa}{2} \frac{\d}{\d \tau} \int^{\frac{M}{\omega_n}}_0\Phi_\alpha * \rho \dd x,
\end{align}
where the 5th line follows from
$K(r,\eta) = K(\eta,r)$, the Fubini theorem, and
\begin{align*}
    \begin{split}
        & \int^{b(t)}_a \rho (\Phi_\alpha * \rho)_t\,r^{n-1}\dd r \\
        & = \int^{b(t)}_a \rho \Big( \int^{b(t)}_a K(r,\eta)\rho(t,\eta)\eta^{n-1}\dd \eta \Big)_t\,r^{n-1}\dd r \\
        & = (\rho u)(t,b(t))b(t)^{n-1} \int^{b(t)}_a K(b(t),r) \rho(t,r)\, r^{n-1}\dd r \\
        &\quad\, + \int^{b(t)}_a \rho \Big( \int^{b(t)}_a K(r,\eta)\rho_t(t,\eta)\eta^{n-1}\dd \eta \Big)\, r^{n-1}\dd r \\
        & = \big (\rho u r^{n-1} (\Phi_\alpha * \rho) \big)(t,b(t))
        + \int^{b(t)}_a \rho_t \Big( \int^{b(t)}_a K(\eta,r)\rho(t,r)r^{n-1}\dd r \Big)\,\eta^{n-1}\dd \eta \\
        & = \big (\rho u r^{n-1} (\Phi_\alpha * \rho) \big)(t,b(t)) + \int^{b(t)}_a \rho_t (\Phi_\alpha * \rho)\,r^{n-1}\dd r.
    \end{split}
\end{align*}
Plugging \eqref{1.2}--\eqref{1.4} into \eqref{1.1}, we obtain
\begin{align}\label{1.5}
 & \frac{\dd}{\dd \tau} \bigg\{ \int^{\frac{M}{\omega_n}}_0
   \Big(\frac{1}{2}u^2 + e(\rho) + \frac{\kappa}{2}(\Phi_\alpha * \rho)\Big) \dd x \bigg\}
   +\varepsilon\int_0^{\frac{M}{\omega_n}}\big(\rho^2(r^{n-1}u_x)^2 + (n-1)\frac{u^2}{r^2}\big)\dd x \nonumber\\
	&\,\,+ \varepsilon(n-1)(\rho u^2 r^{n-2})(\tau,\frac{M}{\omega_n})=0.
\end{align}
Converting \eqref{1.5} into the Eulerian coordinates, we have
\begin{align*}
    \begin{split}
        &\frac{\d}{\d t} \int^{b(t)}_a \rho \Big(\frac{1}{2}u^2 + e(\rho) + \frac{\kappa}{2} (\Phi_\alpha * \rho)\Big)\,r^{n-1} \dd r
        + \varepsilon \int^{b(t)}_a \rho \Big(u_r^2 + (n-1)\frac{u^2}{r^2} \Big)\,r^{n-1} \dd r \\[2mm]
        &\,\, + \varepsilon(n-1) (\rho u^2)(t,b(t))b(t)^{n-2} = 0.
    \end{split}
\end{align*}
Integrating over $[0,t]$, we derive the first part of this lemma:
\begin{align}\label{1.7}
        &\int^{b(t)}_a \rho \Big(\frac{1}{2}u^2 + e(\rho) + \frac{\kappa}{2} (\Phi_\alpha * \rho)\Big)\,r^{n-1} \dd r
        + \varepsilon \int^t_0\int^{b(s)}_a \rho \Big(u_r^2 + (n-1)\frac{u^2}{r^2} \Big)\,r^{n-1} \dd r\dd s \nonumber\\[1mm]
        &\quad+ (n-1)\varepsilon\int^t_0 (\rho u^2)(s,b(s))b(s)^{n-2} \dd s \nonumber\\[1mm]
        &= \int^b_a \rho_0 \Big(\frac{1}{2}u_0^2 + e(\rho_0) + \frac{\kappa}{2} (\Phi_\alpha * \rho_0) \Big)\,r^{n-1} \dd r.
\end{align}
Then Case 1 follows directly from \eqref{1.7}.

\smallskip
2. For Case 2 \& 3 ({\it i.e.}, $\kappa = 1$),
we need to control the potential energy term by the internal energy term.
This is done by applying the H\"older inequality to the potential term and then the variation of the HLS inequality from Lemma \ref{HLSineq},
and finally
by using the interpolation as follows:
\begin{align}\label{1.8}
        & \frac{1}{2}\int^{b(t)}_a \rho (\Phi_\alpha * \rho) r^{n-1} \dd r \nonumber\\
        & = \frac{1}{2 \omega_n}\int_{\Omega_t} \big(\rho(\Phi_\alpha * \rho)\big)(t,\boldsymbol{y}) \dd \boldsymbol{y} \nonumber\\
        & \leq \frac{1}{2 \alpha \omega_n} \bigg| \int_{\Omega_t} \big(\rho(|\cdot|^{-\alpha} * \rho)\big)(t,\boldsymbol{y}) \dd \boldsymbol{y} \bigg|\nonumber\\
        & \leq \frac{C_*(\alpha,\gamma,n)}{2 \alpha \omega_n} \|\rho\|^{\frac{\gamma(2n - \alpha) - 2n}{n(\gamma -1)}}_{L^1(\Omega_t)}\|\rho\|^{\frac{\alpha\gamma}{n(\gamma - 1)}}_{L^\gamma(\Omega_t)} \nonumber\\
        & = \frac{C_*(\alpha,\gamma,n)}{2 \alpha}\omega_n^{\frac{\alpha}{n(\gamma - 1)} - 1}M^{\frac{\gamma(2n - \alpha) - 2n}{n(\gamma -1)}}
           \Big( \int^{b(t)}_a \rho^\gamma r^{n-1} \dd r \Big)^\frac{\alpha}{n(\gamma - 1)}\nonumber\\
        & = \frac{C_*(\alpha,\gamma,n)}{2 \alpha}\omega_n^{\frac{\alpha}{n(\gamma - 1)} - 1}
          \Big(\frac{\gamma - 1}{a_0} \Big)^{\frac{\alpha}{n(\gamma - 1)}}
           M^{\frac{\gamma(2n - \alpha) - 2n}{n(\gamma -1)}}
            \Big( \int^{b(t)}_a \rho e(\rho) r^{n-1} \dd r \Big)^\frac{\alpha}{n(\gamma - 1)} \nonumber\\
        & = B_{n,\gamma,\alpha} M^{\frac{\gamma(2n - \alpha) - 2n}{n(\gamma -1)}}
          \Big( \int^{b(t)}_a \rho e(\rho) r^{n-1} \dd r \Big)^\frac{\alpha}{n(\gamma - 1)}.
\end{align}
Thus, applying \eqref{1.8}, we obtain
\begin{align}\label{1.9}
        & \int^{b(t)}_a \rho \Big(e(\rho) + \frac{1}{2} (\Phi_\alpha * \rho)\Big)r^{n-1} \dd r\nonumber \\
        & \geq \int^{b(t)}_a \rho e(\rho)r^{n-1} \dd r - B_{n,\gamma,\alpha} M^{\frac{\gamma(2n - \alpha) - 2n}{n(\gamma -1)}}
           \Big( \int^{b(t)}_a \rho e(\rho) r^{n-1} \dd r \Big)^\frac{\alpha}{n(\gamma - 1)}.
\end{align}

\smallskip
3. For Case 3, $\gamma > \frac{n + \alpha}{n}$ that implies that $\frac{\alpha}{n(\gamma - 1)} < 1$.
Thus, using \eqref{1.9} and the H\"older inequality, we obtain
\begin{align*}
    \begin{split}
        & \int^{b(t)}_a \rho \Big( e(\rho) + \frac{1}{2} (\Phi_\alpha * \rho) \Big)r^{n-1} \dd r
        \geq \frac{1}{2}\int^{b(t)}_a \rho e(\rho)r^{n-1} \dd r - C(M).
    \end{split}
\end{align*}

\medskip
4. Now, for Case 2, we first consider the case that $\gamma = \frac{n + \alpha}{n}$. Then, from \eqref{1.9}, we have
\begin{align*}
    \begin{split}
        & \int^{b(t)}_a \rho \Big( e(\rho) + \frac{1}{2} (\Phi_\alpha * \rho)\Big)r^{n-1} \dd r \\
        & \geq \int^{b(t)}_a \rho e(\rho)r^{n-1} \dd r - B_{n,\gamma,\alpha} M^{\frac{n - \alpha}{n}}\int^{b(t)}_a \rho e(\rho) r^{n-1} \dd r \\
        & = C_\gamma \int^{b(t)}_a \rho e(\rho)r^{n-1} \dd r,
    \end{split}
\end{align*}
where $C_\gamma > 0$, as we have assumed that $M < M^{\varepsilon,b,\alpha}_{\rm c}(\gamma)$.
Now, considering the remaining case $\gamma \in (\frac{2n}{2n - \alpha},\frac{n + \alpha}{n})$,
inspired by \eqref{1.9}, we define
\begin{equation*}
    F(s) = s - B_{n,\gamma,\alpha} M^{\frac{\gamma(2n - \alpha) - 2n}{n(\gamma -1)}}s^\frac{\alpha}{n(\gamma - 1)}.
\end{equation*}
Then
\begin{align*}
    \begin{cases}
        \displaystyle
        F'(s) = 1 - \frac{\alpha}{n(\gamma - 1)}B_{n,\gamma,\alpha}
          M^{\frac{\gamma(2n - \alpha) - 2n}{n(\gamma -1)}}s^\frac{\alpha - n(\gamma - 1)}{n(\gamma - 1)}, \\[3mm]
        \displaystyle
        F''(s) = - \frac{\alpha(\alpha - n(\gamma -1))}{n^2(\gamma - 1)^2}
          B_{n,\gamma,\alpha}M^{\frac{\gamma(2n - \alpha) - 2n}{n(\gamma -1)}}s^\frac{\alpha - 2n(\gamma - 1)}{n(\gamma - 1)}.
    \end{cases}
\end{align*}
which, for $\gamma \in (\frac{2n}{2n - \alpha},\frac{n + \alpha}{n})$,
implies $F''(s) < 0$ for any $s > 0$.
This implies that $F(s)$ is concave for $s>0$. Denote
\begin{equation*}
    s_* = \Big( \frac{\alpha B_{n,\gamma,\alpha}}{n(\gamma - 1)} \Big)^\frac{n(\gamma - 1)}{n(\gamma - 1) - \alpha}
    M^\frac{\gamma(2n - \alpha) - 2n}{n(\gamma - 1) - \alpha},
\end{equation*}
the critical point of $F(s)$ ({\it i.e.}, $F'(s_*) = 0$).
Since $F'(s) < 0$ for $s > 0$, we see that the maximum of $F(s)$ for $s \geq 0$ is given by
\begin{equation*}
    F(s_*) = \frac{\alpha - n(\gamma - 1)}{\alpha} s_* > 0.
\end{equation*}
Thus, using $M < M^{\varepsilon,b,\alpha}_{\rm c}(\gamma)$, \eqref{critical},
and $\frac{\gamma(2n - \alpha) - 2n}{n(\gamma - 1) - \alpha} < 0$
for $\gamma \in (\frac{2n}{2n - \alpha},\frac{n + \alpha}{n})$, we have
\begin{align}\label{1.10}
        F(s_*) > \frac{\alpha - n(\gamma - 1)}{\alpha}
        \Big(\frac{\alpha B_{n,\gamma,\alpha}}{n(\gamma - 1)} \Big)^\frac{n(\gamma - 1)}{n(\gamma - 1) - \alpha}M^{\varepsilon,b,\alpha}_{\rm c}(\gamma)^\frac{\gamma(2n - \alpha) - 2n}{n(\gamma - 1) - \alpha} = E^{\varepsilon,b}_0,
\end{align}
and
\begin{align}\label{1.11}
        s_* = \frac{\alpha}{\alpha - n(\gamma - 1)}F(s_*) > \frac{\alpha}{\alpha - n(\gamma - 1)}E^{\varepsilon,b}_0 > 2E^{\varepsilon,b}_0.
\end{align}
Now, using \eqref{1.7} and \eqref{1.9}--\eqref{1.11}, we obtain
\begin{align}\label{1.12}
&F( \int^{b(t)}_a \rho e(\rho) r^{n-1} \dd r) \leq E^{\varepsilon,b}_0 < F(s_*),\\
&\int^{b}_a \rho_0 e(\rho_0) r^{n-1} \dd r \leq E^{\varepsilon,b}_0 < s_*.\label{1.12a}
\end{align}
Thus, since $t \mapsto \int^{b(t)}_a \rho e(\rho) r^{n-1} \dd r$ is continuous with respect to $t$,
we must have
\begin{align*}
        \int^{b(t)}_a \rho e(\rho) r^{n-1} \dd r < s_*,
\end{align*}
as otherwise we can apply the intermediate value theorem to derive a contradiction to \eqref{1.12}.
This implies
\begin{align}\label{1.15}
        & F(\int^{b(t)}_a \rho e(\rho) r^{n-1} \dd r) \nonumber\\
       & \geq \Big(1 - B_{n,\gamma,\alpha} M^{\frac{\gamma(2n - \alpha) - 2n}{n(\gamma -1)}}s_*^\frac{\alpha - n(\gamma - 1)}{n(\gamma - 1)}\Big)
         \int^{b(t)}_a \rho e(\rho) r^{n-1} \dd r \nonumber\\
        & = \frac{\alpha - n(\gamma-1)}{\alpha}\int^{b(t)}_a \rho e(\rho) r^{n-1} \dd r.
\end{align}
Therefore, using \eqref{1.7}, \eqref{1.9}, and \eqref{1.15}, we derive \eqref{eq2}.

\medskip
5. For Case 4, we first consider $\alpha =0$.
For $\boldsymbol{x},\boldsymbol{y} \in \mathbb{R}^n$ with $|\boldsymbol{x} - \boldsymbol{y}| \geq 1$,
we know that $\log(|\boldsymbol{x}| + |\boldsymbol{y}|) \geq \log(|\boldsymbol{x} - \boldsymbol{y}|) \geq 0$.
Then, if $|\boldsymbol{y}| \geq |\boldsymbol{x}|$,
    \[
    \log(|\boldsymbol{x}| + |\boldsymbol{y}|) = \frac{1}{2} \log ((|\boldsymbol{x}| + |\boldsymbol{y}|)^2) \leq \frac{1}{2} \log (4 + 4 |\boldsymbol{y}|^2) = \log 2 + \frac{1}{2} \log (1 +  |\boldsymbol{y}|^2).
    \]
If $|\boldsymbol{x}| \geq |\boldsymbol{y}|$, then
    \[
    \log(|\boldsymbol{x}| + |\boldsymbol{y}|) \leq \log 2 + \frac{1}{2} \log (1 +  |\boldsymbol{x}|^2).
    \]
Thus, we obtain that, for $\boldsymbol{x}, \boldsymbol{y} \in \mathbb{R}^n$ such that $|\boldsymbol{x} - \boldsymbol{y}|\geq 1$,
\begin{equation}\label{logtingaling}
\log(|\boldsymbol{x}| + |\boldsymbol{y}|)
\leq \log 4 + \frac{1}{2} \log (1 +  |\boldsymbol{x}|^2) + \frac{1}{2} \log (1 +  |\boldsymbol{y}|^2).
\end{equation}
Then, by \eqref{logtingaling} and the H\"older inequality, we have
\begin{align}\label{NewApproach}
& \bigg| \int^{b(t)}_a \rho (\Phi_0 * \rho) r^{n-1} \dd r \bigg|\nonumber \\
& = \frac{1}{\omega_n} \bigg|\int_{\mathbb{R}^n} \int_{\mathbb{R}^n} \rho(\boldsymbol{x}) \log (|\boldsymbol{x} - \boldsymbol{y}|) \rho(\boldsymbol{y}) \dd \boldsymbol{x} \dd \boldsymbol{y} \bigg|\nonumber \\
& \leq \frac{1}{\omega_n} \int_{\mathbb{R}^n} \rho(\boldsymbol{y})
  \Big( \int_{B_1(\boldsymbol{y})} \rho(\boldsymbol{x}) \log (|\boldsymbol{x} - \boldsymbol{y}|^{-1}) \dd \boldsymbol{x} + \int_{B_1(\boldsymbol{y})^c} \rho(\boldsymbol{x}) \log (|\boldsymbol{x} - \boldsymbol{y}|) \dd \boldsymbol{x} \Big) \dd \boldsymbol{y} \nonumber \\
& \leq \frac{1}{\omega_n} \int_{\mathbb{R}^n} \rho(\boldsymbol{y})
\bigg ( \| \rho \|_{L^\gamma(B_1(\boldsymbol{y}))} \|\log (|\cdot|) \|_{L^\frac{\gamma}{\gamma - 1}(B_1)} \nonumber \\
& \qquad \qquad \qquad\quad\,\, + \int_{|\boldsymbol{x} - \boldsymbol{y}| \geq 1} \rho(\boldsymbol{x})
 \Big(\log 4 + \frac{1}{2} \log (1 + |\boldsymbol{x}|^2) + \frac{1}{2} \log (1 +  |\boldsymbol{y}|^2) \Big) \dd \boldsymbol{x} \bigg)\dd \boldsymbol{y}\nonumber \\
& \leq C(M) \Big (1 + \| \rho \|_{L^\gamma(\Omega_t)} + \int_{\Omega_t} \rho(\boldsymbol{x}) \log(1+|\boldsymbol{x}|^2) \dd \boldsymbol{x}\Big) \nonumber \\
& \leq \frac{1}{2} \int^{b(t)}_a \rho e(\rho) r^{n-1} \dd r + C(M) \Big(1 + \int_a^{b(t)} \rho(r) \log(1+r^2) r^{n-1} \dd r \Big),
\end{align}
where we have applied the Young inequality in the final line.

\smallskip
6. For $\alpha \in (-1,0)$, we see that, for $\boldsymbol{x},\boldsymbol{y} \in \mathbb{R}^n$,
\begin{equation}\label{minus}
 |\Phi_\alpha(\boldsymbol{x} - \boldsymbol{y})|
 \leq \frac{2^{-\alpha}}{-\alpha} \big(k_\alpha(1 + |\boldsymbol{x}|^2) + k_\alpha(1 + |\boldsymbol{y}|^2) \big).
\end{equation}
Thus, employing a similar method to \eqref{NewApproach}, we use \eqref{minus} to obtain
\begin{equation}\label{moments}
        \left | \int^{b(t)}_a \rho (\Phi_\alpha * \rho) r^{n-1} \dd r \right | \leq \frac{2^{1 - \alpha} M}{- \alpha} \int^{b(t)}_a \rho k_\alpha(1 + r^2) r^{n-1} \dd r.
\end{equation}
Considering the latter integral in \eqref{NewApproach} and \eqref{moments},
but in generality for $\alpha \in (-1,0]$, from \eqref{Eul} and integration by parts, we obtain
\begin{align}\label{logtime}
        & \frac{\d}{\d t} \int^{b(t)}_a \rho k_\alpha(1 + r^2) r^{n-1} \dd r\nonumber\\
        & = \left (\rho u k_\alpha(1 + r^2) r \right )(t,b(t)) + \int^{b(t)}_a \rho_t k_\alpha(1 + r^2) r^{n-1} \dd r\nonumber\\
        & = \left (\rho u k_\alpha(1 + r^2) r \right )(t,b(t))
        - \int^{b(t)}_a k_\alpha(1 + r^2) ( \rho u r^{n-1})_r \dd r\nonumber\\
        & = \int^{b(t)}_a (k_\alpha(1 + r^2))_r  \rho u r^{n-1} \dd r\nonumber\\
        & \leq \int^{b(t)}_a \rho u r^{n-1} \dd r \nonumber\\
        & \leq \frac{1}{2} \int^{b(t)}_a  \rho(u^2 + 1) r^{n-1} \dd r\nonumber\\
        & = \frac{1}{2} \int^{b(t)}_a  \rho u^2 r^{n-1} \dd r + \frac{M}{2 \omega_n}.
\end{align}
Integrating \eqref{logtime} in $t$, we see that, for $\alpha \in (-1,0]$,
\begin{equation}\label{logtimeint}
    \int^{b(t)}_a \rho k_\alpha(1 + r^2) r^{n-1} \dd r \leq \int^{b}_a \rho_0 k_\alpha(1 + r^2) r^{n-1} \dd r + \frac{Mt}{2\omega_n} + \frac{1}{2} \int^t_0 \int^{b(s)}_a \rho u^2 r^{n-1} \dd r \dd s.
\end{equation}
Hence, combining \eqref{NewApproach}, \eqref{moments}, and \eqref{logtimeint}, we obtain
\begin{align}\label{toit}
&\bigg| \int^{b(t)}_a \rho (\Phi_\alpha * \rho) r^{n-1} \dd r \bigg|
 + \int^{b(t)}_a \rho k_\alpha(1 + r^2) r^{n-1} \dd r \nonumber\\
& \leq \frac{1}{2} \int^{b(t)}_a \rho e(\rho) r^{n-1} \dd r
 + C(M,T) \Big(1 + \int^{b}_a \rho_0 k_\alpha(1 + r^2) r^{n-1} \dd r + \int^t_0 \int^{b(s)}_a \rho u^2 r^{n-1} \dd r \dd s \Big).
\end{align}
Combining \eqref{1.7} and \eqref{toit}, we obtain that, for $\alpha \in (-1,0]$,
\begin{align}\label{pregron}
        & \frac{1}{2} \int^{b(t)}_a \rho \big(u^2 + e(\rho) + k_\alpha(1 + r^2) \big)r^{n-1} \dd r
        + \varepsilon \int^t_0\int^{b(s)}_a \rho \big(u_r^2 + \frac{u^2}{r^2} \big) r^{n-1} \dd r \dd s\nonumber\\
        &\quad\, + \varepsilon \int^t_0 (\rho u^2)(s,b(s)) b(s)^{n-2} \dd s\nonumber\\
        &\leq \int^b_a \rho_0 \Big(\frac{1}{2}u_0^2 + e(\rho_0) + \frac{\kappa}{2} (\Phi_\alpha * \rho_0) \Big)r^{n-1} \dd r\nonumber\\
        &\quad\, + C(M,T) \bigg(1 + \int^{b}_a \rho_0 k_\alpha(1 + r^2) r^{n-1} \dd r + \int^t_0 \int^{b(s)}_a \rho u^2 r^{n-1} \dd r \dd s\bigg).
\end{align}
Applying the Gr\"{o}nwall inequality on \eqref{pregron} and then using \eqref{epinitial2}, we conclude
\begin{align*}
 &\frac{1}{2} \int^{b(t)}_a \rho \big(u^2 + e(\rho) + k_\alpha(1 + r^2) \big)r^{n-1} \dd r
 + \varepsilon \int^t_0\int^{b(s)}_a \rho \big(u_r^2 + \frac{u^2}{r^2} \big) r^{n-1} \dd r \dd s \\
       &\quad\, + \varepsilon \int^t_0 (\rho u^2)(s,b(s)) b(s)^{n-2} \dd s \\
       & \leq C(M,T) \bigg( 1 + \int^b_a \rho_0 \Big(\frac{1}{2}u_0^2 + e(\rho_0) + \frac{\kappa}{2} (\Phi_\alpha *\rho_0) + k_\alpha(1 + r^2) \Big)r^{n-1} \dd r \bigg) \\
    &  \leq C(M,E^{\varepsilon,b}_0,T).
\end{align*}
\end{proof}

\begin{remark}\label{riesztolog}
 Adding $\frac{\kappa M^2}{2\alpha} = \frac{\kappa}{2} \int^{b(t)}_a \rho (\frac{1}{\alpha} * \rho)r^{n-1} \dd r
 = \frac{\kappa}{2} \int^{b}_a \rho_0 (\frac{1}{\alpha} * \rho_0)r^{n-1} \dd r$ to both sides of \eqref{firstest}
 yields
 \begin{align}\label{exper}
 &\int^{b(t)}_a \rho \Big(\frac{1}{2}u^2 + e(\rho) + \frac{\kappa}{2}
 \big((\frac{1}{\alpha} + \Phi_\alpha \big) * \rho \big) \Big)r^{n-1} \dd r
 + \varepsilon \int^t_0\int^{b(s)}_a \rho \Big(u_r^2 + (n-1)\frac{u^2}{r^2} \Big) r^{n-1} \dd r\dd s \nonumber\\
 &\quad + (n-1)\varepsilon\int^t_0 (\rho u^2)(s,b(s))b(s)^{n-2} \dd s \nonumber\\
 &= \int^b_a \rho_0 \Big(\frac{1}{2}u_0^2 + e(\rho_0) + \frac{\kappa}{2}
  \big((\frac{1}{\alpha} + \Phi_\alpha) * \rho_0 \big) \Big)r^{n-1} \dd r.
    \end{align}
We can formally see that, using \eqref{kernelcon}, as $\alpha \to 0$,
\eqref{exper} converges to
    \begin{align*}
        \begin{split}
            &\int^{b(t)}_a \rho \Big(\frac{1}{2}u^2 + e(\rho) + \frac{\kappa}{2} ( \Phi_0 * \rho) \Big)r^{n-1} \dd r
            + \varepsilon \int^t_0\int^{b(s)}_a \rho \Big(u_r^2 + (n-1)\frac{u^2}{r^2} \Big) r^{n-1} \dd r\dd s \\
            &\quad\, + (n-1)\varepsilon\int^t_0 (\rho u^2)(s,b(s))b(s)^{n-2} \dd s \\
            &= \int^b_a \rho_0 \Big(\frac{1}{2}u_0^2 + e(\rho_0) + \frac{\kappa}{2} (\Phi_0 * \rho_0) \Big)r^{n-1} \dd r.
        \end{split}
    \end{align*}
\end{remark}

\begin{corollary}[Uniform Estimates]\label{uniform}
Under the conditions of {\rm Lemma \ref{Energy}}, the following uniform estimates in $(\varepsilon,b)$
{\rm (}for $\varepsilon$ small enough and $b$ large enough{\rm )} hold{\rm :}

\smallskip
\noindent{Case} {\rm 1:} $\kappa = -1$ with $\gamma > 1$ for $\alpha \in (0,n-2]$ or $\gamma > \frac{2n}{2n - \alpha}$ for $\alpha \in (n-2,n-1)$,
\begin{align}\label{ueq1}
\int_{\mathbb{R}^n} \rho \big(1 + u^2 + \rho^{\gamma - 1} - \Phi_\alpha * \rho\big)
\dd \boldsymbol{x} \leq C(M,E_0);
\end{align}

\noindent{Case} {\rm 2:} $\alpha \in (0,n-1)$ and $\kappa = 1$ with $\gamma > \frac{2n}{2n - \alpha}$,
\begin{align}\label{ueq2}
\int_{\mathbb{R}^n} \rho \big(1 + u^2 + \rho^{\gamma-1} \big) \dd \boldsymbol{x}
   \leq C(M,E_0);
\end{align}

\noindent{Case} {\rm 3:} $\alpha \in (-1,0]$ and $\kappa \in\{-1,1\}$ with $\gamma > 1$,
\begin{align}\label{ueq3}
\int_{\mathbb{R}^n} \rho \big(1 + u^2 + \rho^{\gamma - 1} + k_\alpha(1 + |\boldsymbol{x}|^2) \big)
\dd \boldsymbol{x} \leq C(M,E_0,T),
    \end{align}
where we have used the convention that $\rho$ and $u$ are zero outside of $\Omega_t$.
In particular, the following uniform estimates in $(\varepsilon,b)$
{\rm (}for $\varepsilon$ small enough and $b$ large enough{\rm )} hold{\rm :} For {\rm Cases 1--2},
\begin{align*}
\|(\Phi_\alpha*\rho)(t)\|_{L^{\frac{2n}{\alpha}}(\mathbb{R}^n)}
+\|(\nabla \Phi_\alpha*\rho)(t)\|_{L^\frac{2n}{\alpha + 2}(\mathbb{R}^n)} \leq C(M,E_0);
    \end{align*}
and for {\rm Case 3}, for any $K \Subset \mathbb{R}^n$, there exists $C(M,E_0,T,K)>0$ such that
\begin{align*}
\|(\Phi_\alpha*\rho)(t)\|_{L^\infty(K)}
+\|(\nabla \Phi_\alpha*\rho)(t)\|_{L^\frac{2n}{\alpha + 2}(K)} \leq C(M,E_0,T,K).
\end{align*}
\end{corollary}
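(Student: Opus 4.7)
The plan is to combine the energy identities of Lemma~\ref{Energy} with mass conservation and standard convolution inequalities. First, convert the radial integrals to integrals on $\mathbb{R}^n$ via $\omega_n\int_a^{b(t)} f(r)\,r^{n-1}\,\dd r = \int_{\Omega_t} f(|\boldsymbol{x}|)\,\dd \boldsymbol{x}$, extending $\rho$ and $u$ by zero outside $\Omega_t$. The conservation $\int_{\mathbb{R}^n}\rho\,\dd\boldsymbol{x}=M$ accounts for the ``$1$'' appearing in \eqref{ueq1}--\eqref{ueq3}, and the uniformity in $(\varepsilon,b)$ follows from the bound $E^{\varepsilon,b}_0\leq C(E_0)$ guaranteed by the construction of the approximate initial data.

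For \eqref{ueq1} (Case~1, $\kappa=-1$, $\alpha\in(0,n-1)$), since $\Phi_\alpha=-|\boldsymbol{x}|^{-\alpha}/\alpha<0$, every term on the left of the energy identity of Lemma~\ref{Energy} Case~1 is pointwise non-negative and thus separately bounded by $E_0$. For \eqref{ueq2} (Case~2, $\kappa=1$), Lemma~\ref{Energy} Case~2 controls $\int\rho(u^2/2+C_\gamma e(\rho))\,\dd\boldsymbol{x}$ in the critical/sub-critical mass regime, while Lemma~\ref{Energy} Case~3 controls $\int\rho(u^2+e(\rho))\,\dd\boldsymbol{x}$ for $\gamma>(n+\alpha)/n$; in either sub-case the uniform bound on $\int\rho^\gamma\,\dd\boldsymbol{x}$ follows immediately. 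For \eqref{ueq3} (Case~3, $\alpha\in(-1,0]$), the bound is an immediate restatement of Lemma~\ref{Energy} Case~4.

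For the convolution estimates with $\alpha\in(0,n-1)$, interpolate $\rho\in L^1\cap L^\gamma$ using $\gamma\geq 2n/(2n-\alpha)$ (granted by the hypotheses of Cases~1--2) to get $\|\rho\|_{L^{2n/(2n-\alpha)}}\leq C(M,E_0)$, and apply the Hardy--Littlewood--Sobolev inequality (Lemma~\ref{HLSineq}) to the pointwise dominations $|\Phi_\alpha*\rho|\leq \alpha^{-1}|\cdot|^{-\alpha}*\rho$ and $|\nabla\Phi_\alpha*\rho|\leq|\cdot|^{-(\alpha+1)}*\rho$, yielding the claimed $L^{2n/\alpha}$ and $L^{2n/(\alpha+2)}$ bounds. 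For the local bounds in Case~3 ($\alpha\in(-1,0]$), HLS is unavailable, so I argue locally: fix $K\Subset\mathbb{R}^n$ and split the convolution into its singular part on $\{|\boldsymbol{x}-\boldsymbol{y}|\leq 1\}$, bounded in $L^\infty(K)$ by H\"older using the uniform $L^\gamma$ bound on $\rho$ together with the local $L^{\gamma'}$-integrability of $\Phi_\alpha$ (the logarithm lies in every $L^p_\text{loc}$, and $|\cdot|^{-\alpha}$ is bounded near the origin for $\alpha<0$), and a regular part on $\{|\boldsymbol{x}-\boldsymbol{y}|>1\}$, controlled via the pointwise estimates \eqref{logtingaling} and \eqref{minus} together with the moment bound from \eqref{ueq3}. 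The same decomposition with $|\nabla\Phi_\alpha|\leq|\cdot|^{-(\alpha+1)}$ yields the $L^{2n/(\alpha+2)}(K)$ bound for $\nabla\Phi_\alpha*\rho$. This final step is the most technical, as the near-singular and far-field contributions must be matched consistently using only the local-$L^\gamma$ integrability and the $(-\alpha)$-moment bounds available from \eqref{ueq3}.
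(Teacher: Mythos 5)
Most of your argument matches the paper's proof: the derivation of \eqref{ueq1}--\eqref{ueq3} from Lemma \ref{Energy} plus mass conservation and $E_0^{\varepsilon,b}\to E_0$, the sign observation for the repulsive potential energy, and the near/far splitting for the local bounds when $\alpha\in(-1,0]$ are all essentially what the paper does. But there is one genuine gap in your treatment of the convolution estimates for Case~1.

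You assert that $\gamma\geq \frac{2n}{2n-\alpha}$ is ``granted by the hypotheses of Cases 1--2,'' and then interpolate $\rho\in L^1\cap L^\gamma$ to get $\rho\in L^{2n/(2n-\alpha)}$ before applying HLS. This is false for Case~1 with $\alpha\in(0,n-2]$: there the hypothesis is only $\gamma>1$, and since $\frac{2n}{2n-\alpha}>1$ one may well have $1<\gamma<\frac{2n}{2n-\alpha}$, in which case interpolation between $L^1$ and $L^\gamma$ only reaches exponents up to $\gamma$ and never produces the $L^{2n/(2n-\alpha)}$ bound that HLS requires (both for $\Phi_\alpha*\rho$ in $L^{2n/\alpha}$ and for $\nabla\Phi_\alpha*\rho$ in $L^{2n/(\alpha+2)}$, which need the \emph{same} source exponent $\frac{2n}{2n-\alpha}$). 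The paper's interpolation argument is explicitly restricted to ``Case~2 or $\kappa=-1$ and $\gamma>\frac{2n}{2n-\alpha}$''; for the remaining sub-Coulomb repulsive range it uses a different mechanism, namely the Riesz composition formula (Theorem \ref{composition}): writing $|\cdot|^{-\alpha}=\kappa_{\frac{n-\alpha}{2},\frac{n-\alpha}{2}}^{-1}\,|\cdot|^{-\frac{n+\alpha}{2}}*|\cdot|^{-\frac{n+\alpha}{2}}$ gives
$\bigl\||\cdot|^{-\frac{n+\alpha}{2}}*\rho\bigr\|_{L^2}^2=\kappa_{\frac{n-\alpha}{2},\frac{n-\alpha}{2}}\int\rho\,(|\cdot|^{-\alpha}*\rho)\,\dd\boldsymbol{x}$, and this quantity is exactly (a constant times) the repulsive potential energy, which is already controlled by \eqref{ueq1}. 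A single further application of HLS starting from this $L^2$ bound then lands in $L^{2n/\alpha}$ (and, shifting one exponent by $1$, in $L^{2n/(\alpha+2)}$ for the gradient). Note that this route is only available because $\kappa=-1$ makes the potential energy nonnegative and hence bounded; in the attractive case it is unavailable, which is precisely why Case~2 carries the restriction $\gamma>\frac{2n}{2n-\alpha}$. Your proposal needs this additional argument to cover Case~1 with $\alpha\in(0,n-2]$ and small $\gamma$.
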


\begin{proof}
Estimates \eqref{ueq1}--\eqref{ueq3} follow from Lemma \ref{Energy}
and the fact that $\int^{b(t)}_a \rho r^{n-1} \dd r =  \frac{M}{\omega_n}$, $e(\rho) = \frac{a_0}{\gamma - 1} \rho^{\gamma - 1}$,
and
$E_0^{\v,b} \to E_0$ as $b \to \infty$ and $\varepsilon \to 0$ due to Lemma \ref{appendix5}.

For the second part of the corollary, for Case 2 or $\kappa = -1$ and $\gamma > \frac{2n}{2n - \alpha}$, by the HLS inequality
in Lemma \ref{simplehls} and interpolation, we have
\begin{align*}
&\|(\Phi_\alpha*\rho)(t)\|_{L^{\frac{2n}{\alpha}}(\mathbb{R}^n)}
  \leq \frac{C_{n,\alpha}}{\alpha} \|\rho(t)\|_{L^1(\mathbb{R}^n)}^\frac{\gamma (2n - \alpha) - 2n}{n(\gamma - 1)}
  \|\rho(t)\|_{L^\gamma(\mathbb{R}^n)}^\frac{\alpha \gamma}{n(\gamma - 1)} \leq C(M,E_0), \\
&\|(\nabla \Phi_\alpha*\rho)(t)\|_{L^{\frac{2n}{\alpha + 2}}(\mathbb{R}^n)}
\leq \frac{C_{n,\alpha + 1}}{\alpha} \|\rho(t)\|_{L^1(\mathbb{R}^n)}^\frac{\gamma (2n - \alpha) - 2n}{n(\gamma - 1)}
\|\rho(t)\|_{L^\gamma(\mathbb{R}^n)}^\frac{\alpha \gamma}{n(\gamma - 1)} \leq C(M,E_0),
\end{align*}
where the final inequalities follow from the first part of this corollary.

For Case 1, we use the Riesz composition formula in Theorem \ref{composition}, the HLS inequality in Lemma \ref{simplehls},
and the Tonelli theorem to obtain
\begin{align*}
 \| \Phi_\alpha * \rho \|_{L^\frac{2n}{\alpha}(\mathbb{R}^n)}
 & = \frac{1}{\alpha}\big\| |\cdot|^{-\alpha} * \rho\big\|_{L^\frac{2n}{\alpha}(\mathbb{R}^n)} \\
 & = \frac{1}{\alpha \kappa_{\frac{n-\alpha}{2},\frac{n-\alpha}{2}}}
 \Big\| |\cdot|^{-\frac{n+\alpha}{2}} * \big(|\cdot|^{-\frac{n+\alpha}{2}} * \rho \big)\Big\|_{L^\frac{2n}{\alpha}(\mathbb{R}^n)} \\
 & \leq \frac{C_{n,\frac{n+\alpha}{2}}}{\alpha \kappa_{\frac{n-\alpha}{2},\frac{n-\alpha}{2}}}
 \Big\| |\cdot|^{-\frac{n+\alpha}{2}} * \rho \Big\|_{L^2(\mathbb{R}^n)} \\
 & = \frac{C_{n,\frac{n+\alpha}{2}}}{\alpha \kappa_{\frac{n-\alpha}{2},\frac{n-\alpha}{2}}}
 \int_{\mathbb{R}^n} \rho\,\Big( |\cdot|^{-\frac{n+\alpha}{2}}* \big(|\cdot|^{-\frac{n+\alpha}{2}} * \rho\big) \Big) \dd \boldsymbol{x} \\
 & = - C_{n,\frac{n+\alpha}{2}} \int_{\mathbb{R}^n} \rho ( \Phi_\alpha * \rho) \dd \boldsymbol{x} \\
 & \leq C(M,E_0),
    \end{align*}
and, for $\alpha \in (0,n-2)$,
\begin{align}\label{comp}
\| \nabla \Phi_\alpha * \rho \|_{L^\frac{2n}{\alpha + 2}(\mathbb{R}^n)}
& \leq \frac{1}{\alpha}\big\| |\cdot|^{-(\alpha + 1)} * \rho\big\|_{L^\frac{2n}{\alpha + 2}(\mathbb{R}^n)}\nonumber \\
& = \frac{1}{\alpha \kappa_{\frac{n-\alpha}{2} - 1,\frac{n-\alpha}{2}}}
 \Big\| |\cdot|^{-\left ( 1 + \frac{n+\alpha}{2} \right )} * \big(|\cdot|^{-\frac{n+\alpha}{2}} * \rho \big) \Big\|_{L^\frac{2n}{\alpha + 2}(\mathbb{R}^n)} \nonumber\\
& \leq \frac{C_{n,\frac{n+\alpha}{2} + 1}}{\alpha \kappa_{\frac{n-\alpha}{2} - 1,\frac{n-\alpha}{2}}}
\Big\| |\cdot|^{-\frac{n+\alpha}{2}} * \rho \Big \|_{L^2(\mathbb{R}^n)},
\end{align}
where $\kappa_{\alpha,\beta}$ is defined in \eqref{A.4a}.

\smallskip
For $\alpha = n-2$, it follows directly
that
\begin{align}\label{newtcase}
    \| \Phi_\alpha * \rho \|_{L^\frac{2n}{\alpha}(\mathbb{R}^n)}
    & \leq \Big\| |\cdot|^{-\frac{n+\alpha}{2}} * \rho \Big\|_{L^2(\mathbb{R}^n)}.
\end{align}
Now, applying Theorem \ref{composition} and the Tonelli theorem, we have
\begin{align}\label{lastone}
        \Big\| |\cdot|^{-\frac{n+\alpha}{2}} * \rho \Big\|_{L^2(\mathbb{R}^n)}
        & = \int_{\mathbb{R}^n} \rho\,\Big( |\cdot|^{-\frac{n+\alpha}{2}} * \big(|\cdot|^{-\frac{n+\alpha}{2}} * \rho \big) \Big)
        \dd \boldsymbol{x}\nonumber \\
        & = - \kappa_{\frac{n-\alpha}{2},\frac{n-\alpha}{2}}\int_{\mathbb{R}^n} \rho ( \Phi_\alpha * \rho) \dd \boldsymbol{x} \nonumber\\
        & \leq C(M,E_0).
\end{align}
Tying together \eqref{comp}--\eqref{lastone},
we obtain that, for $\kappa = -1$ and $\alpha \in (0,n-2]$,
\begin{align*}
    \| \nabla \Phi_\alpha * \rho \|_{L^\frac{2n}{\alpha + 2}(\mathbb{R}^n)} \leq C(M,E_0).
\end{align*}

\smallskip
For $\alpha = 0$, for simplicity, we may assume that $K = \overline{B_D(\mathbf{0})}$ for some $D > 0$, where $B_D(\mathbf{0})$ denotes the ball with origin at $\mathbf{0}$ and radius $D$.
Notice that, for any $\boldsymbol{x} \in K$ and $\boldsymbol{y} \in \mathbb{R}^n$ with
$|\boldsymbol{x} - \boldsymbol{y}| \geq 1$, $\log(|\boldsymbol{x}| + |\boldsymbol{y}|) \geq \log(|\boldsymbol{x} - \boldsymbol{y}|) \geq 0$.
Then, if $\boldsymbol{y} \in K$,
\[
\log(|\boldsymbol{x}| + |\boldsymbol{y}|) \leq \log (1 + 2D).
\]
If $\boldsymbol{y} \in \mathbb{R}^n \setminus K$, then
\[
\log(|\boldsymbol{x}| + |\boldsymbol{y}|) = \frac{1}{2} \log ((|\boldsymbol{x}| + |\boldsymbol{y}|)^2) \leq \frac{1}{2} \log (4 + 4 |\boldsymbol{y}|^2) = \log 2 + \frac{1}{2} \log (1 +  |\boldsymbol{y}|^2).
\]
Thus, for $\boldsymbol{y} \in \mathbb{R}^n$, we have
\begin{equation}\label{logtings}
\log(|\boldsymbol{x}| + |\boldsymbol{y}|) \leq \log (1 + 2D) + \log 2 + \frac{1}{2} \log (1 +  |\boldsymbol{y}|^2) = \log (2 + 4D) + \frac{1}{2} \log (1 +  |\boldsymbol{y}|^2).
\end{equation}
Then, for $\boldsymbol{x} \in K$, we apply the H\"older inequality, \eqref{logtings},
and \eqref{eq4} to obtain
\begin{align}\label{logest}
            & |(\Phi_0 * \rho)(\boldsymbol{x})| \nonumber\\
            & \leq \int_{B_1(\boldsymbol{x})} \log(|\boldsymbol{x} - \boldsymbol{y}|^{-1}) \rho(\boldsymbol{y}) \dd \boldsymbol{y} + \int_{B_1(\boldsymbol{x})^c} \log(|\boldsymbol{x} - \boldsymbol{y}|) \rho(\boldsymbol{y}) \dd \boldsymbol{y} \nonumber\\
            & \leq \| \log(|\cdot|) \|_{L^\frac{\gamma}{\gamma - 1}(B_1)} \|\rho\|_{L^\gamma(B_1(\boldsymbol{x}))}
            + \int_{B_1(\boldsymbol{x})^c} \big(\log (2 + 4D) + \frac{1}{2} \log (1 +  |\boldsymbol{y}|^2) \big) \rho(\boldsymbol{y}) \dd \boldsymbol{y}\nonumber\\
            & \leq C \|\rho\|_{L^\gamma(B_1(\boldsymbol{x}))} + \log (2 + 4D) \|\rho\|_{L^1(B_1(\boldsymbol{x})^c)} + \frac{1}{2} \int_{B_1(\boldsymbol{x})^c} \log (1 +  |\boldsymbol{y}|^2)  \rho(\boldsymbol{y}) \dd \boldsymbol{y} \nonumber\\
            & \leq C(K,M,E_0,T).
    \end{align}
Therefore, we have
    \[
    \|\Phi_0 * \rho\|_{L^\infty(K)} \leq C(K,M,E_0,T).
    \]

For $\alpha \in (-1,0)$, we assume again that $K = \overline{B_D}$ for some $D > 0$.
Using \eqref{minus}, we obtain that, for any $\boldsymbol{x} \in K$ and $y \in \mathbb{R}^n$,
    \begin{equation}\label{minustings}
        |\Phi_\alpha(\boldsymbol{x} - \boldsymbol{y})| \leq \frac{2^{-\alpha}}{-\alpha} \left  (k_\alpha(1 + D^2) + k_\alpha(1 + |\boldsymbol{y}|^2) \right ).
    \end{equation}
Thus, similar to \eqref{logest}, using \eqref{minustings} and \eqref{eq4}, we have
    \begin{align*}
        \begin{split}
            |(\Phi_\alpha * \rho)(\boldsymbol{x})|
            & \leq \frac{2^{-\alpha}}{-\alpha} \int_{\mathbb{R}^n} \left  (k_\alpha(1 + D^2) + k_\alpha(1 + |\boldsymbol{y}|^2) \right ) \rho(\boldsymbol{y}) \dd \boldsymbol{y} \\
            & \leq C(K,M,E_0,T).
        \end{split}
    \end{align*}
Then we obtain
    \[
    \|\Phi_\alpha * \rho\|_{L^\infty(K)} \leq C(K,M,E_0,T).
    \]

\smallskip
For $\alpha \in (-1,0]$, take $1 < p < \min\{\frac{2n}{\alpha + 2},\gamma \}$.
Applying the HLS inequality in Lemma \ref{simplehls}, interpolation, and \eqref{eq4}, we have
 \begin{align*}
& \|\nabla \Phi_\alpha * \rho \|_{L^\frac{2n}{\alpha + 2}(K)} \\
& \leq \| (|\cdot|^{-(\alpha + 1)} \mathds{1}_{B_1}) * \rho \|_{L^\frac{2n}{\alpha + 2}(K)} + \| (|\cdot|^{-(\alpha + 1)} \mathds{1}_{B_1^c}) * \rho \|_{L^\frac{2n}{\alpha + 2}(K)} \\
& \leq \| (|\cdot|^{-(1 + \frac{\alpha}{2} + n(1-\frac{1}{p}))}) * \rho \|_{L^\frac{2n}{\alpha + 2}(K)} + \| \mathds{1}_{B_1^c} * \rho \|_{L^\frac{2n}{\alpha + 2}(K)} \\
& \leq C_{n,1 + \frac{\alpha}{2}+ n(1-\frac{1}{p})} \|\rho\|_{L^p(\mathbb{R}^n)} + |K|^{\frac{\alpha + 2}{2n}} \|\rho\|_{L^1(\mathbb{R}^n)} \\
& \leq C_{n,1 + \frac{\alpha}{2} + n(1-\frac{1}{p})} \|\rho\|_{L^1(\mathbb{R}^n)}^{\frac{\gamma - p}{p(\gamma - 1)}} \|\rho\|_{L^\gamma(\mathbb{R}^n)}^{\frac{\gamma(p-1)}{p(\gamma - 1)}} + |K|^{\frac{\alpha + 2}{2n}} \|\rho\|_{L^1(\mathbb{R}^n)} \\
& \leq C(K,M,E_0,T).
  \end{align*}
\end{proof}

\begin{remark}
It is still open for the repulsive case $\kappa = -1$ and $\alpha \in (n-2,n-1)$
without any restriction on $\gamma$,
owing to the fact that the $L^\frac{2n}{\alpha + 2}$--norm
of the derivative of the potential term cannot be bounded by the potential energy of the system.
\end{remark}

\subsection{Expanding the domain}
We need to ensure that, as $b \to \infty$, our domain $\Omega^T$ converges to the whole space
$\mathbb{R}^n$.

\begin{lemma}[Expanding of domain $\Omega^T$]\label{expand}
Given $T>0$ and $\v\in(0,\v_0]$, there exists a positive constant $C_1(M,E_0,T,\v)>0$ such that,
if $b\geq \max\{C_1(M,E_0,T,\v),\mathfrak{B}(\v)\}$ {\rm (}as defined in {\rm Lemma \ref{appendix5}}{\rm )},
	\begin{align}\label{2.60}
		b(t)\geq \frac{b}{2}\qquad\,\, \mbox{for $t\in[0,T]$}.
	\end{align}
\end{lemma}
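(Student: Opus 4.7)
The argument is a continuity/contradiction combined with the stress-free boundary condition. Set
\[
t^\ast := \sup\big\{ t \in [0,T] \,:\, b(s) \geq \tfrac{b}{2} \text{ for all } s \in [0,t] \big\},
\]
and assume for contradiction that $t^\ast < T$, so $b(t^\ast) = b/2$ and $b(s) \geq b/2$ on $[0,t^\ast]$. Integrating $b'(s) = u(s,b(s))$ and applying Cauchy--Schwarz with weight $\sqrt{\rho(s,b(s))\, b(s)^{n-2}}$ gives
\[
\Big(\tfrac{b}{2}\Big)^2 \leq \Big(\int_0^{t^\ast} |u(s,b(s))|\,\dd s\Big)^2 \leq I_1 \cdot I_2,
\]
with
\[
I_1 := \int_0^{t^\ast} (\rho u^2)(s,b(s))\, b(s)^{n-2}\, \dd s,\qquad I_2 := \int_0^{t^\ast} \frac{\dd s}{\rho(s,b(s))\, b(s)^{n-2}}.
\]
The boundary term in the energy identity \eqref{firstest} from {\rm Lemma \ref{Energy}} immediately yields $I_1 \leq \frac{C(M,E_0,T)}{\varepsilon (n-1)}$.

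To control $I_2$, I would derive a closed ODE for $\rho(\cdot,b(\cdot))$. Using $b'(t)=u(t,b(t))$ together with the chain rule applied to $\eqref{Eul}_1$ evaluated at $r=b(t)$,
\[
\frac{\dd}{\dd t}\rho(t,b(t)) = -\rho(t,b(t))\,\Big(u_r + \tfrac{n-1}{r}u\Big)(t,b(t)).
\]
The stress-free boundary condition in \eqref{boundary} provides the pointwise identity $\big(u_r + \tfrac{n-1}{r} u\big)(t,b(t)) = \tfrac{a_0}{\varepsilon}\rho(t,b(t))^{\gamma-1}$, whence
\[
\frac{\dd}{\dd t}\rho(t,b(t)) = -\frac{a_0}{\varepsilon}\, \rho(t,b(t))^\gamma,
\]
which integrates explicitly to
\[
\rho(t,b(t))^{-1} = \Big(\rho_0^{\varepsilon,b}(b)^{1-\gamma} + \frac{a_0(\gamma-1)\, t}{\varepsilon}\Big)^{1/(\gamma-1)}.
\]
Combined with the lower bound on $\rho_0^{\varepsilon,b}(b)$ supplied by the approximate initial data constructed in {\rm Lemma \ref{appendix1}} (which we may take uniform in $b$ for fixed $\varepsilon$) and with $b(s)\geq b/2$, this furnishes $I_2 \leq C(T,\gamma,\varepsilon)\,(b/2)^{-(n-2)}$, with the convention $(b/2)^0 = 1$ for $n=2$.

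Combining these bounds yields $(b/2)^n \leq C_\ast(M,E_0,T,\varepsilon,\gamma)$, which contradicts the hypothesis $b \geq C_1(M,E_0,T,\varepsilon)$ once we set $C_1 := 2C_\ast^{1/n}$. Hence $t^\ast = T$, proving \eqref{2.60}. The principal obstacle is the derivation of the autonomous boundary ODE: without invoking the stress-free boundary condition to convert $(u_r + \tfrac{n-1}{r}u)|_{r=b(t)}$ into a pointwise function of $\rho|_{r=b(t)}$, there is no pointwise control of $u_r$ on the free boundary, and no closed estimate for $\frac{\dd}{\dd t}\rho(t,b(t))$ is available. A secondary delicate point is that the construction in {\rm Lemma \ref{appendix1}} must supply a lower bound on $\rho_0^{\varepsilon,b}(b)$ independent of $b$ (allowing $\varepsilon$-dependence), without which $I_2$ would inherit an uncontrolled $b$-dependence that could prevent closing the contradiction.
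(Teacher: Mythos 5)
Your overall strategy is the right one, and it is essentially the argument the paper invokes (the paper defers to \cite[Lemma 3.4]{Chen2021}, which runs exactly this Cauchy--Schwarz splitting of $\int_0^{t^*}|u(s,b(s))|\,\dd s$ against the boundary dissipation term in \eqref{firstest}, together with the boundary ODE $\frac{\dd}{\dd t}\rho(t,b(t))=-\frac{a_0}{\v}\rho(t,b(t))^{\gamma}$ obtained from $\eqref{Eul}_1$ and the stress-free condition \eqref{boundary}). The bound on $I_1$ and the derivation and explicit solution of the boundary ODE are correct.

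The gap is in your estimate of $I_2$, specifically in the ``secondary delicate point'' you flag: you require a lower bound on $\rho_0^{\v,b}(b)$ that is uniform in $b$, and this is \emph{false} for the approximate data actually constructed. Appendix B imposes $\rho_0^{\v,b}(b)\cong b^{-(n-\beta)}$ with $\beta=\min\{\tfrac12,(1-\tfrac1\gamma)n\}$ (see \eqref{b}), so $\rho_0^{\v,b}(b)\to 0$ as $b\to\infty$; this decay cannot be removed, since it is used elsewhere (e.g.\ to kill the boundary terms $\rho_0(b)b^{n-1}$ and $p(\rho_0(b))b^{n}$ in the BD-type entropy estimates). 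Consequently your claim $I_2\leq C(T,\gamma,\v)(b/2)^{-(n-2)}$ and the resulting ``$(b/2)^n\leq C_*$'' do not hold. The argument nevertheless closes once you carry the correct $b$-dependence through: from the solved ODE,
\[
\rho(t,b(t))^{-1}=\Big(\rho_0^{\v,b}(b)^{1-\gamma}+\tfrac{a_0(\gamma-1)t}{\v}\Big)^{\frac{1}{\gamma-1}}\leq C(T,\v)\,b^{\,n-\beta}
\qquad\mbox{for $b$ large},
\]
so that, on $[0,t^*]$ where $b(s)\geq b/2$,
\[
I_2\leq C(T,\v)\,T\,b^{\,n-\beta}\,(b/2)^{-(n-2)}\leq C(M,E_0,T,\v)\,b^{\,2-\beta},
\]
and hence $(b/2)^2\leq I_1 I_2\leq C(M,E_0,T,\v)\,b^{\,2-\beta}$, i.e.\ $b^{\beta}\leq C(M,E_0,T,\v)$. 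Since $\beta>0$, this is the desired contradiction for $b\geq C_1(M,E_0,T,\v)$. In short: the construction does not, and must not, give you a $b$-independent lower bound on the boundary density; the choice of $\beta>0$ is precisely calibrated so that the polynomial loss $b^{\,n-\beta}$ in $I_2$ is still beaten by the $b^2$ on the left.
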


This theorem can be achieved by following the same argument as in \cite[Lemma 3.4]{Chen2021}.
Therefore, as $b \to \infty$, $\lim_{b \to \infty} \inf_{t \in [0,T]} b(t) = \infty$, as we desire.

\subsection{BD-type entropy}
We now need to make uniform estimates on the derivatives of the density in order
to apply the Aubin-Lions lemma to prove the convergence
of $(\rho^{\v,b}, \rho^{\v,b}u^{\v,b})$ as $b \to \infty$
to global weak solutions $(\rho^\v, \rho^\v u^\v)$ of \eqref{0.2}.
We do so by proving a BD entropy bound, taking advantage of the fact that
our viscosity terms
satisfy $\lambda(\rho) = \rho \mu'(\rho) - \mu(\rho)$.
We have two different BD-type estimates: The first is to follow the same approach
as in \cite{Chen2021} for $\alpha \in (-1,n-2]$ due to the inability to
integrate by parts rigorously for $\alpha > n-2$,
as $\Phi_\alpha * \rho$ is only twice differentiable for $\alpha \in (-1,n-2]$.
This approach fails to provide a suitable estimate in the attractive case $\kappa = 1$, due to the lack of the control of the boundary term $a^{n-1} (\rho(\Phi_\alpha * \rho)_r)(t,a)$
as we have no {\it a priori} of the density at the lower boundary $r = a$ except for \eqref{away}.
Moreover, this is insufficient, as we require this to be a uniform bound in $b$,
which is not the case.
Although, for $\alpha = n-2$, we have no issue, as we can show that the boundary term disappears.
However, in the repulsive case $\kappa = -1$, we can show that the boundary term has favourable sign
and can thus be controlled.

\begin{lemma}[BD-Type Entropy Estimate I]\label{oldBD}
Under the conditions of {\rm Lemma {\rm \ref{Energy}}},
for $\kappa = -1$ with $\alpha \in (-1,n-2]$ or $\kappa = 1$ with $\alpha = n-2$, any given $T > 0$,
$\v\in(0,\v_0]$, and $b \geq \max\{C_1(M,E_0,T,\varepsilon), \mathfrak{B}(\varepsilon)\}$
{\rm (}as defined in {\rm Lemmas \ref{expand}} and {\rm \ref{appendix5}}{\rm )}, the following estimate holds{\rm :}
\begin{align*}
&\frac{\varepsilon^2}{4}\int_a^{b(t)} \big|(\sqrt{\rho(t,r)})_r\big|^2\,r^{n-1} \dd r
	+\frac{4 a_0\varepsilon}{\gamma} \int_0^t\int_a^{b(s)} |(\rho^{\frac{\gamma}{2}})_r|^2\,r^{n-1}\dd r\dd s\\
	&\quad+\frac{1}{n}p(\rho(t,b(t)))b(t)^n
	+\frac{1}{n\varepsilon}\int_0^t p(\rho(s,b(s)))p'(\rho(s,b(s))) b(s)^n\dd s \\
	&\leq C(E_0,M,T) \qquad\,\,\mbox{for all $t \in [0,T]$}.
\end{align*}
\end{lemma}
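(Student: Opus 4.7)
The plan is to derive a BD-type entropy identity adapted to the radial free boundary problem, following Bresch--Desjardins \cite{Bresch02}. Working in Lagrangian coordinates \eqref{lag}, the identities $(r^{n-1}u)_x = (1/\rho)_\tau$ and $(r^{n-1})_\tau = (n-1)r^{n-2}u$ produce a cancellation yielding the BD identity
\begin{equation*}
    \partial_\tau\bigl(u + \varepsilon r^{n-1}\rho_x\bigr) = -r^{n-1}p_x - \kappa\rho\, r^{n-1}(\Phi_\alpha\ast\rho)_x.
\end{equation*}
Setting $w := u + \varepsilon r^{n-1}\rho_x$, multiplying by $w$ and integrating in $x\in[0,M/\omega_n]$ gives $\tfrac{d}{d\tau}\int\tfrac{w^2}{2}\,dx = T_1+T_2+T_3+T_4$, where $T_{1,2}$ come from the pressure and $T_{3,4}$ from the potential. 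Converting back to Eulerian variables, $\int w^2\,dx = \int\rho(u+\varepsilon(\log\rho)_r)^2 r^{n-1}\,dr$ expands to include $4\varepsilon^2\int|(\sqrt\rho)_r|^2 r^{n-1}\,dr$, which after absorbing the cross term $2\varepsilon\int u\rho_r r^{n-1}\,dr$ by Young's inequality together with the kinetic bound of Lemma \ref{Energy} yields the first term of the conclusion.

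Integration by parts in the pressure terms gives $T_1 = -\tfrac{d}{d\tau}\int e(\rho)\,dx - b(t)^{n-1}b'(t)p(\rho(t,b(t)))$, and a change of variables converts $T_2$ to $-\tfrac{4a_0\varepsilon}{\gamma}\int|(\rho^{\gamma/2})_r|^2 r^{n-1}\,dr$, producing the second LHS term directly. The two boundary terms in the conclusion arise from the outer-boundary contribution of $T_1$: the Lagrangian form of the stress-free condition \eqref{bound} reads $\rho_\tau(\tau,M/\omega_n) = -p(\rho)/\varepsilon$, and differentiating $b(t)^n p(\rho(t,b(t)))$ in $t$ with $b'=u$ produces the identity
\begin{equation*}
    b^{n-1}b'\,p(\rho) = \tfrac{1}{n}\tfrac{d}{dt}\bigl(b^n p(\rho(t,b(t)))\bigr) + \tfrac{b^n}{n\varepsilon}p(\rho)p'(\rho)\Big|_{b(t)},
\end{equation*}
so that integrating in $t$ places exactly $\tfrac{1}{n}p(\rho(t,b(t)))b(t)^n$ and $\tfrac{1}{n\varepsilon}\int_0^t p\,p'\,b^n\,ds$ on the LHS.

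The potential contributions constitute the main obstacle. The convective piece $T_3 = -\kappa\int\rho u(\Phi_\alpha\ast\rho)_r r^{n-1}\,dr$ recombines with the potential energy time derivative via the symmetry argument used in the proof of Lemma \ref{Energy}, and is controlled by Lemma \ref{Energy} and Corollary \ref{uniform} after integrating in $t$. The crucial BD piece is
\begin{equation*}
    T_4 = -\varepsilon\kappa\int\rho_r(\Phi_\alpha\ast\rho)_r r^{n-1}\,dr = -\varepsilon\kappa\bigl[\rho(\Phi_\alpha\ast\rho)_r r^{n-1}\bigr]_a^{b(t)} + \varepsilon\kappa\int_a^{b(t)}\rho\,\Delta(\Phi_\alpha\ast\rho)\,r^{n-1}\,dr.
\end{equation*}
For $\alpha=n-2$, the Poisson identity $\Delta(\Phi_{n-2}\ast\rho)=\omega_n\rho$ gives $\varepsilon\kappa\omega_n\int\rho^2 r^{n-1}\,dr$; the inner boundary vanishes since $(\Phi_{n-2}\ast\rho)_r(t,a) = \omega_n a^{1-n}\int_0^a\rho\,\eta^{n-1}\,d\eta = 0$ by the support of $\rho$, and the outer boundary $-\varepsilon\kappa M\rho(t,b(t))$ is controlled by $\varepsilon M\rho_0(b)$ via \eqref{ineq}. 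For $\kappa=-1$ the volume term is dissipative and drops to the LHS, while for $\kappa=1$ it has the wrong sign and must be absorbed via Sobolev interpolation: combining $\|\rho\|_{L^1} = M/\omega_n$, the $L^\gamma$ bound of Lemma \ref{Energy}, and the BD dissipation yields $\varepsilon\int\rho^2 r^{n-1}\,dr \leq \eta\varepsilon^2\int|(\sqrt\rho)_r|^2 r^{n-1}\,dr + C(\eta,M,E_0)$ for any $\eta>0$, and Gr\"onwall's lemma in $t$ closes the estimate. For $\alpha\in(-1,n-2)$ with $\kappa=-1$, the pointwise identity $\Delta\Phi_\alpha = (n-2-\alpha)|\cdot|^{-\alpha-2}\geq 0$ makes the volume term dissipative, and the inner boundary $\varepsilon\rho(t,a)(\Phi_\alpha\ast\rho)_r(t,a)a^{n-1}$ has the favorable sign for $\kappa=-1$ by the radial representation of Lemma \ref{potentially} applied to $\rho$ supported in $\{|\mathbf{y}|\geq a\}$.

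The hardest step is precisely the simultaneous control of the inner-boundary contribution in $T_4$ and the $\int\rho^2$ absorption, which forces the restriction $\kappa=-1,\ \alpha\in(-1,n-2]$ or $\kappa=1,\ \alpha=n-2$; the excluded cases are treated by the alternative Gr\"onwall approach developed subsequently in the paper. Collecting all contributions, invoking the approximate initial-data bounds together with Lemma \ref{Energy} and Corollary \ref{uniform}, and using Gr\"onwall in the Coulomb attractive case, yields the stated estimate.
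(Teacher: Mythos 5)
Your overall architecture is the paper's: the Lagrangian BD identity \eqref{1.17}, testing with $w=u+\varepsilon r^{n-1}\rho_x$, the expansion of $\int w^2\,\dd x$ producing $4\varepsilon^2\int|(\sqrt\rho)_r|^2r^{n-1}\dd r$ up to a cross term controlled by the kinetic energy, the conversion of the pressure terms into $\frac{4a_0\varepsilon}{\gamma}\int|(\rho^{\gamma/2})_r|^2$ plus the two outer-boundary quantities via \eqref{1.20}--\eqref{1.21}, and the integration by parts in the potential term with the sign/vanishing analysis of $\Delta\Phi_\alpha$ and of the inner boundary at $r=a$. Two points, one of which is a genuine gap.

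The gap is your absorption step in the attractive Coulomb case. You claim
$\varepsilon\int\rho^2r^{n-1}\dd r\leq\eta\,\varepsilon^2\int|(\sqrt\rho)_r|^2r^{n-1}\dd r+C(\eta,M,E_0)$
for any $\eta>0$. This inequality is false: Gagliardo--Nirenberg gives $\|\rho\|_{L^2}^2=\|\sqrt\rho\|_{L^4}^4\leq C\,\|\nabla\sqrt\rho\|_{L^2}^{n}M^{(4-n)/2}$ (for $n\leq4$), so already for $n=2$ the left-hand side is $\varepsilon\cdot CM\|\nabla\sqrt\rho\|_{L^2}^2$, which cannot be dominated by $\eta\varepsilon^{2}\|\nabla\sqrt\rho\|_{L^2}^2+C(\eta)$ uniformly as $\varepsilon\to0$ (and for $n\geq3$ the power of $\|\nabla\sqrt\rho\|_{L^2}^2$ exceeds one, so Young's inequality fails for large gradients). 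The mismatch is in the power of $\varepsilon$: the only dissipation carrying a single factor of $\varepsilon$ is the time-integrated term $\varepsilon\int_0^t\int p'(\rho)\rho_x^2r^{2n-2}\dd x\dd s$, i.e. $\varepsilon\int_0^t\int|(\rho^{\gamma/2})_r|^2r^{n-1}\dd r\dd s$, and that is where the volume term must be absorbed. Concretely, the paper splits into $\gamma\geq\frac{2n}{2n-\alpha-2}$ (where $\|\rho\|_{L^{2n/(2n-\alpha-2)}}^2$ is bounded directly by $\int\rho(1+e(\rho))$) and the intermediate range, where one interpolates $\|\rho\|_{L^{2n/(2n-\alpha-2)}}$ between $L^\gamma$ and $L^{n\gamma/(n-2)}$ and uses the Sobolev embedding $\|\rho\|_{L^{n\gamma/(n-2)}}\leq C\|\nabla(\rho^{\gamma/2})\|_{L^2}^{2/\gamma}$, so that Young's inequality absorbs a power strictly less than $2$ of $\|\nabla(\rho^{\gamma/2})\|_{L^2}$ into $\delta\varepsilon\|\nabla(\rho^{\gamma/2})\|_{L^2}^2$. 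No Gr\"onwall argument is needed (or used) in this lemma; that device is reserved for the second BD estimate.

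A secondary omission: for $\kappa=-1$ and $\alpha\in(-1,n-2)$ you only address the inner boundary, but the outer-boundary term $\varepsilon\,\rho(t,b(t))\,b(t)^{n-1}(\Phi_\alpha*\rho)_r(t,b(t))$ then enters with the \emph{unfavorable} sign (since $r^{n-1}(\Phi_\alpha*\rho)_r\geq0$), and its time integral must be estimated. This requires the uniform kernel bound $|\omega(b(s),\eta)|\leq C$ from \eqref{continuity.8} combined with $\rho(s,b(s))\leq\rho_0(b)\cong b^{-(n-\beta)}$ and $b(s)\leq Cb$, giving $C(M,T)b^{-(n-\beta)}b^{n-1}\leq C(M,T)$; your argument only covers the $\alpha=n-2$ outer boundary via the local formula.
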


\begin{proof} We divide the proof into three steps.

\medskip
1. We first consider \eqref{Eul} in Lagrangian coordinates.
Taking the derivative of the mass equation $\eqref{lag}_1$ with respect to $x$, we have
    \begin{align*}
            \rho_{x \tau} = -\big(\rho^2(r^{n-1}u)_x\big)_x.
    \end{align*}
Plugging it into $\eqref{lag}_2$ leads to
    \begin{align}\label{1.16}
        u_\tau + r^{n-1}p_x = -\varepsilon r^{n-1} \rho_{x \tau} - \varepsilon (n-1)r^{n-2}\rho_x u - \kappa \rho r^{n-1}(\Phi_\alpha * \rho)_x.
    \end{align}
    Furthermore, \eqref{1.16} can be simplified to
    \begin{align}\label{1.17}
        (u + \varepsilon r^{n-1}\rho_x)_\tau + r^{n-1}p_x = - \kappa \rho r^{n-1}(\Phi_\alpha * \rho)_x.
    \end{align}
Now, multiplying \eqref{1.17} by $u + \varepsilon r^{n-1}\rho_x$ and integrating with respect to $x$
over $[0,\frac{M}{\omega_n}]$, we obtain
\begin{align}\label{1.18}
	& \frac{1}{2}\frac{\d}{\d\tau}\int_0^{\frac{M}{\omega_n}}(u+\v r^{n-1}\r_x)^2 \dd x
	+\v\int_0^{\frac{M}{\omega_n}}r^{2n-2}p(\rho)_x\rho_x\dd x+\int_0^{\frac{M}{\omega_n}}r^{n-1}p(\rho)_x u\dd x\nonumber\\
	& = - \kappa \int^{\frac{M}{\omega_n}}_0 u\rho r^{n-1}(\Phi_\alpha * \rho)_x \dd x - \kappa \varepsilon \int^{\frac{M}{\omega_n}}_0 \rho \rho_x r^{2(n-1)} (\Phi_\alpha * \rho)_x \dd x.
\end{align}
Using \eqref{1.4}, we can rewrite \eqref{1.18} in the form:
\begin{align}\label{1.19}
        & \frac{1}{2}\frac{\d}{\d\tau}\int_0^{\frac{M}{\omega_n}}(u+\v r^{n-1}\r_x)^2 \dd x
	+\v\int_0^{\frac{M}{\omega_n}}r^{2n-2}p(\rho)_x\rho_x\dd x+\int_0^{\frac{M}{\omega_n}}r^{n-1}p(\rho)_x u\dd x\nonumber\\
	& = - \frac{\kappa}{2} \frac{\d}{\d \tau} \int^{\frac{M}{\omega_n}}_0(\Phi_\alpha * \rho) \dd x - \kappa \varepsilon \int^{\frac{M}{\omega_n}}_0 \rho \rho_x r^{2(n-1)} (\Phi_\alpha * \rho)_x \dd x.
\end{align}
Using $\eqref{lag}_1$ and \eqref{boundary}, we have
\begin{align}\label{1.20}
    \rho_\tau(\tau,\frac{M}{\omega_n}) = -(\rho^2(r^{n-1}u)_x)(\tau,\frac{M}{\omega_n}) = -\frac{a_0 \rho^\gamma}{\varepsilon}(\tau,\frac{M}{\omega_n}).
\end{align}
Now, applying $\eqref{lag}_1$, \eqref{deriv}, and \eqref{1.20}, we obtain
\begin{align}\label{1.21}
        & \int_0^{\frac{M}{\omega_n}}p(\rho)_xu\,r^{n-1}\dd x\nonumber\\
	& =-\int_0^{\frac{M}{\omega_n}}p(\rho)(r^{n-1}u)_x \dd x
	+( p(\rho)u r^{n-1})(\tau,\frac{M}{\omega_n})\nonumber\\
        & =\frac{\d}{\d\tau}\int_0^{\frac{M}{\omega_n}}e(\rho)\dd x
	+( p(\rho)r^{n-1})(\tau,\frac{M}{\omega_n})\, r_\tau(\tau,\frac{M}{\omega_n})\nonumber\\
	& =\frac{\d}{\d\tau}\int_0^{\frac{M}{\omega_n}}e(\rho)\dd x
	+\Big(\frac{1}{n}p(\rho(\tau,\frac{M}{\omega_n}))b(\tau)^n\Big)_\tau
	-\frac{1}{n}p'(\rho(\tau,\frac{M}{\omega_n}))\rho_{\tau}(\tau,\frac{M}{\omega_n})b(\tau)^n\nonumber\\
	& =\frac{\d}{\d\tau}\int_0^{\frac{M}{\omega_n}}e(\rho) \dd x
	+\Big(\frac{1}{n}p(\rho(\tau,\frac{M}{\omega_n}))b(\tau)^n\Big)_\tau
    + \frac{1}{n\v}p(\rho(\tau,\frac{M}{\omega_n}))p'(\rho(\tau,\frac{M}{\omega_n}))b(\tau)^n.
\end{align}
Substituting \eqref{1.21} into \eqref{1.19}, we have
\begin{align}\label{1.22}
	&\frac{\d}{\d\tau}\Big\{\int_0^{\frac{M}{\omega_n}}\frac{1}{2}(u+\v r^{n-1}\r_x)^2\,\dd x
	+\int_0^{\frac{M}{\omega_n}}e(\rho)\,\dd x + \frac{\kappa}{2}\int^{\frac{M}{\omega_n}}_0\Phi_\alpha * \rho \dd x\Big\}
       \nonumber\\
	&\quad\, +\v \int_0^{\frac{M}{\omega_n}}p'(\r)\r_x^2\,r^{2n-2} \dd x
	+\Big(\frac{1}{n}p(\rho(\tau,\frac{M}{\omega_n}))b(\tau)^n\Big)_\tau\nonumber\\
    &\quad\, +\frac{1}{n\v}p(\rho(\tau,\frac{M}{\omega_n}))p'(\rho(\tau,\frac{M}{\omega_n})) b(\tau)^n \nonumber\\
	&= -  \kappa \varepsilon \int^{\frac{M}{\omega_n}}_0 \rho \rho_x r^{2(n-1)} (\Phi_\alpha * \rho)_x \dd x\nonumber\\
    &= - \kappa \varepsilon \left [ \rho^2 r^{2(n-1)} (\Phi_\alpha * \rho)_x \right ]^\frac{M}{\omega_n}_0 + \kappa \varepsilon \int^{\frac{M}{\omega_n}}_0 \rho (\rho r^{2(n-1)} (\Phi_\alpha * \rho)_x)_x \dd x.
\end{align}

\smallskip
2. Consider the last integral of \eqref{1.22}. Using the HLS inequality in Lemma \ref{simplehls},
we obtain the estimate for $\alpha \in (-1,n-2]$:
\begin{align*}
    \begin{split}
        & \int^{\frac{M}{\omega_n}}_0 \rho \big(\rho r^{2(n-1)} (\Phi_\alpha * \rho)_x\big)_x \dd x \\
        & = \frac{1}{\omega_n} \int_{\Omega_t} (\rho \Delta (\Phi_\alpha * \rho))(t,\boldsymbol{y}) \dd \boldsymbol{y} \\
        & \leq \frac{1}{\omega_n} \| \rho \|_{L^{\frac{2n}{2n - (\alpha + 2)}}(\Omega_t)} \| \Delta (\Phi_\alpha * \rho) \|_{L^\frac{2n}{\alpha + 2}(\mathbb{R}^n)} \\
        & = \frac{\alpha + 1}{\omega_n} \| \rho \|_{L^{\frac{2n}{2n - (\alpha + 2)}}(\Omega_t)}
         \big\| \, |\cdot |^{-(\alpha + 2)} * \rho\big\|_{L^\frac{2n}{\alpha + 2}(\mathbb{R}^n)} \\
        & \leq \frac{C_{\alpha + 2,n}(\alpha + 1)}{\omega_n} \| \rho \|_{L^{\frac{2n}{2n - (\alpha + 2)}}(\Omega_t)}^2.
    \end{split}
\end{align*}

\smallskip
\noindent{Case} {\rm 1:} $\kappa = -1$.
We can show that the last term on the RHS of \eqref{1.22} is negative as
 \begin{equation*}
 \Delta \Phi_\alpha =
 \begin{cases}
     \frac{(n-2) - \alpha}{|\,\cdot\,|^{\alpha + 2}}\quad & \mbox{for $\alpha \in (-1,n-2)$}, \\[2mm]
    \omega_n \delta_0 & \mbox{for $\alpha = n-2$},
 \end{cases}
 \end{equation*}
in the sense of distributions, where $\delta_0$ is the Dirac mass centred at 0.
Then we have
\begin{align}\label{deriva}
    \begin{split}
        \rho (\rho r^{2(n-1)} (\Phi_\alpha * \rho)_x)_x = \Delta (\Phi_\alpha * \rho) & =
        \begin{cases}
            (n-2 - \alpha)(|\cdot|^{-(\alpha + 2)} * \rho) &\mbox{for $\alpha \in (-1,n-2)$}, \\
            \omega_n \rho & \mbox{for $\alpha = n-2$}
        \end{cases} \\[2mm]
        & \geq 0.
    \end{split}
\end{align}
Thus, we obtain
\begin{align*}
\begin{split}
	&\frac{\d}{\d\tau}\Big\{\int_0^{\frac{M}{\omega_n}}\frac{1}{2}(u+\v r^{n-1}\r_x)^2\,\dd x
	+\int_0^{\frac{M}{\omega_n}}e(\rho)\,\dd x + \frac{\kappa}{2}\int^{\frac{M}{\omega_n}}_0 \Phi_\alpha * \rho \dd        x\Big\}\\
	&\quad +\v \int_0^{\frac{M}{\omega_n}}p'(\r)\r_x^2\,r^{2n-2} \dd x
	+\Big(\frac{1}{n}p(\rho(\tau,\frac{M}{\omega_n}))b(\tau)^n\Big)_\tau\\
	&\quad +\frac{1}{n\v}p(\rho(\tau,\frac{M}{\omega_n}))p'(\rho(\tau,\frac{M}{\omega_n})) b(\tau)^n \\
        &\leq - \kappa \varepsilon \left [ \rho^2 r^{2(n-1)} (\Phi_\alpha * \rho)_x \right ]^\frac{M}{\omega_n}_0.
\end{split}
\end{align*}

\medskip
\noindent{Case} {\rm 2:} $\kappa = 1$ and $\gamma \geq \frac{2n}{2n - \alpha - 2}$.
Then we can control the integral by the internal energy:
\begin{align*}
    \begin{split}
        \| \rho \|_{L^{\frac{2n}{2n -\alpha - 2)}}(\Omega_t)}^2 \leq C \int^{b(s)}_a \rho\big(1 + e(\rho)\big)r^{n-1} \dd r.
    \end{split}
\end{align*}

\smallskip
\noindent{Case} {\rm 3:} $\kappa = 1$ and $\gamma \in(\frac{2n}{2n - \alpha}, \frac{2n}{2n - \alpha -2})$.
Applying the interpolation yields
\begin{align*}
        \| \rho \|_{L^{\frac{2n}{2n -\alpha-2}}(\Omega_t)}
        = \| \rho \|_{L^{\frac{n\gamma}{n-2}}(\Omega_t)}^{\theta} \| \rho \|_{L^{\gamma}(\Omega_t)}^{1 - \theta} \qquad \text{ with } \theta
        = \frac{2n + (\alpha + 2 - 2n)\gamma}{4}.
\end{align*}
This can be controlled as in \cite{Chen2021}.

\medskip
3. Now, looking at the second to the last term
of \eqref{1.22}, we have
\begin{equation*}
    \big[ \rho^2 r^{2(n-1)} (\Phi_\alpha * \rho)_x \big]^\frac{M}{\omega_n}_0
    = \big[ \rho r^{n-1} (\Phi_\alpha * \rho)_r \big]^{b(t)}_a.
\end{equation*}
From \eqref{deriva}, we see that $r \mapsto r^{n-1}(\Phi_\alpha * \rho)_r$ is increasing in $r$.
Since $\lim_{r \searrow 0} r^{n-1}(\Phi_\alpha * \rho)_r = 0$,
then $a^{n-1}(\rho(\Phi_\alpha * \rho)_r)(t,a) \geq 0$, owing to the fact that $\rho(t,a) > 0$.

In the case that $\kappa = - 1$, we can ignore this term as it has a negative sign.

In the case that $\alpha = n - 2$, we obtain
\begin{align*}
    \begin{split}
        (r^{n-1}(\Phi_{n-2} * \rho)_r)_r = r^{n-1} \Delta (\Phi_{n-2} * \rho) = \omega_n r^{n-1} \rho(t,r) = 0
        \qquad\mbox{for all $r \in (0,a)$}.
    \end{split}
\end{align*}
Hence, $r \mapsto r^{n-1}(\Phi_{n-2} * \rho)_r$ is constant in $r \in (0,a)$.
Since $\lim_{r \searrow 0} r^{n-1}(\Phi_{n-2} * \rho)_r = 0$, then
$a^{n-1}(\rho(\Phi_{n-2} * \rho)_r)(t,a) = 0$.

\smallskip
4. We now focus on the other boundary term for $\alpha \in (-1,n-2)$ to obtain
\begin{align}\label{1.24}
\bigg| \int_{\mathbb{S}^{n-1}} \frac{b(s)\boldsymbol{e_1} - \eta \boldsymbol{y}}{|b(s)\boldsymbol{e_1} - \eta\boldsymbol{y}|^{\alpha + 2}} \cdot \boldsymbol{e_1}
\dd \sigma(\boldsymbol{y}) \bigg| \leq \omega_n |b(s) - \eta|^{-(\alpha + 1)}.
\end{align}
Employing \eqref{1.24}, \eqref{2.60}, \eqref{continuity.8}, and $a = \frac{1}{b}$,
we obtain that, for $\eta \in (a,b(s))$,
\begin{align}\label{1.25}
 \bigg| \int_{\mathbb{S}^{n-1}} \frac{b(s)\boldsymbol{e_1} - \eta\boldsymbol{y}}{|b(s)\boldsymbol{e_1} - \eta\boldsymbol{y}|^{\alpha + 2}} \cdot \boldsymbol{e_1}
 \dd \sigma(\boldsymbol{y}) \bigg|
 & \leq C \min \{|b(s) - \eta|^{-(\alpha + 1)}, \,(\eta b(s))^{-\frac{\alpha + 1}{2}} \} \nonumber\\
& \leq C \min \{|b(s) - \eta|^{-(\alpha + 1)},\, 1 \} \leq C.
\end{align}
From Lemma 3.4 in \cite{Chen2021}, we see that, for $b \geq \max\{C_1(M,E_0,T,\varepsilon), \mathfrak{B}(\varepsilon)\}$,
\begin{align}\label{1.26}
    \bigg| \int^t_0 u(s,b(s)) \dd s \bigg| \leq \frac{1}{4}b.
\end{align}
Using \eqref{ineq}, \eqref{1.25}, and \eqref{1.26}, we obtain
\begin{align*}
    \begin{split}
        & \bigg| \int^t_0 (\rho r^{n-1}(\Phi_\alpha * \rho)_r)(s,b(s)) \dd s \bigg| \\
        & \leq \int^t_0 \rho_0(b) \Big( \int^{b(s)}_a \Big| \int_{\mathbb{S}^{n-1}} \frac{b(s)\boldsymbol{e_1} - \eta\boldsymbol{y}}{|b(s)\boldsymbol{e_1} - \eta\boldsymbol{y}|^{\alpha + 2}} \cdot \boldsymbol{e_1} \dd \sigma(\boldsymbol{y}) \Big| \rho(s,\eta) \eta^{n-1} \dd\eta \Big) b(s)^{n-1} \dd s \\
        & \leq C \int^t_0 \rho_0(b) \Big( \int^{b(s)}_a  \rho(s,\eta) \eta^{n-1} \dd\eta \Big) b(s)^{n-1} \dd s \\
        & \leq C(M) \int^t_0 \rho_0(b) b(s)^{n-1} \dd s \\
        & = C(M) \int^t_0 \rho_0(b) \Big( b + \int^s_0 u(\tau,b(\tau)) \dd \tau \Big)^{n-1} \dd s \\
        & \leq C(M) \int^t_0 \rho_0(b) b^{n-1} \dd s \\
        & \leq C(M,T) b^{-(n-\beta)} b^{n-1} \\
        & \leq C(M,T),
    \end{split}
\end{align*}
where we have used that $\rho_0(b) \cong b^{-(n-\beta)}$, with $\beta = \min\{ \frac{1}{2},(1 - \frac{1}{\gamma})n \}$. We can also obtain this bound for $\alpha = n-2$ from \cite{Chen2021}. Hence, similar to \cite{Chen2021}, we see that,
for $\kappa = -1$ with $\alpha \in(-1,n-2]$ or $\kappa = 1$ with $\alpha = n-2$,
\begin{align*}
\begin{split}
	&\frac{\varepsilon^2}{4}\int_a^{b(t)} \big|(\sqrt{\rho(t,r)})_r\big|^2\,r^{n-1} \dd r
	+\frac{4 a_0\varepsilon}{\gamma} \int_0^t\int_a^{b(s)} \big|(\rho^{\frac{\gamma}{2}})_r\big|^2\,r^{n-1}\dd r\dd s\\
	&\quad+\frac{1}{n}p(\rho(t,b(t)))b(t)^n
	+\frac{1}{n\varepsilon}\int_0^t p(\rho(s,b(s)))p'(\rho(s,b(s))) b(s)^n\dd s \\[1mm]
	&\leq C(E_0,M,T).
 \end{split}
\end{align*}
\end{proof}

\medskip
\begin{remark}\label{3.10a}
In {\rm Step 2} above, when $\kappa = 1$, it follows from \eqref{deriva}
that the boundary term given
by $\kappa \varepsilon a^{n-1}(\rho(\Phi_\alpha * \rho)_r)(t,a)$ has a bad sign.
When $\alpha \in (-1,n-2)$,  we have
\begin{align}\label{boundar}
        a^{n-1}(\Phi_\alpha * \rho)_r(t,a) & = \int^a_0 (r^{n-1}(\Phi_\alpha * \rho)_r)_r \dd r
        = \frac{(n-2) - \alpha}{\omega_n} \int_{B_a} (|\cdot|^{-(\alpha + 2)} * \rho)(\boldsymbol{y}) \dd \boldsymbol{y}>0.
\end{align}
For $\gamma < \frac{n}{(n-2) - \alpha}$, by the HLS inequality from {\rm Lemma \ref{simplehls}} and \eqref{boundar}, we have
\begin{align*}
        a^{n-1}(\Phi_\alpha * \rho)_r(t,a) & \leq C \| \rho \|_{L^\gamma(\Omega_t)} \leq C(E_0).
\end{align*}
For $\gamma \geq \frac{n}{(n-2) - \alpha}$, using the above and the interpolation, we obtain
\begin{align*}
    \begin{split}
        a^{n-1}(\Phi_\alpha * \rho)_r(t,a) & \leq C(M,E_0).
    \end{split}
\end{align*}
Thus, with such estimates, in order to control $\kappa \varepsilon a^{n-1}(\rho(\Phi_\alpha * \rho)_r)(t,a)$,
it suffices to control $\rho(t,a)$.
However, such an {\it a priori} estimate on the density at $r=a$ has not yet been available.
\end{remark}

\smallskip
We now prove the second BD-type estimate result. Here we are able to provide a result
for both the repulsive and attractive case $\kappa \in \{-1,1\}$ and for the more singular Riesz potential
$\alpha \in (0,n-1)$. However, the estimate given by the Gr\"{o}nwall inequality is rough,
compared to that of the case where integration by parts can be applied, so that
we obtain a different range of $\gamma$ for which it can be applied.
This is due to the non-locality of the Riesz potential, while the potential is a local operator
in the already proved Coulomb case $\alpha = n-2$.

\begin{lemma}[BD-Type Entropy Estimate II]\label{newbd}
Under the conditions of {\rm Lemma {\rm \ref{Energy}}}, for $\alpha \in (-1,n-1)$,
$\gamma \geq \frac{3n}{3n - 2(1+\alpha)}$, any given $T > 0$, $\v\in(0,\v_0]$,
and $b \geq \max\{C_1(M,E_0,T,\varepsilon), \mathfrak{B}(\varepsilon)\}$
{\rm (}as defined in {\rm Lemmas \ref{expand}} and {\rm \ref{appendix5}}{\rm )},
the following uniform estimate holds for $t \in [0,T]${\rm :}
\begin{align*}
\begin{split}
	&\frac{\varepsilon^2}{4}\int_a^{b(t)} \big|(\sqrt{\rho(t,r)})_r\big|^2\,r^{n-1} \dd r
	+\frac{4 a_0\varepsilon}{\gamma} \int_0^t\int_a^{b(s)} \big|(\rho^{\frac{\gamma}{2}})_r\big|^2\,r^{n-1}\dd r\dd s\\
	&\quad+\frac{1}{n}p(\rho(t,b(t)))b(t)^n
	+\frac{1}{n\varepsilon}\int_0^t p(\rho(s,b(s)))p'(\rho(s,b(s))) b(s)^n\dd s \\
	&\leq C(E_0,M,T).
 \end{split}
\end{align*}
\end{lemma}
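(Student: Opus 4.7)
The plan is to follow the derivation of identity \eqref{1.22} from the proof of Lemma \ref{oldBD} verbatim, since that step is purely algebraic in $(\tau, x)$ and does not rely on any specific property of $\Phi_\alpha$. What must change is the treatment of the right-hand side $-\kappa\varepsilon\int_0^{M/\omega_n} \rho\rho_x r^{2(n-1)}(\Phi_\alpha * \rho)_x\,\dd x$, where the integration-by-parts method of Lemma \ref{oldBD} either requires twice-differentiability of $\Phi_\alpha * \rho$ (which fails for $\alpha > n-2$) or produces an uncontrollable bad-sign boundary term $\kappa \varepsilon a^{n-1}(\rho (\Phi_\alpha * \rho)_r)(t,a)$ in the attractive sub-Coulomb case, as noted in Remark \ref{3.10a}. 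Instead I would convert the expression back to Eulerian coordinates via \eqref{deriv}, rewriting it as $-2\kappa\varepsilon \int_a^{b(t)} \sqrt{\rho}\,(\sqrt{\rho})_r\,(\Phi_\alpha * \rho)_r\, r^{n-1}\,\dd r$, and then apply Cauchy-Schwarz and Young's inequality with a small parameter $\delta > 0$ to obtain
\begin{equation*}
\Big|\varepsilon\int_a^{b(t)}\rho_r(\Phi_\alpha * \rho)_r\, r^{n-1}\,\dd r\Big| \leq \delta\varepsilon^2\int_a^{b(t)}|(\sqrt{\rho})_r|^2\, r^{n-1}\,\dd r + \frac{1}{\delta}\int_a^{b(t)}\rho\,|(\Phi_\alpha*\rho)_r|^2\, r^{n-1}\,\dd r.
\end{equation*}
No boundary contribution at $r = a$ is produced, which is the structural advantage over Lemma \ref{oldBD}, and the first piece will be absorbed by the dissipative quantity on the left-hand side of \eqref{1.22}.

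The main obstacle, and where the sharp condition $\gamma \geq \frac{3n}{3n - 2(1+\alpha)}$ enters, is the uniform bound on the remaining term $\int_{\Omega_t} \rho |\nabla \Phi_\alpha * \rho|^2 \,\dd \boldsymbol{x}$. For $\alpha \in (0, n-1)$, I would apply the single-exponent H\"older inequality
\begin{equation*}
\int_{\Omega_t}\rho\,|\nabla\Phi_\alpha*\rho|^2\,\dd\boldsymbol{x} \leq \|\rho\|_{L^p(\Omega_t)}\,\|\nabla\Phi_\alpha*\rho\|_{L^{2p/(p-1)}(\mathbb{R}^n)}^2,
\end{equation*}
followed by the HLS inequality of Lemma \ref{simplehls} in the form $\|\nabla\Phi_\alpha*\rho\|_{L^{2p/(p-1)}} \leq C\|\rho\|_{L^s}$ with $\frac{1}{s} = \frac{p-1}{2p} + \frac{n-\alpha-1}{n}$. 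The balanced choice $p = s$ forces $p = s = \frac{3n}{3n - 2(1+\alpha)}$, and the natural requirement $p \leq \gamma$---so that $\|\rho\|_{L^p}$ can be interpolated between the conserved mass $\|\rho\|_{L^1} = M/\omega_n$ and the energy-controlled $\|\rho\|_{L^\gamma}$---gives precisely the stated $\gamma$-threshold. Combined with Lemma \ref{Energy} and Corollary \ref{uniform}, this delivers $\int_{\Omega_t} \rho |\nabla \Phi_\alpha * \rho|^2 \,\dd \boldsymbol{x} \leq C(M, E_0, T)$ uniformly in $(\varepsilon, b)$. For the endpoint range $\alpha \in (-1, 0]$, the same scheme goes through using instead the local $L^\infty$ and $L^{2n/(\alpha+2)}$ bounds on $\nabla \Phi_\alpha * \rho$ from Corollary \ref{uniform} together with the moment estimate \eqref{ueq3}, since $\mathrm{supp}\,\rho(t,\cdot) \subset \overline{\Omega_t}$ is compact at each fixed time.

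With these ingredients in hand, I would integrate the resulting form of \eqref{1.22} in $\tau$ over $[0,t]$ and combine it with the algebraic identity $(u+v)^2 \geq \tfrac{1}{2}v^2 - u^2$ applied to $v = 2\varepsilon (\sqrt{\rho})_r/\sqrt{\rho}$, which yields the coercive lower bound
\begin{equation*}
\tfrac{1}{2}\int_0^{M/\omega_n}(u+\varepsilon r^{n-1}\rho_x)^2\,\dd x \geq \varepsilon^2\int_a^{b(\tau)}|(\sqrt{\rho})_r|^2\, r^{n-1}\,\dd r - \tfrac{1}{2}\int_a^{b(\tau)}\rho u^2\, r^{n-1}\,\dd r.
\end{equation*}
Using Lemma \ref{Energy} and Corollary \ref{uniform} to absorb $\int \rho u^2$, $\int \rho e(\rho)$, and $|\int \rho(\Phi_\alpha*\rho)|$ into constants depending only on $(M, E_0, T)$, one arrives at an integral inequality
\begin{equation*}
B(t) \leq C_0 + \delta \int_0^t B(\tau)\,\dd\tau, \qquad B(\tau) := \varepsilon^2 \int_a^{b(\tau)} |(\sqrt{\rho(\tau,r)})_r|^2\, r^{n-1}\,\dd r,
\end{equation*}
with $C_0 = C_0(M, E_0, T)$. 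Gr\"onwall's inequality then yields $B(t) \leq C(M, E_0, T)$ on $[0, T]$, and substituting this bound back into the integrated form of \eqref{1.22} controls the remaining dissipation $\frac{4a_0\varepsilon}{\gamma} \int_0^t \int |(\rho^{\gamma/2})_r|^2\, r^{n-1}\,\dd r\,\dd\tau$ and the two boundary contributions at $r = b(\tau)$ exactly as at the end of the proof of Lemma \ref{oldBD}, completing the argument uniformly for both $\kappa = \pm 1$ and for all $\alpha \in (-1, n-1)$.
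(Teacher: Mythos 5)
Your proposal follows essentially the same route as the paper's proof: the paper likewise avoids the second integration by parts altogether, bounds the potential contribution by Cauchy--Schwarz against $\tfrac12\int_0^{M/\omega_n}(u+\v r^{n-1}\rho_x)^2\,\dd x$ plus the Eulerian quantity $\int_{\Omega_t}\rho\,|\nabla\Phi_\alpha*\rho|^2\,\dd\boldsymbol{x}$, establishes the latter by exactly your H\"older/HLS/interpolation bookkeeping (this is \eqref{whole}, with the same balanced exponent $\tfrac{3n}{3n-2(1+\alpha)}$), and closes with a Gr\"onwall inequality. The only structural difference is the starting identity: the paper works from \eqref{1.22b}, keeping $u+\v r^{n-1}\rho_x$ intact as the multiplier of the potential term, whereas you start from \eqref{1.22}, i.e.\ you first convert the $u$-part into $\tfrac{\kappa}{2}\tfrac{\d}{\d\tau}\int\Phi_\alpha*\rho\,\dd x$ via \eqref{1.4} and Young-split only the $\v\rho_x$ part. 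Your variant is equally valid, since the potential energy appearing under the time derivative is controlled by Lemma \ref{Energy} under the standing hypotheses, and both variants produce the same Gr\"onwall closure.

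One caveat: your proposed alternative treatment of $\alpha\in(-1,0]$ does not work as stated. The bounds of Corollary \ref{uniform}, Case 3, are local --- the constant is $C(M,E_0,T,K)$ for a \emph{fixed} $K\Subset\mathbb{R}^n$ --- while $\supp\rho(t,\cdot)=\overline{\Omega_t}$ has outer radius $b(t)\geq b/2$ by Lemma \ref{expand}, so it is not contained in any fixed compact set uniformly in $b$; ``compact at each fixed time'' therefore does not yield a uniform constant for $\int_{\Omega_t}\rho|\nabla\Phi_\alpha*\rho|^2\,\dd\boldsymbol{x}$. Fortunately the detour is unnecessary: your H\"older + HLS argument applies verbatim for $\alpha\in(-1,0]$, since $|\nabla\Phi_\alpha|=|\cdot|^{-(\alpha+1)}$ with $\alpha+1\in(0,1]\subset(0,n)$ is still an admissible Riesz kernel and the exponents $p=\tfrac{3n}{3n-2(1+\alpha)}$ and $\tfrac{3n}{1+\alpha}$ remain in $(1,\infty)$. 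This is precisely what the paper does: \eqref{whole} is derived and used for the whole range $\alpha\in(-1,n-1)$ with no case distinction.
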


\begin{proof}
We use calculations \eqref{1.16}--\eqref{1.21} done in Lemma \ref{oldBD}.
Substituting \eqref{1.21} into \eqref{1.19}, we have
\begin{align}\label{1.22b}
	&\frac{\d}{\d\tau}\bigg\{\int_0^{\frac{M}{\omega_n}}\frac{1}{2}(u+\v r^{n-1}\r_x)^2\,\dd x
	+\int_0^{\frac{M}{\omega_n}}e(\rho)\,\dd x \bigg\}
      +\v \int_0^{\frac{M}{\omega_n}}p'(\r)\r_x^2\,r^{2n-2} \dd x\nonumber\\
	& \quad +\Big(\frac{1}{n}p(\rho(\tau,\frac{M}{\omega_n}))b(\tau)^n\Big)_\tau +\frac{1}{n\v}p(\rho(\tau,\frac{M}{\omega_n}))p'(\rho(\tau,\frac{M}{\omega_n})) b(\tau)^n \nonumber\\
	& = - \kappa \int^{\frac{M}{\omega_n}}_0 (u + \varepsilon r^{n-1} \rho_x)\rho r^{n-1}(\Phi_\alpha * \rho)_x \dd x.
\end{align}

In the case that $\gamma \geq \frac{3n}{3n - 2(1+\alpha)}$, we apply the H\"older inequality
and then the HLS inequality from Lemma \ref{simplehls} to obtain
\begin{align}\label{whole}
        & \int^{b(t)}_a |(\Phi_\alpha * \rho)_r(t,r)|^2 \rho(t,r) r^{n-1} \dd r\nonumber\\
        & \leq \frac{1}{\omega_n} \int_{\Omega_t} |\nabla(\Phi_\alpha * \rho)(t,\boldsymbol{y})|^2 \rho(t,\boldsymbol{y}) \dd\boldsymbol{y} \nonumber\\
        & \leq C \| \nabla(\Phi_\alpha * \rho) \|^2_{L^{\frac{3n}{1+\alpha}}(\Omega_t)} \| \rho \|_{L^{\frac{3n}{3n - 2(1+\alpha)}}(\Omega_t)} \nonumber\\
        & \leq C \| |\cdot|^{-(\alpha + 1)}* \rho \|^2_{L^{\frac{3n}{1+\alpha}}(\Omega_t)} \| \rho \|_{L^{\frac{3n}{3n - 2(1+\alpha)}}(\Omega_t)} \nonumber\\
        & \leq C \| \rho \|^3_{L^{\frac{3n}{3n - 2(1+\alpha)}}(\Omega_t)}\nonumber\\
        & \leq C \| \rho \|^{3\theta}_{L^\gamma(\Omega_t)} \| \rho \|^{3(1 - \theta)}_{L^1(\Omega_t)} \nonumber\\
        & \leq C(M,E_0),
\end{align}
where $\theta = \frac{2\gamma(1 + \alpha) + 3n}{3n\gamma}$. Applying the Cauchy-Schwartz inequality to \eqref{1.22b}
and using \eqref{whole}, we have
\begin{align}\label{1.27}
    \begin{split}
        &\frac{\d}{\d\tau}\bigg\{\int_0^{\frac{M}{\omega_n}}\frac{1}{2}(u+\v r^{n-1}\r_x)^2 \dd x
	+\int_0^{\frac{M}{\omega_n}}e(\rho)\,\dd x \bigg\}
    +\v \int_0^{\frac{M}{\omega_n}}p'(\r)\r_x^2\,r^{2n-2} \dd x \\
	& \quad\, +\Big(\frac{1}{n}p(\rho(\tau,\frac{M}{\omega_n}))b(\tau)^n\Big)_\tau
      +\frac{1}{n\v}p(\rho(\tau,\frac{M}{\omega_n}))p'(\rho(\tau,\frac{M}{\omega_n})) b(\tau)^n \\
	& \leq C(M,E_0) + \int^{\frac{M}{\omega_n}}_0 \frac{1}{2}\big(u + \varepsilon r^{n-1} \rho_x\big)^2 \dd x.\\
    \end{split}
\end{align}
Applying the Gr\"{o}nwall type inequality to \eqref{1.27} yields
\begin{align*}
    \begin{split}
        & \int_0^{\frac{M}{\omega_n}}\frac{1}{2}(u+\v r^{n-1}\r_x)^2 \dd x \\
        &\quad+ \int_0^\tau e^{\tau -s} \bigg \{ \Big(\int_0^{\frac{M}{\omega_n}}e(\rho)\,\dd x \Big)_s
         + \v \int_0^{\frac{M}{\omega_n}}p'(\r)\r_x^2\,r^{2n-2} \dd x
         +\Big(\frac{1}{n}p(\rho(s,\frac{M}{\omega_n}))b(s)^n\Big)_s \\
         &\qquad\qquad \qquad\,\,\,
          + \frac{1}{n\v}p(\rho(s,\frac{M}{\omega_n}))p'(\rho(s,\frac{M}{\omega_n})) b(s)^n \bigg \} \dd s \\
        & \leq C(M,E_0) \int^\tau_0 e^{\tau -s} \dd s + e^\tau \int_0^{\frac{M}{\omega_n}}\frac{1}{2}(u_0+\v r^{n-1}\r_{0,x})^2 \dd x.
    \end{split}
\end{align*}
Integrating by parts, we have
\begin{align}\label{1.29}
        & \int_0^{\frac{M}{\omega_n}}\frac{1}{2}(u+\v r^{n-1}\r_x)^2 \dd x + \int_0^{\frac{M}{\omega_n}}e(\rho)\,\dd x
           + \frac{1}{n}p(\rho(\tau,\frac{M}{\omega_n}))b(\tau)^n\nonumber\\
        &+ \int_0^\tau e^{\tau -s} \bigg \{ \int_0^{\frac{M}{\omega_n}}e(\rho)\,\dd x
          + \v \int_0^{\frac{M}{\omega_n}}p'(\r)\r_x^2\,r^{2n-2} \dd x + \frac{1}{n}p(\rho(s,\frac{M}{\omega_n}))b(s)^n\nonumber\\
        & \qquad\qquad\qquad + \frac{1}{n\v}p(\rho(s,\frac{M}{\omega_n}))p'(\rho(s,\frac{M}{\omega_n})) b(s)^n \Bigg \} \dd s \nonumber\\
        & \leq e^\tau \bigg \{ \int_0^{\frac{M}{\omega_n}}\frac{1}{2}(u_0+\v r^{n-1}\r_{0,x})^2 \dd x + \int_0^{\frac{M}{\omega_n}}e(\rho_0)\,\dd x \bigg \}
        + C(M,E_0) (e^\tau - 1) + \frac{e^\tau}{n}p(\rho_0(\frac{M}{\omega_n}))b^n.
\end{align}
Using the positivity of the integrands in the integral of the second line in \eqref{1.29}, we conclude the following inequality:
\begin{align*}
 \begin{split}
        & \int_0^{\frac{M}{\omega_n}}\frac{1}{2}(u+\v r^{n-1}\r_x)^2 \dd x + \int_0^{\frac{M}{\omega_n}}e(\rho)\,\dd x
          + \frac{1}{n}p(\rho(\tau,\frac{M}{\omega_n}))b(\tau)^n \\
        &\quad\,+ \int_0^\tau \bigg \{ \v \int_0^{\frac{M}{\omega_n}}p'(\r)\r_x^2\,r^{2n-2} \dd x
         + \frac{1}{n\v}p(\rho(s,\frac{M}{\omega_n}))p'(\rho(s,\frac{M}{\omega_n})) b(s)^n \bigg \} \dd s \\
        & \leq e^\tau \bigg \{ \int_0^{\frac{M}{\omega_n}}\frac{1}{2}(u_0+\v r^{n-1}\r_{0,x})^2 \dd x + \int_0^{\frac{M}{\omega_n}}e(\rho_0)\,\dd x \bigg \}
        + C(M,E_0) (e^\tau - 1) + \frac{e^\tau}{n}p(\rho_0(\frac{M}{\omega_n}))b^n.
    \end{split}
\end{align*}
\end{proof}

\subsection{Higher integrability}
In order to apply the compensated compactness framework in Chen-Perepelitsa \cite{Chen_2010} to prove
the convergence of weak solutions $(\rho^\v,\rho^\v u^\v)$ of \eqref{0.2} to
finite-energy solutions of \eqref{0.0}--\eqref{0.1} as $\v \to 0$,
we need higher integrability uniform estimates of the density and the velocity.
In the following lemma, we require that $\gamma > \frac{1}{n-1- \alpha}$.
Indeed, this is only required so that we can obtain a pointwise estimate
for the derivative of the convolution of the kernel with the density.

\begin{lemma}[Higher Integrability of the Density]\label{higher}
Let $(\rho,u)$ be the smooth solution of \eqref{Eul}. Then, under the assumptions of {\rm Theorem {\rm \ref{existence}}},
$T>0$, $\v\in(0,\v_0]$, and $b \geq \max\{C_1(M,E_0,T,\varepsilon), \mathfrak{B}(\varepsilon)\}$
{\rm (}as defined in {\rm Lemmas \ref{expand}} and {\rm \ref{appendix5}}{\rm )},
    \begin{align*}
        \int^T_0 \int_K \rho^{\gamma + 1}(t,r) \dd r \dd t \leq C(K,M,E_0,T)
    \end{align*}
for any $K \Subset (a,b(t))$ and any $t \in [0,T]$.
\end{lemma}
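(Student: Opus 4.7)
The strategy is to adapt the local higher-integrability multiplier argument of Chen-Perepelitsa \cite{Chen_2010} and its spherically symmetric version in Chen-He-Wang-Yuan \cite{Chen2021} to the Riesz setting, with the new nonlocal potential contribution controlled via the pointwise radial representation of Lemma \ref{potentially}. First I would fix a smooth cutoff $\phi \in C_c^\infty((a, b(t)))$ with $\phi \equiv 1$ on $K$ (valid uniformly in $(\varepsilon, b)$ for $b$ large by Lemma \ref{expand}), and test the momentum equation in \eqref{Eul} against a multiplier of the form $\psi(t,r) = \phi(r)\, r^{-(n-1)}\, \mathfrak{m}(t,r)$, where $\mathfrak{m}(t,r) := \int_a^r \rho(t,s)\, s^{n-1}\, \dd s \in [0, M/\omega_n]$ is the radial mass (uniformly bounded by mass conservation). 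After integration by parts in $r$, the pressure term produces the desired quantity $a_0 \int_0^T \int_K \rho^{\gamma+1}\, \dd r\, \dd t$ as the leading contribution, modulo commutator terms supported on $\supp(\phi_r)$ that are controlled by the $L^\infty_t L^\gamma_x$-bound from Lemma \ref{Energy}.

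The next step is to bound the remaining terms uniformly in $(\varepsilon, b)$. The inertial and convective contributions involve $\rho u^2$ and are absorbed by the $L^\infty_t L^1_x$ energy bound of Corollary \ref{uniform}. The viscous terms require $L^2_{t,x}$-bounds on $(\sqrt{\rho})_r$ and $(\rho^{\gamma/2})_r$, which are supplied by the BD-type entropy estimates of Lemmas \ref{oldBD} and \ref{newbd}; since $K$ is bounded away from $0$ and from the free boundary, the geometric factors $(n-1)/r$ are tamed without penalty. The time derivative yields a boundary-in-time contribution bounded by the $L^\infty_t L^1_x$-norm of $|\psi|$, again absorbed by the mass bound.

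The most delicate contribution is the nonlocal potential term
\[
\kappa \int_0^T \int_a^{b(t)} \rho(t,r)\, (\mathcal{W}_\alpha)_r(t,r)\, \psi(t,r) \, \dd r \, \dd t.
\]
For this I would invoke the radial representation of Lemma \ref{potentially},
\[
(\mathcal{W}_\alpha)_r(t,r) = \int_0^\infty \omega(r,\eta)\, \rho(t,\eta)\, \eta^{n-1}\, \dd\eta,
\]
together with the explicit kernel bounds \eqref{continuity.8}, and split the $\eta$-integration, for $r$ in the compact set $K \subset (0,\infty)$, into a near-diagonal region $\{|r - \eta| < \pi\sqrt{r\eta}\}$ and its complement. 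On the near-diagonal piece, H\"older's inequality combined with the uniform $L^\gamma$-bound from Lemma \ref{Energy} gives a bounded contribution precisely when $\gamma > \frac{1}{n-1-\alpha}$, ensuring that the singular weight $|r-\eta|^{n-2-\alpha}$ lies in the H\"older dual of $L^\gamma$. On the far-field piece, the kernel decays away from the diagonal and the total mass bound $M$ alone suffices. Together these yield the pointwise bound $|(\mathcal{W}_\alpha)_r(t,r)| \leq C(K, M, E_0)$ on $K$ uniform in $t \in [0,T]$, after which the potential contribution reduces to a harmless quantity bounded by the mass.

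The main obstacle is precisely the super-Coulombic regime $\alpha \in (n-2, n-1)$, where the kernel carries a genuine diagonal singularity; the threshold $\gamma > \frac{1}{n-1-\alpha}$ in Theorem \ref{existence} is exactly tight for the H\"older argument sketched above. The sub-Coulombic and Coulombic ranges $\alpha \in (-1, n-2]$ are substantially simpler, as the kernel bound $(r\eta)^{-(\alpha+1)/2}$ carries no diagonal singularity, and Corollary \ref{uniform} already provides the required $L^p$-control on $\nabla \mathcal{W}_\alpha$ directly.
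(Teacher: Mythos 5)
Your strategy is essentially the paper's: a Chen--Perepelitsa-type localized multiplier argument in which the pressure produces $\rho\,p(\rho)$ as the leading term, the first eight contributions are handled as in \cite{Chen2021}, and the genuinely new nonlocal term is controlled through the radial representation of Lemma \ref{potentially}, the kernel bounds \eqref{continuity.8}, a near/far splitting of the $\eta$-integration, and the condition $\gamma>\frac{1}{n-1-\alpha}$ placing $|r-\eta|^{n-2-\alpha}$ in the H\"older dual of $L^\gamma$ (the paper implements exactly this via Theorem \ref{pointwise}). The one structural difference is the multiplier: you test against the local mass $\phi(r)r^{-(n-1)}\int_a^r\rho s^{n-1}\,\dd s$, whereas the paper integrates $\eqref{Eul}_2$ over $[d,r)$ and multiplies by $\rho w$, i.e.\ effectively uses the partial momentum $\int_d^r\rho u w\,\dd z$ as the multiplier.

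Both multipliers work, but your bookkeeping of the inertial terms is not right as stated. With the mass multiplier, $-\iint m\,\psi_t$ produces $+\iint\phi\,\rho^2u^2\,\dd r\,\dd t$ (since $\mathfrak{m}_t=-r^{n-1}\rho u$ by the continuity equation and the boundary condition at $r=a$), while $-\iint\rho u^2\,\psi_r$ produces $-\iint\phi\,\rho^2u^2\,\dd r\,\dd t$. The quantity $\rho^2u^2=\rho\cdot\rho u^2$ is \emph{not} absorbed by the $L^\infty_tL^1$ energy bound --- that would require a local $L^\infty$ bound on $\rho$, which is precisely what is unavailable before this lemma --- and your argument only closes because these two contributions cancel exactly; this cancellation should be stated, not replaced by an appeal to the energy. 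In the paper's version the analogous products have the form $\rho u\cdot\int_d^r\rho u w\,\dd z$, i.e.\ an $L^1$ quantity times the uniform bound on $\|\rho u\|_{L^1}$, so no cancellation is needed. A second point to repair: the viscous contribution contains $-\varepsilon\iint\phi\,\rho^2\big(u_r+\tfrac{n-1}{r}u\big)\,\dd r\,\dd t$, which after Cauchy--Schwarz leaves $\varepsilon\iint_K\rho^3$; this is controlled via the BD entropy through the $O(\varepsilon^{-1/2})$ bound on $\|\rho^\gamma\|_{L^1_tL^\infty_{\rm loc}}$ as in \cite{Chen2021}, not merely by quoting the $L^2$ bounds on $(\sqrt{\rho})_r$ and $(\rho^{\gamma/2})_r$. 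With these two points made explicit, your proof goes through and yields the stated conclusion under the same hypotheses; the treatment of the potential term, which is the only new ingredient relative to \cite{Chen2021}, coincides with the paper's.
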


\begin{proof}
We follow a similar proof to that of \cite{Chen2021}. Given $K \Subset (a,b(t))$ for any $t \in [0,T]$,
there exists $a < d < D$ such that $K \Subset (d,D) \Subset (a,b(t))$.
Let $w \in C^\infty_0(\mathbb{R})$ such that $\supp (w) \subset (d,D)$ and $w(r) = 1$ for all $r \in K$.
Next, we multiply $\eqref{Eul}_2$ by $w$ to obtain the equation:
    \begin{align}\label{5.1}
            & (\rho u w)_t + \big((\rho u^2 + p(\rho))w\big)_r + \frac{n-1}{r}\rho u^2w + \rho(\Phi_\alpha * \rho)_rw \nonumber\\
            & = \varepsilon\Big( \rho \big (u_r + \frac{n-1}{r}u \big ) w \Big)_r
            - \varepsilon\frac{n-1}{r}\rho_ru w
            + \Big( \rho u^2 + p(\rho) - \varepsilon \rho \big (u_r + \frac{n-1}{r}u \big ) \Big ) \Big)w_r.
    \end{align}
We integrate \eqref{5.1} over $[d,r)$ and then multiply by $\rho w$ to obtain
\begin{align}\label{5.2}
            \r p(\rho) w^2
            & = \v \r^2\big(u_r+\frac{n-1}{r} u\big) w^2 -\v\r w\int_d^r \frac{n-1}{z} u\rho_z w\dd z
              -\Big(\r w\int_d^r\r u w \dd z\Big)_t\nonumber\\
	    & \quad -\Big(\r  uw\int_d^r\r u w \dd z\Big)_r+\r uw_r \int_d^r \r u w \dd z
          -\frac{n-1}{r}  \r u w \int_d^r \r u w \dd z\nonumber\\
	    & \quad -\r w\int_d^r \frac{n-1}{z}\r u^2 w \dd z +\r w\int_d^r\Big(\r u^2+p(\rho)
        - \v\rho(u_z+\frac{n-1}{z}u)\Big)w_z \dd z\nonumber\\
	    & \quad - \kappa \rho w \int^r_d \rho (\Phi_\alpha * \rho)_z w \dd z =: \sum_{i=1}^9 I_i.
    \end{align}
As in \cite{Chen2021}, we have
    \begin{equation}\label{5.3}
        \bigg| \int^T_0 \int^D_d \Big( \sum_{i=1}^8 I_i \Big) \dd r \dd t \bigg| \leq C(d,M,E_0,T).
    \end{equation}
When
$\alpha \in (n-2,n-1)$, employing \eqref{continuity.8} and \eqref{continuity.5} with the Fubini theorem yields
\begin{align}
            & \left | \int^T_0 \int^D_d I_9 \dd r \dd t \right | \nonumber\\
            & \leq C \int^T_0 \int^D_d (\rho w)(t,r) \int^r_d (\rho w)(t,z) \bigg( \int_{(a,b(t)) \cap (z - \frac{d}{2},z + \frac{d}{2})} \omega(z,\eta) \rho(t,\eta) \eta^{n-1} \dd \eta \nonumber\\
            & \qquad\qquad\qquad\qquad\qquad\qquad\qquad\quad\quad\,\, + \int_{(a,b(t)) \cap (z - \frac{d}{2},z + \frac{d}{2})^c} \omega(z,\eta) \rho(t,\eta) \eta^{n-1} \dd \eta \bigg) \dd z \dd r \dd t \nonumber\\
            & \leq C \int^T_0 \int^D_d (\rho w)(t,r) \int^r_d (\rho w)(t,z)
            \bigg( \int_{(a,b(t)) \cap (z - \frac{d}{2},z + \frac{d}{2})} \frac{|z-\eta|^{n-2-\alpha}}{(r\eta)^{\frac{n-1}{2}}} \rho(t,\eta) \eta^{n-1} \dd \eta\nonumber\\
            & \qquad\qquad\qquad\qquad\qquad\qquad\qquad\quad\quad\,\,
              + \int_{(a,b(t)) \cap (z - \frac{d}{2},z + \frac{d}{2})^c} |z-\eta|^{-(\alpha + 1)} \rho(t,\eta) \eta^{n-1} \dd \eta \bigg) \dd z \dd r \dd t  \nonumber\\
            & \leq C(d) \int^T_0 \int^D_d (\rho w)(t,r) \int^r_d (\rho w)(t,z) \int_a^{b(t)}
              \big(1 + |z-\eta|^{n-2-\alpha}\big) \rho(t,\eta) \eta^{n-1} \dd \eta  \dd z \dd r \dd t.  \nonumber\\
            & \leq C(d,M,T)
            + C(d,D) \int^T_0 \int^D_d (\rho w)(t,r)
              \Big(\int_a^{b(t)}\int^D_d \rho(t,\eta) |z-\eta|^{n-2-\alpha} (\rho w)(t,z) \dd z \dd \eta\Big)\dd r \dd t.\label{5.4}
\end{align}
For $\gamma > \frac{1}{n-1 - \alpha}$, by Theorem \ref{pointwise}, we obtain
    \begin{align}\label{high}
            & \bigg| \int^T_0 \int^D_d I_9 \dd r \dd t \bigg| \nonumber\\[1mm]
            & \leq C(d,M,T) \nonumber\\
            & \quad + C(d,D) \int^T_0 \int^D_d (\rho w)(t,r) \Big(\int_a^{b(t)} \rho(t,\eta) \sup_{s \in \mathbb{R}_+}
            \Big\{ \int^D_d |s-\eta|^{n-2-\alpha} (\rho w)(t,z) \dd z \Big\} \dd \eta\Big) \dd r \dd t \nonumber\\[1mm]
            & = C(d,M,T)  \nonumber\\
            & \quad + C(d,D) \int^T_0 \int^D_d (\rho w)(t,r)\Big( \int_a^{b(t)} \rho(t,\eta)
              \sup_{s \in [d,D]} \Big\{ \int^D_d |s-\eta|^{n-2-\alpha} (\rho w)(t,z) \dd z \Big\} \dd \eta\Big) \dd r \dd t  \nonumber\\
            & \leq C(d,M,T) + C(d,D) \|\rho\|_{L^\gamma(d,D)} \int^T_0 \int^D_d (\rho w)(t,r)\Big( \int_a^{b(t)} \rho(t,\eta) \dd \eta\Big)
            \dd r \dd t.  \nonumber\\
            & \leq C(d,D,M,E_0,T).
    \end{align}
When
$\alpha \in (-1,n-2)$, by a similar approach to \eqref{5.4} and using \eqref{continuity.8}, we have
    \begin{align}\label{low}
        & \bigg| \int^T_0 \int^D_d I_9 \dd r \dd t \bigg|  \nonumber\\
        & \leq C \int^T_0 \int^D_d (\rho w)(t,r) \int^r_d (\rho w)(t,z) \bigg( \int_{(a,b(t)) \cap (z - \frac{d}{2},z + \frac{d}{2})} \omega(z,\eta) \rho(t,\eta) \eta^{n-1} \dd \eta \nonumber\\
        & \qquad\qquad\qquad\qquad\qquad\qquad\qquad\quad\quad\,\,
          + \int_{(a,b(t)) \cap (z - \frac{d}{2},z + \frac{d}{2})^c} \omega(z,\eta) \rho(t,\eta) \eta^{n-1} \dd \eta \bigg) \dd z \dd r \dd t \nonumber\\
        & \leq {\small C(d,M,T) + C \int^T_0 \int^D_d (\rho w)(t,r)
          \Big(\int^r_d (\rho w)(t,z) \int_{(a,b(t)) \cap (z - \frac{d}{2},z + \frac{d}{2})} (r\eta)^{-\frac{\alpha + 1}{2}} \rho(t,\eta) \eta^{n-1} \dd \eta \dd z\Big) \dd r \dd t } \nonumber\\
        & \leq C(d,M,T) + C(d) \int^T_0 \int^D_d (\rho w)(t,r)
         \Big( \int^r_d (\rho w)(t,z) \int_a^{b(t)} \rho(t,\eta) \eta^{n-1} \dd \eta  \dd z\Big) \dd r \dd t \nonumber\\
        & \leq C(d,M,T).
    \end{align}

\medskip
When $\alpha = n - 2$, using \eqref{continuity.8}, we obtain
    \begin{align}\label{mid}
         \bigg| \int^T_0 \int^D_d I_9 \dd r \dd t \bigg|
        & = \int^T_0 \int^D_d (\rho w)(t,r)
          \Big(\int^r_d (\rho w)(t,z) \int_a^{r} \frac{\omega_n}{r^{n-1}} \rho(t,\eta) \eta^{n-1} \dd \eta  \dd z\Big)
          \dd r \dd t \nonumber\\[2mm]
        & \leq C(d,M,T).
    \end{align}
Therefore, integrating \eqref{5.2} over $[0,T] \times [d,D]$ and using \eqref{5.3}--\eqref{mid},
we conclude the desired result.
\end{proof}

\smallskip
For the next result, we need to make use of entropy pairs.
As defined in Lax \cite{Lax}, a pair of functions $(\eta(\rho,m),q(\rho,m))$
is called an entropy pair
of the $1$-D isentropic Euler system if they satisfy
\begin{equation*}
    \partial_t \eta(\rho(t,r),m(t,r)) + \partial_r q(\rho(t,r),m(t,r)) = 0
\end{equation*}
for any smooth solution $(\rho,m)$ of the Euler system. Moreover, $\eta$ is called a weak entropy if
\begin{equation*}
    \eta|_{\rho = 0}=0 \qquad \text{ for all fixed $u = \frac{m}{\rho}$}.
\end{equation*}

From \cite{Lions_1994}, it has been shown that any weak entropy pair $(\eta,q)$
can be represented by
\begin{align}\label{5.10}
	\begin{cases}
            \displaystyle
		\eta^\psi(\rho,m) = \eta^\psi(\rho,\rho u) = \int_\mathbb{R} \chi(\rho;s-u) \psi(s) \dd s,\\[3mm]
            \displaystyle
		  q^\psi(\r,m) = q^\psi(\r, \r u) = \int_\mathbb{R}(\theta s + (1 - \theta)u) \chi(\rho;s-u) \psi(s) \dd s,
	\end{cases}
\end{align}
where $\chi(\rho;s-u) = (\rho^{2\theta} - (s-u)^2)_+^{\mathfrak{b}}$
with $\mathfrak{b} = \frac{3 - \gamma}{2(\gamma-1)}$, $\theta = \frac{\gamma - 1}{2}$,
and $(\cdot)_+ = \max \{\cdot\,, 0\}$.
Choosing $\psi(s) = \frac{1}{2}s|s|$ as in \cite{Chen2021},
we obtain the corresponding entropy pair $(\eta^{\#}(\rho, m), q^{\#}(\rho,m))$ that is
represented by
\begin{align}\label{1.32}
	\begin{cases}
            \displaystyle
		\eta^{\#}(\rho,m) = \eta^{\#}(\rho,\rho u)
        = \frac{1}{2} \rho \int_{-1}^1 (u+\rho^{\theta} s) |u+\r^{\theta}s| (1-s^2)_+^{\mathfrak{b}} \dd s,\\[3mm]
            \displaystyle
		q^{\#}(\r,m) = q^{\#}(\r, \r u)
        = \frac{1}{2} \r \int_{-1}^1 (u+\theta\r^{\theta}s)                (u+\r^{\theta} s)|u+\r^{\theta}s| (1-s^2)_+^{\mathfrak{b}} \dd s.
	\end{cases}
\end{align}
Using \eqref{1.32}, it can be shown that there exists $C_\gamma > 0$ such that the following inequalities hold:
\begin{equation}\label{1.33}
    \begin{split}
    & |\eta^{\#}(\rho,\rho u)| \leq C_\gamma(\rho|u|^2 + \rho^\gamma), \quad q^\#(\rho, \rho u) \geq C_\gamma^{-1}(\rho|u|^3 + \rho^{\gamma + \theta}), \\
    & |\eta^{\#}_m(\rho,\rho u)| \leq C_\gamma(|u| + \rho^\theta),
    \quad\quad |\eta^{\#}_\rho(\rho,\rho u)| \leq C_\gamma(|u|^2 + \rho^{2\theta}).
    \end{split}
\end{equation}
As in \cite{Chen_2010,Chen_2015}, we have
\begin{align}\label{5.11}
	\r u \partial_\r \eta^{\#}+\r u^2 \partial_m\eta^{\#}-q^{\#}
	&=\frac{\theta}{2} \rho^{1+\theta}\int_{-1}^1  (u-\r^{\theta}s)s |u+\r^{\theta}s|\, (1-s^2)_+^{\mathfrak{b}} \dd s\leq 0.
\end{align}

\begin{lemma}[Higher Integrability of the Velocity]
Let $(\r,u)$ be the smooth solution of \eqref{Eul}. Then,
under the assumption of {\rm Lemma {\rm \ref{higher}}},
	\begin{align*}
		\int_0^T\int_d^D \big(\rho |u|^3 + \r^{\gamma+\theta}\big)(t,r)\,\dd r \dd t\leq C(d,D,M,E_0,T)
	\end{align*}
for any $(d,D)\Subset[a,b(t)]$ for any $t \in[0,T]$.
\end{lemma}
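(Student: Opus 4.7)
My plan is to adapt the compensated-compactness argument of Chen--Perepelitsa \cite{Chen_2010} to our spherically symmetric Euler--Riesz setting, using the weak entropy pair $(\eta^\#,q^\#)$ from \eqref{1.32}. The key observation is the pointwise lower bound $q^\#(\rho,\rho u)\geq C_\gamma^{-1}(\rho|u|^3+\rho^{\gamma+\theta})$ in \eqref{1.33}; hence it suffices to bound $\int_0^T\!\int w(r)\,q^\#\,\d r\d t$ for a suitable cutoff $w$. Fix $a<d_0<d<D<D_0<b(t)$ on $[0,T]$ (possible by Lemma \ref{expand} for $b$ large) and pick $w\in C^\infty_0((d_0,D_0))$ with $0\le w\le 1$ and $w\equiv 1$ on $[d,D]$.

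First I would derive the entropy balance. Using the chain rule on $\eta^\#(\rho,m)$ and invoking the 1-D Euler entropy identity satisfied by $(\eta^\#,q^\#)$, the geometric, Riesz, and viscous contributions in \eqref{Eul} yield
\begin{align*}
\partial_t\eta^\#+\partial_r q^\#
&=-\tfrac{n-1}{r}\bigl(\rho u\,\eta^\#_\rho+\rho u^2\eta^\#_m\bigr)
-\kappa\rho(\Phi_\alpha\ast\rho)_r\,\eta^\#_m\\
&\quad+\varepsilon\eta^\#_m\Bigl[\bigl(\rho(u_r+\tfrac{n-1}{r}u)\bigr)_r-\tfrac{n-1}{r}\rho_r u\Bigr].
\end{align*}
Multiplying by $w(r)$ and integrating over $[0,T]\times\mathbb{R}_+$, the spatial boundary terms vanish, and integration by parts produces $\int w\,\eta^\#\,\d r\big|_0^T-\int_0^T\!\int w'(r)q^\#\,\d r\d t$ on the left. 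The term $\int w'q^\#$ is supported in the thin transition regions $(d_0,d)\cup(D,D_0)$ where a bootstrap via the proof of Lemma \ref{higher} applied on the slightly larger intervals $[d_0,D_0]$ already bounds $\int_0^T\!\int\rho^{\gamma+1}$, and combined with the uniform energy bound from Corollary \ref{uniform}, this controls the sign-indefinite piece.

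Next I would bound every source term on the right. By \eqref{1.33}, $|\rho u\,\eta^\#_\rho+\rho u^2\eta^\#_m|\lesssim\rho(|u|^3+\rho^{3\theta})+\rho\cdot\rho^{2\theta}|u|$, which on the bounded region $r\in[d_0,D_0]$ is controlled by Corollary \ref{uniform} and Lemma \ref{higher}, exactly as in \cite{Chen_2010,Chen2021}. The viscous contribution, after an integration by parts in $r$ throwing the $r$-derivative onto $w\eta^\#_m$ and using $|\eta^\#_{mm}|,|\eta^\#_{m\rho}|\lesssim 1+\rho^{\theta-1}$, reduces to integrals of $\nabla\sqrt{\rho}$, $\sqrt{\rho}\,u_r$, and $\rho$ in $L^2_{t,r}$ that are absorbed by the BD-type estimates in Lemmas \ref{oldBD}--\ref{newbd} and the energy estimates of Corollary \ref{uniform}. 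For the Riesz source, I would split $(\Phi_\alpha\ast\rho)_r(t,r)=\int_0^\infty\omega(r,\eta)\rho(t,\eta)\eta^{n-1}\,\d\eta$ as in the proof of Lemma \ref{higher}: the far-diagonal piece $|r-\eta|\geq d/2$ is uniformly controlled in $L^\infty([d,D])$ by the $L^1$-mass and $L^\gamma$-bound on $\rho$, while the near-diagonal piece is handled using the sharp pointwise estimates of Lemma \ref{potentially}, which I combine with Theorem \ref{pointwise} and Lemma \ref{higher} to close the estimate against $|\eta^\#_m|\lesssim |u|+\rho^\theta$.

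The main obstacle will be the Riesz term in the super-Coulomb regime $\alpha\in(n-2,n-1)$, where the kernel bound $|\omega(r,\eta)|\leq C(r\eta)^{-(n-1)/2}|r-\eta|^{n-2-\alpha}$ is singular on the diagonal. This is precisely where the hypothesis $\gamma>\frac{1}{n-1-\alpha}$ of Theorem \ref{existence} enters: it guarantees (via Theorem \ref{pointwise}) that the convolution $\int|r-\eta|^{n-2-\alpha}\rho(t,\eta)\,\d\eta$ belongs to $L^\infty([d,D])$ uniformly once the $L^{\gamma+1}_{t,r}$-bound of Lemma \ref{higher} is used, thereby producing a uniform-in-$(\varepsilon,b)$ bound on $\int_0^T\!\int w\,|\rho(\Phi_\alpha\ast\rho)_r\,\eta^\#_m|\,\d r\d t$. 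Once all source contributions are bounded, the lower bound $q^\#\geq C_\gamma^{-1}(\rho|u|^3+\rho^{\gamma+\theta})$ from \eqref{1.33} and $w\equiv 1$ on $[d,D]$ deliver the desired estimate.
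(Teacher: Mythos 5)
Your treatment of the individual source terms (geometric, viscous, Riesz) is broadly sensible, and your remarks on the super-Coulomb kernel and the role of $\gamma>\frac{1}{n-1-\alpha}$ are on target. However, the central mechanism of your argument does not produce the quantity you need to bound. When you multiply the entropy balance by a cutoff $w\in C^\infty_0((d_0,D_0))$ with $w\equiv 1$ on $[d,D]$ and integrate over $[0,T]\times\mathbb{R}_+$, the flux term contributes only $-\int_0^T\!\!\int w'(r)\,q^\#\,\dd r\dd t$, supported in the transition regions $(d_0,d)\cup(D,D_0)$ where $w'$ changes sign; the integral $\int_0^T\!\!\int_d^D q^\#\,\dd r\dd t$ --- the very quantity that controls $\rho|u|^3+\rho^{\gamma+\theta}$ on $[d,D]$ via \eqref{1.33} --- is annihilated because $w'\equiv 0$ there. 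The identity you obtain therefore says nothing about the lemma's left-hand side. Moreover, your claim that the transition-region term $\int w'q^\#$ is controlled by Lemma \ref{higher} plus the energy bounds is circular: $q^\#$ contains $\rho|u|^3$, which is exactly what the lemma is meant to establish and is not controlled by the $L^{\gamma+1}$ bound on the density or by $\int\rho u^2$.

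The correct mechanism --- the one the paper uses, following \cite{Chen_2010,Chen2021} --- is to integrate the identity $(\eta^\# r^{n-1})_t+(q^\# r^{n-1})_r+(n-1)\big(-q^\#+\rho u\,\eta^\#_\rho+\rho u^2\eta^\#_m\big)r^{n-2}=\cdots$ in space from $r$ out to the free boundary $b(t)$, not against a cutoff. The sign condition \eqref{5.11}, namely $\rho u\,\partial_\rho\eta^{\#}+\rho u^2\partial_m\eta^{\#}-q^{\#}\le 0$, lets one discard the geometric source when passing to an upper bound, and the boundary contributions at $z=b(t)$ combine with $b'(t)=u(t,b(t))$ into $(q^{\#}-u\eta^{\#})(t,b(t))b(t)^{n-1}$. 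This yields a pointwise-in-$r$ upper bound on $q^{\#}(t,r)r^{n-1}$ by integrals over $[r,b(t))$ of quantities already controlled by the energy, BD, and density higher-integrability estimates; integrating that bound over $[0,T]\times[d,D]$ and invoking $q^{\#}\ge C_\gamma^{-1}(\rho|u|^3+\rho^{\gamma+\theta})$ gives the result. Your kernel decomposition for the Riesz source then slots in essentially as you describe (the paper bounds $|\eta^{\#}_m|\lesssim|u|+\rho^{\theta}$ and reduces the potential term to $\int\!\int\!\int(\rho u^2+\rho+\rho^{\gamma})z^{n-1}$ via the Young inequality and \eqref{continuity.8}), but only after the pointwise bound on $q^{\#}$ has been set up correctly.
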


\begin{proof}
Firstly, multiplying $\eqref{Eul}_1$ by $r^{n-1} \eta^\#_\rho$ and $\eqref{Eul}_2$ by $r^{n-1} \eta^\#_m$, we have
\begin{align}
& (\eta^\# r^{n-1})_t + (q^\# r^{n-1})_r + (n-1)\big(-q^\# + \rho u \eta^\#_\rho + \rho u^2 \eta_m^\#\big) r^{n-2}\nonumber\\
& = \v \eta^\#_m \big( (\rho u)_r + (n-1)\rho ( \frac{u}{r} )_r\big) r^{n-1}
- \kappa \eta^\#_m \rho (\Phi_\alpha * \rho)_r r^{n-1}.\label{5.12}
\end{align}
Integrating \eqref{5.12} over $[r,b(t))$ and using \eqref{0.19} and \eqref{5.11}, we obtain
    \begin{align}
	     q^{\#}(t,r)r^{n-1} & \leq -\v \int_r^{b(t)}  \eta^{\#}_m(t,z)(\rho u_z)_z\, z^{n-1}\dd z
	   -(n-1)\v\int_r^{b(t)}\eta^{\#}_m(t,z) \left (\frac{u}{z} \right )_z \rho \, z^{n-1}\dd  z\nonumber\\
	   &\quad\, +\Big( \int_r^{b(t)}\eta^{\#}(t,z)\,z^{n-1}\dd z\Big)_t
	   +\big(q^{\#}-u\eta^{\#}\big)(t,b(t))b(t)^{n-1}\nonumber\\
	   &\quad\, +\kappa \int_r^{b(t)} \eta^\#_m(t,z) \rho (\Phi_\alpha * \rho)_z z^{n-1} \dd z =: \sum^5_{i=1} I_i.\label{5.13}
    \end{align}
Now, as in \cite[Lemma 3.7]{Chen2021}, we have
\begin{equation*}
\bigg| \int^T_0 \int^D_d \Big( \sum^4_{i = 1} I_i \Big) \dd r \dd t \bigg| \leq C(d,D,M,E_0,T).
\end{equation*}
To estimate the potential term $I_5$, we use \eqref{1.33}, the Young inequality, and once again
the estimates in \eqref{continuity.8}, and apply the same method as for \eqref{5.4} and \eqref{high}--\eqref{low}
to obtain
    \begin{align}
    & \bigg| \int^T_0 \int^D_d \int^{b(t)}_r \kappa \eta^{\#}_m \rho (\Phi_\alpha * \rho)_z z^{n-1} \dd z \dd r\dd t\bigg|\nonumber \\
    & \leq C(d,D,M,E_0,T) \int^T_0 \int^D_d \int^{b(t)}_r
      \big((|u| + \rho^\theta)\rho\big)(t,z) z^{n-1} \dd z \dd r \dd t\nonumber\\
   & \leq C(d,D,M,E_0,T) \int^T_0 \int^D_d \int^{b(t)}_r \big(\rho|u| + \rho^\frac{\gamma}{2}\rho^\frac{1}{2}\big)(t,z)
        z^{n-1} \dd z \dd r \dd t \nonumber\\
  & \leq C(d,D,M,E_0,T) \int^T_0 \int^D_d \int^{b(t)}_r\big(\rho u^2 + \rho + \rho^\gamma\big)(t,z) z^{n-1} \dd z \dd r \dd t\nonumber\\[1mm]
  & \leq C(d,D,M,E_0,T).\label{5.15}
    \end{align}
Combining $\eqref{1.32}_2$ with \eqref{5.13}--\eqref{5.15}, we obtain the desired result.
\end{proof}

\medskip
\section[Global Solutions of the Compressible Navier-Stokes-Riesz Equations]{Global Solutions to the Compressible Navier-Stokes-Riesz Equations}\label{S4}

We now show the convergence of our solutions $(\rho^{\v,b},\rho^{\v,b}u^{\v,b})$ of the free boundary
problem \eqref{Eul}--\eqref{0.20} to global weak solutions $(\rho^\v,m^\v)$ of CNSREs \eqref{0.2} as $b \to \infty$.
When we take the limit, the cavitation (vacuum states) may form, which should be taken into account carefully.
For the following, we extend $(\rho^{\v,b},u^{\v,b})$ to be zero on $([0,T] \times (0,\infty)) \setminus \Omega^T$.

\begin{lemma}\label{lem5.2}
Under the assumptions of {\rm Theorem {\rm \ref{existence}}}, for fixed $\v\in(0,\v_0]$
{\rm (}as defined in {\rm Lemma \ref{appendix5}}{\rm )}, there exists a non-negative function $\rho^{\v}(t,r)$ a.e. such that, as $b\rightarrow\infty$ $($up to a sub-sequence$)$,
	\begin{equation} \label{2.66}
		(\sqrt{\rho^{\v,b}}, \rho^{\v,b})\longrightarrow (\sqrt{\rho^{\v}}, \rho^{\v})  \quad  \mbox{ {\it a.e.} and strongly in} \  C(0,T; L^q_{\rm loc})
	\end{equation}
	for any $q\in [1,\infty)$, where $L^q_{\rm loc}$ denotes  $L^q(K)$ for $K\Subset (0,\infty)$.
\end{lemma}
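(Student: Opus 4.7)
The plan is to apply the Aubin--Lions compactness lemma to the sequence $\{\rho^{\v,b}\}_{b}$ for each fixed $\v \in (0,\v_0]$, using the BD-type entropy estimate for spatial regularity and the continuity equation for time regularity. I would work on an arbitrary compact set $K \Subset (0,\infty)$, noting that Lemma \ref{expand} guarantees $K \subset (a,b(t))$ for every $t \in [0,T]$ provided $b$ is sufficiently large, so that the values of $(\rho^{\v,b},m^{\v,b})$ on $K$ are genuine solutions of \eqref{Eul} rather than their zero extensions.

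For spatial regularity, I would combine the BD-type estimates (Lemmas \ref{oldBD} and \ref{newbd}) with the energy bounds (Corollary \ref{uniform}) to obtain
\begin{equation*}
\|\sqrt{\r^{\v,b}}\|_{L^\infty(0,T; W^{1,2}(K))} \le C(\v,M,E_0,T,K)
\end{equation*}
uniformly in $b$. Since spherical symmetry reduces us to a one-dimensional setting in $r$, the Sobolev embedding $W^{1,2}(K) \hookrightarrow L^\infty(K)$ then yields a uniform bound on $\sqrt{\r^{\v,b}}$, and hence on $\r^{\v,b}$, in $L^\infty(0,T; L^\infty(K))$. Writing $\partial_r \r^{\v,b} = 2\sqrt{\r^{\v,b}}\,\partial_r\sqrt{\r^{\v,b}}$, this upgrades to a uniform bound $\|\r^{\v,b}\|_{L^\infty(0,T; W^{1,2}(K))} \le C$.

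For time regularity, I would use the mass equation $\partial_t \r^{\v,b} = -\partial_r m^{\v,b} - \frac{n-1}{r} m^{\v,b}$, together with the factorization $m^{\v,b} = \sqrt{\r^{\v,b}}\cdot (m^{\v,b}/\sqrt{\r^{\v,b}})$. By the Cauchy--Schwarz inequality combined with the uniform $L^\infty(0,T;L^\infty(K))$ bound on $\sqrt{\r^{\v,b}}$ and the uniform $L^\infty(0,T;L^2)$ bound on $m^{\v,b}/\sqrt{\r^{\v,b}}$ from the energy estimate, $m^{\v,b}$ is uniformly bounded in $L^\infty(0,T;L^2(K))$. Since $1/r$ is bounded on $K$, it follows that $\partial_t \r^{\v,b}$ is uniformly bounded in $L^\infty(0,T; W^{-1,2}(K))$. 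The compact embedding $W^{1,2}(K) \hookrightarrow\hookrightarrow L^q(K) \hookrightarrow W^{-1,2}(K)$ for any $q \in [1,\infty)$ then allows an application of the Aubin--Lions lemma, producing a (diagonal) subsequence, along an exhaustion of $(0,\infty)$ by compact sets, such that $\r^{\v,b} \to \r^\v$ strongly in $C([0,T]; L^q_{\rm loc})$ for every $q\in[1,\infty)$, and a.e.\ in $(0,T) \times (0,\infty)$.

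Finally, to pass from $\r^{\v,b}$ to $\sqrt{\r^{\v,b}}$, I would use the elementary pointwise inequality $|\sqrt{a}-\sqrt{b}|^2 \le |a-b|$ for $a,b \ge 0$, which gives
\begin{equation*}
\|\sqrt{\r^{\v,b}}(t,\cdot)-\sqrt{\r^\v}(t,\cdot)\|_{L^{2q}(K)} \le \|\r^{\v,b}(t,\cdot)-\r^\v(t,\cdot)\|_{L^q(K)}^{1/2}
\end{equation*}
uniformly in $t \in [0,T]$, yielding the desired convergence of $\sqrt{\r^{\v,b}}$ in $C([0,T];L^q_{\rm loc})$ for every $q\in[1,\infty)$, together with a.e.\ convergence. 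The main technical subtlety lies in Step~1: one must verify that the BD-type bounds of Lemmas \ref{oldBD}--\ref{newbd}, which provide $L^2$-control of $\partial_r\sqrt{\r^{\v,b}}$ with a weight $r^{n-1}$, translate to genuine $L^2(K)$ control independent of $b$ on any fixed compact $K \Subset (0,\infty)$. This is automatic since $r^{n-1}$ is bounded above and below on $K$, and the right-hand sides of the BD estimates depend only on $E_0$, $M$, $T$, and $\v$, but not on $b$.
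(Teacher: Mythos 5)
Your argument is correct and is essentially the standard Aubin--Lions argument that the paper invokes by deferring to \cite[Lemma 4.1]{Chen2021}: uniform $L^\infty(0,T;H^1(K))$ control of $\sqrt{\rho^{\v,b}}$ from the BD-type entropy, time regularity of $\rho^{\v,b}$ in $L^\infty(0,T;W^{-1,2}(K))$ from the continuity equation, a diagonal extraction over an exhaustion of $(0,\infty)$, and the elementary inequality $|\sqrt{a}-\sqrt{b}|^2\le|a-b|$ to transfer the convergence to $\sqrt{\rho^{\v,b}}$. No gaps; the points you flag (working on $K\Subset(a,b(t))$ via Lemma \ref{expand} and the boundedness of $r^{n-1}$ above and below on $K$) are exactly the ones that need checking.
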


The proof is the same as for \cite[Lemma 4.1]{Chen2021}.

\begin{corollary}
	Under the assumptions of {\rm Lemma {\rm \ref{lem5.2}}}, the pressure function sequence $p(\rho^{\v,b})$ is uniformly bounded in $L^\infty(0,T; L^q_{\rm loc})$ for all $q\in[1,\infty]$
	and, as $b\rightarrow\infty$ $($up to a sub-sequence$)$,
	\begin{align*}
		p(\rho^{\v,b})\longrightarrow p(\rho^{\v}) \qquad  \mbox{strongly in $L^q(0,T; L^q_{\rm loc})$ for all $q\in[1,\infty)$}.
	\end{align*}
\end{corollary}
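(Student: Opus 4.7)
Since $p(\rho)=a_0\rho^\gamma$, the whole statement reduces to showing that, for fixed $\v\in(0,\v_0]$, the sequence $(\rho^{\v,b})^\gamma$ is uniformly bounded in $L^\infty(0,T;L^\infty_{\rm loc})$ in $b$ and converges strongly in $L^q(0,T;L^q_{\rm loc})$ for every $q\in[1,\infty)$. The first claim then gives the desired $L^\infty(0,T;L^q_{\rm loc})$ bound for every $q\in[1,\infty]$ by monotonicity of $L^q$ on bounded sets, and the second claim, combined with this uniform bound, yields strong convergence in $L^q(0,T;L^q_{\rm loc})$.

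The plan for the local $L^\infty$ bound proceeds as follows. The energy estimate of Lemma \ref{Energy} gives $\rho^{\v,b}\in L^\infty(0,T;L^\gamma(\mathbb{R}^n))$ uniformly in $b$, and on any $K\Subset(0,\infty)$ the weight $r^{n-1}$ is bounded above and below, so $\sqrt{\rho^{\v,b}}\in L^\infty(0,T;L^{2\gamma}(K))$. The BD-type entropy estimates (Lemma \ref{oldBD} in the sub-Coulomb/Coulomb range and Lemma \ref{newbd} in the remaining range), applied for fixed $\v$, provide
\[
\int_a^{b(t)}|(\sqrt{\rho^{\v,b}})_r|^2\,r^{n-1}\,\dd r\leq C(\v,M,E_0,T),
\]
and hence $(\sqrt{\rho^{\v,b}})_r\in L^\infty(0,T;L^2(K))$ uniformly in $b$. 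Combining both, $\sqrt{\rho^{\v,b}}$ is uniformly bounded in $L^\infty(0,T;W^{1,2}(K))$. Since $r$ is a one-dimensional variable, the Sobolev embedding $W^{1,2}(K)\hookrightarrow L^\infty(K)$ then yields
\[
\|\rho^{\v,b}(t,\cdot)\|_{L^\infty(K)}=\|\sqrt{\rho^{\v,b}}(t,\cdot)\|_{L^\infty(K)}^2\leq C(\v,M,E_0,T,K),
\]
uniformly in $b$ and $t\in[0,T]$. Therefore, $p(\rho^{\v,b})$ is uniformly bounded in $L^\infty(0,T;L^\infty(K))$, which gives the first conclusion for every $q\in[1,\infty]$.

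For the strong convergence, Lemma \ref{lem5.2} already provides $\rho^{\v,b}\to\rho^\v$ almost everywhere as $b\to\infty$, and hence $(\rho^{\v,b})^\gamma\to(\rho^\v)^\gamma$ almost everywhere on $[0,T]\times K$ by continuity of $s\mapsto s^\gamma$. The uniform local $L^\infty$ bound above serves as a dominating function on $[0,T]\times K$. An application of the Lebesgue dominated convergence theorem on the bounded set $[0,T]\times K$ then yields
\[
\int_0^T\!\!\int_K |p(\rho^{\v,b})-p(\rho^\v)|^q\,\dd r\,\dd t\longrightarrow 0\quad \text{as $b\to\infty$}
\]
for every $q\in[1,\infty)$ and every $K\Subset(0,\infty)$. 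The limit inherits the bound $p(\rho^\v)\in L^\infty(0,T;L^\infty_{\rm loc})$ by Fatou.

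The most delicate point is the one-dimensional Sobolev embedding step: it crucially exploits that, after passing to the radial variable, the problem effectively lives in one spatial dimension, where $W^{1,2}$ controls the uniform norm. Away from the radial framework, no such pointwise bound on $\rho^{\v,b}$ would be available purely from energy and BD bounds, and one would only get an improved integrability. Here, however, it gives the strongest possible local $L^\infty$ control and reduces the convergence problem to a routine dominated convergence argument, avoiding any need for the compactness machinery used later in the inviscid limit $\v\to 0$.
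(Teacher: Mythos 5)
Your proposal is correct and follows essentially the same route as the paper: the uniform $L^\infty(0,T;L^\infty_{\rm loc})$ bound on $\rho^{\v,b}$ comes from combining the energy and BD-type entropy estimates (Lemmas \ref{Energy}, \ref{oldBD}, \ref{newbd}) with the one-dimensional Sobolev embedding $H^1_{\rm loc}\hookrightarrow L^\infty_{\rm loc}$ for $\sqrt{\rho^{\v,b}}$ — exactly the mechanism the paper invokes in the proof of Lemma \ref{lem5.5} — and the strong convergence then follows from the a.e.\ convergence of Lemma \ref{lem5.2} together with dominated convergence on $[0,T]\times K$.
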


\begin{lemma}\label{lem5.5}
	Under the assumptions of {\rm Lemma {\rm \ref{lem5.2}}}, as $b\rightarrow\infty$ $($up to a sub-sequence$)$,
	the momentum function sequence $m^{\v,b}:=\rho^{\v,b} u^{\v,b}$ converges strongly in $L^2(0,T; L^q_{\rm loc})$ to
	some function $m^{\v}(t,r)$ for all $q\in[1,\infty)$, which particularly implies that
	\begin{align*}
		m^{\v,b}=\rho^{\v,b} u^{\v,b}\longrightarrow m^{\v}(t,r) \qquad  \mbox{a.e. in $[0,T]\times(0,\infty)$}.
	\end{align*}
\end{lemma}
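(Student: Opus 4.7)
The plan is to establish strong compactness of the momentum sequence $m^{\v,b}=\rho^{\v,b}u^{\v,b}$ on each compact subinterval $K\Subset(0,\infty)$ via an Aubin-Lions argument. I will first obtain a uniform bound on $m^{\v,b}$ in some $L^\infty(0,T;L^p_{\rm loc})$, then a uniform spatial regularity bound, then a uniform bound on $\partial_t m^{\v,b}$ in a negative-order Sobolev space, and finally combine them.

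First, using the energy estimate (Lemma \ref{Energy}) and Corollary \ref{uniform}, the bound $\rho^{\v,b}(u^{\v,b})^2\in L^\infty(0,T;L^1)$ together with $\rho^{\v,b}\in L^\infty(0,T;L^1\cap L^\gamma)$ yields, via Hölder applied to $|m|^p = (\rho u^2)^{p/2}\rho^{p/2}$, a uniform bound of $m^{\v,b}$ in $L^\infty(0,T;L^{\frac{2\gamma}{\gamma+1}}_{\rm loc})$. For the spatial derivative I will use the identity
$$m_r = 2(\sqrt{\rho})_r\,(\sqrt{\rho}\,u) + \sqrt{\rho}\,(\sqrt{\rho}\,u_r),$$
and combine the BD-type bound $(\sqrt{\rho^{\v,b}})_r\in L^\infty(0,T;L^2)$ from Lemma \ref{oldBD} or Lemma \ref{newbd} with the energy-entropy bounds $\sqrt{\rho^{\v,b}}\,u^{\v,b}\in L^\infty(0,T;L^2)$ and $\sqrt{\rho^{\v,b}}\,u_r^{\v,b}\in L^2(0,T;L^2_{\rm loc})$; this gives $m_r^{\v,b}$ uniformly in $L^2(0,T;L^s_{\rm loc})$ for some $s>1$. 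Hence $m^{\v,b}$ is uniformly bounded in $L^2(0,T;W^{1,s}_{\rm loc})$.

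Next, to control $\partial_t m^{\v,b}$, I will test the momentum equation $\eqref{Eul}_2$ against a smooth compactly supported function $\varphi\in C_0^\infty(K)$. The convective and pressure contributions $(\rho u^2 + p(\rho))_r$ and the geometric term $\frac{n-1}{r}\rho u^2$ are controlled in $L^1(0,T;W^{-1,q}_{\rm loc})$ by the energy estimate and the higher integrability from Lemma \ref{higher}. The viscous terms $\v\bigl(\rho(u_r+\tfrac{n-1}{r}u)\bigr)_r - \v\tfrac{n-1}{r}\rho_r u$ are handled by the BD entropy (decomposing $\rho u_r = 2(\sqrt{\rho})\cdot(\sqrt{\rho}u_r)$ and $\rho_r u = 2(\sqrt{\rho})_r\cdot(\sqrt{\rho}u)$) and absorbed into $W^{-1,s}_{\rm loc}$. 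Crucially, the nonlocal forcing $\rho(\Phi_\alpha*\rho)_r$ is estimated by combining the pointwise/radial representation $(\Phi_\alpha*\rho)_r = \int_0^\infty \omega(r,\eta)\rho(\eta)\eta^{n-1}d\eta$ from Lemma \ref{potentially} with the kernel bounds \eqref{continuity.8} and the uniform bounds on $\|\nabla\Phi_\alpha*\rho\|_{L^{2n/(\alpha+2)}}$ (or $L^\infty(K)$ in the logarithmic case) established in Corollary \ref{uniform}, giving $\rho(\Phi_\alpha*\rho)_r\in L^1(0,T;L^1_{\rm loc})$ uniformly. Putting these together, $\partial_t m^{\v,b}$ is uniformly bounded in $L^1(0,T;W^{-1,r}_{\rm loc})$ for suitable $r$.

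With $m^{\v,b}$ bounded in $L^2(0,T;W^{1,s}_{\rm loc})$ and $\partial_t m^{\v,b}$ bounded in $L^1(0,T;W^{-1,r}_{\rm loc})$, the Aubin-Lions lemma applied on the triple $W^{1,s}(K)\hookrightarrow\hookrightarrow L^q(K)\hookrightarrow W^{-1,r}(K)$ yields strong convergence $m^{\v,b}\to m^\v$ in $L^2(0,T;L^q(K))$ along a subsequence, for every $q\in[1,\infty)$ and every $K\Subset(0,\infty)$; the a.e. convergence then follows by extracting a further subsequence. I expect the main difficulty to lie in the uniform control of the nonlocal potential contribution to $\partial_t m^{\v,b}$ in the super-Coulombic regime $\alpha\in(n-2,n-1)$, where the absence of a local representation forces one to rely exclusively on the radial estimates of Lemma \ref{potentially} combined with the higher integrability of $\rho^{\v,b}$ from Lemma \ref{higher}; verifying that these dovetail with the admissible exponent ranges in Theorem \ref{existence} is the delicate bookkeeping step.
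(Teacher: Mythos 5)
Your proposal follows essentially the same route as the paper: an $L^\infty(0,T;L^p_{\rm loc})$ bound on $m^{\v,b}$, a spatial bound via the decomposition $m_r=2(\sqrt{\rho})_r(\sqrt{\rho}\,u)+\sqrt{\rho}\,(\sqrt{\rho}\,u_r)$ using the BD entropy and energy estimates, a $W^{-1,q}_{\rm loc}$ bound on $\partial_t m^{\v,b}$ read off term-by-term from the momentum equation (with the nonlocal force controlled by Corollary~\ref{uniform}), and Aubin--Lions. One overstatement: from $(\sqrt{\rho^{\v,b}})_r\in L^\infty_tL^2_{\rm loc}$ and $\sqrt{\rho^{\v,b}}\,u^{\v,b}\in L^\infty_tL^2_{\rm loc}$ the first term of $m_r$ is only in $L^1_{\rm loc}$ in space, so you cannot claim $m_r^{\v,b}\in L^2(0,T;L^s_{\rm loc})$ for some $s>1$; this is harmless, however, since in one space dimension $W^{1,1}(K)$ still embeds compactly into $L^q(K)$ for every $q<\infty$, which is exactly the $L^2(0,T;W^{1,1}_{\rm loc})$ bound the paper uses.
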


\begin{proof}
From Lemmas \ref{Energy}, \ref{oldBD}, and \ref{newbd}, and the Sobolev embedding theorem,
we see that $\sqrt{\rho^{\varepsilon,b}}$ is uniformly bounded
in $L^\infty(0,T; H^{1}_{\rm loc})\hookrightarrow L^\infty(0,T; L^{\infty}_{\rm loc})$ for $b>0$,
so that
\[
\rho^{\varepsilon,b}u^{\varepsilon,b}
= \sqrt{\rho^{\varepsilon,b}}\big(\sqrt{\rho^{\varepsilon,b}}u^{\varepsilon,b} \big)
\]
is uniformly bounded in $L^\infty(0,T; L^2_{\rm loc})$ for $b>0$.
Since
\begin{align*}
\big(\rho^{\varepsilon,b}u^{\varepsilon,b} \big)_r
= \rho^{\varepsilon,b}_ru^{\varepsilon,b} + \rho^{\varepsilon,b}u^{\varepsilon,b}_r
= 2 \big(\sqrt{\rho^{\varepsilon,b}}\big)_r \big( \sqrt{\rho^{\varepsilon,b}}u^{\varepsilon,b} \big)
+ \sqrt{\rho^{\varepsilon,b}} \big( \sqrt{\rho^{\varepsilon,b}}u^{\varepsilon,b}_r \big),
\end{align*}
it follows from Lemmas \ref{Energy}, \ref{oldBD}, and \ref{newbd}
that $(\rho^{\varepsilon,b}u^{\varepsilon,b})_r$ is uniformly
bounded in $L^2(0,T; L^1_{\rm loc})$ for $b > 0$. This implies
that
\begin{align}\label{4.2}
\rho^{\varepsilon,b}u^{\varepsilon,b} \in L^2(0,T; W^{1,1}_{\rm loc})
 \qquad\mbox{uniformly for $b > 0$}.
\end{align}
From Lemma \ref{Energy} and the HLS inequality from Lemma \ref{simplehls}, we have
    \begin{align}\label{4.3}
	\begin{cases}
		\displaystyle \partial_r\big((\sqrt{\rho^{\v,b}} u^{\v,b})^2\big)\in L^{\infty}(0,T;W_{\rm loc}^{-1,1}),\\[2mm]
		\displaystyle \frac{n-1}{r} \big(\sqrt{\rho^{\v,b}} u^{\v,b}\big)^2 \in L^{\infty}(0,T; L_{\rm loc}^{1}),\\[2mm]
		\displaystyle \partial_r p(\rho^{\v,b}) \in L^{2}(0,T;H_{\rm loc}^{-1}),\\[2mm]
	\displaystyle \rho^{\varepsilon,b} \big(\Phi_\alpha * \rho^{\varepsilon,b}\big)_r \in L^\infty(0,T; L^\infty_{\rm loc}),
	\end{cases}
\end{align}
and
\begin{align}\label{4.4}
    \sqrt{\rho^{\v,b}}\,\big(\sqrt{\rho^{\v,b}} u^{\v,b}_r\big)
	+\frac{n-1}{r} \sqrt{\rho^{\v,b}}\,\big(\sqrt{\rho^{\v,b}}u^{\v,b}\big) \in L^2(0,T; L^2_{\rm loc})
\end{align}
uniformly for $b>0$.
Hence, it follows from \eqref{4.4} that
\begin{align}\label{4.5}
    \partial_r \big( \rho^{\varepsilon,b} (u^{\varepsilon,b}_r + \frac{n-1}{r}u^{\varepsilon,b})\big)
    \in L^2(0,T; H^{-1}_{\rm loc})
\end{align}
uniformly for $b>0$.
Using Lemmas \ref{Energy}, \ref{oldBD}, and \ref{newbd}, we have
\begin{align}\label{4.6}
    \frac{n-1}{r} u^{\v,b} \partial_r \rho^{\v,b} =\frac{2(n-1)}{r} (\sqrt{\rho^{\v,b}})_r\,\big(\sqrt{\rho^{\v,b}}u^{\v,b}\big)\in L^2(0,T; L^1_{\rm loc})
\end{align}
uniformly for $b > 0$.
Then, substituting \eqref{4.3} and \eqref{4.5}--\eqref{4.6} into $\eqref{Eul}_2$
and using the Sobolev embedding theorem, we see that
\begin{align*}
    \partial_t(\rho^{\v,b} u^{\v,b})\in L^{2}(0,T; W^{-1,q}_{\rm loc}),
\end{align*}
uniformly for $b > 0$ and $q \in [1,\frac{n}{n-1})$.
Using the Aubin-Lions lemma together with \eqref{4.2}, we obtain that
$\rho^{\v,b} u^{\v,b}$ is compact in $L^{2}(0,T;L^q_{\rm loc})$ for all $q \in [1,\infty)$.
\end{proof}

\begin{lemma}\label{lem5.6}
	Under the assumptions of {\rm Lemma {\rm \ref{lem5.2}}}, the limit function $m^\v(t,r)$ in {\rm Lemma \ref{lem5.5}} satisfies that
	$m^{\v}(t,r)=0$ a.e. on $\{(t,r)\,:\,\rho^{\v}(t,r)=0\}$. Furthermore, there exists
	a function $u^{\v}(t,r)$ such that $m^{\v}(t,r)=\rho^{\v}(t,r) u^{\v}(t,r)$ a.e., and $\,u^\v(t,r)=0$ a.e.
	on $\{(t,r)\,:\,\rho^{\v}(t,r)=0\}$. Moreover, as $b\rightarrow\infty$ $($up to a sub-sequence$)$,
	\begin{align*}
		& m^{\v,b} \longrightarrow m^{\v}=\rho^{\v} u^{\v} \qquad  \mbox{strongly in $L^2(0,T; L^q_{\rm loc})$ for $q\in[1,\infty)$},\\
		& \frac{m^{\v,b}}{\sqrt{\rho^{\v,b}}}\longrightarrow \sqrt{\rho^{\v}} u^{\v}=:\frac{m^\v}{\sqrt{\rho^\v}}
		\qquad  \mbox{strongly in $L^2(0,T; L^2_{\rm loc})$}.
	\end{align*}
\end{lemma}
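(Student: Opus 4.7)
The plan is to use Fatou's lemma combined with the uniform energy bound from Lemma~\ref{Energy} to first force $m^\v$ to vanish on the vacuum $\{\r^\v = 0\}$, then to define $u^\v$ consistently with this property, and finally to promote the weak $L^2$ convergence of $\frac{m^{\v,b}}{\sqrt{\r^{\v,b}}}$ to strong $L^2$ convergence through a splitting between the non-vacuum bulk and a thin vacuum layer.

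First, by Lemmas~\ref{lem5.2} and~\ref{lem5.5}, along a sub-sequence both $\sqrt{\r^{\v,b}} \to \sqrt{\r^\v}$ and $m^{\v,b} \to m^\v$ a.e. on $(0,T)\times(0,\infty)$. Since $\int \frac{|m^{\v,b}|^2}{\r^{\v,b}}\, r^{n-1}\,\dd r\,\dd t$ is bounded uniformly in $b$ by Lemma~\ref{Energy}, Fatou's lemma with the convention $\frac{|m|^2}{0} := +\infty$ whenever $m \neq 0$ forces $m^\v = 0$ a.e. on $\{\r^\v = 0\}$ and yields the finiteness of $\int \frac{|m^\v|^2}{\r^\v}$. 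Set $u^\v := \frac{m^\v}{\r^\v}$ on $\{\r^\v > 0\}$ and $u^\v := 0$ on the vacuum, so that $m^\v = \r^\v u^\v$ a.e. and $\sqrt{\r^\v}\,u^\v \in L^\infty(0,T; L^2_{\mathrm{loc}})$. The first strong convergence $m^{\v,b} \to \r^\v u^\v$ in $L^2(0,T;L^q_{\mathrm{loc}})$ is then just Lemma~\ref{lem5.5} phrased through this identification.

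For the second convergence, set $V_b := \frac{m^{\v,b}}{\sqrt{\r^{\v,b}}}$, uniformly bounded in $L^\infty(0,T;L^2_{\mathrm{loc}})$, and extract a sub-sequence with $V_b \rightharpoonup V$ weakly in $L^2$. The identity $\sqrt{\r^{\v,b}}\,V_b = m^{\v,b}$, combined with the strong convergence $\sqrt{\r^{\v,b}} \to \sqrt{\r^\v}$ (Lemma~\ref{lem5.2}) paired against the weak convergence of $V_b$, passes to $\sqrt{\r^\v}\,V = m^\v = \r^\v u^\v$ a.e., hence $V = \sqrt{\r^\v}\,u^\v$ on $\{\r^\v > 0\}$; extending $V := 0$ on the vacuum is consistent with $u^\v = 0$ there. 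To upgrade weak to strong convergence, split the integration domain for each $\delta > 0$ into the bulk $\{\r^\v > \delta\}$ and the vacuum layer $\{\r^\v \leq \delta\}$. The BD-type bounds from Lemmas~\ref{oldBD} and~\ref{newbd} give $\sqrt{\r^{\v,b}}$ uniformly in $L^\infty(0,T;H^1_{\mathrm{loc}})$; coupled with the one-dimensional Morrey embedding $H^1 \hookrightarrow C^{0,1/2}$ in the radial variable and the compactness of Lemma~\ref{lem5.2}, this yields $\sqrt{\r^{\v,b}} \geq \frac{\sqrt{\delta}}{2}$ on $\{\r^\v > \delta\}$ for $b$ large. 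Dominated convergence, using the a.e. convergence $V_b \to V$ on the bulk, then delivers strong $L^2$ convergence on $\{\r^\v > \delta\}$.

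The main obstacle is controlling the vacuum-layer contribution $\iint_{\{\r^\v \leq \delta\}} V_b^2\, r^{n-1}\,\dd r\,\dd t$ uniformly in $b$ and showing that it is $o(1)$ as $\delta \to 0$. For this I would exploit the higher-integrability estimates of Lemma~\ref{higher} and its companion for $\r^{\v,b}|u^{\v,b}|^3$, which, combined via H\"older's inequality, yield equi-integrability of $\{V_b^2\} = \{\r^{\v,b}|u^{\v,b}|^2\}$ beyond the pure $L^1$ energy bound. Equi-integrability forces the vacuum-layer integral to tend to zero uniformly in $b$ as the layer shrinks, which happens because $\sqrt{\r^{\v,b}}$ is locally uniformly small there. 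Taking $\delta \to 0$ then completes the strong $L^2(0,T;L^2_{\mathrm{loc}})$ convergence of $V_b$ to $\sqrt{\r^\v}\,u^\v$.
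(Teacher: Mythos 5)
Your overall strategy is sound and is essentially the standard one: the paper itself gives no argument here but defers to \cite[Lemma 4.4]{Chen2021}, whose proof runs exactly along your lines (Fatou on the kinetic energy to kill $m^\v$ on the vacuum, definition of $u^\v$, then a splitting into bulk and vacuum layer with the higher integrability of the velocity controlling the layer). Two points deserve tightening. First, the decisive estimate for the vacuum layer is not really ``equi-integrability'' of $\{\rho^{\v,b}|u^{\v,b}|^2\}$ --- the set $\{\rho^{\v}\leq\delta\}$ need not have small measure, so uniform absolute continuity of the integrals with respect to the measure of the set buys you nothing there. What actually closes the argument is the pointwise interpolation $\rho|u|^2=(\rho|u|^3)^{2/3}\rho^{1/3}$ together with H\"older's inequality and the bound $\iint_K\rho^{\v,b}|u^{\v,b}|^3\,\dd r\,\dd t\leq C(K)$ from the higher-integrability lemma, which gives
\begin{equation*}
\iint_{\{\rho^{\v,b}\leq\delta\}\cap([0,T]\times K)}\rho^{\v,b}|u^{\v,b}|^2\,\dd r\,\dd t
\leq\Big(\iint_{[0,T]\times K}\rho^{\v,b}|u^{\v,b}|^3\,\dd r\,\dd t\Big)^{2/3}\big(\delta\,T|K|\big)^{1/3}
\leq C(K)\,\delta^{1/3},
\end{equation*}
uniformly in $b$; you gesture at the smallness of $\rho^{\v,b}$ on the layer but never write this inequality, and without it the step is incomplete. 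Second, the Morrey-embedding detour to obtain $\sqrt{\rho^{\v,b}}\geq\sqrt{\delta}/2$ on the bulk is unnecessary (and delicate to make uniform in $t$): since $m^{\v,b}\to m^\v$ and $\rho^{\v,b}\to\rho^\v$ a.e., the quotient $m^{\v,b}/\sqrt{\rho^{\v,b}}$ converges a.e.\ on $\{\rho^\v>0\}$ directly, and on $\{\rho^{\v,b}>\delta\}$ it is dominated by $|m^{\v,b}|/\sqrt{\delta}$, which converges strongly in $L^2(0,T;L^2_{\rm loc})$ by Lemma \ref{lem5.5}; generalized dominated convergence then handles the bulk without any lower bound on $\sqrt{\rho^{\v,b}}$.
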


The proof of Lemma \ref {lem5.6} is the same as \cite[Lemma 4.4]{Chen2021}.

\begin{lemma}\label{mass}
Under the assumptions of {\rm Lemma {\rm \ref{lem5.2}}}, let $0\leq t_1<t_2\leq T$, and let $\zeta(t,\boldsymbol{x})\in C^1_{\rm c}([0,T]\times\mathbb{R}^n)$
be any smooth function with compact support.
Then
\begin{align}\label{3.1}
	\int_{\mathbb{R}^n} \rho^{\v}(t_2,\boldsymbol{x}) \zeta(t_2,\boldsymbol{x})\, \dd\boldsymbol{x}
	=\int_{\mathbb{R}^n} \rho^{\v}(t_1,\boldsymbol{x}) \zeta(t_1,\boldsymbol{x})\, \dd\boldsymbol{x}
	+\int_{t_1}^{t_2} \int_{\mathbb{R}^n} \big(\rho^{\v} \zeta_t + \M^{\v}\cdot\nabla\zeta\big)\,\dd\boldsymbol{x}\dd t.
\end{align}
Moreover, the total mass is conserved:
\begin{equation}\label{3.2}
	\int_{\mathbb{R}^n} \rho^\v(t,\boldsymbol{x})\, \dd\boldsymbol{x}=\int_{\mathbb{R}^n} \rho_0^\v(\boldsymbol{x})\, \dd\boldsymbol{x}=M\qquad\, \mbox{for $t\geq0$}.
\end{equation}
\end{lemma}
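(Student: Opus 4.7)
The plan is to first establish the weak continuity identity \eqref{3.1} at the level of the approximate solutions $(\rho^{\v,b},\M^{\v,b})$ of the free boundary problem \eqref{Eul}--\eqref{0.20}, and then pass to the limit $b\to\infty$ using the strong convergence results of Lemmas \ref{lem5.2} and \ref{lem5.5}. I would multiply the continuity equation $\rho_t+\nabla\cdot\M=0$ by $\zeta$ and integrate over $[t_1,t_2]\times\Omega_t$. Applying Reynolds' transport theorem to account for the moving outer boundary and then integrating by parts in space yield boundary contributions at $|\boldsymbol{x}|=a$ and $|\boldsymbol{x}|=b(t)$; the inner-boundary term vanishes because $u(t,a)=0$, while the outer-boundary flux is exactly canceled by the Reynolds correction via $b'(t)=u(t,b(t))$. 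Extending $(\rho^{\v,b},\M^{\v,b})$ by zero outside $\Omega^T$ then delivers the identity \eqref{3.1} with $(\rho^{\v},\M^{\v})$ replaced by $(\rho^{\v,b},\M^{\v,b})$.

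For the passage to the limit, I would fix the compact support of $\zeta$ inside $[0,T]\times B_R$. By Lemma \ref{expand}, one has $b(t)\geq b/2$ for $b$ large, so the support of $\zeta$ eventually lies well inside the outer moving boundary, while the inner radius $a=b^{-1}$ collapses to $0$. Away from the origin, Lemmas \ref{lem5.2} and \ref{lem5.5} give the strong convergences $\rho^{\v,b}\to\rho^{\v}$ in $C(0,T;L^q_{\rm loc})$ and $\M^{\v,b}\to\M^{\v}$ in $L^2(0,T;L^q_{\rm loc})$, which suffice to pass to the limit in the integrals restricted to any annulus $\{\delta\leq|\boldsymbol{x}|\leq R\}$. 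The contribution from $\{|\boldsymbol{x}|<\delta\}$ is handled by the uniform bounds of Corollary \ref{uniform}: the $L^\infty(0,T;L^\gamma)$-bound on $\rho^{\v,b}$ together with $|\M^{\v,b}|\leq\sqrt{\rho^{\v,b}}\,\big|\sqrt{\rho^{\v,b}}\,u^{\v,b}\big|$ and Cauchy--Schwarz produce equi-integrability on $B_\delta$ that is uniform in $b$, so these contributions vanish as $\delta\to 0$. This establishes \eqref{3.1}.

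For the total mass conservation \eqref{3.2}, I would apply \eqref{3.1} with the time-independent cutoff $\zeta_R(\boldsymbol{x})=\phi(|\boldsymbol{x}|/R)$, where $\phi\in C^1([0,\infty))$ equals $1$ on $[0,1]$ and $0$ on $[2,\infty)$. The left-hand side tends to $\int_{\mathbb{R}^n}(\rho^{\v}(t_2,\boldsymbol{x})-\rho^{\v}(t_1,\boldsymbol{x}))\,\dd\boldsymbol{x}$ as $R\to\infty$ by monotone convergence and the uniform $L^1$-bound of Corollary \ref{uniform}. For the right-hand side, the Cauchy--Schwarz inequality gives
\begin{align*}
\bigg|\int_{t_1}^{t_2}\!\int_{\mathbb{R}^n}\M^{\v}\cdot\nabla\zeta_R\,\dd\boldsymbol{x}\dd t\bigg|
\leq \frac{C}{R}\int_{t_1}^{t_2}\Big(\int_{\mathbb{R}^n}\rho^{\v}\,\dd\boldsymbol{x}\Big)^{1/2}
\Big(\int_{\mathbb{R}^n}\Big|\frac{\M^{\v}}{\sqrt{\rho^{\v}}}\Big|^{2}\,\dd\boldsymbol{x}\Big)^{1/2}\dd t,
\end{align*}
which vanishes as $R\to\infty$ by the uniform energy bound. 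Combined with $\int_{\mathbb{R}^n}\rho_0^{\v}\,\dd\boldsymbol{x}=M$ from the construction in Lemma \ref{appendix5}, this yields \eqref{3.2}. The main delicate point is the behavior near the origin, where the compact support of $\zeta$ may touch $\boldsymbol{0}$ while the solutions are supported only on the shrinking annulus $\Omega_t$; this is precisely where the uniform $L^\gamma_{\rm loc}$ bound on the density is required.
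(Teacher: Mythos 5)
Your proposal is correct and follows essentially the same route as the paper, which simply defers to the proof of \cite[Lemma 4.9]{Chen2021}: derive the weak identity for the free-boundary approximations (with the outer-boundary flux cancelling against the Reynolds correction and the inner one vanishing by $u(t,a)=0$), pass to the limit via the local strong convergences of Lemmas \ref{lem5.2} and \ref{lem5.5} together with the uniform $L^\gamma$-equi-integrability near the origin, and then obtain \eqref{3.2} from a far-field cutoff controlled by the Cauchy--Schwarz estimate $\int|\M^\v|\le M^{1/2}\|\M^\v/\sqrt{\rho^\v}\|_{L^2}$. No gaps.
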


The proof of Lemma \ref {mass} is  the same as  \cite[Lemma 4.9]{Chen2021}.

We can now prove an {\it a priori} estimate of the energy terms.

\begin{lemma}\label{simple}
    Under the assumptions of {\rm Lemma {\rm \ref{lem5.2}}}, the following estimates hold{\rm :}
\begin{enumerate}
\item[\rm (i)] When $\alpha \in (0,n - 1)$,
    \begin{align*}
        \begin{split}
            & \int_0^\infty  \Big(\frac12 \Big|\frac{m^{\v}}{\sqrt{\rho^\varepsilon}} \Big|^2
            + \rho^{\v}e(\rho^\v) \Big)(t,r)\,r^{n-1}\dd r
+ \varepsilon (n-1) \int^t_0\int^\infty_0 \Big|\frac{m^{\v}}{\sqrt{\rho^\varepsilon}}\Big|^2 r^{n-3} \dd r\dd s \\[1mm]
            & \leq C(M,E_0).
        \end{split}
    \end{align*}

\item[\rm (ii)] When $\alpha \in (-1,0]$,
    \begin{align*}
        \begin{split}
            & \int_0^\infty \Big(\frac12 \Big|\frac{m^{\v}}{\sqrt{\rho^\varepsilon}} \Big|^2
            + \rho^{\v}\big( e(\rho^\v) + k_\alpha(1 + r^2)\big) \Big)(t,r)\,r^{n-1}\dd r + \varepsilon (n-1) \int^t_0\int^\infty_0 \Big|\frac{m^{\v}}{\sqrt{\rho^\varepsilon}} \Big|^2 r^{n-3} \dd r\dd s \\[1mm]
            & \leq C(M,E_0,T).
            \end{split}
    \end{align*}
\end{enumerate}
\end{lemma}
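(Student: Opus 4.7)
The plan is to derive Lemma~\ref{simple} by passing to the limit $b\to\infty$ in the uniform energy estimates of Lemma~\ref{Energy}, exploiting the convergence results established in Lemmas~\ref{lem5.2}, \ref{lem5.5}, and~\ref{lem5.6}, together with $E_0^{\v,b}\to E_0$ from Lemma~\ref{appendix5}. Extending $(\rho^{\v,b},u^{\v,b})$ by zero outside $\Omega^T$ and recalling $\rho^{\v,b}|u^{\v,b}|^2=\bigl|\tfrac{m^{\v,b}}{\sqrt{\rho^{\v,b}}}\bigr|^2$, together with the matching identification for the limit provided by Lemma~\ref{lem5.6}, reduces every integral in the statement to a non-negative integrand for which Fatou's lemma applies.

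For part (i), fix $\v\in(0,\v_0]$ and $\alpha\in(0,n-1)$. When $\kappa=-1$, Case~1 of Lemma~\ref{Energy} gives an identity whose potential contribution $-\tfrac{1}{2}\rho(\Phi_\alpha\ast\rho)$ is non-negative (since $\Phi_\alpha<0$), and may be dropped. When $\kappa=1$, Cases~2--3 furnish the same structure but with $e(\rho)$ carrying the coefficient $C_\gamma>0$ (Case~2, under the critical-mass hypothesis) or $\tfrac12$ (Case~3), the right-hand side being bounded by $C(M,E_0^{\v,b})$. In all sub-cases we obtain the uniform-in-$b$ bound
\begin{equation*}
\int_a^{b(t)}\rho^{\v,b}\Bigl(\tfrac12|u^{\v,b}|^2+c_\gamma e(\rho^{\v,b})\Bigr)r^{n-1}\,\dd r
+\v(n-1)\int_0^t\!\!\int_a^{b(s)}\Bigl|\tfrac{m^{\v,b}}{\sqrt{\rho^{\v,b}}}\Bigr|^2 r^{n-3}\,\dd r\,\dd s\le C(M,E_0),
\end{equation*}
with $c_\gamma>0$. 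Since $\rho^{\v,b}\to\rho^\v$ strongly in $C(0,T;L^q_{\rm loc})$ by Lemma~\ref{lem5.2} and $\tfrac{m^{\v,b}}{\sqrt{\rho^{\v,b}}}\to\tfrac{m^\v}{\sqrt{\rho^\v}}$ strongly in $L^2(0,T;L^2_{\rm loc})$ by Lemma~\ref{lem5.6}, Fatou's lemma passes each non-negative integrand to the limit. Combined with the domain exhaustion $[a,b(t)]\nearrow(0,\infty)$ ensured by Lemma~\ref{expand}, this yields the bound on the whole half-line.

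For part (ii) with $\alpha\in(-1,0]$, we proceed identically, starting from Case~4 of Lemma~\ref{Energy}, which supplies the analogous bound including the additional non-negative moment term $\tfrac12\rho\,k_\alpha(1+r^2)r^{n-1}$ on the left and right-hand side $C(M,E_0^{\v,b},T)$ after the Gr\"onwall step~\eqref{pregron}--\eqref{logtimeint}. The dissipation coefficient displayed as $1$ in \eqref{pregron} can in fact be kept as $(n-1)$ throughout, since the Gr\"onwall argument only acts on the non-dissipative energy terms; starting from the full identity~\eqref{firstest} preserves the natural $(n-1)$ factor. Non-negativity of $k_\alpha(1+r^2)$ is immediate: $\log(1+r^2)\ge 0$ for $\alpha=0$, and $(1+r^2)^{-\alpha/2}\ge 1$ for $\alpha\in(-1,0)$. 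The same lower-semicontinuity machinery from part~(i) then delivers the bound on the limit.

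The only delicate point, rather than a deep obstacle, is the joint limit in the quadratic quantity $\rho^\v|u^\v|^2$ on the vacuum set; this is exactly what Lemma~\ref{lem5.6} resolves, constructing $u^\v$ so that $m^\v=\rho^\v u^\v$ and $u^\v=0$ on $\{\rho^\v=0\}$, whence $\bigl|\tfrac{m^\v}{\sqrt{\rho^\v}}\bigr|^2=\rho^\v|u^\v|^2$ a.e. With this identification in hand, every integrand is non-negative and l.s.c.~along the relevant convergences, and Fatou's lemma closes the argument.
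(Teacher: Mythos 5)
Your proposal is correct and follows essentially the same route as the paper: uniform-in-$b$ bounds from Lemma \ref{Energy} (dropping or absorbing the potential term case by case), followed by Fatou's lemma combined with Lemmas \ref{expand}, \ref{lem5.2}, and \ref{lem5.6} to pass to the limit on the exhausting domains. The paper's proof is just a terser version of the same argument, and your extra remarks (the sign of the dropped potential term, the identification of $|m^\v/\sqrt{\rho^\v}|^2$ on the vacuum set via Lemma \ref{lem5.6}) are consistent with what the paper implicitly relies on.
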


\begin{proof}
It follows directly from Lemma \ref{Energy} that, for $\alpha \in (0,n - 1)$,
\begin{align*}
        \begin{split}
            & \int_a^{b(t)}  \Big(\frac12 \Big|\frac{m^{\v,b}}{\sqrt{\rho^{\v,b}}} \Big|^2
            + \rho^{\v,b}e(\rho^{\v,b}) \Big)(t,r)\,r^{n-1}\dd r
            + \varepsilon (n-1) \int^t_0\int_a^{b(s)} \Big|\frac{m^{\v,b}}{\sqrt{\rho^{\v,b}}} \Big|^2 r^{n-3} \dd r\dd s \\[1mm]
            & \leq C(n,M,E_0,T).
        \end{split}
    \end{align*}
Now, applying the Fatou lemma, and Lemmas \ref{expand}, \ref{lem5.2}, and \ref{lem5.6},
we obtain the result when $\alpha \in (0,n - 1)$. The approach is the same when $\alpha \in (-1,0]$.
\end{proof}

Notice that our estimates do not involve the potential term;
this is because the convergence of the potential has not yet been proved.
To do this, we require a few more steps.
Since the mass is conserved in the limit, and the local convergence of the density has been verified,
there is no blow-up of mass at the origin or a loss of mass at infinity; this allows us to prove some global convergence results.

\begin{lemma}\label{Lq}
Under the assumptions of {\rm Lemma {\rm \ref{lem5.2}}}, as $b\rightarrow\infty$ $($up to a sub-sequence$)$,
	\begin{equation*}
		\rho^{\v,b} \longrightarrow \rho^{\v}  \quad
        \text{ strongly in $C(0,T; L^q(\mathbb{R}^n))$ for any $q \in [1,\gamma)$}.
	\end{equation*}
That is, the convergence of $\rho^{\v,b}$ is global sub-sequentially in $L^q$.
\end{lemma}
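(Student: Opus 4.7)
The plan is to upgrade the local convergence provided by Lemma \ref{lem5.2} to global convergence by establishing uniform tightness of $\rho^{\v,b}(t,\cdot)$ in $L^1(\mathbb{R}^n)$, both at infinity and near the origin, uniformly in $t\in[0,T]$ and in $b$, and then interpolating against the uniform $L^\gamma$ bound to handle $q>1$.

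First I would control the mass near the origin. The uniform energy estimate (Lemma \ref{Energy}, Corollary \ref{uniform}) yields $\rho^{\v,b}\in L^\infty(0,T;L^\gamma(\mathbb{R}^n))$ with a bound independent of $b$. Hölder's inequality on the ball $B_\delta$ then gives
\[
\int_{|\boldsymbol x|\leq\delta}\rho^{\v,b}(t,\boldsymbol x)\,\dd\boldsymbol x\leq \|\rho^{\v,b}(t)\|_{L^\gamma(\mathbb{R}^n)}\,|B_\delta|^{(\gamma-1)/\gamma}\leq C(M,E_0,T)\,\delta^{n(\gamma-1)/\gamma},
\]
which tends to $0$ as $\delta\to 0$ uniformly in $(t,b)$. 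This is the quantitative version of the statement in the introduction that no mass concentrates at the origin in the limit.

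Next I would establish tightness at infinity via a cut-off argument in the weak mass equation \eqref{3.1} applied to the approximate solutions. Choose $\zeta_R\in C^1_{\rm c}(\mathbb{R}^n)$ with $\zeta_R\equiv 1$ on $B_{R/2}$, $\supp\zeta_R\subset B_R$, and $\|\nabla\zeta_R\|_\infty\leq C/R$. Applying the identity to $1-\zeta_R$ and using Cauchy--Schwarz,
\[
\|\M^{\v,b}(s)\|_{L^1(\mathbb{R}^n)}\leq \|\sqrt{\rho^{\v,b}}(s)\|_{L^2}\,\Big\|\tfrac{\M^{\v,b}}{\sqrt{\rho^{\v,b}}}(s)\Big\|_{L^2}\leq C(M,E_0,T),
\]
whence
\[
\int_{|\boldsymbol x|\geq R}\rho^{\v,b}(t,\boldsymbol x)\,\dd\boldsymbol x\leq \int_{|\boldsymbol x|\geq R/2}\rho_0^{\v,b}\,\dd\boldsymbol x+\frac{CT}{R}.
\]
The second term vanishes as $R\to\infty$, and the first vanishes uniformly in $(\v,b)$ by the tightness of the approximate initial data constructed in Appendix B (Lemmas \ref{appendix1}--\ref{appendix5}). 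Mass conservation (Lemma \ref{mass}) guarantees that the limit $\rho^\v$ satisfies the same tightness.

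Finally I would combine the three pieces. Given $\eta>0$, choose $\delta>0$ small and $R>0$ large so that the tails above are each less than $\eta$, uniformly in $t\in[0,T]$ and $b$. Then
\[
\sup_{t\in[0,T]}\|\rho^{\v,b}(t)-\rho^\v(t)\|_{L^1(\mathbb{R}^n)}\leq \sup_{t\in[0,T]}\|\rho^{\v,b}-\rho^\v\|_{L^1(\{\delta\leq|\boldsymbol x|\leq R\})}+4\eta,
\]
and the first term tends to $0$ as $b\to\infty$ by the $C(0,T;L^1_{\rm loc})$ convergence of Lemma \ref{lem5.2}; letting $\eta\to 0$ proves the claim for $q=1$. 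For $q\in(1,\gamma)$, the interpolation
\[
\|\rho^{\v,b}(t)-\rho^\v(t)\|_{L^q}\leq \|\rho^{\v,b}(t)-\rho^\v(t)\|_{L^1}^{1-\theta}\,\|\rho^{\v,b}(t)-\rho^\v(t)\|_{L^\gamma}^{\theta}
\]
with the uniform $L^\gamma$ bound closes the argument. The delicate step is the uniform-in-$t$ tail control at infinity, since one must avoid any moment assumption that is not already available; the weak mass formulation combined with the uniform $L^1$ bound on the momentum is precisely what makes this work, and is the reason mass conservation in the limit (Lemma \ref{mass}) is essential.
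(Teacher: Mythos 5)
Your proposal is correct, but it reaches the conclusion by a genuinely different route than the paper. The paper first proves uniform-in-$t$ tightness of the \emph{limit} density $\rho^\v$ by a soft contradiction argument: it combines mass conservation of the limit (Lemma \ref{mass}, which is why that lemma is placed before this one) with the $C(0,T;L^1_{\rm loc})$ convergence and the monotone convergence theorem to show $\lim_{k\to\infty}\inf_{t}\int_{1/k}^{k}\rho^\v\,r^{n-1}\dd r=M/\omega_n$, and only then transfers the tightness back to $\rho^{\v,b}$ via the local convergence. You go in the opposite direction: you establish tightness of the \emph{approximate} solutions $\rho^{\v,b}$ directly and quantitatively --- H\"older against the uniform $L^\gamma$ bound near the origin, and a cut-off in the continuity equation plus the uniform $L^1$ bound $\|\M^{\v,b}(s)\|_{L^1}\le\|\sqrt{\rho^{\v,b}}\|_{L^2}\|\M^{\v,b}/\sqrt{\rho^{\v,b}}\|_{L^2}\le C(M,E_0,T)$ at infinity --- and let the limit inherit it. Your version buys explicit rates ($\delta^{n(\gamma-1)/\gamma}$ near the origin, $CT/R$ at infinity) and does not logically rely on mass conservation of the limit, whereas the paper's version avoids any manipulation of the approximate continuity equation. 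One point of rigor you should tighten: the identity \eqref{3.1} is stated for the limit and for compactly supported test functions, so to justify testing against $1-\zeta_R$ you should instead integrate the classical continuity equation $\eqref{Eul}_1$ for the smooth free-boundary solutions over $\Omega_t$, where the boundary terms vanish because $u(t,a)=0$ and the outer boundary moves with the fluid; the tightness of the initial data $\{\rho_0^{\v,b}\}_b$ at infinity then follows from their $L^1$ convergence to $\rho_0^\v$ at fixed total mass $M$ (Lemma \ref{appendix2}). With that adjustment the argument is complete, and the interpolation step for $q\in(1,\gamma)$ is identical to the paper's.
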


\begin{proof}
For all $\alpha \in (-1,n-1)$, it follows from Lemma \ref{lem5.2} and \eqref{3.2} that
 \begin{align}\label{truth}
           \lim_{k \to \infty} \inf_{t \in [0,T]} \int_\frac{1}{k}^k \rho^\v(t,r) r^{n-1} \dd r = \frac{M}{\omega_n},
 \end{align}
as if not, there exists $\varepsilon > 0$ and $t_k \in [0,T]$ such that
\begin{align}\label{contra}
    \int_\frac{1}{k}^k \rho^\v(t_k,r) r^{n-1} \dd r < \frac{M}{\omega_n} - \varepsilon
    \qquad \mbox{for all $k \in \mathbb{N}$}.
\end{align}
Then there exists a subsequence of $(t_k)_k$ given by $(t_{k_l})_l$
and  $t_* \in [0,T]$ such that $t_{k_l} \to t_*$ as $l \to \infty$.
From \eqref{3.2}, we see that, by the monotone convergence theorem, there exists $L \in \mathbb{N}$ such that
\begin{align*}
    \int_\frac{1}{L}^L \rho^\v(t_*,r) r^{n-1} \dd r > \frac{M}{\omega_n} - \varepsilon.
\end{align*}
Note, for $k_l \geq L$, using \eqref{contra}, we have
\begin{align}\label{contraband}
    \int_\frac{1}{L}^L \rho^\v(t_{k_l},r) r^{n-1} \dd r \leq \int_\frac{1}{k_l}^{k_l} \rho^\v(t_{k_l},r) r^{n-1} \dd r < \frac{M}{\omega_n} - \varepsilon.
\end{align}
By Lemma \ref{lem5.2}, $\rho^\v \in C(0,T; L^1_{\rm loc})$ so that
\begin{align}\label{contaconv}
    \int_\frac{1}{L}^L \rho^\v(t_{k_l},r) r^{n-1} \dd r \longrightarrow \int_\frac{1}{L}^L \rho^\v(t_*,r) r^{n-1} \dd r
    \qquad \mbox{as $k\to\infty$}.
\end{align}
Thus, combining \eqref{contraband} with \eqref{contaconv}, we obtain
\begin{align*}
    \int_\frac{1}{L}^L \rho^\v(t_*,r) r^{n-1} \dd r \leq \frac{M}{\omega_n} - \varepsilon,
\end{align*}
which is a contradiction. Thus, \eqref{truth} is verified.

With this, there exists $k = k(\delta) \in \mathbb{N}$ such that
    \begin{align}\label{2.98}
           \inf_{t \in [0,T]} \int_\frac{1}{k}^k \rho^\v(t,r) r^{n-1} \dd r > \frac{M}{\omega_n} - \frac{\delta}{4},
    \end{align}
which, by \eqref{3.2} and \eqref{2.98}, implies
\begin{align}\label{2.99}
\sup_{t \in [0,T]} \Big(\int_{k}^\infty + \int_0^\frac{1}{k} \Big) \rho^\v(t,r) r^{n-1} \dd r
< \frac{\delta}{4}.
\end{align}
By \eqref{2.66}, there exists $B(\delta) > 0$ such that, for $b \geq B(\delta)$ (up to a sub-sequence),
\begin{align}\label{2.100}
        \begin{split}
           \sup_{t \in [0,T]} \int_\frac{1}{k}^k |\rho^\v - \rho^{\v,b}|(t,r) r^{n-1} \dd r < \frac{\delta}{4}.
        \end{split}
\end{align}
Then it follows from \eqref{2.98}--\eqref{2.100} that, for $b \geq B(\delta)$,
 \begin{align}\label{2.101}
        \begin{split}
           & \inf_{t \in [0,T]} \int_\frac{1}{k}^k \rho^{\v,b}(t,r) r^{n-1} \dd r > \frac{M}{\omega_n} - \frac{\delta}{2}, \\
           & \sup_{t \in [0,T]} \Big(\int_{k}^\infty + \int_0^\frac{1}{k} \Big)
           \rho^{\v,b}(t,r) r^{n-1} \dd r < \frac{\delta}{2}.
        \end{split}
\end{align}
Thus, we obtain that, for $b>B(\delta)$,
\begin{equation*}
\sup_{t \in [0,T]} \int_0^\infty |\rho^\v - \rho^{\v,b}|(t,r) r^{n-1} \dd r < \delta.
\end{equation*}
Now, for $q \in [1,\gamma)$, using Lemmas \ref{Energy} and \ref{simple},
and performing interpolation, we have
    \begin{align*}
        \begin{split}
        & \| \rho^\v - \rho^{\v,b} \|_{C(0,T;L^q(\mathbb{R}^n))} \\
        & \leq \| \rho^\v - \rho^{\v,b} \|^\frac{\gamma - q}{q(\gamma - 1)}_{C(0,T;L^1(\mathbb{R}^n))} \| \rho^\v - \rho^{\v,b} \|^\frac{\gamma(q - 1)}{q(\gamma - 1)}_{L^\infty(0,T;L^\gamma(\mathbb{R}^n))} \\
        & \leq C(n,M,E_0,T,q) \| \rho^\v - \rho^{\v,b} \|^\frac{\gamma - q}{q(\gamma - 1)}_{C(0,T;L^1(\mathbb{R}^n))} \longrightarrow 0,
        \end{split}
    \end{align*}
    as $b \to \infty$ (up to a sub-sequence).
\end{proof}

\begin{lemma}\label{lem5.1}
Under the assumptions of {\rm Lemma {\rm \ref{lem5.2}}}, the following statements hold
as $b\rightarrow\infty$ {\rm(}up to a sub-sequence{\rm)}{\rm :}
\begin{align}
&\Phi_\alpha*\rho^{\v,b} \longrightharpoonup \Phi_\alpha*\rho^\v \quad
	\mbox{weak-$\ast$ in $L^\infty(0,T; W^{1,\frac{2n}{\alpha +2}}_{\rm loc}(\mathbb{R}^n))$ and weakly in $L^2(0,T; W^{1,\frac{2n}{\alpha +2}}_{\rm loc}(\mathbb{R}^n))$},\label{2.67}\\
	&(\Phi_\alpha*\rho^{\v,b})_r(t,r)r^{n-1} \longrightarrow (\Phi_\alpha*\rho^\v)_r(t,r)r^{n-1} \quad
	\mbox{ in $C_{\rm loc}([0,T]\times[0,\infty))$}, \label{2.68} \\[4mm]
    &(\Phi_\alpha*\rho^{\v,b})(t,r)r^{n-1} \longrightarrow (\Phi_\alpha*\rho^\v)(t,r)r^{n-1} \quad
	\mbox{ in $C_{\rm loc}([0,T]\times[0,\infty))$},\label{2.69} \\[3mm]
    & \int_0^\infty \big|\rho^{\v,b} (\Phi_\alpha*\rho^{\v,b}) - \rho^{\v} (\Phi_\alpha*\rho^{\v})\big|(t,r)\,
    r^{n-1} \dd r \longrightarrow 0. \label{2.71}
\end{align}
Moreover, when $\alpha \in (0,n-1)$,
\begin{align}\label{2.70}
	\|(\Phi_\alpha*\rho^\v)(t)\|_{L^{\frac{2n}{\alpha}}(\mathbb{R}^n)}
    +\|(\nabla \Phi_\alpha*\rho^\v)(t)\|_{L^\frac{2n}{\alpha + 2}(\mathbb{R}^n)}\leq C(M,E_0)\qquad\,\mbox{for $t\geq0$};
\end{align}
when $\alpha \in (-1,0]$, for any $K \Subset \mathbb{R}^n$,
\begin{align}\label{2.70-2}
	\|(\Phi_\alpha*\rho^\v)(t)\|_{L^\infty(K)}
    +\|(\nabla \Phi_\alpha*\rho^\v)(t)\|_{L^\frac{2n}{\alpha + 2}(K)}
    \leq C(M,E_0,T,K)\qquad\,\mbox{for $t\geq0$}.
\end{align}
\end{lemma}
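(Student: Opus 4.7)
\medskip
\noindent\textbf{Proof plan for Lemma \ref{lem5.1}.}
The strategy is to first extract weak-$\ast$ convergence from \emph{a priori} bounds and identify the limit via Lemma \ref{Lq}, then upgrade to $C_{\rm loc}$ convergence through Arzel\`a-Ascoli using the radial kernel representation, and finally handle the global $L^1$ statement \eqref{2.71} by a careful split.

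\medskip
\noindent\emph{Step 1: Uniform bounds and weak convergence \eqref{2.67}, \eqref{2.70}, \eqref{2.70-2}.} From Corollary \ref{uniform} applied to each $(\rho^{\v,b},u^{\v,b})$, I obtain uniform bounds in $b$ for $\Phi_\alpha * \rho^{\v,b}$ and $\nabla\Phi_\alpha*\rho^{\v,b}$ in the spaces stated in \eqref{2.70}--\eqref{2.70-2}. By Banach-Alaoglu, a subsequence converges weak-$\ast$ in $L^\infty(0,T; W^{1,\frac{2n}{\alpha+2}}_{\rm loc}(\mathbb{R}^n))$ and weakly in $L^2(0,T; W^{1,\frac{2n}{\alpha+2}}_{\rm loc}(\mathbb{R}^n))$. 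The identification of the limit with $\Phi_\alpha*\rho^\v$ uses Lemma \ref{Lq}: for $q\in[1,\gamma)$ chosen so that the HLS inequality (Lemma \ref{HLSineq}) or Lemma \ref{simplehls} applies to $\Phi_\alpha*(\rho^{\v,b}-\rho^\v)$, one passes to the limit. The bounds \eqref{2.70}--\eqref{2.70-2} then follow by lower semicontinuity of norms under weak-$\ast$ convergence.

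\medskip
\noindent\emph{Step 2: $C_{\rm loc}$ convergence \eqref{2.68} and \eqref{2.69}.} I apply Arzel\`a-Ascoli on each compact set $[0,T]\times[r_1,r_2]\subset[0,T]\times[0,\infty)$ to the radial representation provided by Lemma \ref{potentially}:
\[
(\Phi_\alpha*\rho^{\v,b})_r(t,r)\,r^{n-1}=r^{n-1}\int_0^\infty \omega(r,\eta)\,\rho^{\v,b}(t,\eta)\,\eta^{n-1}\,\dd\eta,
\]
together with the analogous formula involving $K(r,\eta)$ for the potential itself.

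\emph{Pointwise convergence} follows from the global $L^1$ convergence of $\rho^{\v,b}\to\rho^\v$ in Lemma \ref{Lq}, the local $L^\gamma$ boundedness, and the kernel estimates \eqref{continuity.8} of Lemma \ref{potentially} (splitting the $\eta$-integral into a small neighborhood of $r$ where the kernel is integrably singular and is controlled via $L^\gamma$, and the complementary region where the kernel is bounded and is controlled via $L^1$).

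\emph{Spatial equicontinuity} in $r$ follows similarly by exploiting the integrable singularity of $\omega$ (and of $K$) along the diagonal together with the uniform $L^1\cap L^\gamma$ bound on $\rho^{\v,b}$ from Lemma \ref{Energy}. The factor $r^{n-1}$ absorbs the potential degeneracy near $r=0$.

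\emph{Temporal equicontinuity} is the key step. Using the mass equation $\eqref{Eul}_1$ in multidimensional form $\partial_t\rho^{\v,b}+\nabla\cdot\mathcal{M}^{\v,b}=\boldsymbol{0}$ and integrating by parts (for test purposes) formally yields
\[
\partial_t(\Phi_\alpha*\rho^{\v,b})(\boldsymbol{x})=-\nabla\Phi_\alpha*\mathcal{M}^{\v,b}(\boldsymbol{x}),
\]
and this expression is to be bounded, locally, via Lemma \ref{simplehls} together with the uniform energy bounds of Lemma \ref{Energy} (which give $\mathcal{M}^{\v,b}=\sqrt{\rho^{\v,b}}\cdot(\sqrt{\rho^{\v,b}}u^{\v,b})\in L^\infty(0,T;L^p_{\rm loc})$ for appropriate $p$, after interpolating between $L^\gamma$ bounds on $\rho^{\v,b}$ and $L^2$ bounds on $\sqrt{\rho^{\v,b}}u^{\v,b}$). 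Integration in $t$ then gives a uniform H\"older-in-time modulus of continuity, which combined with the spatial equicontinuity and the uniform bound furnishes the hypotheses of Arzel\`a-Ascoli.

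\medskip
\noindent\emph{Step 3: Global $L^1$ convergence \eqref{2.71}.} I split
\[
\int_0^\infty\big|\rho^{\v,b}(\Phi_\alpha*\rho^{\v,b})-\rho^\v(\Phi_\alpha*\rho^\v)\big|\,r^{n-1}\,\dd r
\le A_1+A_2,
\]
where $A_1=\int_0^\infty|\rho^{\v,b}-\rho^\v|\,|\Phi_\alpha*\rho^{\v,b}|\,r^{n-1}\,\dd r$ and $A_2=\int_0^\infty\rho^\v\,|\Phi_\alpha*(\rho^{\v,b}-\rho^\v)|\,r^{n-1}\,\dd r$. For $\alpha\in(0,n-1)$, H\"older together with \eqref{2.70} and Lemma \ref{Lq} (at a conjugate exponent $q\in[1,\gamma)$ with $\frac{1}{q}+\frac{\alpha}{2n}=1$ or by interpolation) controls $A_1$. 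The term $A_2$ is controlled similarly after applying Lemma \ref{simplehls} to $\Phi_\alpha*(\rho^{\v,b}-\rho^\v)$ and using the global $L^1$ convergence from Lemma \ref{Lq}. For $\alpha\in(-1,0]$, one localizes: on $\{r\le R\}$ the bound \eqref{2.70-2} gives a local $L^\infty$ bound on $\Phi_\alpha*\rho^{\v,b}$; on $\{r>R\}$ one uses finite-mass/$(-\alpha)$-moment uniform estimates from Corollary \ref{uniform} and the logarithmic/negative-power decomposition in \eqref{minus} and \eqref{logtingaling} to make the tail small uniformly in $b$.

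\medskip
\noindent\emph{Main obstacle.} The delicate step is the temporal equicontinuity in Step 2: because $\nabla\Phi_\alpha$ is a nonlocal singular kernel and $\mathcal{M}^{\v,b}$ enjoys only the limited regularity provided by the energy and BD-type estimates of Lemmas \ref{Energy}, \ref{oldBD}, and \ref{newbd}, one must carefully select an integrability exponent for $\mathcal{M}^{\v,b}$ compatible with Lemma \ref{simplehls} and with the range of $\alpha$, in particular distinguishing $\alpha\le n-2$ from $\alpha\in(n-2,n-1)$ where the kernel $\nabla\Phi_\alpha$ is more singular, exactly as in the case analysis behind \eqref{continuity.8}.
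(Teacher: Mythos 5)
Your Steps 1 and 3 follow essentially the paper's route (weak compactness plus duality identification of the limit for \eqref{2.67} and \eqref{2.70}--\eqref{2.70-2}; a near/far splitting with the HLS inequality, Lemma \ref{Lq}, and the moment bounds for \eqref{2.71}). The genuine problem is Step 2. First, the Arzel\`a--Ascoli detour is both unnecessary and, as proposed, gappy. It is unnecessary because Lemmas \ref{lem5.2} and \ref{Lq} already give $\rho^{\v,b}\to\rho^\v$ strongly in $C(0,T;L^q_{\rm loc})$ and $C(0,T;L^q(\mathbb{R}^n))$, i.e.\ \emph{uniformly in $t$}; plugging this into the three-region splitting of the $\eta$-integral (small $\eta$-neighbourhood of the singularity controlled uniformly by H\"older and the kernel bounds \eqref{continuity.8}, an intermediate compact region where the strong convergence is used, and a far region controlled by the mass) directly yields uniform convergence on $[0,T]\times[0,D]$, which is exactly how the paper argues. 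It is gappy because the temporal-equicontinuity bound you propose, $\partial_t(\Phi_\alpha*\rho^{\v,b})=-\nabla\Phi_\alpha*\mathcal{M}^{\v,b}$, requires $\nabla\Phi_\alpha*\mathcal{M}^{\v,b}$ to be uniformly bounded in $L^\infty_{\rm loc}$ in $\boldsymbol{x}$; since $\nabla\Phi_\alpha\sim|\boldsymbol{x}|^{-(\alpha+1)}$ this needs $\mathcal{M}^{\v,b}\in L^q_{\rm loc}$ with $q>\frac{n}{n-1-\alpha}$, whereas the energy estimates only give $\mathcal{M}^{\v,b}=\sqrt{\rho^{\v,b}}\,(\sqrt{\rho^{\v,b}}u^{\v,b})\in L^\infty(0,T;L^{\frac{2\gamma}{\gamma+1}}_{\rm loc})$ with $\frac{2\gamma}{\gamma+1}<2$; for $\alpha$ close to $n-1$ (or moderate $\gamma$) this exponent is insufficient, so the claimed H\"older-in-time modulus does not follow from the available bounds.

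Second, for \eqref{2.69} with $\alpha\in(-1,0]$ your far-field control fails: you assert that on the complement of a diagonal neighbourhood "the kernel is bounded and is controlled via $L^1$", but the kernel $K(r,\eta)$ of $\Phi_\alpha$ is \emph{unbounded at infinity} for $\alpha\le 0$ (it grows like $\log|\boldsymbol{x}|$ or $|\boldsymbol{x}|^{-\alpha}$), so the tail integral cannot be estimated by the mass alone. One needs to show that the tail moments $\int_{|\boldsymbol{x}|\ge\hat\sigma}\rho^{\v,b}\,k_\alpha(1+|\boldsymbol{x}|^2)\,\dd\boldsymbol{x}$ are small uniformly in $b$ as $\hat\sigma\to\infty$; the paper obtains this by testing the continuity equation against a mollified cut-off $\mathds{1}_{B_{2\hat\sigma}^c}*J_{\hat\sigma}$ times $k_\alpha(1+r^2)$ and using the energy bound on $\rho^{\v,b}|u^{\v,b}|^2$ together with the approximate initial moment bound (Lemma \ref{kernel2}). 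Without some such uniform tightness of the weighted mass, the uniform convergence in \eqref{2.69} (and hence the tail estimate in \eqref{2.71}) is not established in the logarithmic and sub-logarithmic cases.
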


\begin{proof}  We divide the proof into five steps.

\medskip
 1. We first see that \eqref{2.67} and \eqref{2.70}--\eqref{2.70-2} follow
from Corollary \ref{uniform} and the weak-*/weak compactness directly.

\smallskip
2. To show that the weak limit of $\Phi_\alpha * \rho^{\v,b}$, denoted by $\Phi_\alpha^\varepsilon$,
is indeed $\Phi_\alpha * \rho^\v$, we test by a smooth compactly supported
function $\varphi \in C^\infty_{\rm c}([0,T] \times \mathbb{R}^n)$ so that
it can be shown by shifting the convolution from $\rho^{\v,b}$ to $\varphi$ that, as $b \to \infty$,
    \[
    \int_{\mathbb{R}^n} \varphi (\Phi_\alpha * \rho^{\varepsilon,b}) \dd \boldsymbol{x} = \int_{\mathbb{R}^n} \rho^{\varepsilon,b} (\Phi_\alpha * \varphi) \dd \boldsymbol{x} \longrightarrow \int_{\mathbb{R}^n} \rho^{\varepsilon} (\Phi_\alpha * \varphi) \dd \boldsymbol{x} = \int_{\mathbb{R}^n} \varphi (\Phi_\alpha * \rho^{\varepsilon}) \dd \boldsymbol{x}.
    \]
 Using \eqref{2.67}, we have
    \[
    \int_{\mathbb{R}^n} \varphi (\Phi_\alpha * \rho^{\varepsilon,b}) \dd \boldsymbol{x} \longrightarrow \int_{\mathbb{R}^n} \varphi\,\Phi_\alpha^\varepsilon \dd \boldsymbol{x}.
    \]
Then, by the fundamental theorem of calculus of variations, $\Phi_\alpha^\varepsilon = \Phi_\alpha * \rho^{\varepsilon}$
{\it a.e.}. The result follows.

\smallskip
3. We now prove \eqref{2.68}.
When $\alpha \in (-1,n-1)$,
for $D > 0$, $(t,r) \in [0,T] \times [0,D]$, and $b$ sufficiently large,
and taking
$0 < \sigma < {\hat{\sigma}}<\infty$,
we have
    \begin{align}\label{2.72}
            &\big|(\Phi_\alpha*\rho^{\v,b})_r r^{n-1} - (\Phi_\alpha*\rho^\v)_r r^{n-1}\big|(t,r) \nonumber\\
            &= r^{n-1} \Big| \int^\infty_0 \omega(r,\eta) (\rho^{\v,b} - \rho^\v)(t,\eta) \eta^{n-1} \dd\eta \Big| \nonumber\\
            & \leq r^{n-1} \Big| \int^\sigma_0 \omega(r,\eta) (\rho^{\v,b} - \rho^\v)(t,\eta) \eta^{n-1} \dd \eta \Big|
            + r^{n-1} \Big| \int^{\hat{\sigma}}_\sigma \omega(r,\eta) (\rho^{\v,b} - \rho^\v)(t,\eta) \eta^{n-1} \dd \eta \Big| \nonumber\\
            & \quad + r^{n-1} \Big| \int^\infty_{\hat{\sigma}} \omega(r,\eta) (\rho^{\v,b} - \rho^\v)(t,\eta) \eta^{n-1} \dd \eta \Big|\nonumber\\
            & =: I_1 + I_2 + I_3.
    \end{align}
For $I_1$, applying the H\"{o}lder inequality, we have
    \begin{align}\label{2.73}
            & I_1 \leq r^{n-1} \Big(\int^\sigma_0 \big( (\rho^{\v,b})^\gamma + (\rho^b)^\gamma\big)\eta^{n-1} \dd \eta \Big)
            \Big| \int^\sigma_0 \omega^{\frac{\gamma}{\gamma - 1}}(r,\eta) \eta^{n-1}
            \dd \eta \Big|^{1 - \frac{1}{\gamma}} \nonumber\\
            & \quad \leq C(M,E_0) r^{n-1}
            \Big| \int^\sigma_0 \omega^{\frac{\gamma}{\gamma - 1}}(r,\eta) \eta^{n-1}
            \dd \eta \Big|^{1 - \frac{1}{\gamma}}.
    \end{align}
When $\alpha \in (n-2,n-1)$, using \eqref{continuity.8}, we have
\begin{align}\label{2.74}
            & r^{n-1} \bigg| \int_0^\sigma\omega^{\frac{\gamma}{\gamma - 1}}(r,\eta) \eta^{n-1}
            \dd \eta \bigg|^{1 - \frac{1}{\gamma}} \nonumber\\
            & \leq C r^{n-1} \sigma^{(n-1)(1 - \frac{1}{\gamma})}
            \bigg( \int_{(0,\sigma)\cap(\frac{r}{2},\infty)}\omega^{\frac{\gamma}{\gamma - 1}}(r,\eta) \dd \eta + \int_{(0,\sigma)\cap(0,\frac{r}{2})}\omega^{\frac{\gamma}{\gamma - 1}}(r,\eta) \dd \eta
            \bigg)^{1 - \frac{1}{\gamma}} \nonumber\\
            & \leq C r^{n-1} \sigma^{(n-1)(1 - \frac{1}{\gamma})} \bigg ( \int_{(0,\sigma)\cap(\frac{r}{2},\infty)}\Big( \frac{|r-\eta|^{n-2 - \alpha}}{(r\eta)^{\frac{n-1}{2}}} \Big)^{\frac{\gamma}{\gamma - 1}} \dd \eta
            + \int_{(0,\sigma)\cap(0,\frac{r}{2})} |r-\eta|^{-(\alpha +1)\frac{\gamma}{\gamma - 1}}
            \dd \eta \bigg )^{1 - \frac{1}{\gamma}}\nonumber\\
            & \leq C r^{n-1} \sigma^{(n-1)(1 - \frac{1}{\gamma})}
            \bigg( r^{-(n-1)\frac{\gamma}{\gamma - 1}} \int_{(0,\sigma)\cap(\frac{r}{2},\infty)}|r-\eta|^{(n-2- \alpha)\frac{\gamma}{\gamma - 1}} \dd \eta + r^{1 -(\alpha +1)\frac{\gamma}{\gamma - 1}} \bigg)^{1 - \frac{1}{\gamma}}\nonumber\\
            & \leq C \sigma^{(n - 1)(1 - \frac{1}{\gamma })} + C r^{n - 1 - \alpha - \frac{1}{\gamma}} \sigma^{(n-1)(1 - \frac{1}{\gamma})} \nonumber\\
            & \leq C(D) \sigma^{(n-1)(1 - \frac{1}{\gamma})},
\end{align}
 where the last inequality follows from the fact that $n - 1 - \alpha - \frac{1}{\gamma} > 0$
 due to the condition for \eqref{2.68},
 and the second to the last inequality follows from the fact that there exists $C>0$ such that,
 for small $\sigma >0$,
    \begin{equation*}
        \int_{(0,\sigma)\cap(\frac{r}{2},\infty)}|r-\eta|^{(n-2 - \alpha)\frac{\gamma}{\gamma - 1}} \dd \eta \leq C\qquad\,\mbox{uniformly in $r$},
    \end{equation*}
due to the fact that $(n-2 - \alpha)\frac{\gamma}{\gamma - 1} > -1$ as a consequence of the condition in \eqref{2.68}.

When $\alpha \in (-1,n-2]$, applying \eqref{continuity.8} and taking the same approach to \eqref{2.74},
we obtain that, for $\sigma$ small enough,
\begin{align}\label{2.74n}
    \begin{split}
        r^{n-1} \left | \int_0^\sigma\omega^{\frac{\gamma}{\gamma - 1}}(r,\eta) \eta^{n-1} \dd \eta \right |^{1 - \frac{1}{\gamma}} \leq C(D) \sigma^{n(1 - \frac{1}{\gamma})} \leq C(D) \sigma^{(n - 1)(1 - \frac{1}{\gamma})}.
    \end{split}
\end{align}
Thus, combining \eqref{2.73}--\eqref{2.74n} together, we have
\begin{align}\label{2.76}
        \begin{split}
            \sup_{[0,T] \times [0,D]} I_1
            \leq C(D,M,E_0) \sigma^{(n - 1)(1 - \frac{1}{\gamma})}\big(1 + \sigma^{1 - \frac{1}{\gamma}}\big) \to 0
            \qquad \text{ as $\sigma \to 0$}.
        \end{split}
    \end{align}

\smallskip
Considering $I_2$, for $\alpha \in (-1,n-2]$, we use \eqref{continuity.8} to obtain
\begin{align}\label{2.77}
            & I_2 \leq C r^{n-1} \int^{\hat{\sigma}}_\sigma (r\eta)^{-\frac{\alpha + 1}{2}} |\rho^{\v,b} - \rho^\v|(t,\eta) \eta^{n-1} \dd \eta \nonumber\\
            & \leq C({\hat{\sigma}}) r^{n - 2 - \alpha} \sigma^{-\frac{\alpha + 1}{2}} \int^{\hat{\sigma}}_\sigma  |\rho^{\v,b} - \rho^\v|(t,\eta) \dd \eta \nonumber\\
            & \leq C(D,{\hat{\sigma}},\sigma) \|\rho^{\v,b} - \rho^\v\|_{L^1(\sigma,{\hat{\sigma}})}.
\end{align}
For $\alpha \in (n-2,n-1)$, using the HLS inequality from Lemma \ref{simplehls} and \eqref{continuity.5} yields
    \begin{align}\label{2.78}
            & I_2 \leq C r^{n-1} \int^{\hat{\sigma}}_\sigma \frac{|r - \eta|^{n-2-\alpha}}{(r\eta)^{\frac{n-1}{2}}} |\rho^{\v,b} - \rho^\v|(t,\eta) \eta^{n-1} \dd \eta \nonumber\\
            & \leq C({\hat{\sigma}}) \Big( \frac{r}{\sigma} \Big)^{\frac{n-1}{2}}
              \big\| |\cdot|^{n-2-\alpha} * ((\rho^{\v,b} - \rho^\v)\mathds{1}_{(\sigma,{\hat{\sigma}})})\big\|_{C[0,D]}
                 \nonumber\\
            & \leq C(D,{\hat{\sigma}},\sigma) \|\rho^{\v,b} - \rho^\v\|_{L^\gamma(\sigma,{\hat{\sigma}})}.
    \end{align}
Combining \eqref{2.77} with \eqref{2.78}, we obtain via Lemma {\rm \ref{lem5.2}} that
    \begin{align}\label{2.79}
        \sup_{[0,T] \times [0,D]} I_2
        \leq C(D,{\hat{\sigma}},\sigma)\sup_{[0,T]} \big(\|\rho^{\v,b} - \rho^\v\|_{L^1(\sigma,{\hat{\sigma}})}
        + \|\rho^{\v,b} - \rho^\v\|_{L^\gamma(\sigma,{\hat{\sigma}})}\big) \to 0
        \qquad\text{as $b \to \infty$}.
    \end{align}

\smallskip
Considering $I_3$, since $\omega(r,\eta) \leq C |r-\eta|^{-(\alpha + 1)}$, we have
    \begin{align}
           \sup_{[0,T] \times [0,D]} I_3 &\leq C |D - {\hat{\sigma}}|^{-(\alpha + 1)} \sup_{[0,T]} \int^\infty_{\hat{\sigma}} |\rho^{\v,b} - \rho^\v|(t,\eta) \eta^{n-1} \dd \eta \nonumber\\
            & \leq C(M) |D - {\hat{\sigma}}|^{-(\alpha + 1)} \to 0 \qquad\text{ as ${\hat{\sigma}} \to \infty$}.\label{2.80}
    \end{align}
    Thus, combining \eqref{2.72} and \eqref{2.76} with \eqref{2.79}--\eqref{2.80}, we conclude     \begin{align*}
        \sup_{[0,T] \times [0,D]} |(\Phi_\alpha*\rho^{\v,b})_r r^{n-1} - (\Phi_\alpha*\rho^\v)_r r^{n-1}|(t,r) \to 0 \qquad\text{ as $b \to \infty$}.
    \end{align*}
    Thus, \eqref{2.68} is proved.

\medskip
4.  Regarding \eqref{2.69},  when $\alpha \in (0,n - 1)$,
using the same method as for the calculations for $\omega$
in Lemma \ref{potentially} with \eqref{continuity.8} and \eqref{continuity.4}--\eqref{continuity.5}, we have
    \begin{align}\label{2.81}
        K(r,\eta) \leq C \min\big\{|r - \eta|^{-\alpha},\,(r \eta)^{- \frac{\alpha}{2}} \big\}.
    \end{align}
That is, $K(r,\eta)$ has the same bound as $\omega$ from \eqref{continuity.8}
but with $\alpha + 1$ replaced by $\alpha$.
Then the proof of \eqref{2.69} is verbatim the proof of \eqref{2.68}
corresponding to the case that $\alpha \in (0,n-2)$.

\smallskip
When $\alpha \in (-1,0]$, taking ${\hat{\sigma}} >1$, we define $J_{\hat{\sigma}}(\boldsymbol{x}) := \frac{1}{{\hat{\sigma}}^n} J(\frac{\boldsymbol{x}}{\hat{\sigma}})$,
where $J$ is a standard spherically symmetric mollifier kernel
with $J \in C^\infty_{\rm c}(\mathbb{R}^n)$,
$\supp (J) \subset B_1$, and $\int_{\mathbb{R}^n} J(\boldsymbol{x}) \dd \boldsymbol{x} = 1$.
Using \eqref{Eul} and integrating by parts, we have
    \begin{align*}
        \begin{split}
        & \frac{\d}{\d t} \int^{b(t)}_a \rho^{\v,b} (\mathds{1}_{B_{2{\hat{\sigma}}}^c} * J_{\hat{\sigma}})
        k_\alpha(1 + r^2) r^{n-1} \dd r \\
        & = \int^{b(t)}_a \rho^{\v,b} u^{\v,b} \left ( (\mathds{1}_{B_{2{\hat{\sigma}}}^c} * J_{\hat{\sigma}}) (k_\alpha(1 + r^2))_r + (\mathds{1}_{B_{2{\hat{\sigma}}}^c} * J_{\hat{\sigma}})_r k_\alpha(1+ r^2) \right ) r^{n-1} \dd r.
        \end{split}
    \end{align*}
Integrating over time, we obtain
        \begin{align}\label{FTC1}
        \begin{split}
        &\int^{b(t)}_a \rho^{\v,b} (\mathds{1}_{B_{2{\hat{\sigma}}}^c} * J_{\hat{\sigma}}) k_\alpha(1 + r^2) r^{n-1} \dd r\\
        &= \int^{b}_a \rho^{\v,b}_0 (\mathds{1}_{B_{2{\hat{\sigma}}}^c} * J_{\hat{\sigma}}) k_\alpha(1 + r^2) r^{n-1} \dd r\\
        & \quad + \int^t_0 \int^{b(s)}_a \left ( (\mathds{1}_{B_{2{\hat{\sigma}}}^c} * J_{\hat{\sigma}}) (k_\alpha(1 + r^2))_r + (\mathds{1}_{B_{2{\hat{\sigma}}}^c} * J_{\hat{\sigma}})_r k_\alpha(1+ r^2) \right ) \rho^{\v,b}u^{\v,b} r^{n-1} \dd r \dd s.
        \end{split}
    \end{align}
As in \eqref{logest}, for $\boldsymbol{x} \in K = \overline{B_D}$ and $q \in (1,\gamma)$,
we use \eqref{logtings} and \eqref{minustings} to obtain
     \begin{align*}
        \begin{split}
            & \big|(\Phi_\alpha * \rho^{\v,b})(\boldsymbol{x}) - (\Phi_\alpha * \rho^{\v})(\boldsymbol{x})\big|\\
            & \leq \int_{B_{3{\hat{\sigma}}}(\boldsymbol{x})} k_\alpha(|\boldsymbol{x} - \boldsymbol{y}|) |\rho^{\v,b} - \rho^{\v}|(\boldsymbol{y}) \dd \boldsymbol{y} + \int_{B_{3{\hat{\sigma}}}(\boldsymbol{x})^c} k_\alpha(|\boldsymbol{x} - \boldsymbol{y}|) |\rho^{\v,b} - \rho^{\v}|(\boldsymbol{y}) \dd \boldsymbol{y} \\
            & \leq \| k_\alpha(|\cdot|) \|_{L^\frac{q}{q - 1}(B_{3{\hat{\sigma}}})} \|\rho^{\v,b} - \rho^{\v}\|_{L^q(B_{3{\hat{\sigma}}}(\boldsymbol{x}))} \\
            & \quad\, + C \int_{B_{3{\hat{\sigma}}}(\boldsymbol{x})^c} \left (k_\alpha(1 + D^2) + k_\alpha (1 +  |\boldsymbol{y}|^2) \right ) |\rho^{\v,b} - \rho^{\v}|(\boldsymbol{y}) \dd \boldsymbol{y} \\
            & \leq C({\hat{\sigma}}) \|\rho^{\v,b} - \rho^{\v}\|_{L^q(\mathbb{R}^n)} + C(D) \|\rho^{\v,b} - \rho^{\v}\|_{L^1(\mathbb{R}^n)} \\
            & \quad\, + C \int_{B_{3{\hat{\sigma}}}(\boldsymbol{x})^c} k_\alpha (1 +  |\boldsymbol{y}|^2)  \rho^{\v}(\boldsymbol{y}) \dd \boldsymbol{y} + C \int_{B_{3{\hat{\sigma}}}(\boldsymbol{x})^c} k_\alpha (1 +  |\boldsymbol{y}|^2)  \rho^{\v,b}(\boldsymbol{y}) \dd \boldsymbol{y} \\
            & =: I_1 + I_2 + I_3 + I_4.
        \end{split}
    \end{align*}
By Lemma \ref{Lq}, for fixed ${\hat{\sigma}}$, we have
    \begin{equation}\label{alpha1}
        \sup_{[0,T] \times [0,D]} |I_1 + I_2| \longrightarrow 0 \qquad \mbox{as $b \to \infty$}.
    \end{equation}

For $I_3$, by Lemma \ref{simple} and the Lebesgue dominated convergence theorem,
we obtain
\begin{equation*}
\sup_{[0,T] \times [0,D]} |I_3| \longrightarrow 0
\qquad\mbox{as ${\hat{\sigma}} \to \infty$, uniformly in $b$}.
\end{equation*}

For $I_4$, by \eqref{FTC1}, we have
    \begin{align*}
        \begin{split}
            |I_4| & = C \int^{b(t)}_{3{\hat{\sigma}}} k_\alpha (1 + r^2)  \rho^{\v,b}(r) r^{n-1} \dd r \\
            & \leq C \int^{b(t)}_0 \rho^{\v,b} (\mathds{1}_{B_{2{\hat{\sigma}}}^c} * J_{\hat{\sigma}})
               k_\alpha(1 + r^2) r^{n-1} \dd r \\
            & = C \int^{b(t)}_0 \rho^{\v,b}_0 (\mathds{1}_{B_{2{\hat{\sigma}}}^c} * J_{\hat{\sigma}}) k_\alpha(1 + r^2) r^{n-1} \dd r \\
            & \quad + C \int^t_0 \int^{b(s)}_0 \left ( (\mathds{1}_{B_{2{\hat{\sigma}}}^c} * J_{\hat{\sigma}}) (k_\alpha(1 + r^2))_r + (\mathds{1}_{B_{2{\hat{\sigma}}}^c} * J_{\hat{\sigma}})_r k_\alpha(1 + r^2) \right ) \rho^{\v,b} u^{\v,b} r^{n-1} \dd r \dd s \\
            & \leq C \int^{\b(t)}_{\hat{\sigma}} \rho^{\v,b}_0 k_\alpha(1 + r^2) r^{n-1} \dd r + \frac{C}{{\hat{\sigma}}^{\alpha + 1}} \int^t_0 \int^{b(s)}_{\hat{\sigma}} \rho^{\v,b} u^{\v,b} r^{n-1} \dd r \dd s \\
            & \quad + \frac{C k_\alpha (1 + {\hat{\sigma}}^2)}{{\hat{\sigma}}} \int^t_0 \int^{3{\hat{\sigma}}}_{\hat{\sigma}} \rho^{\v,b} u^{\v,b} r^{n-1} \dd r \dd s \\
            & \leq C \int^{\infty}_{\hat{\sigma}} \rho^{\v,b}_0 k_\alpha(1 + r^2) r^{n-1} \dd r + \frac{C k_\alpha (1 + {\hat{\sigma}}^2)}{{\hat{\sigma}}} \int^t_0 \int^{b(s)}_a \rho^{\v,b} u^{\v,b} r^{n-1} \dd r \dd s \\
            & =: I_{4,1} + I_{4,2}.
        \end{split}
    \end{align*}
By Lemma \ref{kernel2} and the Lebesgue dominated convergence theorem, we obtain
\begin{equation}\label{alpha3}
\sup_{[0,T] \times [0,D]} |I_{4,1}| \longrightarrow 0
 \qquad \mbox{as ${\hat{\sigma}} \to \infty$, uniformly in $b$}.
\end{equation}
By Lemma \ref{Energy} and the Cauchy-Schwartz inequality, we have
\begin{align}\label{alpha4}
 \sup_{[0,T] \times [0,D]} |I_{4,2}|
&\leq \frac{C k_\alpha (1 + {\hat{\sigma}}^2)}{{\hat{\sigma}}}
\int^t_0 \int^{b(s)}_a \big(\rho^{\v,b} + \rho^{\v,b} (u^{\v,b})^2\big) r^{n-1} \dd r \dd s \nonumber\\
& \leq \frac{C(M,E_0,T) k_\alpha (1 + {\hat{\sigma}}^2)}{{\hat{\sigma}}} \longrightarrow 0
            \qquad \mbox{as ${\hat{\sigma}} \to \infty$, uniformly in $b$.}
\end{align}
Thus, combining \eqref{alpha1}--\eqref{alpha4} together yields that, for $\alpha \in (-1,0]$,
    \begin{equation*}
        (\Phi_\alpha*\rho^{\v,b})(t,r)r^{n-1} \longrightarrow (\Phi_\alpha*\rho^\v)(t,r)r^{n-1} \qquad
	\text{ in $C_{\rm loc}([0,T]\times[0,\infty))$ as $b \to \infty$}.     \end{equation*}

\smallskip
5. For \eqref{2.71}, it follows from Lemma \ref{Lq} and \eqref{2.69}
that, when $\alpha \in (-1,n - 1)$ and ${\hat{\sigma}} > 0$,     \begin{align}\label{missed}
            \int_0^{\hat{\sigma}}
             \big|\rho^{\v,b} (\Phi_\alpha*\rho^{\v,b}) - \rho^{\v} (\Phi_\alpha*\rho^{\v})\big|(t,r)\,
             r^{n-1} \dd r \longrightarrow 0
            \qquad \mbox{as $b\to \infty$}.
    \end{align}
When $\alpha \in (0,n-1)$, using \eqref{2.81}, we have
    \begin{align}\label{2.82}
            & \int_{\hat{\sigma}}^\infty\big|\rho^{\v,b} (\Phi_\alpha*\rho^{\v,b})\big|(t,r)\, r^{n-1} \dd r \nonumber\\
            & \leq C \int_{\hat{\sigma}}^\infty \rho^{\v,b}(t,r)
            \bigg ( \Big(\int_0^{r-1}+ \int_{r+1}^\infty \Big)
            \rho^{\v,b}(t,\eta) \eta^{n-1} \dd \eta
            + \int_{r-1}^{r+1} \frac{1}{(r \eta)^{\frac{\alpha}{2}}} \rho^{\v,b}(t,\eta) \eta^{n-1} \dd \eta \bigg ) r^{n-1} \dd r \nonumber\\
            & \leq C \int_{\hat{\sigma}}^\infty \rho^{\v,b}(t,r)
            \bigg ( \Big(\int_0^{r-1}+\int_{r+1}^\infty\Big) \rho^{\v,b}(t,\eta) \eta^{n-1} \dd \eta
            + \frac{1}{({\hat{\sigma}}-1)^{\alpha}} \int_{r-1}^{r+1} \rho^{\v,b}(t,\eta) \eta^{n-1} \dd \eta \bigg ) r^{n-1} \dd r\nonumber\\
            & \leq C(M) \int_{\hat{\sigma}}^\infty \rho^{\v,b}(t,r) r^{n-1} \dd r.
    \end{align}
We can also replace $\rho^{\v,b}$ by $\rho^{\v}$ in \eqref{2.82}.
Thus, for $\delta > 0$, with  \eqref{2.99}, \eqref{2.101}, and \eqref{2.82},
we obtain that, for $b$ and ${\hat{\sigma}}$ sufficiently large,
    \begin{align}\label{2.102}
            \int_{\hat{\sigma}}^\infty\big|\rho^{\v,b} (\Phi_\alpha*\rho^{\v,b})
              - \rho^{\v} (\Phi_\alpha*\rho^{\v})\big|(t,r)\, r^{n-1} \dd r \leq C(M) \delta.
    \end{align}
When $\alpha \in (-1,0]$, using \eqref{logtingaling}, \eqref{minus}, and Lemma \ref{simple}, we have
    \begin{align*}
        \begin{split}
            & \int_{\hat{\sigma}}^\infty |\rho^{\v,b} (\Phi_\alpha*\rho^{\v,b})|(t,r)\, r^{n-1} \dd r \\
            & = \frac{1}{\omega_n} \Big| \int_{B_{\hat{\sigma}}^c} \int_{\mathbb{R}^n} \rho^{\v,b}(\boldsymbol{x}) k_\alpha (|\boldsymbol{x} - \boldsymbol{y}|) \rho^{\v,b}(\boldsymbol{y}) \dd \boldsymbol{x} \dd \boldsymbol{y} \Big| \\
            & \leq \frac{1}{\omega_n} \int_{B_{\hat{\sigma}}^c} \rho^{\v,b}(\boldsymbol{y}) \Big( \int_{B_1(\boldsymbol{y})} \rho^{\v,b}(\boldsymbol{x}) |k_\alpha (|\boldsymbol{x} - \boldsymbol{y}|)| \dd \boldsymbol{x} + \int_{B_1(\boldsymbol{y})^c} \rho^{\v,b}(\boldsymbol{x}) k_\alpha (|\boldsymbol{x} - \boldsymbol{y}|) \dd \boldsymbol{x} \Big) \dd \boldsymbol{y} \\
            & \leq \frac{1}{\omega_n} \int_{B_{\hat{\sigma}}^c} \rho^{\v,b}(\boldsymbol{y}) \Big ( \| \rho^{\v,b} \|_{L^\gamma(B_1(\boldsymbol{y}))} \|k_\alpha (|\cdot|) \|_{L^\frac{\gamma}{\gamma - 1}(B_1)} \\
            & \qquad \qquad\qquad\qquad\,\, + C \int_{|\boldsymbol{x} - \boldsymbol{y}| \geq 1} \rho^{\v,b}(\boldsymbol{x}) \left (1 + k_\alpha (1 +  |\boldsymbol{x}|^2) + k_\alpha (1 +  |\boldsymbol{y}|^2) \right ) \dd \boldsymbol{x} \Big ) \dd \boldsymbol{y} \\
            & \leq C(M,E_0,T) \int_{B_{\hat{\sigma}}^c} \rho^{\v,b}(\boldsymbol{x}) \left ( 1 + k_\alpha(1+|\boldsymbol{x}|^2) \right ) \dd \boldsymbol{x} \\
            & \leq C(M,E_0,T) \Big( \|\rho^{\v,b} - \rho^{\v,b}\|_{L^1(B_{\hat{\sigma}}^c)} + \int_{B_{\hat{\sigma}}^c} \rho^{\v}(\boldsymbol{x}) \dd \boldsymbol{x} + \int_{B_{\hat{\sigma}}^c} \rho^{\v,b}(\boldsymbol{x})  k_\alpha(1+|\boldsymbol{x}|^2) \dd \boldsymbol{x} \Big).
        \end{split}
    \end{align*}
Notice that, by \eqref{alpha3}--\eqref{alpha4},
    \begin{equation}\label{alpha5}
        \sup_{[0,T]} \Big|\int_{B_{\hat{\sigma}}^c} \rho^{\v,b}(\boldsymbol{x}) k_\alpha(1+|\boldsymbol{x}|^2) \dd \boldsymbol{x} \Big| \longrightarrow 0
       \qquad\mbox{as $b \to \infty$ \,\,  uniformly in $b$}.
    \end{equation}
Thus, for $\delta > 0$, by Lemmas \ref{simple}--\ref{Lq}, the Lebesgue dominated convergence theorem, and \eqref{alpha5}, we obtain that, for $b$ and ${\hat{\sigma}}$ sufficiently large,
    \begin{equation}\label{bigsig}
        \int_{\hat{\sigma}}^\infty
        \big|\rho^{\v,b} (\Phi_\alpha*\rho^{\v,b}) - \rho^{\v} (\Phi_\alpha*\rho^{\v})\big|(t,r)\, r^{n-1} \dd r
        \leq C(M,E_0,T) \delta.
    \end{equation}
Therefore, combining \eqref{missed} and \eqref{2.102} with \eqref{bigsig} leads
to \eqref{2.71} for $\alpha \in (-1,n-1)$.
\end{proof}

\begin{lemma}
Under the assumptions of {\rm Lemma {\rm \ref{lem5.2}}}, the following holds{\rm :}
\begin{align*}
& \int_0^\infty  \Big(\frac12 \Big|\frac{m^{\v}}{\sqrt{\rho^\varepsilon}} \Big|^2
 + \rho^{\v}e(\rho^\v) + \frac{\kappa}{2} \rho^\v (\Phi_\alpha * \rho^\v)\Big)(t,r)\,r^{n-1}\dd r \\
& \leq \int_0^\infty\Big(\frac12 \Big|\frac{m^{\v}_0}{\sqrt{\rho^\varepsilon_0}}\Big|^2
+ \rho^{\v}_0 e(\rho^{\v}_0) + \frac{\kappa}{2} \rho^\v_0 (\Phi_\alpha * \rho^\v_0)\Big)(r)\,r^{n-1}\dd r.
\end{align*}
\end{lemma}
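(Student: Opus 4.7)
The plan is to derive the desired inequality by passing to the limit $b\to\infty$ in the exact energy equality \eqref{firstest} of Lemma~\ref{Energy}, applied to the smooth approximate solutions $(\rho^{\v,b},u^{\v,b})$ of the free boundary problem. Since the two viscous contributions on the left-hand side of \eqref{firstest}, namely $\v\int_0^t\!\int_a^{b(s)}\rho(u_r^2+(n-1)u^2/r^2)r^{n-1}\dd r\dd s$ and $\v(n-1)\int_0^t(\rho u^2)(s,b(s))b(s)^{n-2}\dd s$, are manifestly non-negative, dropping them converts the equality into the inequality
\[
\int_a^{b(t)}\!\rho^{\v,b}\Big(\tfrac12(u^{\v,b})^2+e(\rho^{\v,b})+\tfrac{\kappa}{2}\,\Phi_\alpha\ast\rho^{\v,b}\Big) r^{n-1}\dd r \;\leq\; \int_a^{b}\!\rho_0^{\v,b}\Big(\tfrac12(u_0^{\v,b})^2+e(\rho_0^{\v,b})+\tfrac{\kappa}{2}\,\Phi_\alpha\ast\rho_0^{\v,b}\Big) r^{n-1}\dd r.
\]

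Next, I will take $b\to\infty$ along the subsequence extracted in Lemmas~\ref{lem5.2}--\ref{lem5.6}, extending $(\rho^{\v,b},u^{\v,b})$ by zero outside $[a,b(t)]$. For the kinetic energy, $\tfrac{m^{\v,b}}{\sqrt{\rho^{\v,b}}}=\sqrt{\rho^{\v,b}}\,u^{\v,b}\to \tfrac{m^\v}{\sqrt{\rho^\v}}$ a.e. by Lemmas~\ref{lem5.2} and~\ref{lem5.6}, so Fatou's lemma yields $\int_0^\infty \tfrac12|\tfrac{m^\v}{\sqrt{\rho^\v}}|^2 r^{n-1}\dd r \le \liminf_{b\to\infty}\int_a^{b(t)}\tfrac12\rho^{\v,b}(u^{\v,b})^2 r^{n-1}\dd r$. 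For the internal energy, non-negativity of $\rho e(\rho)=\frac{a_0}{\gamma-1}\rho^\gamma$ together with the a.e.\ convergence $\rho^{\v,b}\to\rho^\v$ from Lemma~\ref{lem5.2} gives the same lower-semicontinuity. For the potential term, the global $L^1$ convergence \eqref{2.71} of Lemma~\ref{lem5.1} yields
\[
\int_0^\infty \rho^{\v,b}(\Phi_\alpha\ast\rho^{\v,b})(t,r) r^{n-1}\dd r \;\longrightarrow\; \int_0^\infty \rho^\v(\Phi_\alpha\ast\rho^\v)(t,r) r^{n-1}\dd r,
\]
which handles either sign of $\kappa$ as an equality (not merely an inequality).

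For the right-hand side, the convergence of the initial data is delivered by the construction in Appendix~B: by Lemmas~\ref{appendix1}--\ref{appendix5}, $\rho_0^{\v,b}\to\rho_0^\v$ and $\tfrac{m_0^{\v,b}}{\sqrt{\rho_0^{\v,b}}}\to \tfrac{m_0^\v}{\sqrt{\rho_0^\v}}$ in the appropriate norms so the kinetic and internal parts converge, while the potential term at $t=0$ is controlled by the same Riesz/logarithmic estimates used in the proof of \eqref{2.71}, namely the global convergence scheme of Lemma~\ref{lem5.1} applied with $(\rho^{\v,b}_0,\rho^\v_0)$ in place of $(\rho^{\v,b},\rho^\v)$. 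Combining all these passages to the limit produces the stated inequality.

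The main obstacle is the potential term: since $\Phi_\alpha\ast\rho$ is non-local and, in the attractive case $\kappa=1$ or the logarithmic case $\alpha\in(-1,0]$ where the kernel lacks a definite sign, one cannot simply invoke Fatou's lemma on it. This is precisely why the global (rather than merely local) $L^1$ convergence in \eqref{2.71}, which rests on the mass-conservation identity \eqref{truth}--\eqref{2.101} and the tail control of Lemma~\ref{Lq}, is indispensable. The analogous tail control for the initial data rests on the finite-moment assumption \eqref{0.81} when $\alpha\in(-1,0]$, and on the critical-mass energy bound \eqref{eq2} when $\alpha\in(0,n-1)$ with $\kappa=1$, ensuring that no mass escapes either at the origin or at infinity in the limit $b\to\infty$.
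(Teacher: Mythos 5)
Your proposal is correct and follows essentially the same route as the paper: drop the non-negative viscous terms in the energy identity of Lemma~\ref{Energy}, pass the initial-data side to the limit via the Appendix~B construction and Lemma~\ref{expand}, and handle the left-hand side with the Fatou lemma for the kinetic and internal energies together with the global convergence \eqref{2.71} for the potential term. Your additional remarks on why \eqref{2.71} (rather than Fatou) is needed for the sign-indefinite potential term accurately reflect the role that Lemmas~\ref{Lq} and \ref{lem5.1} play in the paper's argument.
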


\begin{proof}
It follows from Lemma \ref{Energy} that
\begin{align*}
& \int_0^{b(t)}\Big(\frac12 \Big|\frac{m^{\v,b}}{\sqrt{\rho^{\varepsilon,b}}}\Big|^2
+ \rho^{\v,b} e(\rho^{\v,b})
+ \frac{\kappa}{2} \rho^{\varepsilon,b} \big(\Phi_\alpha * \rho^{\v,b}\big)\Big)(t,r)\,r^{n-1}\dd r \\
& \leq \int_0^{b(t)} \Big(\frac12 \Big|\frac{m^{\v,b}_0}{\sqrt{\rho^{\varepsilon,b}_0}}\Big|^2
+ \rho^{\v,b}_0 e(\rho^{\v,b}_0)
+ \frac{\kappa}{2} \rho^{\varepsilon,b}_0\big(\Phi_\alpha * \rho^{\v,b}_0\big) \Big)(r)\,r^{n-1}\dd r.
\end{align*}
Using the construction of the approximation of $(\rho^\v_0,m^\v_0)$ and Lemma \ref{expand}, we have
\begin{align*}
& \int_0^{b(t)} \Big(\frac12 \Big|\frac{m^{\v,b}_0}{\sqrt{\rho^{\varepsilon,b}_0}}\Big|^2
+ \rho^{\v,b}_0 e(\rho^{\v,b}_0) + \frac{\kappa}{2} \rho^{\varepsilon,b}_0 (\Phi_\alpha * \rho^{\v,b}_0)\Big)(r)\,
r^{n-1}\dd r \\
& \longrightarrow \int_0^\infty
\Big(\frac12 \Big|\frac{m^{\v}_0}{\sqrt{\rho^\varepsilon_0}}\Big|^2
+ \rho^{\v}_0 e(\rho^{\v}_0) + \frac{\kappa}{2} \rho^{\varepsilon}_0 (\Phi_\alpha * \rho^\v_0)\Big)(r)\,r^{n-1}\dd r \qquad\text{as } b \to \infty.
\end{align*}
Now, by the Fatou lemma, Lemma \ref{expand}, and \eqref{2.71}, we obtain
 \begin{align*}
 & \int_0^\infty  \Big(\frac12 \Big|\frac{m^{\v}}{\sqrt{\rho^\varepsilon}}\Big|^2
 + \rho^{\v} e(\rho^{\v}) + \frac{\kappa}{2} \rho^{\varepsilon} (\Phi_\alpha * \rho^\v)\Big)(t,r)\,r^{n-1}\dd r \\
 & \leq \liminf_{b \to \infty} \int_0^{b(t)}
 \Big(\frac12 \Big|\frac{m^{\v,b}}{\sqrt{\rho^{\varepsilon,b}}} \Big|^2
 + \rho^{\v,b} e(\rho^{\v,b})
 + \frac{\kappa}{2} \rho^{\varepsilon,b} \big(\Phi_\alpha * \rho^{\v,b}\big)\Big)(t,r)\,r^{n-1}\dd r.
    \end{align*}
This completes the proof.
\end{proof}

\begin{lemma}
Under the assumptions of {\rm Lemma {\rm \ref{lem5.2}}}, let $\bp(t,\boldsymbol{x})\in \left(C^2_0([0,T]\times \mathbb{R}^n)\right)^n$ be any smooth function with compact support
so that $\bp(T,\boldsymbol{x})=0$. Then
\begin{align*}
	&\int_{\mathbb{R}_+^{n+1}} \Big\{\M^{\v} \cdot\partial_t\bp +\frac{\M^{\v}}{\sqrt{\rho^{\v}}} \cdot \big(\frac{\M^{\v}}{\sqrt{\rho^{\v}}}\cdot \nabla\big)\bp
	+p(\rho^{\v}) \mbox{\rm div} \bp -\rho^\v \nabla(\Phi_\alpha * \rho^\v)\cdot \bp\Big\}\,\dd\boldsymbol{x} \dd t\\
	&\quad +\int_{\mathbb{R}^n} \M_0^\v\cdot \bp(0,\boldsymbol{x})\,\dd\boldsymbol{x}
	\\
	&=-\v\int_{\mathbb{R}_+^{n+1}}
	\Big\{\frac{1}{2}\M^{\v}\cdot \big(\Delta \bp+\nabla\mbox{\rm div}\,\bp \big)
	+ \frac{\M^{\v}}{\sqrt{\rho^{\v}}} \cdot \big(\nabla\sqrt{\rho^{\v}}\cdot \nabla\big)\bp
    + \nabla\sqrt{\rho^{\v}}  \cdot \big(\frac{\M^{\v}}{\sqrt{\rho^{\v}}}\cdot \nabla\big)\bp\Big\}\,\dd\boldsymbol{x}\dd t\nonumber\\
	&=\sqrt{\v}\int_{\mathbb{R}_+^{n+1}}
	\sqrt{\rho^{\v}} \Big\{V^{\v}  \frac{\boldsymbol{x}\otimes\boldsymbol{x}}{r^2}
	+\frac{\sqrt{\v}}{r}\frac{m^\v}{\sqrt{\rho^\v}}\big(I_{n\times n}-\frac{\boldsymbol{x}\otimes\boldsymbol{x}}{r^2}\big)\Big\}: \nabla\bp\, \dd\boldsymbol{x}\dd t,
\end{align*}
where $V^{\v}(t,r)\in L^2(0,T; L^2(\mathbb{R}^n))$ is a function such that
$$
\displaystyle\int_0^T\int_{\mathbb{R}^n} |V^{\v}(t,\boldsymbol{x})|^2\, \dd\boldsymbol{x}\dd t\leq C(E_0,M)
\quad \mbox{for some $C(E_0,M)>0$ independent of $T>0$}.
$$
\end{lemma}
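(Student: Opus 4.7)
The plan is to begin from the weak formulation of $\eqref{Eul}_2$ satisfied by the smooth approximate solutions $(\rho^{\v,b},u^{\v,b})$ on $\Omega^T$. Multiplying $\eqref{Eul}_2$ written in multi-dimensional divergence form by the test field $\bp$, integrating over $[0,T]\times\Omega_t$, and using the kinematic boundary condition \eqref{0.19} together with the stress-free condition \eqref{boundary} to cancel the boundary contributions produced by integration by parts in the pressure and viscous terms, we obtain the identity asserted in the lemma with $(\rho^\v,\M^\v)$ replaced by $(\rho^{\v,b},\M^{\v,b})$. The objective is then to pass to the limit $b\to\infty$ in each of the resulting integrals.

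For the transport, convective, pressure, and initial data terms, the convergences are standard given the preceding compactness lemmas. Concretely, I would use Lemma \ref{lem5.2} (strong convergence of $\rho^{\v,b}$ and $\sqrt{\rho^{\v,b}}$), the Corollary after Lemma \ref{lem5.2} (strong convergence of $p(\rho^{\v,b})$), Lemma \ref{lem5.5} for strong convergence of $\M^{\v,b}\to\M^\v$ in $L^2(0,T;L^q_{\rm loc})$, and Lemma \ref{lem5.6} for strong convergence of $\M^{\v,b}/\sqrt{\rho^{\v,b}}\to\M^\v/\sqrt{\rho^\v}$ in $L^2(0,T;L^2_{\rm loc})$. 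These give immediately the passage to the limit in $\int\M^{\v,b}\cdot\partial_t\bp$, in $\int (\M^{\v,b}/\sqrt{\rho^{\v,b}})\cdot(\M^{\v,b}/\sqrt{\rho^{\v,b}}\cdot\nabla)\bp$, in $\int p(\rho^{\v,b})\,\mathrm{div}\,\bp$, and (using also Lemma \ref{appendix5}) in the initial data term. The compact support of $\bp$ localizes every integral, so only the $L^q_{\rm loc}$ strong convergences are required.

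For the Riesz contribution $-\int\rho^{\v,b}\nabla(\Phi_\alpha*\rho^{\v,b})\cdot\bp$, I would exploit spherical symmetry to reduce to a one-dimensional integral and write $\rho^{\v,b}(\Phi_\alpha*\rho^{\v,b})_r\,r^{n-1}=\rho^{\v,b}\cdot\bigl((\Phi_\alpha*\rho^{\v,b})_r\,r^{n-1}\bigr)$. The first factor converges strongly in $C(0,T;L^q_{\rm loc})$ by Lemma \ref{Lq}, while the second factor converges uniformly on compact sets by \eqref{2.68}, so the product passes to the limit against the compactly supported test function $\bp$. The uniform potential bounds \eqref{2.70}--\eqref{2.70-2} guarantee that the limiting integral is finite.

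The main obstacle is the viscous right-hand side, because $u^{\v,b}=\M^{\v,b}/\rho^{\v,b}$ is not globally defined once vacuum forms in the limit. For spherical symmetry, the viscous contribution tested against $\bp$ can be written, after integration by parts (using the boundary conditions \eqref{boundary} to kill all boundary terms), as a sum of two pieces: one involving $\sqrt{\v}\,\sqrt{\rho^{\v,b}}\,(\sqrt{\v\rho^{\v,b}}\,u^{\v,b}_r)$ contracted with $(\boldsymbol{x}\otimes\boldsymbol{x}/r^2):\nabla\bp$, and the other involving $(\v/r)\sqrt{\rho^{\v,b}}\,(\sqrt{\rho^{\v,b}}\,u^{\v,b})$ contracted with $(I-\boldsymbol{x}\otimes\boldsymbol{x}/r^2):\nabla\bp$. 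The BD entropy estimate (Lemmas \ref{oldBD}--\ref{newbd}) gives a uniform $L^2$ bound on $\sqrt{\v\rho^{\v,b}}\,u^{\v,b}_r$, and Lemma \ref{Energy} gives a uniform $L^2$ bound on $\sqrt{\v\rho^{\v,b}}\,u^{\v,b}/r$. I would therefore extract, along a subsequence, a weak $L^2$ limit $V^\v$ of $\sqrt{\rho^{\v,b}}\,u^{\v,b}_r$ (interpreted via the identity $\sqrt{\rho^{\v,b}}\,u^{\v,b}_r=(\sqrt{\rho^{\v,b}}\,u^{\v,b})_r-2u^{\v,b}(\sqrt{\rho^{\v,b}})_r$, each factor of which is bounded in a suitable space). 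The weak $L^2$ bound $\int_0^T\!\!\int_{\mathbb{R}^n}|V^\v|^2\,\dd\boldsymbol{x}\dd t\leq C(E_0,M)$ then follows from lower semicontinuity of the norm. For the second piece, the strong convergence of Lemma \ref{lem5.6} allows replacement of $(\sqrt{\rho^{\v,b}}\,u^{\v,b})/r$ by $(m^\v/\sqrt{\rho^\v})/r$. Finally, the strong $L^2_{\rm loc}$ convergence of $\sqrt{\rho^{\v,b}}$ (Lemma \ref{lem5.2}) paired with the weak $L^2$ convergence of $V^\v$ and the smooth compactly supported $\nabla\bp$ allows passage to the limit in the viscous terms, yielding the claimed identity with the stated representation.
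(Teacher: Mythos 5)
Your proposal is correct and follows essentially the same route as the paper, which itself defers to the corresponding argument in \cite{Chen2021}: weak formulation on the free-boundary domain, cancellation of the boundary terms via \eqref{0.19}--\eqref{boundary}, term-by-term passage to the limit $b\to\infty$ using Lemmas \ref{lem5.2}, \ref{lem5.5}, \ref{lem5.6}, \ref{Lq} and \eqref{2.68}, and extraction of $V^{\v}$ as a weak $L^2$ limit of the radial viscous stress $\sqrt{\v\rho^{\v,b}}\,u^{\v,b}_r$. Two cosmetic slips only: the uniform $L^2$ bound on $\sqrt{\v\rho^{\v,b}}\,u^{\v,b}_r$ (hence $\int_0^T\!\int_{\mathbb{R}^n}|V^{\v}|^2\,\dd\boldsymbol{x}\dd t\leq C(E_0,M)$) comes from the dissipation term in Lemma \ref{Energy} rather than from the BD entropy, and the product-rule identity should read $\sqrt{\rho}\,u_r=(\sqrt{\rho}\,u)_r-u(\sqrt{\rho})_r$; neither affects the argument, since weak $L^2$ compactness of $\sqrt{\v\rho^{\v,b}}\,u^{\v,b}_r$ is immediate from that energy bound.
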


The proof of this lemma is the same as the argument for that of \cite{Chen2021}.

\begin{lemma}[$H_{\rm loc}^{-1}$--Compactness]
Under the assumptions of {\rm Lemma {\rm \ref{lem5.2}}}, let $(\eta, q)$ be a weak entropy pair defined in \eqref{5.10}
for any compactly supported smooth function $\psi(s)$ in $\mathbb{R}$. Then
\begin{align*}
	\partial_t\eta(\rho^\v,m^\v)+\partial_rq(\rho^\v,m^\v) \qquad   \mbox{is compact in $ H^{-1}_{\rm loc}(\mathbb{R}^2_+)$}.
\end{align*}
\end{lemma}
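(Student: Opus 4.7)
The plan is to use the NSRE system satisfied by $(\rho^\v, m^\v)$ in the distributional sense to derive an entropy production identity, decompose it into pieces each of which is either compact in $H^{-1}_{\rm loc}$ or bounded in the space of Radon measures $\mathcal{M}_{\rm loc}$, and then invoke the Murat--Tartar compactness lemma. Because $\psi$ is compactly supported, the weak entropy pair from $\eqref{5.10}$ satisfies the standard pointwise bounds
$|\eta|, |q| \leq C_\psi \rho$, $|\eta_m| + |\eta_\rho - u \eta_m| \leq C_\psi$, and $\rho |\eta_{mm}| \leq C_\psi$, so that by the uniform energy estimate of Lemma \ref{Energy} and the higher-integrability estimates from Lemma \ref{higher} and its velocity counterpart, both $\eta(\rho^\v,m^\v)$ and $q(\rho^\v,m^\v)$ are bounded in $L^{\gamma+1}_{\rm loc}((0,T)\times K)$ for $K \Subset (0,\infty)$. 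This already yields that $\partial_t\eta + \partial_r q$ is bounded in $W^{-1, p}_{\rm loc}$ for $p = \gamma + 1 > 2$, supplying the upper integrability required by the Murat--Tartar criterion.

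Next, multiplying the continuity equation by $\eta_\rho$ and the momentum equation in \eqref{Eul} by $\eta_m$, summing, and using $q_\rho = u\eta_\rho + u^2 \eta_m - p'(\rho)\eta_m/\rho$ etc., we obtain $\partial_t \eta + \partial_r q = R_1 + R_2 + R_3$, where
\[
R_1 = -\frac{n-1}{r}\Big(m\eta_\rho + \tfrac{m^2}{\rho}\eta_m\Big), \quad R_2 = \v\eta_m\Big[\big(\rho(u_r + \tfrac{n-1}{r}u)\big)_r - \tfrac{n-1}{r}\rho_r u\Big], \quad R_3 = -\kappa\,\eta_m\,\rho\,(\Phi_\alpha \ast \rho)_r.
\]
On $K \Subset (0,\infty)$, the factor $1/r$ is bounded, so $|R_1| \leq C(K)(\rho + \rho u^2)$ and the uniform energy estimate puts $R_1$ in $L^1_{\rm loc}$ uniformly in $\v$. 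For $R_2$, write $\v\eta_m(\rho(u_r+\tfrac{n-1}{r}u))_r = \partial_r[\v\eta_m \rho(u_r+\tfrac{n-1}{r}u)] - \v(\eta_m)_r \rho(u_r+\tfrac{n-1}{r}u)$; the BD-type entropy estimates of Lemmas \ref{oldBD}--\ref{newbd} guarantee that $\sqrt{\v\rho^\v}(u_r^\v+\tfrac{n-1}{r}u^\v)$ is uniformly bounded in $L^2_{\rm loc}$ and that $(\sqrt{\rho^\v})_r$ is uniformly bounded in $L^\infty(L^2_{\rm loc})$, so the divergence term vanishes in $H^{-1}_{\rm loc}$ as $\v \to 0$ (because of the prefactor $\sqrt{\v}$), while the remainder and the term $\v\eta_m \tfrac{n-1}{r}\rho_r u$ lie in $L^1_{\rm loc}$ via Cauchy--Schwarz between the BD and energy estimates.

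The main obstacle is the potential term $R_3$, which is the new difficulty in the Riesz setting: unlike the Coulomb case there is no local Poisson representation. Here we apply the radial representation and the pointwise estimate $\eqref{continuity.8}$ on $\omega(r,\eta)$ from Lemma \ref{potentially} together with the uniform potential bound from Corollary \ref{uniform}. For $\alpha \in (0, n-1)$, Hölder gives $\|\rho (\Phi_\alpha \ast \rho)_r\|_{L^s_{\rm loc}}$ bounded uniformly in $\v$ for some $s > 1$ with $\tfrac{1}{s} = \tfrac{1}{\gamma} + \tfrac{\alpha+2}{2n}$; for $\alpha \in (-1,0]$, the local $L^\infty$ bound on $\Phi_\alpha \ast \rho$ combined with the local $L^{2n/(\alpha+2)}$ bound on the gradient gives the same conclusion. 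In either case, $R_3$ is uniformly bounded in $\mathcal{M}_{\rm loc}$.

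Combining the three steps: $R_1, R_3$, and the remainder of $R_2$ are uniformly bounded in $\mathcal{M}_{\rm loc}$; the divergence part of $R_2$ is compact in $H^{-1}_{\rm loc}$; and $\partial_t \eta + \partial_r q$ itself is uniformly bounded in $W^{-1, p}_{\rm loc}$ for some $p > 2$. The Murat--Tartar lemma then yields the desired compactness of $\partial_t \eta(\rho^\v, m^\v) + \partial_r q(\rho^\v, m^\v)$ in $H^{-1}_{\rm loc}(\mathbb{R}^2_+)$. The only genuinely new point relative to the CEPE argument of \cite{Chen2021} is the treatment of $R_3$, where the non-local kernel forces us to rely on the radial pointwise estimates of Lemma \ref{potentially} in place of the explicit formula $(\Phi_{n-2}\ast\rho)_r = \omega_n r^{1-n}\int_0^r \rho \eta^{n-1}d\eta$.
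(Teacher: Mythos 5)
Your overall architecture --- entropy production identity for the approximate solutions, decomposition into a geometric source, a viscous part, and the nonlocal potential term, a $W^{-1,p}_{\rm loc}$ bound with $p=\gamma+1>2$ from the higher integrability, and the Murat--Tartar interpolation lemma --- is exactly the paper's, which, like you, defers the first four terms to the Coulomb-case argument of \cite{Chen2021} and concentrates on the potential term. Two small points of hygiene: the identity should be derived for the smooth free-boundary solutions $(\rho^{\v,b},u^{\v,b})$ of \eqref{Eul} and then passed to the limit $b\to\infty$, since the chain rule is not available for the weak solutions $(\rho^\v,m^\v)$ directly; and the BD estimates control $\v(\sqrt{\rho^\v})_r$, not $(\sqrt{\rho^\v})_r$, uniformly in $L^\infty(0,T;L^2_{\rm loc})$ --- your subsequent Cauchy--Schwarz for $\v\eta_m\frac{n-1}{r}\rho_r u=2\eta_m\frac{n-1}{r}\big(\v(\sqrt{\rho})_r\big)\big(\sqrt{\rho}\,u\big)$ is consistent with the correct version, so this is only a slip of wording.

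The genuine divergence from the paper, and the place where your argument has a gap, is the potential term $R_3$. You bound it by H\"older between $\rho\in L^\gamma$ and $\nabla\Phi_\alpha*\rho\in L^{2n/(\alpha+2)}$, claiming $\frac1s=\frac1\gamma+\frac{\alpha+2}{2n}$ with $s>1$. This requires $\gamma\geq\frac{2n}{2n-\alpha-2}$, which the hypotheses do not guarantee: for $\kappa=-1$, $n=3$, $\alpha=1$, any $\gamma>1$ is admissible while $\frac{2n}{2n-\alpha-2}=2$, so for $\gamma<2$ your exponent gives $s<1$ and the product is not even placed in $L^1_{\rm loc}$, which is what the bounded-measure half of Murat's lemma needs. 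The gap is repairable: use the space-time higher integrability $\rho\in L^{\gamma+1}([0,T]\times K)$ of Lemma \ref{higher} (which you already invoke for the $W^{-1,p}$ bound) in place of $L^\gamma$; one checks that $\gamma+1>\frac{2n}{2n-\alpha-2}$ throughout the admissible range, since $\gamma>1$ suffices when $\alpha\leq n-2$ and $\gamma\geq\frac{3n}{3n-2(1+\alpha)}$ suffices when $\alpha\in(n-2,n-1)$. The paper avoids the HLS route altogether here: it writes $(\Phi_\alpha*\rho)_r(r)=\int\omega(r,\eta)\rho(\eta)\eta^{n-1}\,\d\eta$, splits the $\eta$-integral at distance $\frac{d}{2}$ from the diagonal, bounds the far part by $C(d)M$ using \eqref{continuity.8}, and controls the near-diagonal singularity $|r-\eta|^{n-2-\alpha}$ for $\alpha>n-2$ via Theorem \ref{pointwise} against $\|\rho\|_{L^\gamma(d,D)}$ --- which is precisely where the hypothesis $\gamma>\frac{1}{n-1-\alpha}$ enters. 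Either route closes once the integrability of $\rho$ you are actually using is stated correctly.
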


\begin{proof}
We multiply $\eqref{Eul}_1$ by $\eta_\rho^{\v,b} := \eta_\rho(\rho^{\v,b},m^{\v,b})$
and $\eqref{Eul}_2$ by $\eta_m^{\v,b} := \eta_m(\rho^{\v,b},m^{\v,b})$.
Then, by the construction of the entropy pair, we obtain
\begin{align}\label{entropyting}
\partial_t \eta^{\v,b} + \partial_r q^{\v,b} = & - \frac{n-1}{r}m^{\v,b}(\eta_\rho^{\v,b} + u^{\v,b} \eta_m^{\v,b}) - \kappa \eta^{\v,b}_m \rho^{\v,b} (\Phi_\alpha * \rho^{\v,b})_r \nonumber\\
& + \v \eta_m^{\v,b} \Big\{ \Big( \rho^{\v,b}(u^{\v,b}_r + \frac{n-1}{r} u^{\v,b})\Big)_r
  - \frac{n-1}{r} \rho_r^{\v,b} u^{\v,b} \Big\}.
\end{align}
Multiplying \eqref{entropyting} by $\phi(t,r) \in C^\infty_0(\mathbb{R}^2_+)$ and integrating it
over $\mathbb{R}^2_+$, and then integrating by parts, we obtain
 \begin{align*}
        \displaystyle
        & \int_{\mathbb{R}^2_+} \big(\partial_t \eta^{\v,b} + \partial_r q^{\v,b}\big)\phi \dd r\dd t \\
        \displaystyle
        & = - \int_{\mathbb{R}^2_+} \frac{n-1}{r}m^{\v,b}
            \big(\eta_\rho^{\v,b} + u^{\v,b} \eta_m^{\v,b}\big) \phi \dd r \dd t
            - \v \int_{\mathbb{R}^2_+} \big(\eta_m^{\v,b})_r  \rho^{\v,b}(u^{\v,b}_r
              + \frac{n-1}{r} u^{\v,b}\big) \phi \dd r \dd t \\
        \displaystyle
        &\quad  - \v \int_{\mathbb{R}^2_+} \eta_m^{\v,b}
        \rho^{\v,b}\big(u^{\v,b}_r + \frac{n-1}{r} u^{\v,b}\big) \phi_r \dd r \dd t
        - \v \int_{\mathbb{R}^2_+} \eta_m^{\v,b} \frac{n-1}{r} \rho_r^{\v,b} u^{\v,b}\phi\dd r \dd t \\
        \displaystyle
        & \quad - \kappa \int_{\mathbb{R}^2_+} \eta^{\v,b}_m \rho^{\v,b}
         \big(\Phi_\alpha * \rho^{\v,b}\big)_r \phi \dd r \dd t:= \sum^5_{i=1} I^{\v,b}_i.
 \end{align*}
Notice that the terms, $I^{\v,b}_i$ for $i=1,\cdots,4$, are the same as in \cite[Lemma 4.12]{Chen2021},
so that they can be handled in the same ways.
However, $I^{\v,b}_5$ is different and, as such, requires another approach.

Since $\phi$ is compactly supported in $\mathbb{R}^2_+$,
there exist $(d,D) \Subset (0,\infty)$ and $T>0$
such that $\supp (\phi) \subset (0,T) \times (d,D)$ so that
    \begin{align}\label{3.14}
            |I^{\v,b}_5| \leq C\int^T_0 \int_d^D
            \Big| \eta^{\v,b}_m \rho^{\v,b} (\Phi_\alpha * \rho^{\v,b})_r \Big| \dd r \dd t
            \leq C_\psi \int^T_0 \int_d^D \rho^{\v,b}(r) \int^{b(t)}_a \big|\omega(r,\eta)\big|
            \rho^{\v,b}(\eta) \eta^{n-1} \dd \eta \dd r \dd t.
    \end{align}
We need to prove the following estimate:
\begin{align}\label{3.15}
\int^T_0 \int_d^D \big|\eta^{\v,b}_m \rho^{\v,b} (\Phi_\alpha * \rho^{\v,b})_r \big|\dd r \dd t
\leq C_\psi(d,D,M,E_0,T).
\end{align}
When $\alpha \in (n-2,n-1)$, combining \eqref{3.14} with \eqref{continuity.8}
and using the Fubini theorem, we have
\begin{align}\label{3.16}
& \int^T_0 \int_d^D \left | \eta^{\v,b}_m \rho^{\v,b} (\Phi_\alpha * \rho^{\v,b})_r \right | \dd r \dd t
   \nonumber\\
& \leq C_\psi \int^T_0 \int_d^D \rho^{\v,b}(r) \int_{(a,b(t))\cap(r-\frac{d}{2},r + \frac{d}{2})}|\omega(r,\eta)| \rho^{\v,b}(\eta) \eta^{n-1} \dd \eta \dd r \dd t \nonumber\\
&\quad\,  + C_\psi \int^T_0 \int_d^D \rho^{\v,b}(r) \int_{(a,b(t))\cap(r-\frac{d}{2},r + \frac{d}{2})^c}|\omega(r,\eta)| \rho^{\v,b}(\eta) \eta^{n-1} \dd \eta \dd r \dd t\nonumber\\
& \leq C_\psi \int^T_0 \int_d^D \rho^{\v,b}(r) \int_{(a,b(t))\cap(r-\frac{d}{2},r + \frac{d}{2})} \frac{|r - \eta|^{n-2-\alpha}}{(r \eta)^{\frac{n-1}{2}}} \rho^{\v,b}(\eta) \eta^{n-1} \dd \eta \dd r \dd t
   \nonumber\\
&\quad\,  + C_\psi(d) \int^T_0 \int_d^D \rho^{\v,b}(r) \int_{(a,b(t))\cap(r-\frac{d}{2},r + \frac{d}{2})^c} \rho^{\v,b}(\eta) \eta^{n-1} \dd \eta \dd r \dd t\nonumber\\
& \leq C_\psi(d,M,T) + C_\psi \int^T_0 \int_d^D \rho^{\v,b}(r) \int_{(a,b(t))\cap(r-\frac{d}{2},r + \frac{d}{2})} \frac{|r - \eta|^{n-2-\alpha}}{d^{n-1}} \rho^{\v,b}(\eta) \eta^{n-1} \dd \eta \dd r \dd t
   \nonumber\\
& \leq C_\psi(d,M,T) +  C_\psi(d) \int^T_0 \int^{b(t)}_a \rho^{\v,b}(\eta) \eta^{n-1} \int_d^D |r - \eta|^{n-2-\alpha} \rho^{\v,b}(r) \dd r \dd \eta \dd t.
\end{align}
Thus, for $\gamma > \frac{1}{(n-1) - \alpha}$, using Theorem \ref{pointwise} and \eqref{3.16}, we obtain
\begin{align*}
& \int^T_0 \int_d^D \big| \eta^{\v,b}_m \rho^{\v,b} (\Phi_\alpha * \rho^{\v,b})_r \big|\dd r \dd t \\
& \leq C_\psi \int^T_0 \int^{b(t)}_a \rho^{\v,b}(\eta) \eta^{n-1} \sup_{z \in \mathbb{R}_+}
            \Big\{ \int_d^D |r - z|^{n-2-\alpha} \rho^{\v,b}(r) \dd r \Big\} \dd \eta \dd t \\
& = C_\psi \int^T_0 \int^{b(t)}_a \rho^{\v,b}(\eta) \eta^{n-1} \sup_{z \in [d,D]}
            \Big\{ \int_d^D |r - z|^{n-2-\alpha} \rho^{\v,b}(r) \dd r \Big\} \dd \eta \dd t \\
& \leq C_\psi(K) \| \rho^{\v,b} \|_{L^\gamma(d,D)} \int^T_0 \int^{b(t)}_a \rho^{\v,b}(\eta) \eta^{n-1} \dd \eta \dd t \\
& \leq C_\psi(d,D,M,E_0,T).
\end{align*}
When $\alpha \in (-1,n-2]$, using the approach to \eqref{3.16} but instead with the alternate estimate for $\omega$ as in \eqref{continuity.8} for $\alpha \in (-1,n-2]$,
we obtain \eqref{3.15}.
\end{proof}

Thus, to sum up, we have the following corollary.
\begin{corollary}
Under the assumptions of {\rm Lemma {\rm \ref{lem5.2}}}, for any $T>0$,
there exists a global spherically symmetric solution $(\rho^\v, m^\v)=(\rho^\v, \rho^\v u^\v)$
of CNSREs \eqref{0.2} in the sense of {\rm Definition \ref{CNSREs}} satisfying
\begin{align*}
\begin{split}
&\rho^{\v}(t,r)\geq0 \,\,\,\, a.e.,
\end{split}\\
\begin{split}
& u^\v(t,r)=0, \,\,\big(\frac{m^\v}{\sqrt{\rho^\v}}\big)(t,r)=\sqrt{\rho^\v}(t,r)u^\v(t,r)=0 \quad\, a.e.\,\, \mbox{on $\{(t,r)\,:\, \rho^\v(t,r)=0\}$},
\end{split}\\
\begin{split}
& \int^\infty_0 \rho^{\v}(t,r) r^{n-1} \dd r = M \qquad \mbox{for all $t\in[0,T]$},
\end{split} \\
\begin{split}
&\int_0^\infty  \Big(\frac12\rho^{\v} |u^{\v}|^2+ \rho^{\v}e(\rho^\v) - \frac{\kappa}{2}\rho^\v (\Phi_\alpha * \rho^\v)\Big)(t,r)\,r^{n-1}\dd r +\v\int_0^t\int_0^\infty  (\rho^{\v} |u^{\v}|^2)(s,r)\, r^{n-3} \dd r \dd s \\
&\quad\leq C(M,E_0) \qquad \mbox{for all $t\in[0,T]$},
\end{split}\\[1mm]
\begin{split}
& \int_0^\infty  \bigg(\frac12 \Big|\frac{m^{\v}}{\sqrt{\rho^\varepsilon}} \Big|^2
+ \rho^{\v}e(\rho^\v) + \frac{\kappa}{2} \rho^\v (\Phi_\alpha * \rho^\v)\bigg)(t,r)\,r^{n-1}\dd r \\
& \quad \leq \int_0^\infty  \bigg(\frac12 \Big|\frac{m^{\v}_0}{\sqrt{\rho^\varepsilon_0}} \Big|^2 + \rho^{\v}_0 e(\rho^{\v}_0)
+ \frac{\kappa}{2} \rho^\v_0 (\Phi_\alpha * \rho^\v_0)\bigg)(r)\,r^{n-1}\dd r \qquad \mbox{for all $t\in[0,T]$},
\end{split} \\
\begin{split}
&\v^2 \int_0^\infty \big|(\sqrt{\rho^{\v}(t,r)})_r\big|^2\,r^{n-1}\dd r
+\v\int_0^T\int_{0}^\infty \big|\big((\rho^{\v}(s,r))^{\frac{\gamma}{2}}\big)_r\big|^2\,r^{n-1}\dd r\dd s\\
&\quad\leq C(M,E_0,T) \qquad \mbox{for all $t\in[0,T]$},
\end{split}\\[2mm]
\begin{split}
\int_0^T\int_d^D  \big(\rho^{\v}|u^{\v}|^3+(\rho^{\v})^{\gamma+\theta}+(\rho^{\v})^{\gamma+1}\big)(t,r)\, r^{n-1}\dd r \dd t
\leq C(d,D,M,E_0,T) \qquad \mbox{for all $t\in[0,T]$},
\end{split}
\end{align*}
where $[d,D]\Subset (0,\infty)$. Let $(\eta, q)$ be a weak entropy pair defined in \eqref{5.10}
for any smooth compact supported function $\psi(s)$ on $\mathbb{R}$. Then, for $\v\in (0, \v_0]$,
\begin{align*}
	\partial_t\eta(\rho^\v,m^\v)+\partial_rq(\rho^\v,m^\v) \qquad   \mbox{is compact in $ H^{-1}_{\rm loc}(\mathbb{R}^2_+)$}.
\end{align*}
\end{corollary}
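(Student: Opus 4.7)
The plan is to assemble this corollary by taking the sub-sequential limit $b\to\infty$ in the collection of uniform estimates proved for the free-boundary approximate solutions $(\rho^{\v,b},m^{\v,b})$, and then identifying the limit with the $(\rho^\v,m^\v)$ extracted in Lemmas \ref{lem5.2}--\ref{lem5.6}. The pointwise non-negativity $\rho^\v\ge 0$ and the vacuum identities $u^\v(t,r)=0$ and $(m^\v/\sqrt{\rho^\v})(t,r)=0$ on $\{\rho^\v=0\}$ are precisely the content of Lemma \ref{lem5.6}; the mass identity is \eqref{3.2} of Lemma \ref{mass}; and the $H^{-1}_{\rm loc}$-compactness for any weak entropy pair follows by passing to the limit in the compactness bound derived for $(\rho^{\v,b},m^{\v,b})$ in the preceding lemma, which was established with a constant independent of $b$.

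For the two energy bounds, I would pass to the limit in the identity of Lemma \ref{Energy}. The strong convergence in Lemma \ref{lem5.2} together with $\sqrt{\rho^{\v,b}}u^{\v,b}\to \sqrt{\rho^\v}u^\v$ in $L^2(0,T;L^2_{\rm loc})$ from Lemma \ref{lem5.6} lets me apply Fatou's lemma to the kinetic, internal, and boundary-viscous terms; the convergence $E_0^{\v,b}\to E_0^\v$ follows from the construction of the approximate initial data in Appendix B (Lemma \ref{appendix5}); and, crucially, the potential-energy term $\int \rho^\v(\Phi_\alpha*\rho^\v)r^{n-1}\dd r$ is handled by the global $L^1$-convergence \eqref{2.71} of Lemma \ref{lem5.1}. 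This yields both the bound $\int(\tfrac12\rho^\v|u^\v|^2+\rho^\v e(\rho^\v)-\tfrac{\kappa}{2}\rho^\v(\Phi_\alpha*\rho^\v))r^{n-1}\dd r\le C(M,E_0)$ (after absorbing the potential term using Corollary \ref{uniform}) and the inequality with the potential energy on the left-hand side controlled by the initial total energy. The BD-type estimate is inherited by weak lower semi-continuity: $(\sqrt{\rho^{\v,b}})_r$ and $((\rho^{\v,b})^{\gamma/2})_r$ converge weakly in $L^\infty(0,T;L^2_{\rm loc})$ and $L^2(0,T;L^2_{\rm loc})$ respectively to $(\sqrt{\rho^\v})_r$ and $((\rho^\v)^{\gamma/2})_r$ by the strong convergence of Lemma \ref{lem5.2} combined with the uniform bound of Lemmas \ref{oldBD}--\ref{newbd}. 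The higher integrability bound is inherited analogously from Lemma \ref{higher} by weak lower semi-continuity.

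The delicate point, which is essentially the one non-routine step, is the convergence of the nonlocal momentum forcing $\kappa\rho^{\v,b}\nabla(\Phi_\alpha*\rho^{\v,b})$ when one verifies Definition \ref{CNSREs}(c) for the limit; the other terms were treated in the preceding lemma using the Aubin--Lions argument of Lemma \ref{lem5.5}. For this term one splits the test-function domain into an inner region $\{r\le D\}$ where \eqref{2.68} of Lemma \ref{lem5.1} gives locally uniform convergence of $(\Phi_\alpha*\rho^{\v,b})_r r^{n-1}$, combined with $\rho^{\v,b}\to\rho^\v$ in $C(0,T;L^q_{\rm loc})$, and an outer region where the mass conservation \eqref{3.2} together with the radial kernel estimates of Lemma \ref{potentially} and (for $\alpha\in(-1,0]$) the moment bound \eqref{logtimeint} yield a tail contribution vanishing as $D\to\infty$ uniformly in $b$. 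Once this is done for the momentum equation, every item in the corollary is a direct consequence of results already established in this section, and no further argument is needed.
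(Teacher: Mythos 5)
Your proposal is correct and takes essentially the same route as the paper, which offers no separate proof for this corollary but presents it as a pure summary (``to sum up'') of the preceding results: non-negativity and the vacuum identities from Lemma \ref{lem5.6}, mass conservation from Lemma \ref{mass}, the energy bounds from Lemmas \ref{Energy} and \ref{simple} combined with \eqref{2.71} of Lemma \ref{lem5.1} and the Fatou lemma, the BD-type and higher-integrability bounds by lower semi-continuity from Lemmas \ref{oldBD}, \ref{newbd}, and \ref{higher}, and the $H^{-1}_{\rm loc}$-compactness from the lemma immediately preceding the corollary, whose estimates are uniform in $b$. The only cosmetic omission is that the $\rho^\v|u^\v|^3+(\rho^\v)^{\gamma+\theta}$ portion of the higher-integrability line rests on the companion ``Higher Integrability of the Velocity'' lemma rather than on Lemma \ref{higher} alone, but the limiting mechanism you describe applies verbatim to it.
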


\smallskip
As in \cite{Chen2021}, similar to the compensated compactness argument in \cite{Chen_2010},
we can derive that there exists a vector function $(\rho,m)(t,r)$ such that
\begin{align*}
(\rho^\v,m^{\v})\longrightarrow (\rho,m) \qquad  \mbox{{\it a.e.} $\,(t,r)\in \mathbb{R}^2_+$\, as $\v\rightarrow0^+$ (up to a sub-sequence)}.
\end{align*}
Notice that
$m(t,r)=0$ {\it a.e.} on $\{(t,r) \,:\, \rho(t,r)=0\}$.
We can define the  limit velocity $u(t,r)$ by setting $u(t,r):=\frac{m(t,r)}{\rho(t,r)}$ {\it a.e.} on $\{(t,r)\,:\,\rho(t,r)\neq0\}$
and $u(t,r):=0$ {\it a.e.} on $\{(t,r)\,: \rho(t,r)=0\,\,\, \mbox{or $r=0$}\}$.
Then
\begin{align}
m(t,r)=\rho(t,r) u(t,r).\nonumber
\end{align}

We can also define $(\frac{m}{\sqrt{\rho}})(t,r):=\sqrt{\rho(t,r)} u(t,r)$, which is $0$ {\it a.e.} on $\{(t,r) \ :\ \rho(t,r)=0\}$.
Moreover, we obtain that, as $\v\rightarrow0^+$,
\begin{equation*}
\frac{m^{\v}}{\sqrt{\rho^{\v}}}\equiv\sqrt{\rho^{\v}} u^{\v}\longrightarrow \frac{m}{\sqrt{\rho}}\equiv\sqrt{\rho} u \qquad
\mbox{strongly in $L^2([0,T]\times[d,D], r^{n-1}\dd r \dd t)$}
\end{equation*}
for any given $T$ and $[d,D]\Subset (0,\infty)$, and
\begin{align}\label{3.18}
(\rho^\v,m^{\v})\longrightarrow (\rho,m) \quad \mbox{in $L^{p_1}_{\rm loc}(\mathbb{R}^2_+)\times L^{p_2}_{\rm loc}(\mathbb{R}^2_+)$}
\end{align}
for $p_1\in[1,\gamma+1)$ and $p_2\in[1,\frac{3(\gamma+1)}{\gamma+3})$. \\\

Integrating \eqref{3.1} over $[0,T]$, letting $\varepsilon \to 0^+$ (up to a sub-sequence),
and using \eqref{3.18}, we obtain

\begin{align}\label{3.19}
	\int^T_0 \int_{\mathbb{R}^n} \rho(t,\boldsymbol{x}) \zeta(t,\boldsymbol{x})\, \dd\boldsymbol{x} \dd t
	= \int^T_0 \int_{\mathbb{R}^n} \rho_0(\boldsymbol{x}) \zeta(0,\boldsymbol{x})\, \dd\boldsymbol{x} \dd t
	+ \int^T_0 \int_{0}^{t} \int_{\mathbb{R}^n} \big(\rho \zeta_t + \M\cdot\nabla\zeta\big)\,\dd\boldsymbol{x}\dd s \dd t
\end{align}
for any $\zeta \in C^1_c([0,T] \times \mathbb{R}^n)$. Thus,
in the same way as for Lemma \ref{mass}, using \eqref{3.19}, we can obtain
\begin{align*}
\int^T_0 \int_{\mathbb{R}^n} \rho(t,\boldsymbol{x}) \zeta(t,\boldsymbol{x})\, \dd\boldsymbol{x} \dd t = MT.
\end{align*}
Then, in the same way as in the proof of Lemma \ref{lem5.1}, we have

\begin{lemma}
Under the assumptions of {\rm Theorem \ref{existence}}, the following statements hold
as $\v \rightarrow 0$ {\rm(}up to a sub-sequence{\rm):}
\begin{align*}
	&\Phi_\alpha*\rho^{\v} \longrightharpoonup \Phi_\alpha*\rho \quad
	\mbox{weak-$\ast$ in $L^\infty(0,T; W^{1,\frac{2n}{\alpha +2}}_{\rm loc}(\mathbb{R}^n))$ and weakly in $L^2(0,T; W^{1,\frac{2n}{\alpha +2}}_{\rm loc}(\mathbb{R}^n))$},\\[4mm]
	&(\Phi_\alpha*\rho^{\v})_r(t,r)r^{n-1} \longrightarrow (\Phi_\alpha*\rho)_r(t,r)r^{n-1} \quad
	\mbox{ in $C_{\rm loc}([0,T]\times[0,\infty))$}, \\[4mm]
    &(\Phi_\alpha*\rho^{\v})(t,r)r^{n-1} \longrightarrow (\Phi_\alpha*\rho)(t,r)r^{n-1} \quad
	\mbox{ in $C_{\rm loc}([0,T]\times[0,\infty))$}, \\[3mm]
& \int_0^\infty \big|\rho^{\v} (\Phi_\alpha*\rho^{\v})-\rho (\Phi_\alpha*\rho)\big|(t,r)\, r^{n-1}
  \dd r \longrightarrow 0.
\end{align*}
 In addition, for $\alpha \in (0,n-1)$,
    \begin{align*}
	\|\Phi_\alpha*\rho(t)\|_{L^{\frac{2n}{\alpha}}(\mathbb{R}^n)}+\|\nabla  \Phi_\alpha*\rho(t)\|_{L^\frac{2n}{\alpha + 2}(\mathbb{R}^n)}\leq C(M,E_0)\,\qquad\mbox{for $t\geq0$};
    \end{align*}
and, for $\alpha \in (-1,0]$, for any $K \Subset \mathbb{R}^n$,
    \begin{align*}
	\|\Phi_\alpha*\rho(t)\|_{L^\infty(K)}+\|\nabla \Phi_\alpha*\rho(t)\|_{L^\frac{2n}{\alpha + 2}(K)}\leq C(M,E_0,T,K)\qquad\,\mbox{for $t\geq0$}.
    \end{align*}
\end{lemma}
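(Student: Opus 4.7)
The plan is to transcribe the proof of Lemma \ref{lem5.1} verbatim, with the limit $b\to\infty$ replaced by $\varepsilon\to 0^+$. All the structural ingredients are already in place: the previous corollary gives $\Phi_\alpha*\rho^\varepsilon$ and $\nabla\Phi_\alpha*\rho^\varepsilon$ uniformly bounded in the appropriate spaces; mass is conserved in the limit via \eqref{3.19}; and we have strong local convergence $\rho^\varepsilon \to \rho$ in $L^{p_1}_{\rm loc}(\mathbb{R}^2_+)$ for $p_1 \in [1,\gamma+1)$. First I would invoke Banach--Alaoglu on the uniform bounds to extract weak-$\ast$/weak limits, and identify them with $\Phi_\alpha*\rho$ by testing against $\varphi \in C^\infty_{\rm c}([0,T]\times\mathbb{R}^n)$, shifting the convolution onto $\varphi$, passing to the limit in the strong $L^{p_1}_{\rm loc}$ topology for $\rho^\varepsilon$, and shifting back. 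The norm bounds \eqref{2.70}--\eqref{2.70-2} then transfer to the limit by weak lower semicontinuity.

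The crucial preparatory step is an analogue of Lemma \ref{Lq}, asserting $\rho^\varepsilon \to \rho$ in $C(0,T; L^q(\mathbb{R}^n))$ for all $q \in [1,\gamma)$. Here I would argue exactly as in Lemma \ref{Lq}: assuming by contradiction that mass escapes the window $[1/k,k]$ uniformly in $\varepsilon$, one extracts a time $t_\ast$ where the limit would violate conservation $\int_{\mathbb{R}^n}\rho(t_\ast,\boldsymbol{x})\dd\boldsymbol{x}=M$; this yields uniform-in-$\varepsilon$ tightness, which combined with the local $L^1$ convergence gives global $L^1$ convergence, and interpolation with the uniform $L^\infty_t L^\gamma_x$ bound upgrades it to $L^q$ for $q\in[1,\gamma)$. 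For $\alpha\in(-1,0]$ one additionally needs the uniform moment bound $\sup_{t\in[0,T]}\int\rho^\varepsilon k_\alpha(1+r^2)r^{n-1}\dd r \leq C(M,E_0,T)$, which is inherited from Lemma \ref{Energy} by Fatou.

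Next I would prove the locally uniform convergence of $(\Phi_\alpha*\rho^\varepsilon)_r\,r^{n-1}$ and $(\Phi_\alpha*\rho^\varepsilon)\,r^{n-1}$ by decomposing the radial representation
\[
r^{n-1}\int_0^\infty \omega(r,\eta)(\rho^\varepsilon-\rho)(t,\eta)\,\eta^{n-1}\dd\eta = I_1+I_2+I_3,
\]
corresponding to $\eta\in(0,\sigma)$, $(\sigma,\hat\sigma)$, and $(\hat\sigma,\infty)$. The kernel bounds in \eqref{continuity.8} combined with Hölder and the uniform $L^\gamma$ estimate handle $I_1$ (small as $\sigma\downarrow 0$), mass conservation handles $I_3$ (small as $\hat\sigma\uparrow\infty$), and $I_2$ tends to zero by the strong local $L^\gamma$ convergence of $\rho^\varepsilon$. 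The analogous argument for the undifferentiated kernel uses \eqref{2.81} when $\alpha\in(0,n-1)$, and for $\alpha\in(-1,0]$ it requires the truncation device with the mollified indicator $\mathds{1}_{B_{2\hat\sigma}^c}*J_{\hat\sigma}$, together with the flux identity
\[
\frac{\dd}{\dd t}\int_0^\infty \rho^\varepsilon(\mathds{1}_{B_{2\hat\sigma}^c}*J_{\hat\sigma})k_\alpha(1+r^2)\,r^{n-1}\dd r
\]
controlled using $\int\rho^\varepsilon|u^\varepsilon|\,r^{n-1}\dd r \leq C$ from Lemma \ref{Energy}, exactly as in Step~4 of Lemma \ref{lem5.1}. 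Finally, the $L^1$-convergence \eqref{2.71} in the limit is obtained by splitting $(0,\hat\sigma)$ and $(\hat\sigma,\infty)$: on the bounded piece the pointwise convergence combined with the global $L^q$ convergence of $\rho^\varepsilon$ gives the result, while on the tail the HLS-type bound \eqref{2.82} (for $\alpha>0$) or the moment truncation argument leading to \eqref{bigsig} (for $\alpha\in(-1,0]$) makes the contribution arbitrarily small uniformly in $\varepsilon$.

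The main obstacle is the logarithmic and negative-$\alpha$ regime, where the kernel has neither a sign nor decay at infinity; the tail term $I_3$ and the corresponding contribution to \eqref{2.71} cannot be killed by kernel smallness alone and instead require the uniform moment bound to be stable as $\varepsilon\to 0$. Since the approximate CNSRE solutions $\rho^\varepsilon$ were constructed as $b\to\infty$ limits that inherit the moment estimate from Lemma \ref{Energy}, this stability holds, and the rest of the argument proceeds as a direct transcription of Lemma \ref{lem5.1}.
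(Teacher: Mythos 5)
Your proposal is correct and follows essentially the same route as the paper, which itself dispatches this lemma by first establishing mass conservation in the limit via \eqref{3.19} and then stating that the argument is "in the same way as in the proof of Lemma \ref{lem5.1}"; your transcription correctly identifies all the ingredients that make this transfer work (the $\varepsilon$-analogue of Lemma \ref{Lq}, the uniform moment bound from Lemma \ref{simple}, and the three-piece kernel decomposition).
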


Then, similar to \cite{Chen2021}, we can conclude that
$(\rho,m)(t,r)$ define a global finite-energy
solution of CEREs \eqref{0.0}
in the sense of Definition \ref{definition}, which leads to Theorem \ref{existence}.

\section{Nonlinear Stability of Steady States for the Compressible Euler-Riesz Equations}\label{sec:stab}

In this section, we consider the attractive case $(\kappa = 1)$, $\alpha \in (0,n-1)$,
and $\gamma > \frac{n + \alpha}{n}$, and
assume that the initial data $(\rho_0,\mathcal{M}_0)(\boldsymbol{x})$
have finite initial mass and energy.
For $0<M<\infty$, we recall the definition for a set of functions $\varrho${\rm:}
\begin{align*}
X_M := \bigg \{ \varrho \in L^1_+(\mathbb{R}^n) \cap L^\frac{n + \alpha}{n}(\mathbb{R}^n) \,\, : \, \int_{\mathbb{R}^n} \varrho(\boldsymbol{x})\,\dd \boldsymbol{x} = M \bigg \}.
\end{align*}
For $\varrho\in X_M,$ we recall the definition of the energy functional $\mathcal{G}$:
\begin{align*}
\mathcal{G}(\varrho):=\int_{\mathbb{R}^n} \Big((\varrho e(\varrho))(\boldsymbol{x})+\frac{1}{2}\varrho(\boldsymbol{x})(\Phi_\alpha\ast \varrho)(\boldsymbol{x})\Big)\,\dd\boldsymbol{x}.
\end{align*}
We first focus on
proving the existence, uniqueness, regularity, and other properties of global minimizers of the free energy $\mathcal{G}$ on the set $X_M$. We include some details for the sake of completeness and clarity of exposition.

We
notice that one can show that the global minimizers
of the functional $\mathcal{G}$ in $X_M$, if they exist, are actually  radially symmetric. This is a very classical result that goes back to \cite{Auchmuty_1971} by using symmetric decreasing rearrangement via Lemma \ref{rearrange}. The following result on the existence of global minimizers is proven in \cite[Theorem 5]{Carrillo_2018}.

\begin{theorem}\label{lmul}
For $\alpha\in(0,n)$  with $\gamma > \frac{n + \alpha}{n}$, there exists a global
minimizer $\tilde{\rho} \in X_M$ to $\mathcal{G}$.
Moreover, any global minimizer is radially decreasing, compactly supported, and bounded,
which is a stationary solution in the sense of {\rm Definition \ref{steady}}.
Furthermore, any global minimizer satisfies the Euler-Lagrange conditions{\rm :}
There exists a constant {\rm (}Lagrangian multiplier{\rm )} $\lambda$ such that
$$
\begin{cases}
(\tilde{\rho}e(\tilde{\rho}))_{\tilde{\rho}} +\Phi_\alpha\ast\tilde{\rho}
=\lambda &
\mbox{ for $\boldsymbol{x}\in \Gamma$},\\[1mm]
(\Phi_\alpha\ast\tilde{\rho})(\boldsymbol{x})\geq\lambda
& \mbox{ for $\boldsymbol{x}\in \R^n\setminus\Gamma$},
\end{cases}
$$
where
$\Gamma=\{\boldsymbol{x}\in \R^n\,:\, \tilde{\rho}(\boldsymbol{x})>0\}$.
\end{theorem}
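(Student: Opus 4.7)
The plan is to apply the direct method of the calculus of variations combined with Lions' concentration-compactness principle \cite{Lions_1984}, following the strategy developed in \cite{Carrillo_2018}, and then to extract the remaining structural properties from the Euler-Lagrange conditions. The starting observation is that $\mathcal{G}$ is bounded below on $X_M$: combining the HLS-type estimate \eqref{1.8} derived in the proof of Lemma \ref{Energy} with Young's inequality---applicable because the assumption $\gamma > \frac{n+\alpha}{n}$ forces the interpolation exponent $\frac{\alpha}{n(\gamma-1)}$ to be strictly less than $1$---one obtains
\[
\mathcal{G}(\varrho) \;\geq\; \tfrac{1}{2}\int_{\mathbb{R}^n} \varrho\, e(\varrho)\,\dd\boldsymbol{x} \;-\; C(M,n,\alpha,\gamma),
\]
so $I_M := \inf_{X_M}\mathcal{G} > -\infty$ and any minimizing sequence $\{\varrho_k\}$ is uniformly bounded in $L^1(\mathbb{R}^n)\cap L^\gamma(\mathbb{R}^n)$. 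By the Riesz rearrangement inequality, replacing each $\varrho_k$ by its symmetric decreasing rearrangement $\varrho_k^{\ast}$ preserves the internal energy (a layer-cake integral) and can only decrease the attractive interaction energy, so one may assume $\varrho_k$ is radial and decreasing. The pointwise bound $\varrho_k^{\ast}(r) \leq M\omega_n^{-1}r^{-n}$ combined with the uniform $L^\gamma$ bound then allows Helly's selection theorem to yield a subsequence converging pointwise a.e.\ to a radially decreasing limit $\tilde{\rho}$.

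The critical step is showing that no mass is lost, i.e., that $\tilde{\rho}\in X_M$. Here I would apply Lions' concentration-compactness dichotomy to the measures $\varrho_k\,\dd\boldsymbol{x}$: vanishing is ruled out by the inequality $I_M < 0$, obtained by testing $\mathcal{G}$ against a scaled profile $\lambda^n\varrho_0(\lambda\,\cdot)$ and choosing $\lambda$ so that the attractive potential energy dominates the internal energy; dichotomy is ruled out by proving the strict subadditivity
\[
I_M \;<\; I_{M_1} + I_{M - M_1} \qquad \mbox{for every } M_1\in(0,M),
\]
which follows by placing near-optimizers of masses $M_1$ and $M - M_1$ at a finite mutual distance and using that the cross interaction $-\frac{1}{\alpha}\iint |\boldsymbol{x}-\boldsymbol{y}|^{-\alpha}\varrho_1(\boldsymbol{x})\varrho_2(\boldsymbol{y})\,\dd\boldsymbol{x}\dd\boldsymbol{y}$ is strictly negative. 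Lower semicontinuity of the internal energy along $\varrho_k\to\tilde{\rho}$ is a consequence of convexity of $\varrho\mapsto \varrho e(\varrho)$ together with Fatou's lemma, while continuity of the potential energy term follows from the HLS inequality and strong $L^{(n+\alpha)/n}$ convergence of radial decreasing sequences (the tails being controlled by the uniform pointwise bound). Hence $\mathcal{G}(\tilde{\rho})\leq I_M$ and $\tilde{\rho}\in X_M$ is a global minimizer.

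Finally, the Euler-Lagrange conditions follow from a standard first-variation argument: for admissible perturbations $\tilde{\rho}+t(\varphi_1-\varphi_2)$ with $\varphi_1,\varphi_2\geq 0$, $\int\varphi_1\,\dd\boldsymbol{x} = \int\varphi_2\,\dd\boldsymbol{x}$, and $\mathrm{supp}\,\varphi_1\subset\{\tilde{\rho}>0\}$, computing $\left.\tfrac{d}{dt}\mathcal{G}(\tilde{\rho}+t(\varphi_1-\varphi_2))\right|_{t=0^+}\geq 0$ produces a Lagrange multiplier $\lambda$ for which the stated equality and inequality hold in the sense of distributions. Boundedness of $\tilde{\rho}$ is immediate from the pointwise identity $\frac{a_0\gamma}{\gamma-1}\tilde{\rho}^{\gamma-1} = \lambda - \Phi_\alpha\ast\tilde{\rho}$ on $\{\tilde{\rho}>0\}$, since $\Phi_\alpha\ast\tilde{\rho}\in L^\infty(\mathbb{R}^n)$ by HLS applied to $\tilde{\rho}\in L^1\cap L^{(n+\alpha)/n}$. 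Compact support follows from radial monotonicity combined with $\Phi_\alpha\ast\tilde{\rho}(\boldsymbol{x})\to 0$ as $|\boldsymbol{x}|\to\infty$: the Euler-Lagrange inequality on the complement of the support forces $\lambda\leq 0$, and the argument can be sharpened (as in \cite{Carrillo_2018,Carrillo_2019}) to give $\lambda < 0$ and thus a bounded support. The main obstacle I anticipate is the strict subadditivity used to exclude dichotomy, which requires a careful construction of competitor configurations that fully exploits the long-range attractive nature of $\Phi_\alpha$ to show that merging two disjoint mass clusters strictly lowers the total energy.
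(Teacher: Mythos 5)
Your overall strategy (concentration--compactness following \cite{Lions_1984} and \cite{Carrillo_2018}, lower bound on $\mathcal{G}$ via the HLS interpolation \eqref{1.8} with $\frac{\alpha}{n(\gamma-1)}<1$, rearrangement to radial decreasing sequences, first variation for the Euler--Lagrange conditions) is the same as the one the paper relies on, but the paper does not carry it out in full: it cites \cite[Theorem 5]{Carrillo_2018} for the complete statement and only sketches the existence step. Crucially, for the step you yourself flag as the main obstacle --- excluding dichotomy --- the paper does \emph{not} prove strict subadditivity by juxtaposing two mass clusters. It invokes Lions' Corollary II.1 (Corollary \ref{corollary}): because $\Phi_\alpha$ is exactly homogeneous, i.e.\ satisfies \eqref{Psi} with equality for $\beta=\alpha$, strict subadditivity $F_M<F_m+F_{M-m}$ is \emph{equivalent} to $g_M<0$ for all $M>0$, and the latter follows from a one-parameter scaling of a normalized characteristic function of a ball. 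Your direct ``two bumps at finite distance'' argument, as stated, does not close: for each $\delta>0$ you get $I_M\le I_{M_1}+I_{M-M_1}+2\delta-c_\delta$, where the gain $c_\delta$ from the negative cross-interaction depends on the supports of the $\delta$-near-optimizers, and nothing prevents $c_\delta\to 0$ as $\delta\to 0$. One must exploit the homogeneity (scaling sub-additivity of $M\mapsto I_M$) to make this strict, which is precisely what Lions' corollary packages.

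There is a second concrete error in your regularity discussion. You claim $\Phi_\alpha\ast\tilde{\rho}\in L^\infty(\mathbb{R}^n)$ ``by HLS applied to $\tilde{\rho}\in L^1\cap L^{(n+\alpha)/n}$.'' The HLS inequality never yields an $L^\infty$ bound, and the splitting argument near the singularity requires $\tilde{\rho}\in L^q$ with $q>\frac{n}{n-\alpha}$, whereas $\frac{n+\alpha}{n}<\frac{n}{n-\alpha}$ for every $\alpha\in(0,n)$. So the a priori integrability of a minimizer is not sufficient; boundedness is obtained in \cite{Carrillo_2018} by a bootstrap on the Euler--Lagrange identity $\frac{a_0\gamma}{\gamma-1}\tilde{\rho}^{\gamma-1}=(\lambda-\Phi_\alpha\ast\tilde{\rho})_+$, upgrading the integrability of $\tilde{\rho}$ and of $\Phi_\alpha\ast\tilde{\rho}$ iteratively until $L^\infty$ is reached. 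Your compact-support argument is essentially right once one knows $\lambda<0$ (which comes from pairing the Euler--Lagrange relation with $\tilde{\rho}$ and using $g_M<0$), but the boundedness step as written is a genuine gap.
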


We need some parts of the core of the existence result of global minimizers in order to derive our stability estimates for CEREs. We sketch partially the proof making emphasis only on the relevant elements. The existence part of the proof follows the classical concentration compactness argument introduced by Lions \cite{Lions_1984} in the following result:

\begin{theorem}[\text{\cite[Theorem II.1]{Lions_1984}}]\label{minimizer}
    Suppose that
    \begin{equation}\label{potential}
        \Psi \in L^p_{\rm w}(\mathbb{R}^n) \text{{\rm (}weak $L^p$ space{\rm )}}\, \text{ for } p \in (1,\infty), \text{ with } \Psi \geq 0 \text{ a.e.},
    \end{equation}
    and
    \begin{equation}\label{convex}
        \begin{cases}
        \Pi:[0,\infty) \to [0,\infty) \text{ strictly convex,} \\
        \lim_{t \to 0^+} \Pi(t)t^{-1} = 0, \\
        \lim_{t \to \infty} \Pi(t)t^{-q} = \infty,
        \end{cases}
    \end{equation}
    where $q = \frac{p + 1}{p}$. Define
    \begin{equation*}
    \mathcal{A}_M := \bigg \{ \varrho \in L^1_+(\mathbb{R}^n) \cap L^q(\mathbb{R}^n)
     \, :  \int_{\mathbb{R}^n} \varrho(\boldsymbol{x})\, \dd \boldsymbol{x} = M \bigg \},
    \end{equation*}
    and the energy functional as follows{\rm :}
    \begin{equation}\label{free}
    \mathcal{F}(\varrho)=\int_{\mathbb{R}^n}\Big(\Pi(\varrho(\boldsymbol{x})) - \frac{1}{2}\varrho(\boldsymbol{x})(\Psi \ast \varrho)(\boldsymbol{x})\Big)\,\dd\boldsymbol{x}.
\end{equation}
Define the minimum of $\mathcal{F}$ over $\mathcal{A}_M$ by
\begin{equation*}
    F_M := \inf_{\varrho \in \mathcal{A}_M} \mathcal{F}(\varrho).
\end{equation*}
Then, for any minimizing sequence $\{\rho^j\}
\subset \mathcal{A}_M$,
there exists $\boldsymbol{y}_j \in \mathbb{R}^n$ and a minimizer
$\tilde{\rho} \in \mathcal{A}_M$ to $\mathcal{F}$ {\rm (}i.e., $\mathcal{F}(\tilde{\rho}) = F_M${\rm )}
with $\{T^{\boldsymbol{y}_j} \rho^j\}$ compact in $L^1(\mathbb{R}^n) \cap L^q(\mathbb{R}^n)$ such that
$T^{\boldsymbol{y}_j} \rho^j \to \tilde{\rho}$ in $L^1(\mathbb{R}^n) \cap L^q(\mathbb{R}^n)$ if and only if
\begin{equation}\label{inequality}
    F_M < F_m + F_{M - m} \qquad \text{ for all $m \in (0,M)$}.
\end{equation}
\end{theorem}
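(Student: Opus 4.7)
The plan is to apply Lions' concentration-compactness principle to a minimizing sequence $\{\rho^j\} \subset \mathcal{A}_M$. First I would check that $\mathcal{F}$ is bounded below on $\mathcal{A}_M$: since $\Psi \in L^p_{\rm w}(\mathbb{R}^n)$, the Hardy-Littlewood-Sobolev inequality yields $|\int \varrho (\Psi \ast \varrho)\,\dd \boldsymbol{x}| \leq C \|\varrho\|_{L^1}^{2-2/q} \|\varrho\|_{L^q}^{2/q}$, and the super-$q$ growth of $\Pi$ absorbs this term, giving coercivity of $\mathcal{F}$ on $\mathcal{A}_M$ and boundedness of any minimizing sequence in $L^1 \cap L^q$. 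Then I would renormalize $\mu^j := \rho^j / M$ into probability measures and invoke the concentration-compactness lemma for measures, which provides a trichotomy: up to a subsequence, $\{\rho^j\}$ either (i) vanishes uniformly on balls of any fixed radius, (ii) splits into at least two escaping pieces of masses $m$ and $M-m$ with $m \in (0,M)$, or (iii) concentrates, meaning there exist translations $\boldsymbol{y}_j \in \mathbb{R}^n$ such that $T^{\boldsymbol{y}_j}\rho^j$ is tight.

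Next I would rule out alternatives (i) and (ii). Vanishing is excluded because, under hypothesis \eqref{potential}, a standard estimate shows that uniform vanishing on balls forces $\|\Psi \ast \rho^j\|_{L^\infty} \to 0$ (by splitting the convolution at an appropriate radius and using the decay of $\Psi$ in weak $L^p$), so the interaction term vanishes and $\liminf \mathcal{F}(\rho^j) \geq \liminf \int \Pi(\rho^j)\,\dd \boldsymbol{x} \geq 0$; on the other hand, scaling $\varrho_\lambda(\boldsymbol{x}) = \lambda^n \tilde{\varrho}(\lambda \boldsymbol{x})$ of any fixed density in $\mathcal{A}_M$ shows $F_M < 0$, a contradiction. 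Dichotomy is excluded directly by hypothesis \eqref{inequality}: if $\rho^j$ splits essentially as $\rho^j_1 + \rho^j_2$ with $\int \rho^j_1 \to m$, $\int \rho^j_2 \to M-m$, and ${\rm dist}(\supp \rho^j_1, \supp \rho^j_2) \to \infty$, then the cross interaction $\int \rho^j_1(\Psi \ast \rho^j_2)\,\dd \boldsymbol{x}$ tends to zero (again using $\Psi \in L^p_{\rm w}$), so $F_M = \lim \mathcal{F}(\rho^j) \geq F_m + F_{M-m}$, contradicting \eqref{inequality}. Only compactness remains: after translation, $T^{\boldsymbol{y}_j}\rho^j$ is tight and converges weakly in $L^1 \cap L^q$ to a limit $\tilde{\rho} \in \mathcal{A}_M$; strong convergence in $L^1$ follows from tightness and the weak limit preserving mass, while strong convergence in $L^q$ follows from convexity of $\Pi$ via a refined Brezis-Lieb/Vitali argument using the strict super-$q$ growth. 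The lower semicontinuity of $\int \Pi(\varrho)\,\dd \boldsymbol{x}$ together with the continuity of the interaction term under this mode of convergence then gives $\mathcal{F}(\tilde{\rho}) \leq F_M$, so $\tilde{\rho}$ is a minimizer.

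For the reverse implication, I would show that whenever the strict subadditivity \eqref{inequality} fails, i.e., $F_M = F_m + F_{M-m}$ for some $m \in (0,M)$, one can construct a minimizing sequence that is not relatively compact modulo translation: take near-minimizers $\varrho_m^j$ and $\varrho_{M-m}^j$ of $\mathcal{F}$ on $\mathcal{A}_m$ and $\mathcal{A}_{M-m}$ with disjoint supports separated by a distance $d_j \to \infty$, set $\rho^j := \varrho_m^j + T^{\boldsymbol{y}_j}\varrho_{M-m}^j$, and observe that the cross interaction vanishes in the limit, giving $\mathcal{F}(\rho^j) \to F_m + F_{M-m} = F_M$; but any translate of $\rho^j$ cannot converge in $L^1$, contradicting compactness. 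The main obstacle throughout is handling the nonlocal interaction term $\int \varrho (\Psi \ast \varrho)\,\dd \boldsymbol{x}$ under the weak convergence produced by the concentration-compactness lemma: establishing that cross-interaction terms vanish when supports escape to infinity, and that the convolution behaves continuously under the tight convergence obtained in the compact case, both rely delicately on the weak $L^p$ hypothesis on $\Psi$ and a careful splitting of the kernel into a local singular part (controlled by HLS and $L^q$ bounds) and a tail (controlled by mass conservation and the rearrangement to small values in weak $L^p$).
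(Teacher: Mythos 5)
This statement is not proved in the paper at all: it is quoted verbatim as an external result, namely Theorem II.1 of Lions' concentration-compactness paper \cite{Lions_1984}, and the authors only \emph{apply} it (together with Corollary II.1) to the specific functional $\mathcal{G}$ with $\Psi=-\Phi_\alpha$. So there is no in-paper proof to compare against; what you have written is a reconstruction of Lions' argument. Your architecture is the correct one (coercivity via HLS, the vanishing/dichotomy/compactness trichotomy, exclusion of vanishing and dichotomy, strong convergence via strict convexity of $\Pi$, and the gluing construction for the converse), and at sketch level most steps are the standard ones.

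There is, however, one genuine gap. To exclude vanishing you need $F_M<0$, and you claim this follows ``by scaling $\varrho_\lambda(\boldsymbol{x})=\lambda^n\tilde\varrho(\lambda\boldsymbol{x})$'' for an arbitrary $\Psi\in L^p_{\rm w}$ with $\Psi\ge 0$. That scaling argument only works when $\Psi$ has a definite homogeneity (as it does for the Riesz kernel in the paper's application, which is exactly how the authors verify $g_M<0$ there); for a general nonnegative weak-$L^p$ kernel the interaction term under dilation, $\iint\varrho(u)\varrho(v)\Psi(\lambda^{-1}(u-v))\,\dd u\,\dd v$, has no controlled sign or rate, and concentrating ($\lambda\to\infty$) is penalized by the super-$q$ growth of $\Pi$ while spreading ($\lambda\to 0$) sends both terms to $0$, proving only $F_M\le 0$. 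In Lions' proof the strict negativity is instead \emph{derived from the hypothesis}: one first establishes the wide subadditivity $F_M\le F_m+F_{M-m}$ (by the same gluing construction you use for the converse) together with $F_M\le 0$ for every $M$, and then observes that $F_M=0$ would force $F_m=F_{M-m}=0$ and hence violate the strict inequality \eqref{inequality}. You should replace the scaling step by this argument. A smaller imprecision: tightness of $T^{\boldsymbol{y}_j}\rho^j$ plus weak $L^1$ convergence and conservation of mass do not by themselves give strong $L^1$ convergence (oscillations are not excluded); the strong convergence in both $L^1$ and $L^q$ comes from the chain $\int\Pi(\rho^j)\to\int\Pi(\tilde\rho)$ (forced by convergence of the interaction term and weak lower semicontinuity) combined with strict convexity of $\Pi$, which yields convergence in measure, and then Vitali with the uniform integrability supplied by $\Pi(t)t^{-q}\to\infty$ and tightness. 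With these two repairs your outline matches Lions' proof.
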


We can now use this theorem in our problem since $- \Phi_\alpha \in L^\frac{n}{\alpha}_{\rm w}(\mathbb{R}^n)$
and satisfies \eqref{potential}.
Thus, we take $p = \frac{n}{\alpha}$, $q = \frac{p + 1}{p} = \frac{n + \alpha}{n}$, and the internal energy function $\Pi(\tau):= \tau e(\tau) = \frac{a_0}{\gamma - 1} \tau^\gamma$ that satisfies \eqref{convex}.
So it suffices to prove \eqref{inequality} for applying Theorem \ref{minimizer}.
In order to verify condition \eqref{inequality}, we also employ a corollary from \cite{Lions_1984}.

\begin{corollary}[\text{\cite[Corollary II.1]{Lions_1984}}]\label{corollary}
Suppose that \eqref{potential} and \eqref{convex} are satisfied.
If, for some $\beta \in (1,n)$,
 \begin{equation}\label{Psi}
   \Psi(t \xi) \geq t^{-\beta} \Psi(\xi)
   \qquad \mbox{for all $t \geq 1$ and $\xi \in \mathbb{R}^n$ a.e.},
\end{equation}
then \eqref{inequality} is satisfied if and only if $F_M < 0$.
\end{corollary}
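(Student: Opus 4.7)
The strategy is Lions' classical concentration-compactness scaling argument exploiting the homogeneity \eqref{Psi} to establish strict concavity of the mass dependence of $F$. I treat both implications in turn.

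The easy direction $(\Rightarrow)$: a dispersion argument (take $\varrho = M |B_R|^{-1} \chi_{B_R}$ and let $R \to \infty$) shows $F_m \leq 0$ for every $m > 0$, because both the internal-energy and potential-energy terms vanish as the support spreads out. Hence $F_m + F_{M-m} \leq 0$ for every $m \in (0, M)$, and under the hypothesis $F_M < F_m + F_{M-m}$ we immediately get $F_M < 0$.

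For the hard direction $(\Leftarrow)$, assume $F_M < 0$. The key is the scaling inequality $F_{\theta m} < \theta F_m$ valid for every $\theta > 1$ and every $m > 0$ with $F_m < 0$. Given such an $m$, take a near-minimizer $\varrho \in \mathcal{A}_m$ with $\mathcal{F}(\varrho) \leq F_m/2 < 0$, and define the spatial dilation $\tilde{\varrho}(\boldsymbol{x}) := \varrho(\theta^{-1/n} \boldsymbol{x})$, which lies in $\mathcal{A}_{\theta m}$. A change of variables gives $\int \Pi(\tilde{\varrho}) = \theta \int \Pi(\varrho)$, while
\begin{align*}
\int \tilde{\varrho}\,(\Psi \ast \tilde{\varrho})(\boldsymbol{x})\, \dd \boldsymbol{x}
 = \theta^2 \int \int \Psi\big(\theta^{1/n}(\boldsymbol{u} - \boldsymbol{v})\big) \varrho(\boldsymbol{u}) \varrho(\boldsymbol{v})\, \dd \boldsymbol{u}\, \dd \boldsymbol{v}
 \geq \theta^{2 - \beta/n} \int \varrho\,(\Psi \ast \varrho),
\end{align*}
by \eqref{Psi} applied with $t = \theta^{1/n} \geq 1$. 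Since $\beta < n$, the exponent $2 - \beta/n > 1$, so $\theta^{2 - \beta/n} > \theta$ strictly for $\theta > 1$; meanwhile the negativity of $\mathcal{F}(\varrho)$ forces the uniform lower bound $\int \varrho(\Psi \ast \varrho) \geq -F_m > 0$. Combining these,
\begin{align*}
\mathcal{F}(\tilde{\varrho}) \leq \theta \mathcal{F}(\varrho) - \tfrac{\theta(\theta^{1-\beta/n} - 1)}{2} \int \varrho\,(\Psi \ast \varrho) < \theta F_m,
\end{align*}
so $F_{\theta m} < \theta F_m$, i.e., the map $m \mapsto F_m/m$ is strictly decreasing on the set where $F_m < 0$.

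To finish, I propagate negativity downward: for every $m \in (0, M)$, the analogous dilation with scale $\lambda = (m/M)^{1/n} < 1$ applied to a near-minimizer at mass $M$ (invoking \eqref{Psi} now with $t = \lambda^{-1} \geq 1$) produces a test function in $\mathcal{A}_m$ whose energy is strictly negative, showing $F_m < 0$. By the strict monotonicity of $F_m/m$ on $(0, M]$, I obtain $F_m/m > F_M/M$ and $F_{M-m}/(M-m) > F_M/M$, whence
\begin{align*}
F_m + F_{M-m} > m \cdot \tfrac{F_M}{M} + (M-m) \cdot \tfrac{F_M}{M} = F_M,
\end{align*}
which is exactly \eqref{inequality}. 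The main obstacle throughout is securing the uniform positivity of the interaction term $\int \varrho(\Psi \ast \varrho)$ along near-minimizers: this requires the negativity $F_m < 0$, and it is only through the strict scaling gap $\beta < n$ that both the upward scaling inequality and the downward propagation of negativity close up simultaneously.
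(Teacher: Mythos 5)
The paper does not prove this corollary at all --- it is quoted verbatim from Lions \cite{Lions_1984} --- so there is no in-paper argument to compare against; I am judging your proof on its own merits. Your $(\Rightarrow)$ direction and your upward scaling inequality $F_{\theta m}<\theta F_m$ (for $\theta>1$ when $F_m<0$) are essentially correct and are exactly Lions' mechanism: the dilation $\tilde\varrho(\boldsymbol{x})=\varrho(\theta^{-1/n}\boldsymbol{x})$ scales the entropy linearly while \eqref{Psi} with $t=\theta^{1/n}\ge 1$ forces the interaction to grow superlinearly, and negativity of $\mathcal F$ along a minimizing sequence gives the uniform lower bound $\int\varrho(\Psi*\varrho)\ge -F_m>0$. (A small imprecision: with a near-minimizer satisfying only $\mathcal F(\varrho)\le F_m/2$, the final step ``$<\theta F_m$'' fails for $\theta$ close to $1$, since $\theta\mathcal F(\varrho)$ may exceed $\theta F_m$ by up to $-\theta F_m/2$ while the gain term is only $O(\theta-1)$; you should take $\mathcal F(\varrho)\le F_m+\epsilon$ and let $\epsilon\to 0$.)

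The genuine gap is the step ``propagate negativity downward.'' To produce a competitor of mass $m<M$ from one of mass $M$ you must \emph{compress}: $\hat\varrho(\boldsymbol{x})=\varrho(\lambda^{-1}\boldsymbol{x})$ with $\lambda=(m/M)^{1/n}<1$, whose interaction term involves $\Psi(\lambda(\boldsymbol{u}-\boldsymbol{v}))$ with $\lambda<1$. Hypothesis \eqref{Psi} applied with $t=\lambda^{-1}\ge 1$ yields $\Psi(\lambda\boldsymbol{\zeta})\le\lambda^{-\beta}\Psi(\boldsymbol{\zeta})$ --- an \emph{upper} bound on the compressed interaction, which is the wrong direction: it cannot show $\mathcal F(\hat\varrho)<0$. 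Indeed $\Psi(\boldsymbol{\xi})=\mathds{1}_{\{|\boldsymbol{\xi}|\ge 1\}}|\boldsymbol{\xi}|^{-\beta}$ satisfies \eqref{potential} and \eqref{Psi}, yet compression can kill the interaction entirely. Since your concluding chain $F_m+F_{M-m}>\frac{m}{M}F_M+\frac{M-m}{M}F_M$ needs the \emph{strict} scaling inequality at both masses $m$ and $M-m$, hence $F_m<0$ and $F_{M-m}<0$, the proof does not close. The standard repair avoids proving $F_m<0$ for all $m$: note that $\Psi\ge 0$ gives the non-strict inequality $F_{\theta m}\le\theta F_m$ for every $\theta\ge 1$ and every $m$ (the correction term in your display is always nonnegative), and that the dispersion argument gives $F_m\le 0$ for all $m$. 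Then, for $m\ge M/2$ say: if $F_m<0$, write $F_M<\frac{M}{m}F_m=F_m+\frac{M-m}{m}F_m$ and use $\frac{M-m}{m}F_m\le F_{M-m}$ (non-strict scaling from base mass $M-m$); if $F_m=0$, either $F_{M-m}<0$ and then $F_M<\frac{M}{M-m}F_{M-m}\le F_{M-m}=F_m+F_{M-m}$, or $F_{M-m}=0$ and \eqref{inequality} reduces to the hypothesis $F_M<0$.
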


Denoting by
\begin{equation*}
g_M = \inf_{\varrho \in X_M} \mathcal{G}(\varrho),
\end{equation*}
the result in \cite[Theorem 5]{Carrillo_2018} shows that $g_M<0, \text{ for all } M>0.$ This is obtained via a simple scaling argument on a suitably normalized characteristic of a ball by using the homogeneities of the entropy and the interaction parts of functional $\mathcal{G}$ to express this as a function of the radius of the ball with competing homogeneities. This shows \eqref{inequality} for the free energy functional $\mathcal{G}$.

Clearly, $\Phi_\alpha$ satisfies \eqref{Psi}. In fact, with an equality for $\beta = \alpha$ and for all $\xi \in \mathbb{R}^n\setminus \{0\}$ that is for $\xi \in \mathbb{R}^n$ {\it a.e.}, since $g_M < 0$, Corollary \ref{corollary} implies that
there exists a minimizer $\tilde{\rho} \in X_M$ to $\mathcal{G}$ over $X_M$. The rest of the properties of the global minimizers of $\mathcal{G}$ over $X_M$ are referred to \cite{Carrillo_2018}. This finishes the proof of Theorem \ref{lmul}.

We now prove the existence of a radial minimizer of functional \eqref{free} over $\mathcal{A}_M$
under conditions \eqref{potential} and \eqref{convex}.

\begin{lemma}
Suppose that \eqref{potential}--\eqref{convex} are satisfied, with $\Psi$ radially symmetric and radially decreasing.
Then there exists a radial minimizer $\tilde{\rho} \in \mathcal{A}_M$ of $\mathcal{F}$ over $\mathcal{A}_M$.
Moreover, any minimizer of $\mathcal{F}$ over $\mathcal{A}_M$ is the translation of a radially symmetric and decreasing minimizer.
\end{lemma}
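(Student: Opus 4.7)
The plan is to combine the existence result already quoted from Theorem \ref{minimizer} with Schwarz symmetrization to produce a radial minimizer, and then use the equality case of the Riesz rearrangement inequality to conclude that every minimizer is a translate of such a radial minimizer. First, I would invoke Theorem \ref{minimizer} together with the verification of the strict subadditivity \eqref{inequality} (which, for the concrete functional $\mathcal{G}$ over $X_M$, follows from $g_M<0$ via Corollary \ref{corollary}, but in this abstract version is assumed through the hypothesis that a minimizer exists — if not, I would run the same scaling argument by testing on suitably normalized characteristic functions of balls, exploiting the competing homogeneities of $\Pi$ and $\Psi$ under $\varrho\mapsto t^n\varrho(t\,\cdot)$).

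Once a minimizer $\tilde{\rho}\in\mathcal{A}_M$ is in hand, the key step is to pass to its symmetric decreasing rearrangement $\tilde{\rho}^*$ and show that $\tilde{\rho}^*$ is still a minimizer. To this end I would verify: (i) $\tilde{\rho}^*\in\mathcal{A}_M$ because rearrangement preserves the $L^1$ norm and $L^q$ norm; (ii) the internal-energy part is invariant, since $\Pi$ is nondecreasing with $\Pi(0)=0$ and rearrangement preserves the distribution function, so $\int\Pi(\tilde{\rho}^*)\dd\boldsymbol{x}=\int\Pi(\tilde{\rho})\dd\boldsymbol{x}$; (iii) because $\Psi$ is radially symmetric and radially decreasing, $\Psi=\Psi^*$, and the Riesz rearrangement inequality (as quoted in Lemma \ref{rearrange}) yields
\begin{equation*}
\int_{\mathbb{R}^n}\!\int_{\mathbb{R}^n}\tilde{\rho}(\boldsymbol{x})\,\Psi(\boldsymbol{x}-\boldsymbol{y})\,\tilde{\rho}(\boldsymbol{y})\dd\boldsymbol{x}\dd\boldsymbol{y}
\le \int_{\mathbb{R}^n}\!\int_{\mathbb{R}^n}\tilde{\rho}^*(\boldsymbol{x})\,\Psi(\boldsymbol{x}-\boldsymbol{y})\,\tilde{\rho}^*(\boldsymbol{y})\dd\boldsymbol{x}\dd\boldsymbol{y}.
\end{equation*}
Combining these gives $\mathcal{F}(\tilde{\rho}^*)\le\mathcal{F}(\tilde{\rho})=F_M$, so equality holds throughout, proving that $\tilde{\rho}^*$ is a radial (symmetric and decreasing) minimizer.

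For the second assertion, let $\tilde{\rho}$ be an arbitrary minimizer; then the chain of inequalities above is, in fact, a chain of equalities. The main obstacle is the equality case of the Riesz rearrangement inequality: I plan to invoke Lieb's sharp characterization of equality in the Riesz rearrangement inequality, which, under the hypothesis that the kernel $\Psi$ is strictly radially decreasing on the support of $\tilde{\rho}^*\ast\tilde{\rho}^*$ (a property that will be in force whenever $\Psi$ is strictly monotone on $(0,\infty)$, as is the case for $\Psi=-\Phi_\alpha$ with $\alpha>0$ in the intended application), forces $\tilde{\rho}$ to be a translate of $\tilde{\rho}^*$. Writing $\tilde{\rho}=T^{\boldsymbol{y}}\tilde{\rho}^*$ for a suitable $\boldsymbol{y}\in\mathbb{R}^n$ then completes the proof.

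The delicate point — and the one that deserves careful treatment — is precisely this equality case: if $\Psi$ is allowed to be constant on some annulus, translations alone need not exhaust the equality set, and one should either strengthen the hypothesis to a strictly decreasing $\Psi$ or work on each radial level set of $\Psi$ separately. In the paper's applications, however, $\Psi=-\Phi_\alpha$ is strictly decreasing for $\alpha\in(0,n)$, so Lieb's theorem applies directly and the proof goes through as outlined.
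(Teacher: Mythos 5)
Your proof is correct and follows essentially the same route as the paper's: symmetrize a minimizer, use the invariance of the internal energy under rearrangement together with the Riesz rearrangement inequality (with $\Psi=\Psi^*$) to show that the rearrangement is still a minimizer, and then invoke the equality case to identify every minimizer as a translate of a radial one. Your caveat about the equality case is well-taken --- Lemma \ref{rearrange} requires the kernel to be \emph{strictly} symmetric-decreasing for equality to force a translation, a hypothesis the lemma's statement elides but which the paper's proof implicitly uses and which does hold for the intended kernel $\Psi=-\Phi_\alpha$ with $\alpha\in(0,n)$.
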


\begin{proof}
Suppose that $\tilde{\rho}$ is a minimizer of $\mathcal{F}$ over $\mathcal{A}_M$. Then
\[
\int_{\mathbb{R}^n} (\tilde{\rho}^* e(\tilde{\rho}^*))(\boldsymbol{x})\,\dd\boldsymbol{x} = \int_{\mathbb{R}^n} (\tilde{\rho} e(\tilde{\rho}))(\boldsymbol{x})\,\dd\boldsymbol{x}.
\]
where $\tilde{\rho}^*$ is the symmetric decreasing rearrangement of $\tilde{\rho}$ defined in \eqref{A.2a} in Appendix A.
\noindent
If $\Psi$ is spherically symmetric and decreasing in $r$ so that $\Psi^* = \Psi$, by Lemma \ref{rearrange}, we have
\begin{equation*}
- \int_{\mathbb{R}^n} \tilde{\rho}^*(\boldsymbol{x}) (\Psi * \tilde{\rho}^*)(\boldsymbol{x}) \dd \boldsymbol{x} \leq - \int_{\mathbb{R}^n} \tilde{\rho}(\boldsymbol{x}) (\Psi * \tilde{\rho})(\boldsymbol{x}) \dd \boldsymbol{x},
\end{equation*}
with the equality only if $\tilde{\rho} = \rho^*(\cdot - y)$ for some $y \in \mathbb{R}^n$.
Thus, $\mathcal{F}(\rho^*) \leq \mathcal{F}(\tilde{\rho})$.
Since $\tilde{\rho}$ is a minimizer of $\mathcal{F}$ over $\mathcal{A}_M$,
the equality in fact holds, $\tilde{\rho} := \rho^*$ is a radial minimizer,
and $\tilde{\rho}$ is the translation of a radial minimizer.
\end{proof}

We finally discuss further properties of the global minimizers and their uniqueness.
In Carrillo-Hoffmann-Mainini-Volzone \cite{Carrillo_2018}, they proved that,
for the Riesz potential ($\alpha \in [0,n)$) and in the case that $n \geq 1$ and $\frac{n + \alpha}{n} < \gamma < \gamma^*$
with
\begin{equation}\label{gamma*}
\gamma^* :=
\begin{cases}
    \frac{\alpha - (n - 2)}{\alpha - (n-1)} \quad & \text{for } \alpha \in (n-1,n),\\
    \infty & \text{for } \alpha \in [0,n-1],
\end{cases}
\end{equation}
all the global minimizers of $\mathcal{G}$ in $X_M$ are steady states of \eqref{0.0}.
Moreover, if $\frac{n + \alpha}{n} < \gamma \leq 2$, then any global minimizers $\varrho \in X_m$
are in the space $\varrho \in \mathcal{W}^{1,\infty}(\mathbb{R}^n)$.
Furthermore, for $n=1$, $\alpha \in (0,1)$, and $\gamma > \frac{n + \alpha}{n}$,
the global minimizers of $\mathcal{G}$ in $X_M$ and the steady states of \eqref{0.0} in $X_M$ are unique (up to translation). Similar results hold for $\alpha=0$ as proven in \cite{Carrillo_2015}, so this case is included in \eqref{gamma*}.

Calvez-Carrillo-Hoffmann \cite{Calvez_2021} proved that, for $n \geq 2$, $\alpha \in [n-2,n)$, and $\gamma > \frac{n + \alpha}{n}$,
the steady states of \eqref{0.0} in $X_M$ are unique (up to translation).
Thus, for $n \geq 2$, $\alpha \in [n-2,n)$, and $\frac{n + \alpha}{n} < \gamma < \gamma^*$,
there is a unique minimizer of $\mathcal{G}$ over $X_M$ (up to translation) that coincides with the unique steady state of \eqref{0.0} in $X_M$
(up to translation).
It was also proved that, for the critical case that $\alpha \in [n-2,n)$ and $\gamma = \frac{n + \alpha}{n}$,
there exists a unique (up to translation) minimizer of $\mathcal{G}$ over $X_{M_{\rm c}}$ and a steady state of \eqref{0.0} in $X_{M_{\rm c}}$
for some critical mass $M_{\rm c}$.

Chan-Gonz\'alez-Huang-Mainini-Volzone \cite{Chan_2020} proved that, for $n \geq 1$, $\alpha \in (0,1)$ for $n=1$,
and $\alpha \in [n-2,n)$ for $n \geq 2$, and $\frac{2n}{2n - \alpha} < \gamma < \frac{n + \alpha}{n}$,
there exist a unique radial steady state $\tilde{\rho} \in X_m \cap L^\infty(\mathbb{R}^n)$ to \eqref{0.0} in $X_m$ in the sense that
\begin{equation}\label{intstable}
e(\tilde{\rho})(\boldsymbol{x}) =\big(-\frac{\gamma - 1}{\gamma} (\Phi_\alpha* \tilde{\rho})(\boldsymbol{x}) - \mathcal{K}\big)_+
\end{equation}
is satisfied for {\it a.e.} $\boldsymbol{x} \in \mathbb{R}^n$ for some $\mathcal{K} \geq 0$.
Here, \eqref{intstable} is derived by dividing the steady form of \eqref{0.0}, in the attractive case ($\kappa = 1$),
by the density to obtain
\[
 \nabla e(\tilde{\rho})(\boldsymbol{x}) = - \frac{\gamma - 1}{\gamma} \nabla \mathcal{W}_\alpha(\boldsymbol{x}).
\]
Then, in the case that the density is regular enough, there must exist a constant $\mathcal{K}$ such that
\[
e(\tilde{\rho})(\boldsymbol{x}) = - \frac{\gamma - 1}{\gamma} (\Phi_\alpha* \tilde{\rho})(\boldsymbol{x}) - \mathcal{K}
= \big(- \frac{\gamma - 1}{\gamma} (\Phi_\alpha* \tilde{\rho})(\boldsymbol{x}) - \mathcal{K} \big)_+.
 \]
 Delgadino-Yan-Yao \cite{Delgadino_2020} considered non-local potentials of the form $W * \rho$, where $W: \mathbb{R}^n \setminus \{0\} \to \mathbb{R}$ under the following assumptions:
 \begin{enumerate}
     \item[(W1)] $W \in C^\infty(\mathbb{R}^n \setminus \{0\})$ is radially symmetric, $W$ is an attractive potential and $W'(r) > 0$ for $r>0$, where $r$ is the radial variable.
     \item[(W2)] $W$ is no more singular than the locally integrable potential $\Phi_\alpha$ at the origin, for some $\alpha < n$. That is, there exists some $C > 0$ such that $W'(r) \leq C r^{-(\alpha + 1)}$ for $r \in (0,1)$.
     \item[(W3)] There exists $C > 0$ such that $W'(r) \leq C$ for all $r \geq 1$.
     \item[(W4)] Either $W(r)$ is bounded for $r \geq 1$ or there exists some $C > 0$ such that, for all $a,b \geq 0$,
     \[
     W_+(a+b) \leq C(1 + W(1+a) + W(1+b)),
     \]
     where $W_+ := \max \{W,0\}$.
 \end{enumerate}
Under these conditions (W1)--(W4) and $\gamma \geq 2$, there is at most one steady state in $X_M \cap L^\infty(\mathbb{R}^n)$ up to translation. Note that the potential function $\Phi_\alpha$, for $\alpha \in [-1,n)$, satisfies conditions (W1)--(W4). Moreover, in \cite{Carrillo_2019}, they proved that, under the same conditions, the steady states with $W(1+|\boldsymbol{x}|)\tilde{\rho} \in L^1(\mathbb{R}^n)$ are radially decreasing.

 Thus, combining \text{\cite[Theorem 2]{Carrillo_2018}}, \text{\cite[Theorem 3]{Calvez_2021}}, \text{\cite[Proposition 5.16]{Chan_2020}}, \text{\cite[Theorem 1.1]{Delgadino_2020}}, and \text{\cite[Theorem 2.2]{Carrillo_2019}}, we obtain the following theorem.

\begin{theorem}[Existence and Uniqueness of Steady States and Global Minimizers]\label{uniqueness of steady states}
${}$
\begin{enumerate}
        \item[(i)] Suppose that $n \geq 1$, $M > 0$, $\gamma > \frac{n + \alpha}{n}$, and
        \begin{enumerate}
            \item[(a)] $\,\alpha \in (0,1)$ for $n=1$,
            \item[(b)] $\,\alpha \in [n-2,n)$ for $n \geq 2$.
        \end{enumerate}
Then the steady states of \eqref{0.0} in $X_M$, as defined in {\rm Definition \ref{steady}},
are unique {\rm (}up to translation{\rm )} and radially decreasing.
Moreover, if $\frac{n + \alpha}{n} < \gamma < \gamma^*$, as defined in \eqref{gamma*},
then the global minimizers of $\mathcal{G}$ in $X_M$ are steady states and radially decreasing, which implies the uniqueness of the global minimizers in this range.
\item[(ii)] Suppose that $n \geq 2$, $\alpha \in [n-2,n)$, and $\gamma = \frac{n + \alpha}{n}$.
Then there exists a unique minimizer of $\mathcal{G}$
that is a steady state of \eqref{0.0} in $X_{M_{\rm c}}$, which is radially decreasing.
\item[(iii)] Suppose that $n \geq 1$, $M > 0$, $\frac{2n}{2n - \alpha} < \gamma < \frac{n + \alpha}{n}$, and
        \begin{enumerate}
            \item[(a)] $\,\alpha \in (0,1)$ for $n=1$,
            \item[(b)] $\,\alpha \in [n-2,n)$ for $n \geq 2$.
        \end{enumerate}
Then there exists a unique radially decreasing steady state $\tilde{\rho} \in X_m \cap L^\infty(\mathbb{R}^n)$ to \eqref{0.0}
        in $X_m$ of \eqref{intstable}.
\item[(iv)] Suppose that $n \geq 1$, $M > 0$, $\gamma \geq 2$, and $\alpha \in [-1,n-2)$, then there exists a unique radially decreasing steady state $\tilde{\rho} \in X_m \cap L^\infty(\mathbb{R}^n)$ to \eqref{0.0} in $X_m$ of \eqref{intstable}.
\end{enumerate}
\label{exisunisteady}
\end{theorem}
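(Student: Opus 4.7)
The plan is to establish the four parts of Theorem \ref{uniqueness of steady states} by combining Theorem \ref{lmul} with uniqueness statements drawn from the cited literature, each of which handles a specific $(\alpha,\gamma,n)$ regime. The existence of a radial, radially decreasing, compactly supported, and bounded global minimizer of $\mathcal{G}$ on $X_M$ that is a steady state (in the sense of Definition \ref{steady}) is already contained in Theorem \ref{lmul} whenever $\gamma > \frac{n+\alpha}{n}$. What remains in each case is either to identify this minimizer with any steady state in $X_M$, to prove uniqueness at the critical mass, or to establish existence and uniqueness directly in the sub-critical or less-singular regimes where steady states need not be minimizers of $\mathcal{G}$.

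For part (i), I would first take the minimizer produced by Theorem \ref{lmul} as a steady state, and then appeal to \cite[Theorem 3]{Calvez_2021} for $n \geq 2$ and $\alpha \in [n-2,n)$, respectively \cite[Theorem 2]{Carrillo_2018} for $n=1$ and $\alpha \in (0,1)$, to obtain that any two steady states in $X_M$ differ by a translation. Radial monotonicity of any such steady state is then delivered by \cite[Theorem 2.2]{Carrillo_2019} once one verifies that $\Phi_\alpha$ satisfies the structural conditions (W1)--(W4) required there and that $W(1+|\boldsymbol{x}|)\tilde{\rho}\in L^1(\mathbb{R}^n)$, both of which are immediate from the pointwise form of the Riesz kernel and the compact support of the minimizer. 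Within the subrange $\frac{n+\alpha}{n}<\gamma<\gamma^*$, \cite[Theorem 2]{Carrillo_2018} furthermore asserts that every global minimizer of $\mathcal{G}$ in $X_M$ is a steady state, so uniqueness of steady states passes to uniqueness of minimizers modulo translation.

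Part (ii) concerns the critical exponent $\gamma=\frac{n+\alpha}{n}$ and is a direct citation of the critical-mass theorem in \cite[Theorem 2]{Carrillo_2018}, combined once again with Lemma \ref{rearrange} for radial symmetry. Parts (iii) and (iv) fall outside the scope of Theorem \ref{lmul}, since in those regimes the energy $\mathcal{G}$ is not bounded below on $X_M$; I plan to handle them by invoking the direct existence-and-uniqueness results \cite[Proposition 5.16]{Chan_2020} and \cite[Theorem 1.1]{Delgadino_2020}, respectively, together with the radial-symmetry consequence of \cite[Theorem 2.2]{Carrillo_2019}. For (iv), the verification of conditions (W1)--(W4) on $\Phi_\alpha$ with $\alpha \in [-1,n-2)$ is the only substantive check: (W2) uses the explicit estimate $|\Phi_\alpha'(r)|=r^{-(\alpha+1)}$ at the origin, (W3) the decay at infinity, and (W4) the bounds \eqref{logtingaling} and \eqref{minus} for $\alpha\in[-1,0]$.

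The main obstacle I anticipate is that different references work with different formulations of the stationary equation: \cite{Chan_2020} and \cite{Delgadino_2020} phrase uniqueness in terms of the integral Euler-Lagrange identity \eqref{intstable}, whereas Definition \ref{steady} only demands a distributional version of $\nabla p(\tilde{\rho})+\kappa\tilde{\rho}\nabla\mathcal{\tilde W}_\alpha=\boldsymbol{0}$. To bridge this gap I would insert a short regularity lemma showing that, under the local integrability $\nabla\mathcal{\tilde W}_\alpha\in L^1_{\rm loc}(\mathbb{R}^n)$ together with $\tilde{\rho}^\gamma\in W^{1,2}_{\rm loc}(\mathbb{R}^n)$, the distributional equation forces a single Lagrange constant $\mathcal{K}\geq 0$ and yields the identity \eqref{intstable} on all of $\mathbb{R}^n$. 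With that bridge in place, the uniqueness theorems of \cite{Chan_2020} and \cite{Delgadino_2020} apply to our steady states verbatim, completing the assembly.
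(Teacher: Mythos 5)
Your proposal matches the paper's own treatment: Theorem \ref{uniqueness of steady states} is proved there simply by combining Theorem \ref{lmul} with the cited results of \cite{Carrillo_2018}, \cite{Calvez_2021}, \cite{Chan_2020}, \cite{Delgadino_2020}, and \cite{Carrillo_2019}, exactly as you propose, including the observation that $\Phi_\alpha$ satisfies (W1)--(W4) for $\alpha\in[-1,n)$. Your extra ``bridge lemma'' between the distributional formulation of Definition \ref{steady} and the Euler--Lagrange identity \eqref{intstable} is a reasonable refinement of a step the paper handles only informally (by dividing the stationary equation by the density), but it does not change the argument.
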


In order to obtain the stability of the unique stationary solution of CEREs in the previous theorem, we need to introduce some new quantities measuring the distance between a general solution and a stationary solution.
Let $\tilde{\rho}$ be a minimizer of $\mathcal{G}$ on $X_M$, and let $\lambda$
be the constant in Theorem \ref{lmul}. For $\varrho \in X_M,$ we define the distance:
\begin{align*}
d(\varrho,\tilde{\rho})&:=\int_{\mathbb{R}^n}\big((\varrho e(\varrho)-\tilde{\rho}e(\tilde{\rho}))+(\Phi_\alpha\ast\tilde{\rho})(\varrho-\tilde{\rho})\big)\,\dd \boldsymbol{x}\\
&=\int_{\mathbb{R}^n}\big((\varrho e(\varrho)-\tilde{\rho}e(\tilde{\rho}))+(\Phi_\alpha\ast\tilde{\rho}-\lambda)(\varrho-\tilde{\rho})\big)\,\dd \boldsymbol{x}\\
&=\int_{\mathbb{R}^n}\big((\varrho e(\varrho)-\tilde{\rho}e(\tilde{\rho}))-(\tilde{\rho}e(\tilde{\rho}))_{\tilde{\rho}}(\varrho-\tilde{\rho})\big)\,\dd \boldsymbol{x},
\end{align*}
where we have used the fact that $\int_{\mathbb{R}^n} \varrho(\boldsymbol{x})\,\dd \boldsymbol{x}=\int_{\mathbb{R}^n} \tilde{\rho}(\boldsymbol{x})\,\dd \boldsymbol{x}=M$ for $\varrho,\tilde{\rho}\in X_M$ and Theorem \ref{lmul}. Notice that, by convexity of $\varrho e(\varrho)$, then $d(\varrho,\tilde{\rho})\geq0$ for any $\varrho\in X_M$.

Let us now connect this to the global finite-energy weak solutions of CEREs obtained
in the previous sections.
Denote $(\rho, \mathcal{M})(t)=(\rho, \mathcal{M})(t,\cdot)$ as given in the statement of Theorem \ref{ns}.
The energy $\mathcal{E}$ of the solution to CEREs is given by
\begin{equation*}
\mathcal{E}(\rho(t),\mathcal{M}(t))
= \int_{\mathbb{R}^n}\Big((\rho e(\rho))(t,\boldsymbol{x})
+ \frac{1}{2} \Big|\frac{\mathcal{M}}{\sqrt{\rho}}\Big|^2(t,\boldsymbol{x}) + \frac{1}{2}\rho(t,\boldsymbol{x})(\Phi_\alpha \ast \rho)(t,\boldsymbol{x})\Big)\,\dd\boldsymbol{x}.
\end{equation*}
Then, for a given solution $(\rho(t),\mathcal{M}(t))$ of CEREs with mass $M$, we have
\begin{equation}\label{Energydiff}
    \mathcal{E}(\rho(t),\mathcal{M}(t)) - \mathcal{E}(\tilde{\rho},0) = \mathcal{G}(\rho(t)) - \mathcal{G}(\tilde{\rho})
    + \frac{1}{2}\int_{\mathbb{R}^n} \Big|\frac{\mathcal{M}}{\sqrt{\rho}}\Big|^2(t,\boldsymbol{x})\,\dd\boldsymbol{x}.
\end{equation}
For $\alpha \in (0,n)$,
we have
\begin{align}\label{difference}
    \begin{split}
        \mathcal{G}(\rho(t)) - \mathcal{G}(\tilde{\rho}) & = d(\rho(t),\tilde{\rho}) + \frac{1}{2} \int_{\mathbb{R}^n} (\rho(t) - \tilde{\rho})\, \Phi_\alpha* (\rho(t) - \tilde{\rho}) \dd \boldsymbol{x}.
    \end{split}
\end{align}
Combining \eqref{Energydiff} with \eqref{difference}, we obtain
\begin{align}\label{relative}
    \begin{split}
    \mathcal{E}(\rho(t),\mathcal{M}(t)) - \mathcal{E}(\tilde{\rho},0)
    & = \mathcal{E}(\rho(t),\mathcal{M}(t)) - \mathcal{G}(\tilde{\rho}) \\
    & = d(\rho(t),\tilde{\rho}) + \frac{1}{2} \int_{\mathbb{R}^n} (\rho(t) - \tilde{\rho})\, \Phi_\alpha* (\rho(t) - \tilde{\rho}) \dd \boldsymbol{x}
    + \frac{1}{2}\int_{\mathbb{R}^n}
    \Big|\frac{\mathcal{M}}{\sqrt{\rho}}\Big|^2(t,\boldsymbol{x})\,\dd\boldsymbol{x}.
    \end{split}
\end{align}
Notice that we can estimate the second term in the last equality, which can be obtained by the H\"older inequality and the HLS inequality from Lemma \ref{simplehls}, as such
\[
\Big|\int_{\mathbb{R}^n} (\rho(t) - \tilde{\rho}) \,\Phi_\alpha* (\rho(t) - \tilde{\rho}) \dd \boldsymbol{x}\Big| \leq C_{n,\alpha} \| \rho(t) - \tilde{\rho} \|_{L^\frac{2n}{2n - \alpha}}^2.
\]

\begin{remark}
Carrillo-Castorina-Volzone {\rm \cite{Carrillo_2015}} proved that, for $n = 2$, $\alpha = 0$, and $\gamma > 1$,
there exists a unique global minimizer of $\mathcal{G}$ in $X_M$. which is the unique radially decreasing, compactly supported,
and smooth in its support steady state
of \eqref{0.0} in the sense of distributions{\rm ;} however, in {\rm \cite{Carrillo_2015}}, only the weak compactness of the minimizing sequence was obtained.
In this paper, we require the strong compactness of the minimizing sequence,
allowing us to control the relative potential energy.
For the Riesz potential {\rm (}{\it i.e.}, $\alpha \in (0,n)${\rm )},
we are able to control the relative potential energy by the relative
$L^\frac{2n}{2n - \alpha}$--norm of the densities.
However, in the logarithmic potential case, it is no longer apparent to find the natural controlling relative entropy or how we might obtain such a bound, so that
we cannot control the relative potential energy and
use \eqref{justify} and \eqref{relative} to obtain \eqref{justify2}, since
\[
\frac{1}{2} \int_{\mathbb{R}^n} (\rho(t) - \tilde{\rho})\, \Phi_0* (\rho(t) - \tilde{\rho}) \dd \boldsymbol{x}
\]
does not have a sign. Thus, we are unable to use a contradiction argument as structured in the following argument for this case.
\end{remark}

\smallskip
\begin{proof}[Proof of Theorem \ref{ns}]
We follow a similar approach as in Rein \cite{Rein_2003}, however, we take advantage of having uniqueness of global minimizers modulo translations.
Suppose the statement in Theorem \ref{ns} is not true, then there exist $\varepsilon > 0$ and a sequence of global weak solutions $(\rho^j,\mathcal{M}^j)$ of CEREs associated to the initial data $\rho^j_0$ and a sequence of times $t_j > 0$ such that
\begin{align}\label{contradict}
d(\rho^j_0,\tilde{\rho})+ \| \rho^j_0 - \tilde{\rho} \|_{L^\frac{2n}{2n - \alpha}}^2
+\frac{1}{2}\int_{\mathbb{R}^n}\Big|\frac{\mathcal{M}^j_0}{\sqrt{\rho^j_0}}\Big|^2\,\dd \boldsymbol{x}<\frac{1}{j},
\end{align}
and
\begin{align}\label{auxx}
d(\rho^j(t_j),T^{\boldsymbol{y}}\tilde{\rho})
+ \| \rho^j(t_j) - T^{\boldsymbol{y}} \tilde{\rho} \|_{L^\frac{2n}{2n - \alpha}}^2
+\frac{1}{2}\int_{\mathbb{R}^n}\Big|\frac{\mathcal{M}^j}{\sqrt{\rho^j}}\Big|^2(t_j) \,\dd \boldsymbol{x} \geq \v
\end{align}
for all $\boldsymbol{y} \in \mathbb{R}^n$. Notice that the sequence of times $t_j$ can be chosen outside the zero measure set, where the terms in \eqref{auxx} are well defined, according to Definition \ref{definition}.

Then, by \eqref{contradict} and \eqref{difference}, we have
\begin{equation*}
    \lim_{j \to \infty} \mathcal{E}(\rho^j_0,\mathcal{M}^j_0) = \mathcal{E}(\tilde{\rho},0) = \mathcal{G}(\tilde{\rho}).
\end{equation*}
Since the energy $\mathcal{E}$ for the weak solutions is non-increasing from the initial energy by definition, we have
\begin{equation*}
    \limsup_{j \to \infty} \mathcal{G}(\rho^j(t_j)) \leq \limsup_{j \to \infty} \mathcal{E}(\rho^j(t_j),\mathcal{M}^j(t_j)) \leq \lim_{j \to \infty} \mathcal{E}(\rho^j_0,\mathcal{M}^j_0) = \mathcal{G}(\tilde{\rho}).
\end{equation*}
Thus, $\rho^j(t_j) \in X_M$ is a minimizing sequence of $\mathcal{G}$ in $X_M$ and, by Theorem \ref{minimizer},
there exists $\boldsymbol{y}_j \in \mathbb{R}^n$ such that $T^{\boldsymbol{y}_j} \rho^j(t_j) \to \tilde{\rho}$
in $L^1(\mathbb{R}^n) \cap L^\frac{n + \alpha}{n}(\mathbb{R}^n)$, due to the uniqueness of $\tilde{\rho}$ up to translation.
Then
\begin{equation*}
     \| \rho^j(t_j) - T^{-\boldsymbol{y}_j} \tilde{\rho} \|_{L^\frac{2n}{2n - \alpha}}
     =  \| T^{\boldsymbol{y}_j} \rho^j(t_j) - \tilde{\rho} \|_{L^\frac{2n}{2n - \alpha}}
     \longrightarrow 0 \qquad\mbox{as $j \to \infty$},
\end{equation*}
since $\frac{2n}{2n - \alpha} \in (1, \frac{n + \alpha}{n})$.
We see that, for any $i$,
\begin{equation}\label{justify}
 \lim_{j \to \infty} \mathcal{E}(\rho^j(t_j),\mathcal{M}^j(t_j)) = \mathcal{G}(\tilde{\rho}) = \mathcal{G}(T^{-\boldsymbol{y}_i}\tilde{\rho}).
\end{equation}
Therefore, by \eqref{relative},
\begin{equation}\label{justify2}
    d(\rho^j(t_j),T^{-\boldsymbol{y}_j}\tilde{\rho})
    + \frac{1}{2}\int_{\mathbb{R}^n}    \Big|\frac{\mathcal{M}^j}{\sqrt{\rho^j}}\Big|^2(t_j)\,\dd \boldsymbol{x} \longrightarrow 0 \qquad \mbox{as $j \to \infty$}.
\end{equation}
This is a contradiction to \eqref{auxx}, which completes the proof.
\end{proof}

\begin{remark}
Suppose that $\tilde{\rho}$ is a unique minimizer of $\mathcal{G}$ in $X_M$. Without loss of generality,
suppose that $0 < \delta = \delta(\varepsilon) < \varepsilon$ satisfies {\rm Theorem \ref{ns}}. Define
\[
A_\varepsilon := \{ \boldsymbol{y} \in \mathbb{R}^n \, : \,
\text{\eqref{close}--\eqref{closer} are satisfied with $\varepsilon$ and $\delta(\varepsilon)$ for a global weak solution of CEREs} \}.
\]
Then, for $\boldsymbol{y} \in A_\varepsilon$, there exists weak solutions $(\rho,\mathcal{M})$ of CEREs such that \eqref{close}--\eqref{closer}
are satisfied for $\varepsilon$ and $\delta(\varepsilon)$:
    \[
         \| \tilde{\rho} - T^{\boldsymbol{y}} \tilde{\rho} \|_{L^\frac{2n}{2n - \alpha}}^2 \leq  \| \tilde{\rho} - \rho_0 \|_{L^\frac{2n}{2n - \alpha}}^2 + \| \rho_0 - T^{\boldsymbol{y}} \tilde{\rho} \|_{L^\frac{2n}{2n - \alpha}}^2 \leq \varepsilon + \delta(\varepsilon).
    \]
Thus, as $\varepsilon \to 0$, we have
    \[
    \sup_{\boldsymbol{y} \in A_\varepsilon} \| \tilde{\rho} - T^{\boldsymbol{y}} \tilde{\rho} \|_{L^\frac{2n}{2n - \alpha}}^2 \longrightarrow 0.
    \]
Since $\tilde{\rho} \in L^\frac{2n}{2n - \alpha}(\mathbb{R}^n)$, we must have
    \[
    \sup_{\boldsymbol{y} \in A_\varepsilon} |\boldsymbol{y}| \longrightarrow 0\qquad \mbox{as $\varepsilon \to 0$}.
    \]
That is, the closer we start to $\tilde{\rho}$, the closer we remain to $\tilde{\rho}$, not just a translation of $\tilde{\rho}$.
\end{remark}

\begin{remark}
In case we no longer have the uniqueness of the minimizers, as in {\rm \cite{Rein_2003}},
we can define the set of minimizers of $\mathcal{G}$ in $X_M$ by
\[
\mathcal{M}_M := \big\{ \tilde{\rho} \in X_M \, : \, \mathcal{G}(\tilde{\rho}) = g_M \big\},
\]
and prove the following corollary{\rm :}

\begin{corollary}[Nonlinear Stability of CEREs]
Suppose that $\alpha \in (0,n - 1)$ with $\gamma > \frac{n + \alpha}{n}$.
Let $(\rho,\mathcal{M})(t, \boldsymbol{x})$ be a global weak solution of the attractive CEREs {\rm (}$\kappa = 1${\rm )}
in the sense of {\rm Definition \ref{definition}} satisfying
\[
\int_{\mathbb{R}^n} \rho_0(\boldsymbol{x})\,\dd \boldsymbol{x}=\int_{\mathbb{R}^n} \tilde{\rho}(\boldsymbol{x})\,\dd \boldsymbol{x}=M.
\]
Then, for any $\v>0,$ there exists $\delta = \delta(\varepsilon)>0$ such that, if
\begin{align*}
\inf_{\tilde{\rho} \in \mathcal{M}_M}
\bigg\{ d(\rho_0,\tilde{\rho})+ \| \rho_0 - \tilde{\rho} \|_{L^\frac{2n}{2n - \alpha}}^2
+\frac{1}{2}\int_{\mathbb{R}^n} \Big|\frac{\mathcal{M}_0}{\sqrt{\rho_0}}\Big|^2\,\dd \boldsymbol{x} \bigg\} <\delta,
\end{align*}
there exists a translation $\boldsymbol{y}\in \R^n$ such that
\begin{align*}
\inf_{\tilde{\rho} \in \mathcal{M}_M}
\bigg\{ d(\rho(t),\tilde{\rho})+ \| \rho(t) - \tilde{\rho} \|_{L^\frac{2n}{2n - \alpha}}^2
+\frac{1}{2}\int_{\mathbb{R}^n} \Big|\frac{\mathcal{M}}{\sqrt{\rho}}\Big|^2(t)\,\dd \boldsymbol{x} \bigg\} <\v
\qquad\mbox{for a.e. $t>0$}.
\end{align*}
Furthermore, let $\tilde{\rho} \in \mathcal{M}_M$ be an isolated
minimizer {\rm (}up to translation{\rm )} of functional $\mathcal{G}$ in $X_M$, that is,
\[
\delta_0 := \inf \Big\{ \| \rho - \tilde{\rho} \|_{L^\frac{2n}{2n - \alpha}}^2 \,  :\,
\rho \in \mathcal{M}_M \setminus \{ T^{\boldsymbol{y}}\tilde{\rho} \,: \, \boldsymbol{y} \in \mathbb{R}^n \} \Big\}>0,
\]
and let
\begin{equation}\label{aux}
(t,\boldsymbol{y}) \longmapsto d(\rho(t),T^{\boldsymbol{y}}\tilde{\rho})
+ \| \rho(t) - T^{\boldsymbol{y}}\tilde{\rho} \|_{L^\frac{2n}{2n - \alpha}}^2
+\frac{1}{2}\int_{\mathbb{R}^n} \Big|\frac{\mathcal{M}}{\sqrt{\rho}}\Big|^2(t)\,\dd \boldsymbol{x}
\end{equation}
be continuous. Then, for any $0 < \v < \frac{\delta_0}{4}$, there exists $0<\delta = \delta(\varepsilon)<\varepsilon$ such that,
if
\begin{align*}
d(\rho_0,\tilde{\rho})+ \| \rho_0 - \tilde{\rho} \|_{L^\frac{2n}{2n - \alpha}}^2
+\frac{1}{2}\int_{\mathbb{R}^n} \Big|\frac{\mathcal{M}_0}{\sqrt{\rho_0}}\Big|^2\,\dd \boldsymbol{x}<\delta,
\end{align*}
there exists a translation $\boldsymbol{y}\in \R^n$ such that
\begin{align*}
d(\rho(t),T^{\boldsymbol{y}}\tilde{\rho})
+ \| \rho(t) - T^{\boldsymbol{y}}\tilde{\rho} \|_{L^\frac{2n}{2n - \alpha}}^2
+\frac{1}{2}\int_{\mathbb{R}^n} \Big|\frac{\mathcal{M}}{\sqrt{\rho}}\Big|^2(t)\,\dd \boldsymbol{x}<\v
\qquad\mbox{for all $t>0$}.
\end{align*}
\end{corollary}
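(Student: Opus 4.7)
The plan is to argue by contradiction for the infimum-type statement, mirroring the proof of Theorem \ref{ns}, and then to upgrade to the isolated-orbit version via a connectedness argument in $L^{2n/(2n-\alpha)}$ that exploits the separation $\delta_0$ together with the assumed continuity of \eqref{aux}. Throughout, I will write $D(\varrho,\tilde{\rho}):=d(\varrho,\tilde{\rho})+\|\varrho-\tilde{\rho}\|_{L^{2n/(2n-\alpha)}}^2+\tfrac12\int_{\mathbb{R}^n}|\mathcal{M}/\sqrt{\rho}|^2\dd\boldsymbol{x}$ for the composite quantity appearing in both conclusions.

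For the infimum-type statement, suppose by contradiction that there exist $\v>0$, global finite-energy weak solutions $(\rho^j,\mathcal{M}^j)$ of mass $M$, and times $t_j>0$ with $\inf_{\mathcal{M}_M}D(\rho^j_0,\cdot)<1/j$ but $\inf_{\mathcal{M}_M}D(\rho^j(t_j),\cdot)\geq\v$. Pick $\tilde{\rho}^j\in\mathcal{M}_M$ nearly optimal at $t=0$. The relative-energy identity \eqref{relative} with $\tilde{\rho}=\tilde{\rho}^j$, coupled with the HLS estimate $\bigl|\int(\rho^j_0-\tilde{\rho}^j)\Phi_\alpha\ast(\rho^j_0-\tilde{\rho}^j)\dd\boldsymbol{x}\bigr|\leq C\|\rho^j_0-\tilde{\rho}^j\|_{L^{2n/(2n-\alpha)}}^2$, forces $\mathcal{E}(\rho^j_0,\mathcal{M}^j_0)\to g_M$. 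Energy dissipation and $\mathcal{G}\geq g_M$ on $X_M$ then make $\{\rho^j(t_j)\}$ a minimizing sequence for $\mathcal{G}$; Theorem \ref{minimizer} (through Corollary \ref{corollary}, exactly as used to produce Theorem \ref{lmul}) yields translations $\boldsymbol{y}_j\in\mathbb{R}^n$ and some $\hat\rho\in\mathcal{M}_M$ with $T^{\boldsymbol{y}_j}\rho^j(t_j)\to\hat\rho$ in $L^1\cap L^{(n+\alpha)/n}$, hence in $L^{2n/(2n-\alpha)}$ by interpolation. Since $\mathcal{M}_M$ is translation invariant, $T^{-\boldsymbol{y}_j}\hat\rho\in\mathcal{M}_M$, and a second application of \eqref{relative} with $\tilde{\rho}=T^{-\boldsymbol{y}_j}\hat\rho$ drives every summand of $D(\rho^j(t_j),T^{-\boldsymbol{y}_j}\hat\rho)$ to zero, contradicting $\inf_{\mathcal{M}_M}D(\rho^j(t_j),\cdot)\geq\v$.

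For the isolated-orbit statement, fix $0<\v<\delta_0/4$, apply the first part with threshold $\v/4$ to obtain $\delta_1>0$, and set $\delta=\min(\delta_1,\v/4,\delta_0/16)$. Put $\Omega_{\tilde{\rho}}:=\{T^{\boldsymbol{y}}\tilde{\rho}:\boldsymbol{y}\in\mathbb{R}^n\}$; translation invariance of the $L^{2n/(2n-\alpha)}$ norm together with the isolation hypothesis gives $\mathrm{dist}_{L^{2n/(2n-\alpha)}}(\Omega_{\tilde{\rho}},\mathcal{M}_M\setminus\Omega_{\tilde{\rho}})\geq\sqrt{\delta_0}$. For each $t$, the first part provides $\hat\rho(t)\in\mathcal{M}_M$ with $D(\rho(t),\hat\rho(t))<\v/4\leq\delta_0/16$: either $\hat\rho(t)\in\Omega_{\tilde{\rho}}$ and hence $\inf_{\boldsymbol{y}}D(\rho(t),T^{\boldsymbol{y}}\tilde{\rho})<\v/4$, or $\hat\rho(t)\in\mathcal{M}_M\setminus\Omega_{\tilde{\rho}}$ and the triangle inequality in $L^{2n/(2n-\alpha)}$ forces $\|\rho(t)-T^{\boldsymbol{y}}\tilde{\rho}\|_{L^{2n/(2n-\alpha)}}\geq\sqrt{\delta_0}-\sqrt{\delta_0}/4=3\sqrt{\delta_0}/4$ for every $\boldsymbol{y}$. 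Thus $\mathrm{dist}_{L^{2n/(2n-\alpha)}}(\rho(t),\Omega_{\tilde{\rho}})$ takes values only in $[0,\sqrt{\delta_0}/4)\cup[3\sqrt{\delta_0}/4,\infty)$. The continuity of \eqref{aux} renders this distance continuous in $t$; since $\rho_0$ lies in the lower piece by assumption, connectedness of $[0,\infty)$ forces $\rho(t)$ to remain in the lower piece for every $t$, the second case is ruled out, and the near-optimal translate $T^{\boldsymbol{y}(t)}\tilde{\rho}$ then satisfies $D(\rho(t),T^{\boldsymbol{y}(t)}\tilde{\rho})<\v/4<\v$.

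The main obstacle is the intricate geometry of the composite distance $D$, which is not a metric since it mixes the convex relative entropy $d$, a squared $L^{2n/(2n-\alpha)}$ norm, and a kinetic energy. Orbit separation must be routed through the $L^{2n/(2n-\alpha)}$ component—where a genuine triangle inequality is available—and then repackaged via non-negativity of the other two pieces. A related subtlety is that the infimum $\inf_{\boldsymbol{y}}D(\rho(t),T^{\boldsymbol{y}}\tilde{\rho})$ need not be attained, which forces one to work with $\v$-minimizing translations and to draw on the compact support of $\tilde{\rho}$ from Theorem \ref{lmul} together with the uniform $L^{(n+\alpha)/n}$ bound on $\rho(t)$ from the conserved energy, so that near-optimal translations stay in a bounded set and the joint continuity of \eqref{aux} actually delivers the limit used at the exit time; the same ingredients are needed to legitimately deduce continuity of $t\mapsto\mathrm{dist}_{L^{2n/(2n-\alpha)}}(\rho(t),\Omega_{\tilde{\rho}})$ from the continuity of \eqref{aux}.
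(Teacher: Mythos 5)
Your argument is correct and is exactly the route the paper intends: the first part repeats the concentration-compactness contradiction from the proof of Theorem \ref{ns} with the single minimizer replaced by a near-optimal element of $\mathcal{M}_M$ (using that $\mathcal{G}\equiv g_M$ on $\mathcal{M}_M$, that \eqref{difference} is an algebraic identity valid for any minimizer, and that $\mathcal{M}_M$ is translation invariant), and the second part is the Rein-type dichotomy-plus-connectedness argument that the paper defers to \cite{Rein_2003}. The only step to tighten is the final continuity claim: rather than extracting continuity of $t\mapsto\mathrm{dist}_{L^{2n/(2n-\alpha)}}(\rho(t),\Omega_{\tilde{\rho}})$ from the hypothesis (continuity of the sum \eqref{aux} does not by itself give continuity of one summand), it is cleaner to run the connectedness argument directly on $g(t):=\inf_{\boldsymbol{y}}$ of \eqref{aux}, whose continuity follows from the assumed joint continuity together with your observation that near-optimal translations stay in a compact set.
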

Notice that this result needs the additional continuity assumption of \eqref{aux}{\rm ;} see {\rm \cite{Rein_2003}} for details.
\end{remark}

\medskip
\appendix
\section{Convolution Inequalities}\label{A1}
We now present several convolution inequalities that have been used in the main text of this paper.

\begin{theorem}[\text{\cite[Theorem II.11.2]{Galdi}}]\label{pointwise}
Suppose that $\Omega \subset \mathbb{R}^n$ is bounded, $\alpha \in (0,n)$, and $f \in L^q(\Omega)$ for $q \in (1,\infty)$.
If $\alpha < n(1 - \frac{1}{q})$,
then $|\cdot|^{-\alpha} * (\mathds{1}_\Omega f) \in C^{0,\mu}(\overline{\Omega})$
for $\mu:= \min\big\{1,n(1-\frac{1}{q})-\alpha\big\}$
and there exists some constant $C = C(\Omega,n,q,\alpha)$ such that
    \begin{equation*}
    \big\| |\cdot|^{-\alpha} * (\mathds{1}_\Omega f)\big\|_{C^{0,\mu}(\overline{\Omega})} \leq C \|f\|_{L^q(\Omega)}.
    \end{equation*}
\end{theorem}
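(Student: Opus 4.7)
The plan is to establish the two conclusions separately: first, the uniform $L^\infty$ bound, which follows directly from H\"older's inequality, and second, the H\"older estimate of exponent $\mu$, which follows from a standard near/far splitting of the convolution at two nearby points.

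Write $g(\boldsymbol{x}):=\int_\Omega |\boldsymbol{x}-\boldsymbol{y}|^{-\alpha}f(\boldsymbol{y})\,\dd\boldsymbol{y}$ and let $q'=q/(q-1)$ be the H\"older conjugate. For Step~1, the hypothesis $\alpha<n(1-\tfrac1q)$ is equivalent to $\alpha q'<n$, so $\boldsymbol{y}\mapsto|\boldsymbol{x}-\boldsymbol{y}|^{-\alpha}$ lies in $L^{q'}(\Omega)$ uniformly in $\boldsymbol{x}\in\overline{\Omega}$ (using boundedness of $\Omega$); H\"older's inequality then gives $\|g\|_{L^\infty(\overline\Omega)}\leq C(\Omega,n,q,\alpha)\|f\|_{L^q(\Omega)}$.

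For Step~2, fix $\boldsymbol{x}_1,\boldsymbol{x}_2\in\overline\Omega$ and set $r:=|\boldsymbol{x}_1-\boldsymbol{x}_2|$ (we may assume $r$ is small, say $r\leq \tfrac12\mathrm{diam}(\Omega)$, as the case of larger $r$ is absorbed into the $L^\infty$ bound). Decompose
\[
g(\boldsymbol{x}_1)-g(\boldsymbol{x}_2)=\int_{E_{\mathrm{near}}}+\int_{E_{\mathrm{far}}},
\qquad E_{\mathrm{near}}:=\{|\boldsymbol{y}-\boldsymbol{x}_1|<2r\}\cup\{|\boldsymbol{y}-\boldsymbol{x}_2|<2r\}.
\]
On $E_{\mathrm{near}}$, estimate the two terms separately by H\"older, using that $\int_{|\boldsymbol{z}|<3r}|\boldsymbol{z}|^{-\alpha q'}\,\dd\boldsymbol{z}\leq C r^{n-\alpha q'}$, which produces a bound of the form $C\|f\|_{L^q(\Omega)}\,r^{n/q'-\alpha}=C\|f\|_{L^q(\Omega)}\,r^{n(1-1/q)-\alpha}$. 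On $E_{\mathrm{far}}$, the segment from $\boldsymbol{x}_1$ to $\boldsymbol{x}_2$ stays at distance at least $r$ from $\boldsymbol{y}$, so the mean value theorem gives
\[
\big||\boldsymbol{x}_1-\boldsymbol{y}|^{-\alpha}-|\boldsymbol{x}_2-\boldsymbol{y}|^{-\alpha}\big|\leq C\,\alpha\,r\,|\boldsymbol{\xi}-\boldsymbol{y}|^{-(\alpha+1)},
\]
with $|\boldsymbol{\xi}-\boldsymbol{y}|\geq r$. Applying H\"older once more reduces the far contribution to $Cr\,\|f\|_{L^q(\Omega)}\bigl(\int_{\{|\boldsymbol{z}|\geq r\}\cap(\Omega-\boldsymbol{x}_1)}|\boldsymbol{z}|^{-(\alpha+1)q'}\,\dd\boldsymbol{z}\bigr)^{1/q'}$.

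The main technical point is the dichotomy in evaluating this last integral, which produces the $\min$ in the definition of $\mu$. If $(\alpha+1)q'>n$, equivalently $n(1-\tfrac1q)-\alpha<1$, the integral equals $C r^{n-(\alpha+1)q'}$, and the far term is also bounded by $C\|f\|_{L^q(\Omega)}r^{n(1-1/q)-\alpha}=C\|f\|_{L^q(\Omega)}r^{\mu}$, matching the near term. If instead $(\alpha+1)q'\leq n$, so $\mu=1$, the integral is bounded independently of $r$ (by a constant depending on $\mathrm{diam}(\Omega)$), and the far term is $\leq Cr\|f\|_{L^q(\Omega)}=Cr^\mu\|f\|_{L^q(\Omega)}$; in this regime the near contribution, which is $O(r^{n(1-1/q)-\alpha})=O(r^{\mu'})$ with $\mu'\geq 1$, is also $O(r)$ since $r$ is bounded. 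Combining the two estimates yields
\[
|g(\boldsymbol{x}_1)-g(\boldsymbol{x}_2)|\leq C(\Omega,n,q,\alpha)\,\|f\|_{L^q(\Omega)}\,r^\mu,
\]
which together with Step~1 gives the claimed $C^{0,\mu}(\overline\Omega)$ bound. The only delicate bookkeeping is the case distinction just above, needed precisely to accommodate the cut-off at exponent $1$ in $\mu=\min\{1,n(1-1/q)-\alpha\}$.
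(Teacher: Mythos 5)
The paper does not actually prove this statement; it is quoted verbatim from Galdi's book, so there is no internal proof to compare against. Your argument is the standard near/far splitting for Riesz potentials and is correct in the two generic regimes $n(1-\tfrac1q)-\alpha<1$ and $n(1-\tfrac1q)-\alpha>1$ --- which are, in fact, the only regimes the paper ever uses (in Lemma \ref{potentially} and Lemma \ref{higher} the theorem is invoked with $q>\tfrac{n}{n-1-\alpha}$, so the exponent $n(1-\tfrac1q)-\alpha$ exceeds $1$ strictly).

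There is, however, a genuine gap at the borderline $n(1-\tfrac1q)-\alpha=1$, equivalently $(\alpha+1)q'=n$, which the statement as written does include (the only hypothesis is $\alpha<n(1-\tfrac1q)$). In that case your claim that $\int_{\{r\le|\boldsymbol{z}|\le D\}}|\boldsymbol{z}|^{-(\alpha+1)q'}\,\dd\boldsymbol{z}$ is ``bounded independently of $r$'' is false: the integrand is $|\boldsymbol{z}|^{-n}$ and the integral equals $\omega_n\log(D/r)$, which diverges as $r\to0$. Your far-field contribution is then only $O\bigl(r\,(\log(1/r))^{1/q'}\bigr)$, which gives log-Lipschitz rather than Lipschitz continuity. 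This is not a removable bookkeeping defect: it reflects the genuine endpoint failure $I_1:L^n\not\to L^\infty$ (the gradient of the potential lands in BMO, not in $L^\infty$), so no rearrangement of the same estimates will recover $\mu=1$ there. To make the write-up sound you should either exclude the borderline case (assume $n(1-\tfrac1q)-\alpha\neq1$, which suffices for every application in the paper) or weaken the conclusion there to $C^{0,\mu'}$ for every $\mu'<1$, i.e.\ the log-Lipschitz bound your computation actually delivers. The remaining steps --- the uniform sup-norm bound via H\"older using $\alpha q'<n$, the near-field bound $Cr^{\,n/q'-\alpha}$, and the mean-value estimate on the far field with $|\boldsymbol{\xi}-\boldsymbol{y}|\ge r$ --- are all correct.
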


\begin{lemma}[Hardy-Littlewood-Sobolev (HLS) Inequality]\label{simplehls}
	Let $\alpha \in (0,n)$ and $1 < p,r < \infty$ such that
	\[
	\frac{1}{p} + \frac{\alpha}{n} = 1 + \frac{1}{r}.
	\]
	Then,
    for all $f \in L^p(\mathbb{R}^n)$,
	\[
	\big\| |\cdot |^{-\alpha} * f \big\|_{L^r(\mathbb{R}^n)} \leq C_{n,\alpha}\|f\|_{L^p(\mathbb{R}^n)},
	\]
   where
    \[
    C_{n,\alpha} = \pi^\frac{\alpha}{2} \frac{\Gamma(\frac{n - \alpha}{2})}{\Gamma( n - \frac{\alpha}{2})}
    \bigg( \frac{\Gamma(\frac{n}{2})}{\Gamma(n)} \bigg)^{\frac{\alpha - n}{n}}.
    \]
\end{lemma}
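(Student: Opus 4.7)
The plan is to prove HLS via the classical kernel-decomposition argument to obtain a weak-type $(p,r)$ estimate, then upgrade to the strong-type bound by Marcinkiewicz interpolation; identifying the specific sharp constant $C_{n,\alpha}$ is a separate matter that I address at the end.

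First I would, after normalizing $\|f\|_{L^p}=1$ by homogeneity, fix $\lambda>0$ and split
$$|\boldsymbol{x}|^{-\alpha}=K_1^R(\boldsymbol{x})+K_2^R(\boldsymbol{x}),\qquad K_1^R:=|\boldsymbol{x}|^{-\alpha}\mathds{1}_{\{|\boldsymbol{x}|\le R\}},\qquad K_2^R:=|\boldsymbol{x}|^{-\alpha}\mathds{1}_{\{|\boldsymbol{x}|> R\}},$$
with $R=R(\lambda)$ to be chosen. The scaling relation $\tfrac{1}{p}+\tfrac{\alpha}{n}=1+\tfrac{1}{r}$ gives $\tfrac{1}{p'}=\tfrac{\alpha}{n}-\tfrac{1}{r}<\tfrac{\alpha}{n}$, so Young's inequality yields $\|K_2^R*f\|_{L^\infty}\le\|K_2^R\|_{L^{p'}}\le c\,R^{n/p'-\alpha}$ with a negative exponent of $R$; choosing $R$ so that this equals $\lambda/2$ forces the tail part to be harmless pointwise. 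Then
$$\big|\{|{|\cdot|^{-\alpha}*f}|>\lambda\}\big|\le\big|\{|K_1^R*f|>\lambda/2\}\big|\le(\lambda/2)^{-p}\|K_1^R*f\|_{L^p}^p\le c'(\lambda/2)^{-p}R^{(n-\alpha)p}$$
by Chebyshev and Young applied with $K_1^R\in L^1$. Inserting the chosen $R$ and simplifying, the scaling relation conveniently collapses the exponent of $\lambda$ to $-r$ and that of $\|f\|_{L^p}$ to $r$, producing the weak-type $(p,r)$ bound
$$\big|\{\boldsymbol{x}: |(|\cdot|^{-\alpha}*f)(\boldsymbol{x})|>\lambda\}\big|\le C\lambda^{-r}\|f\|_{L^p}^r.$$

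Next I would apply the Marcinkiewicz interpolation theorem to the sublinear operator $T: f\mapsto |\cdot|^{-\alpha}*f$: the previous step delivers weak-type bounds $(p_1,r_1)$ and $(p_2,r_2)$ for any two pairs on the HLS scaling line with $1<p_1<p<p_2<\infty$, and interpolation produces the strong-type bound $T:L^p(\mathbb{R}^n)\to L^r(\mathbb{R}^n)$ with \emph{some} finite constant depending on $n,\alpha,p$. The main obstacle is obtaining the specific sharp constant $C_{n,\alpha}$ written above: this is the conformally invariant (diagonal) case $p=\tfrac{2n}{2n-\alpha}$, where Lieb's sharp HLS inequality applies. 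The plan there is (i) reduce to radially symmetric decreasing $f$ via the Riesz rearrangement inequality (Lemma \ref{rearrange} in the appendix), (ii) perform the stereographic lift to $\mathbb{S}^n$ to exploit the conformal invariance of the associated bilinear form, and (iii) execute Lieb's method of competing symmetries to identify the extremizers, up to translation and dilation, as the function $(1+|\boldsymbol{x}|^2)^{-(2n-\alpha)/2}$, at which the explicit value of $C_{n,\alpha}$ is computed via Beta- and Gamma-function identities. The genuinely hard part will be this final identification step; the weak-type/interpolation portion is routine, while the sharp constant truly requires Lieb's rearrangement machinery together with the special conformal structure of the sphere.
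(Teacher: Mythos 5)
The paper offers no proof of this lemma: it is quoted in Appendix~\ref{A1} as a classical convolution inequality (the Hardy--Littlewood--Sobolev inequality of Hardy--Littlewood, Sobolev, and Lieb), so there is no argument of the authors' to compare against. Your outline is the standard and correct route to it. The weak-type step is sound: with $\|f\|_{L^p}=1$, the scaling relation gives $1/p' = \alpha/n - 1/r < \alpha/n$, so $K_2^R \in L^{p'}$ with $\|K_2^R\|_{L^{p'}} \simeq R^{n/p'-\alpha}$ and negative exponent, the choice $R\simeq \lambda^{-r/n}$ kills the tail, and Chebyshev plus Young with $\|K_1^R\|_{L^1}\simeq R^{n-\alpha}$ collapses, via the same scaling relation, to the weak-type $(p,r)$ bound with exponent exactly $-r$; Marcinkiewicz interpolation along the HLS line (note $r>p$ there, as required) then gives the strong bound.

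One caveat concerns the constant, and it is really an imprecision in the lemma as stated rather than in your proof. The displayed $C_{n,\alpha}$ is Lieb's sharp constant for the conformally invariant diagonal case $p=\frac{2n}{2n-\alpha}$, $r=\frac{2n}{\alpha}$, which is indeed obtained by the rearrangement/stereographic-lift/competing-symmetries argument you describe (and the Riesz rearrangement inequality you would invoke is the paper's Lemma~\ref{rearrange}). For off-diagonal pairs $(p,r)$ on the scaling line, your interpolation argument produces \emph{some} finite constant depending on $(n,\alpha,p)$, not the displayed $C_{n,\alpha}$; no closed-form sharp constant is known there. This discrepancy is harmless for the paper, which invokes the precise value of $C_{n,\alpha}$ only in the diagonal bilinear setting (e.g.\ in \S\ref{sec:stab}) and otherwise needs only finiteness, but if you wanted to prove the lemma literally as stated you would have to weaken the claim to ``a constant depending on $n,\alpha,p$'' away from the diagonal. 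With that understanding, your proposal is correct, and the genuinely hard part is, as you say, the identification of the sharp diagonal constant, for which Lieb's machinery is unavoidable.
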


\begin{lemma}[Variation of HLS Inequality \text{\cite[Theorem 3.1]{Calvez_2017}}]\label{HLSineq}
Suppose that $\alpha \in (0,n)$ and $p \geq \frac{2n}{2n - \alpha}$.
Then there exists $C_* = C_*(\alpha,p,n) > 0$ such that, for any $f \in L^1(\mathbb{R}^n) \cap L^p(\mathbb{R}^n)$,
\begin{equation}\label{HLS}
\bigg| \int_{\mathbb{R}^n} \int_{\mathbb{R}^n} f(\boldsymbol{x}) |\boldsymbol{x} - \boldsymbol{y}|^{-\alpha} f(\boldsymbol{y}) \dd \boldsymbol{x} \dd \boldsymbol{y} \bigg| \leq C_* \| f \|_{L^1}^\frac{p (2n - \alpha) - 2n}{n(p - 1)} \| f \|_{L^p}^\frac{\alpha p}{n(p - 1)}.
\end{equation}
In particular, when $p = \frac{n + \alpha}{n} > \frac{2n}{2n - \alpha}$, for any
$f \in L^1(\mathbb{R}^n) \cap L^{\frac{n + \alpha}{n}}(\mathbb{R}^n)$,
\[
\bigg| \int_{\mathbb{R}^n} \int_{\mathbb{R}^n} f(\boldsymbol{x}) |\boldsymbol{x} - \boldsymbol{y}|^{-\alpha} f(\boldsymbol{y}) \dd \boldsymbol{x} \dd \boldsymbol{y} \bigg|
\leq C_* \| f \|_{L^1}^\frac{n - \alpha}{n} \| f \|_{L^{\frac{n + \alpha}{n}}}^\frac{n + \alpha}{n}.
\]
Furthermore, $C_*$ is the sharp constant that satisfies \eqref{HLS} and is strictly bounded above by the standard HLS constant $C_{n,\alpha}$, i.e., $C_* < C_{n,\alpha}$.
\end{lemma}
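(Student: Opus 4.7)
\medskip

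\noindent\textbf{Proof proposal.} The plan is to derive the inequality as a combination of the standard (diagonal) HLS inequality of Lemma \ref{simplehls} with a log-convexity interpolation between $L^{1}$ and $L^{p}$. Set $q = \frac{2n}{2n-\alpha}$, which is precisely the exponent making the bilinear form $\int\int f(\boldsymbol{x})|\boldsymbol{x}-\boldsymbol{y}|^{-\alpha}f(\boldsymbol{y})\dd\boldsymbol{x}\dd\boldsymbol{y}$ bounded by a multiple of $\|f\|_{q}^{2}$. First I would apply Lemma \ref{simplehls} in the diagonal form
\[
\Big|\int_{\mathbb{R}^{n}}\int_{\mathbb{R}^{n}} f(\boldsymbol{x})\,|\boldsymbol{x}-\boldsymbol{y}|^{-\alpha}\,f(\boldsymbol{y})\,\dd\boldsymbol{x}\dd\boldsymbol{y}\Big|\leq C_{n,\alpha}\,\|f\|_{L^{q}(\mathbb{R}^{n})}^{2},
\]
which is the standard HLS with the choice $\frac1p+\frac1r+\frac{\alpha}{n}=2$ and $p=r=q$.

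Next, since $p\ge q>1$, I would use the interpolation inequality $\|f\|_{q}\le \|f\|_{1}^{1-\theta}\|f\|_{p}^{\theta}$ where $\theta\in[0,1]$ solves $\frac{1}{q}=(1-\theta)+\frac{\theta}{p}$. A short computation gives
\[
\theta=\frac{p(q-1)}{q(p-1)}=\frac{\alpha p}{2n(p-1)},\qquad 1-\theta=\frac{p(2n-\alpha)-2n}{2n(p-1)}.
\]
Squaring and substituting into the displayed HLS estimate yields
\[
\Big|\int_{\mathbb{R}^{n}}\int_{\mathbb{R}^{n}} f(\boldsymbol{x})\,|\boldsymbol{x}-\boldsymbol{y}|^{-\alpha}\,f(\boldsymbol{y})\,\dd\boldsymbol{x}\dd\boldsymbol{y}\Big|\leq C_{n,\alpha}\,\|f\|_{L^{1}}^{\frac{p(2n-\alpha)-2n}{n(p-1)}}\|f\|_{L^{p}}^{\frac{\alpha p}{n(p-1)}},
\]
which is the desired bound with the admissible constant $C_{n,\alpha}$. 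In particular, the sharp constant $C_{*}=C_{*}(\alpha,p,n)$, defined as the smallest constant for which the inequality holds on $L^{1}\cap L^{p}$, satisfies $C_{*}\le C_{n,\alpha}$. The critical exponent $p=\frac{n+\alpha}{n}$ is a direct substitution which then gives the special case stated.

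The delicate final point is the \emph{strict} inequality $C_{*}<C_{n,\alpha}$. The plan here is to argue by contradiction: if $C_{*}=C_{n,\alpha}$, then there exists a maximizing sequence $(f_{k})\subset L^{1}\cap L^{p}$ for the displayed inequality that forces asymptotic equality both in the HLS estimate and in the interpolation step. Equality in interpolation between $L^{1}$ and $L^{p}$ (with $p>1$) forces $f_{k}$ to be a (rescaled) characteristic function of a set, while (by the Lieb classification of HLS extremizers, or the rearrangement/conformal-invariance argument) asymptotic HLS extremizers are, up to translation and dilation, the radially symmetric smooth profiles $c\,(1+|\boldsymbol{x}|^{2})^{-(2n-\alpha)/2}$. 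These two asymptotic shapes are incompatible, yielding the strict inequality. The main obstacle I expect is making the contradiction rigorous — in particular, compactness up to the non-compact group of dilations and translations on $\mathbb{R}^{n}$ — which can be handled by the concentration-compactness method used earlier in Section \ref{sec:stab}, or by a direct computation of the sharp ratio on the known HLS extremizer (which already lies in $L^{1}\cap L^{p}$ for $\alpha\in(0,n)$) and comparison with the interpolation constant on the same profile.
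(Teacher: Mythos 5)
The paper itself offers no proof of this lemma; it is quoted verbatim from \cite[Theorem 3.1]{Calvez_2017}. Your derivation of the inequality \eqref{HLS} with the admissible constant $C_{n,\alpha}$ is correct and complete: the diagonal HLS bound with $q=\tfrac{2n}{2n-\alpha}$, the interpolation exponents $\theta=\tfrac{\alpha p}{2n(p-1)}$ and $1-\theta=\tfrac{p(2n-\alpha)-2n}{2n(p-1)}$, and the specialization to $p=\tfrac{n+\alpha}{n}$ all check out, and this is exactly the route taken in the cited reference for the easy half $C_*\leq C_{n,\alpha}$.

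The gap is in the strictness $C_*<C_{n,\alpha}$, and it is genuine. Your contradiction argument runs through a maximizing sequence $(f_k)$ and asserts that "asymptotic equality" in the HLS step forces the $f_k$ to approach Lieb extremizers while asymptotic equality in the interpolation step forces them to approach characteristic functions. But the equality cases of HLS and of H\"older are rigidity statements about actual functions, not about sequences; to extract shape information along a sequence you would need quantitative stability versions of both inequalities, which you do not have. The clean way to close this — and the way \cite{Calvez_2017} does it — is to first prove that the variational problem defining $C_*$ admits a \emph{maximizer} $f_*$ (reduce to radially decreasing $f$ by Riesz rearrangement, which raises the numerator and fixes $\|f\|_{L^1}$ and $\|f\|_{L^p}$, normalize away the dilation invariance, and apply concentration compactness); once $f_*$ exists, $C_*=C_{n,\alpha}$ would force simultaneous equality in HLS (so $f_*$ is a Lieb profile) and in the $L^1$--$L^p$ interpolation (so $|f_*|$ is a multiple of a characteristic function), which is the contradiction. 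Your fallback suggestion — "a direct computation of the sharp ratio on the known HLS extremizer" — does not work: evaluating the combined functional at the HLS extremizer $h$ only shows the combined inequality is strict \emph{at $h$} and gives a lower bound on $C_*$; it says nothing about the supremum over all $f\in L^1\cap L^p$. So the existence-of-optimizer step is not optional, and as written the strictness claim is not established.
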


\begin{lemma}[Riesz's Rearrangement Inequality \cite{Lieb_2001}]\label{rearrange}
    Let $f, g, h : \mathbb{R}^n \to \mathbb{R}_+$ be measurable functions. Define
    \[
    I(f,g,h):= \int_{\mathbb{R}^n} \int_{\mathbb{R}^n} f(\boldsymbol{x}) g(\boldsymbol{x} - \boldsymbol{y}) h(\boldsymbol{y}) \dd \boldsymbol{x} \dd \boldsymbol{y}.
    \]
    For a given measurable function $\phi : \mathbb{R}^n \to \mathbb{R}_+$, define the symmetric decreasing rearrangement $\phi^*$ of $\phi$ is
    \begin{equation}\label{A.2a}
    \phi^* (\boldsymbol{x}) := \int^\infty_0 \mathds{1}_{\{\boldsymbol{y} \, : \, \phi(\boldsymbol{y}) > t\}^*} (\boldsymbol{x}) \dd t
    \end{equation}
    with
    \[
    A^* = \{ \boldsymbol{x} \, : \, \omega_n |\boldsymbol{x}|^n \leq |A|\}
    \]
    for any Lebesgue measurable set $A \subset \mathbb{R}^n$. Then
    \begin{equation}\label{rearrangeineq}
    I(f,g,h) \leq I(f^*,g^*,h^*).
    \end{equation}
    If $g$ is a strictly symmetric-decreasing function, then the equality holds in \eqref{rearrangeineq}
    if and only if there exists some $\boldsymbol{z} \in \mathbb{R}^n$ such that $f(\boldsymbol{x}) = f^*(\boldsymbol{x} - \boldsymbol{z})$ and $h(\boldsymbol{x}) = h^*(\boldsymbol{x} - \boldsymbol{z})$.
\end{lemma}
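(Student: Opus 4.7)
The plan is to follow the classical Brascamp--Lieb--Luttinger strategy: reduce to characteristic functions by the layer-cake principle, prove the inequality in one dimension, and lift to $\mathbb{R}^n$ via Steiner symmetrization. First, using $f(\boldsymbol{x}) = \int_0^\infty \mathds{1}_{\{f > s\}}(\boldsymbol{x})\,\dd s$ and the analogous representations for $g$ and $h$, together with the identity $(\mathds{1}_A)^* = \mathds{1}_{A^*}$ and the trilinearity of $I$, the problem reduces to proving
\[
I(\mathds{1}_A, \mathds{1}_B, \mathds{1}_C) \leq I(\mathds{1}_{A^*}, \mathds{1}_{B^*}, \mathds{1}_{C^*})
\]
for measurable sets $A, B, C \subset \mathbb{R}^n$ of finite measure. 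In one dimension, this becomes the statement that, among triples of intervals of fixed lengths, the trilinear form is maximized when all three are centered at the origin, which follows from an elementary geometric sliding argument (first for two intervals, then iterated).

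Second, I would lift the one-dimensional inequality to $\mathbb{R}^n$ via Steiner symmetrization. For a unit direction $\boldsymbol{e} \in \mathbb{S}^{n-1}$, let $f^{*,\boldsymbol{e}}$ denote the Steiner symmetrization of $f$ along $\boldsymbol{e}$, obtained by rearranging $f$ symmetrically and decreasingly on each line parallel to $\boldsymbol{e}$. Writing $I(f,g,h)$ as an iterated integral over hyperplanes orthogonal to $\boldsymbol{e}$ via the Fubini theorem and applying the one-dimensional inequality on each line yields
\[
I(f, g, h) \leq I(f^{*,\boldsymbol{e}}, g^{*,\boldsymbol{e}}, h^{*,\boldsymbol{e}}).
\]
A classical convergence result (see \cite{Lieb_2001}) then shows that iterated Steiner symmetrizations along a carefully chosen dense sequence of directions converge in $L^p$ to the symmetric decreasing rearrangement; combining this with monotonicity along each step and continuity of $I$ under such convergence delivers $I(f,g,h) \leq I(f^*, g^*, h^*)$.

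The main obstacle will be the equality characterization under strict monotonicity of $g$. The difficulty is that Steiner symmetrization is highly non-injective, so tracking equality through an infinite limiting process is delicate. Following the treatment in \cite{Lieb_2001}, my plan is to analyze a single step first: strict monotonicity of $g$ upgrades the one-dimensional Riesz inequality so that equality on a slice forces the corresponding slices of $f$ and $h$ to be symmetric-decreasing reflections of themselves about a \emph{common} center (a shared slice-wise translation). The subtle final step is to synthesize these slice-wise translations, measured along independent directions, into a single global vector $\boldsymbol{z} \in \mathbb{R}^n$ with $f(\boldsymbol{x}) = f^*(\boldsymbol{x} - \boldsymbol{z})$ and $h(\boldsymbol{x}) = h^*(\boldsymbol{x} - \boldsymbol{z})$, a measurable-selection argument that constitutes the deep part of the Lieb--Burchard equality analysis.
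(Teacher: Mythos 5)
The paper does not prove this lemma at all: it is quoted verbatim as a classical result with a citation to Lieb--Loss \cite{Lieb_2001}, so there is no ``paper proof'' to compare against. Your outline is essentially the standard proof from that reference (layer-cake reduction to characteristic functions, the one-dimensional case, Steiner symmetrization, and a convergence argument), and as a sketch it is sound. Two places are thinner than they look, though. First, the one-dimensional inequality needed for the Steiner step is the statement for arbitrary measurable sets of finite measure, not merely for triples of intervals; passing from intervals to general sets requires approximating by finite unions of intervals and running a simultaneous contraction flow on all the gaps (Lieb--Loss, Lemma 3.6), and the ``elementary sliding argument, first for two intervals, then iterated'' does not by itself deliver this. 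Second, the equality characterization for strictly symmetric-decreasing $g$ is not obtained in \cite{Lieb_2001} by tracking equality through the infinite Steiner iteration, which, as you suspect, is very hard to control; instead it is proved by a separate direct argument (their Theorem 3.9) that uses the already-established inequality together with strict monotonicity of the kernel to force $f$ and $h$ to be common translates of their rearrangements. Your plan for the equality case is therefore the one genuinely incomplete step: synthesizing slice-wise centers into a single global translation is precisely the part that the cited direct argument is designed to avoid, and you should either reproduce that argument or explicitly defer the equality case to \cite{Lieb_2001} rather than to the symmetrization limit.
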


\begin{theorem}[The Riesz Composition Formula \text{\cite[Theorem 3.1]{Plessis_1970}}]\label{composition}
 Suppose that $0 < \alpha < n$, $0 < \beta < n$, and $0 < \alpha + \beta < n$. Then
\begin{equation*}
\int_{\mathbb{R}^n} |\boldsymbol{x} - \boldsymbol{z}|^{\alpha - n} |\boldsymbol{z} - \boldsymbol{y}|^{\beta - n} \dd \boldsymbol{z}
= \kappa_{\alpha,\beta}\,|\boldsymbol{x} - \boldsymbol{y}|^{\alpha + \beta - n},
\end{equation*}
where
\begin{equation}\label{A.4a}
 \kappa_{\alpha,\beta}
 = \frac{\pi^{\frac{n}{2}} \Gamma(\frac{\alpha}{2}) \Gamma(\frac{\beta}{2}) \Gamma(\frac{n-\alpha-\beta}{2})}{\Gamma(\frac{n-\alpha}{2}) \Gamma(\frac{n - \beta}{2}) \Gamma(\frac{\alpha + \beta}{2})}.
 \end{equation}
\end{theorem}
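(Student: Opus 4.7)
The plan is to establish the identity in three steps: use invariances to reduce to identifying a single scalar constant, and then compute that constant via Fourier analysis of Riesz kernels.

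First, by the translation invariance $\boldsymbol{z} \mapsto \boldsymbol{z} + \boldsymbol{y}$, it suffices to prove the formula with $\boldsymbol{y} = \boldsymbol{0}$. Writing
\[
F(\boldsymbol{x}) := \int_{\mathbb{R}^n} |\boldsymbol{x} - \boldsymbol{z}|^{\alpha - n} |\boldsymbol{z}|^{\beta - n} \,\dd \boldsymbol{z},
\]
I note that $F$ is well-defined for $\boldsymbol{x} \neq \boldsymbol{0}$ precisely because $\alpha, \beta > 0$ ensure local integrability of each factor, while $\alpha + \beta < n$ guarantees that the integrand decays like $|\boldsymbol{z}|^{\alpha + \beta - 2n}$ at infinity. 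By rotation invariance, $F(\boldsymbol{x}) = F(|\boldsymbol{x}|\boldsymbol{e}_1)$, and the change of variables $\boldsymbol{z} = |\boldsymbol{x}|\boldsymbol{w}$ yields $F(\boldsymbol{x}) = |\boldsymbol{x}|^{\alpha + \beta - n} F(\boldsymbol{e}_1)$. So the content of the theorem reduces to identifying $\kappa_{\alpha,\beta} := F(\boldsymbol{e}_1)$.

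To evaluate this constant, I would invoke the standard Fourier-transform identity for Riesz kernels: for $0 < s < n$,
\[
\widehat{|\cdot|^{-s}}(\boldsymbol{\xi}) = 2^{n - s} \pi^{n/2} \frac{\Gamma(\tfrac{n - s}{2})}{\Gamma(\tfrac{s}{2})} |\boldsymbol{\xi}|^{s - n},
\]
understood as a tempered distribution. (This is proved by pairing both sides against a Gaussian dilation $e^{-\pi t|\boldsymbol{x}|^2}$, Mellin-transforming in $t$, and matching the resulting Gamma factors.) Applying the identity with $s = n - \alpha$ and $s = n - \beta$, the convolution $|\cdot|^{\alpha - n} * |\cdot|^{\beta - n}$ transforms to a product of two explicit constants times $|\boldsymbol{\xi}|^{-(\alpha + \beta)}$; inverting via the identity with $s = \alpha + \beta < n$ gives a scalar multiple of $|\boldsymbol{x}|^{\alpha + \beta - n}$. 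Collecting the three Gamma ratios and the powers of $\pi$ and $2$ gives exactly
\[
\kappa_{\alpha, \beta} = \frac{\pi^{n/2} \Gamma(\tfrac{\alpha}{2}) \Gamma(\tfrac{\beta}{2}) \Gamma(\tfrac{n - \alpha - \beta}{2})}{\Gamma(\tfrac{n - \alpha}{2}) \Gamma(\tfrac{n - \beta}{2}) \Gamma(\tfrac{\alpha + \beta}{2})}.
\]

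The main technical obstacle is justifying the Fourier manipulation rigorously, since $|\boldsymbol{x}|^{\alpha - n}$ and $|\boldsymbol{x}|^{\beta - n}$ lie in no standard $L^p$ class. A clean way to handle this is to pair both sides with an arbitrary Schwartz function $\varphi$ and use the Parseval-type identity for tempered distributions to verify
\[
\int_{\mathbb{R}^n} F(\boldsymbol{x})\,\varphi(\boldsymbol{x})\,\dd \boldsymbol{x} = \kappa_{\alpha,\beta} \int_{\mathbb{R}^n} |\boldsymbol{x}|^{\alpha + \beta - n}\,\varphi(\boldsymbol{x})\,\dd \boldsymbol{x},
\]
and then conclude pointwise by continuity of $F$ on $\mathbb{R}^n \setminus \{\boldsymbol{0}\}$. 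An alternative that avoids Fourier analysis entirely is to compute $F(\boldsymbol{e}_1)$ directly by passing to bipolar (or shifted spherical) coordinates, reducing the integral to a product of Beta functions, which manifests the Gamma factors by way of the classical identity $B(p,q) = \Gamma(p)\Gamma(q)/\Gamma(p+q)$; this is the approach in Stein's treatise and yields the same constant with more elementary tools, at the cost of a less transparent parametrization.
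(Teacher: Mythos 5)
Your argument is correct, but note that the paper does not prove this statement at all: it is quoted verbatim from du Plessis \cite[Theorem 3.1]{Plessis_1970} (where it is established by a direct computation in the spirit of your ``bipolar coordinates'' alternative), so there is no in-paper proof to match. Your reduction is sound: translation invariance puts $\boldsymbol{y}=\boldsymbol{0}$, the hypotheses $\alpha,\beta>0$ and $\alpha+\beta<n$ give absolute convergence of $F(\boldsymbol{e}_1)$, and rotation plus scaling invariance force $F(\boldsymbol{x})=|\boldsymbol{x}|^{\alpha+\beta-n}F(\boldsymbol{e}_1)$, so only the constant remains. Your Gamma-factor bookkeeping is also right; indeed the ratio $A_{n-\alpha}A_{n-\beta}/A_{n-\alpha-\beta}$ of the Fourier normalizations is independent of the transform convention and equals $\kappa_{\alpha,\beta}$ exactly. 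The one step that deserves more care than you give it is the invocation of $\widehat{f*g}=\hat f\,\hat g$: neither kernel is in $L^1+L^2$, and the convolution of two arbitrary tempered distributions is not defined, so ``Parseval for tempered distributions'' does not apply off the shelf. The clean repair, consistent with what you sketch, is to use the homogeneity reduction first (so you never need a distributional convolution theorem) and then identify $F(\boldsymbol{e}_1)$ by integrating both sides against $e^{-\pi|\boldsymbol{x}|^2}$, expanding each kernel via the subordination formula $|\boldsymbol{x}|^{-s}=\frac{\pi^{s/2}}{\Gamma(s/2)}\int_0^\infty e^{-\pi t|\boldsymbol{x}|^2}t^{s/2-1}\,\dd t$ and applying Tonelli; everything then reduces to Gaussian and Beta integrals and yields \eqref{A.4a}. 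With that adjustment the proof is complete and entirely standard.
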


\medskip
\section{Construction of the Approximate Initial Data}\label{B1}
Similar to \cite{Chen2021}, we approximate the initial density by a density function away from the vacuum,
which means that our approximate solutions can also be away from the vacuum, giving us simpler calculations,
as we consider the equations in terms of the velocity and the density, rather than the momentum and the density.

\smallskip
Consider a standard radially symmetric mollifier $J(\boldsymbol{x})$ and the associated mollifier:
$$
J_\delta(\boldsymbol{x}) := \frac{1}{\delta^n}J(\frac{\boldsymbol{x}}{\delta}).
$$
We now define the approximation:
\begin{equation*}
    \tilde{\rho}^\varepsilon_0(\boldsymbol{x})
    := \bigg( \int_{\mathbb{R}^n} \sqrt{\rho_0(\boldsymbol{x}- \boldsymbol{y})} J_{\sqrt{\varepsilon}}(\boldsymbol{y}) \dd \boldsymbol{y} + \varepsilon e^{-|\boldsymbol{x}|^2} \bigg)^2.
\end{equation*}
We have defined the approximation as such in order to have a nice formula for $\sqrt{\tilde{\rho}^\varepsilon_0}(\boldsymbol{x})$ with the first part smoothing the square root of the density and the second part avoiding any vacuum states that might occur in the initial density.
Then, as in \cite{Chen2021}, we normalize to ensure that the mass of the approximating initial density remains constant as $\varepsilon$ varies.
We define the initial density approximation:
\begin{equation}\label{approx1}
    \rho^\varepsilon_0(\boldsymbol{x}) := \frac{M}{\int_{\mathbb{R}^n} \tilde{\rho}^\varepsilon_0(\boldsymbol{x}) \dd \boldsymbol{x}} \tilde{\rho}^\varepsilon_0(\boldsymbol{x}).
\end{equation}

Then, similar to \cite{Chen2021}, we have the following lemma:
\begin{lemma}\label{appendix1}
    Let $q \in \{1,\gamma\}$ for $\kappa = 1$ and $q \in \{1,\gamma,\frac{2n}{2n - \alpha}\}$ for $\kappa = -1$. Then
    \begin{align}\label{epcon}
        \begin{split}
		&\int_{\R^n}\rho_0^\v(\boldsymbol{x})\,\dd\boldsymbol{x}=M, \quad
        \|\rho_0^\v\|_{L^q}\leq C\big(\|\rho_0\|_{L^q}+1\big)
		\qquad\,\,\mbox{for all $\v\in(0,1]$},\\
		&\lim_{\v\rightarrow0} \big(\|\rho_0^\v-                     \rho_0\|_{L^q}+\|\sqrt{\rho_0^\v}-  \sqrt{\rho_0}\|_{L^{2q}}\big)=0,\\
		&\v^2\int_{\R^n}\Big|\nabla_{\boldsymbol{x}}\sqrt{\rho^\v_{0} (\boldsymbol{x})}\Big|^2 \dd\boldsymbol{x}
		\leq C\v \big(\|\rho_0\|_{L^1}+1\big) \longrightarrow     0\qquad\mbox{as $\v\rightarrow0$}.
            \end{split}
	\end{align}
\end{lemma}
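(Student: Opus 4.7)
My plan is to exploit the fact that the approximation is constructed so that its square root has a clean expression, namely
\[
\sqrt{\tilde{\rho}_0^{\v}}(\boldsymbol{x}) = (\sqrt{\rho_0}\ast J_{\sqrt{\v}})(\boldsymbol{x}) + \v e^{-|\boldsymbol{x}|^2},
\]
so that the standard mollifier estimates apply directly to $\sqrt{\tilde{\rho}_0^{\v}}$ rather than to $\tilde{\rho}_0^{\v}$ itself. The mass identity $\int \rho_0^{\v}\dd\boldsymbol{x}=M$ is immediate from the normalization in \eqref{approx1}. For the $L^q$ bound with $q\in\{1,\gamma\}$ (or $q=\frac{2n}{2n-\alpha}$ in the repulsive case), I would use $(a+b)^2\le 2a^2+2b^2$ together with Young's convolution inequality to write
\[
\| \tilde{\rho}_0^{\v} \|_{L^q}
\le 2\,\|\sqrt{\rho_0}\ast J_{\sqrt{\v}}\|_{L^{2q}}^2+2\v^2\|e^{-2|\cdot|^2}\|_{L^q}
\le 2\,\|\sqrt{\rho_0}\|_{L^{2q}}^2+C\v^2
= 2\|\rho_0\|_{L^q}+C\v^2,
\]
since $\|J_{\sqrt{\v}}\|_{L^1}=1$ and $\|\sqrt{\rho_0}\|_{L^{2q}}^2=\|\rho_0\|_{L^q}$. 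The normalizing factor $\frac{M}{\int \tilde{\rho}_0^{\v}\dd\boldsymbol{x}}$ is harmless because $\int \tilde{\rho}_0^{\v}\dd\boldsymbol{x}=\|\sqrt{\tilde{\rho}_0^{\v}}\|_{L^2}^2\to \|\sqrt{\rho_0}\|_{L^2}^2=M$ as $\v\to 0$ by the standard $L^2$-mollifier convergence, so the factor is bounded above and bounded away from zero for small $\v$.

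For the strong convergence statements, the key point is again that convergence of $\sqrt{\rho_0}\ast J_{\sqrt{\v}}$ to $\sqrt{\rho_0}$ in $L^{2q}(\mathbb{R}^n)$ is a classical mollifier fact (using $\sqrt{\rho_0}\in L^{2q}$, which is exactly $\rho_0\in L^q$). Combined with the decay $\v e^{-|\boldsymbol{x}|^2}\to 0$ in $L^{2q}$ and the normalizing factor tending to $1$, this gives $\sqrt{\rho_0^{\v}}\to\sqrt{\rho_0}$ in $L^{2q}$. The corresponding convergence of $\rho_0^{\v}$ in $L^q$ then follows from the factorization
\[
\rho_0^{\v}-\rho_0=(\sqrt{\rho_0^{\v}}-\sqrt{\rho_0})(\sqrt{\rho_0^{\v}}+\sqrt{\rho_0}),
\]
the Cauchy--Schwarz inequality in the form $\|fg\|_{L^q}\le \|f\|_{L^{2q}}\|g\|_{L^{2q}}$, and the uniform $L^{2q}$-bound on $\sqrt{\rho_0^{\v}}+\sqrt{\rho_0}$ coming from the first part of the proof.

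Finally, for the BD-type bound on $\nabla\sqrt{\rho_0^{\v}}$, I would simply differentiate:
\[
\nabla\sqrt{\tilde{\rho}_0^{\v}}(\boldsymbol{x}) = (\sqrt{\rho_0}\ast\nabla J_{\sqrt{\v}})(\boldsymbol{x})-2\v\boldsymbol{x} e^{-|\boldsymbol{x}|^2},
\]
and apply Young's inequality together with the scaling $\|\nabla J_{\sqrt{\v}}\|_{L^1}=\v^{-1/2}\|\nabla J\|_{L^1}$ to obtain
\[
\|\nabla\sqrt{\tilde{\rho}_0^{\v}}\|_{L^2}^2\le 2\,\|\sqrt{\rho_0}\|_{L^2}^2\,\v^{-1}\|\nabla J\|_{L^1}^2+C\v^2\le C\v^{-1}(\|\rho_0\|_{L^1}+1).
\]
Multiplying by $\v^2$ yields the stated $O(\v)$ bound, and the normalization factor is again bounded. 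There is no single hard step here: the whole argument is essentially a bookkeeping exercise around the identity $\sqrt{\tilde{\rho}_0^{\v}}=\sqrt{\rho_0}\ast J_{\sqrt{\v}}+\v e^{-|\cdot|^2}$, with the only mild subtlety being to verify that the normalizing constant $\frac{M}{\|\tilde{\rho}_0^{\v}\|_{L^1}}$ is uniformly controlled and converges to $1$ as $\v\to 0$.
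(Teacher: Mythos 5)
Your proof is correct and follows exactly the route the construction was designed for (the paper itself defers the details to \cite{Chen2021}, noting that the approximation was built precisely so that $\sqrt{\tilde\rho_0^\v}=\sqrt{\rho_0}*J_{\sqrt\v}+\v e^{-|\cdot|^2}$ admits direct mollifier estimates): Young's inequality for the $L^q$ and gradient bounds, classical mollifier convergence in $L^{2q}$, and the factorization $\rho_0^\v-\rho_0=(\sqrt{\rho_0^\v}-\sqrt{\rho_0})(\sqrt{\rho_0^\v}+\sqrt{\rho_0})$ with H\"older. The only point to tidy is that the uniform bound is claimed for all $\v\in(0,1]$, so you should note that $\int\tilde\rho_0^\v\,\dd\boldsymbol{x}\ge\|\sqrt{\rho_0}*J_{\sqrt\v}\|_{L^2}^2>0$ is bounded away from zero on all of $(0,1]$ (by positivity and continuity in $\v$, together with the limit $M$ as $\v\to0$), not merely for small $\v$.
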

In the case that $\alpha \in (-1,0]$,
$\Phi_\alpha * \rho^\varepsilon_0$ is well-defined:
\begin{align}\label{boundedlog}
   & \int_{\mathbb{R}^n} \rho^\varepsilon_0(\boldsymbol{x}) |k_\alpha(|\boldsymbol{x} - \boldsymbol{y}|)|
     \dd \boldsymbol{y}\nonumber\\
        & = \frac{M}{\int_{\mathbb{R}^n} \tilde{\rho}^\varepsilon_0(\boldsymbol{y}) \dd \boldsymbol{y}}
        \int_{\mathbb{R}^n} \tilde{\rho}^\varepsilon_0(\boldsymbol{y}) |k_\alpha(|\boldsymbol{x} - \boldsymbol{y}|)|
        \dd \boldsymbol{y} \nonumber\\
        & \leq \frac{2M}{\int_{\mathbb{R}^n} \tilde{\rho}^\varepsilon_0(\boldsymbol{y}) \dd \boldsymbol{y}}
        \int_{\mathbb{R}^n} \Big( |\sqrt{\rho_0} * J_{\sqrt{\varepsilon}}|^2(\boldsymbol{y}) + \varepsilon^2 e^{-2|\boldsymbol{x}
        - \boldsymbol{y}|^2} \Big) |k_\alpha(|\boldsymbol{y}|)| \dd \boldsymbol{y}\nonumber\\
        & \leq \frac{2M}{\int_{\mathbb{R}^n} \tilde{\rho}^\varepsilon_0(\boldsymbol{y}) \dd \boldsymbol{y}}
        \int_{\mathbb{R}^n} \bigg( \Big|\int_{\mathbb{R}^n} \sqrt{\rho_0}(\boldsymbol{y} - \boldsymbol{z}) J_{\sqrt{\varepsilon}}(\boldsymbol{z}) \dd \boldsymbol{z} \Big|^2 + \varepsilon^2 e^{-2|\boldsymbol{y}|^2} \bigg)
        |k_\alpha(|\boldsymbol{x} - \boldsymbol{y}|)| \dd \boldsymbol{y}\nonumber\\
        & \leq \frac{2M}{\int_{\mathbb{R}^n} \tilde{\rho}^\varepsilon_0(\boldsymbol{y}) \dd \boldsymbol{y}}
        \bigg( \Big( \int_{\mathbb{R}^n} J_{\sqrt{\varepsilon}} (\boldsymbol{z}) \sqrt{(|k_\alpha( |\cdot|)| * \rho_0)(\boldsymbol{x} - \boldsymbol{z})} \dd \boldsymbol{z} \Big)^2 + \int_{\mathbb{R}^n} \varepsilon^2 e^{-2|\boldsymbol{y}|^2}\,
        |k_\alpha(|\boldsymbol{x} - \boldsymbol{y}|)| \dd \boldsymbol{y} \bigg)\nonumber\\[1mm]
        & < \infty,
\end{align}
where we have used the Minkowski inequality to obtain the penultimate inequality.
We can also prove the following convergence result for the kernel:

\begin{lemma}\label{kernel1}
 $\Phi_\alpha*\rho^{\varepsilon}_0$ with $\alpha \in (0,n-1)$ satisfies the following{\rm :}
    \begin{equation*}
    \begin{split}
        & \|\Phi_\alpha*\rho_0^\v - \Phi_\alpha * \rho_0\|_{L^{\frac{2n}{\alpha}}} \leq C \|\rho^\varepsilon_0 - \rho_0\|_{L^\frac{2n}{2n - \alpha}} \longrightarrow 0 \qquad \text{ as } \v \to 0, \\[2mm]
        & \| \nabla (\Phi_\alpha*\rho^\varepsilon_0 - \Phi_\alpha * \rho_0)\|_{L^\frac{2n}{\alpha + 2}} \leq C \|\rho^\varepsilon_0 - \rho_0\|_{L^\frac{2n}{2n - \alpha}} \longrightarrow 0 \qquad \text{ as } \v \to 0.
    \end{split}
    \end{equation*}
    For $\alpha \in (-1,0]$ and $K \Subset \mathbb{R}^n$,
    \begin{equation}\label{logcon}
        \|\Phi_\alpha*\rho_0^\v - \Phi_\alpha * \rho_0\|_{L^\infty(K)}
        + \| \nabla (\Phi_\alpha*\rho^\varepsilon_0 - \Phi_\alpha * \rho_0)\|_{L^\frac{2n}{2n - \alpha}(K)}
        \longrightarrow 0 \qquad \text{ as } \v \to 0.
    \end{equation}
    Moreover, there exists a constant $C > 0$ such that, for $\varepsilon > 0$ sufficiently small,
    \begin{equation}\label{epinitial1}
        \int_{\mathbb{R}^n} \rho_0^\varepsilon(\boldsymbol{x}) k_\alpha(1 + |\boldsymbol{x}|^2) \dd \boldsymbol{x} \leq C.
    \end{equation}
\end{lemma}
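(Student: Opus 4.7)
\textbf{Proof plan for Lemma \ref{kernel1}.}

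The strategy is to treat the three assertions with three complementary tools: the HLS inequality from Lemma \ref{simplehls} for $\alpha\in(0,n-1)$; a near/far splitting of the singular kernel combined with the moment assumption \eqref{0.81} for $\alpha\in(-1,0]$; and a direct computation on $\tilde\rho^\v_0$ for the uniform moment bound \eqref{epinitial1}. The convergence ingredient in every case will be the $L^q$ convergence $\rho^\v_0\to\rho_0$ in Lemma \ref{appendix1} (applied for appropriate choices of $q\in\{1,\gamma,\frac{2n}{2n-\alpha}\}$), so the bulk of the work is to reduce each norm to a linear combination of such $L^q$ differences.

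I would first dispatch \eqref{epinitial1}. Since $\rho^\v_0=\bigl(M/\!\int\tilde\rho^\v_0\bigr)\tilde\rho^\v_0$ with $\int\tilde\rho^\v_0\to M$ by \eqref{epcon}, it suffices to bound $\int\tilde\rho^\v_0\,k_\alpha(1+|x|^2)\,\dd x$ uniformly. Using $(a+b)^2\le 2(a^2+b^2)$, the Gaussian contribution is harmless, and for the convolution piece the Cauchy--Schwarz inequality gives $(\sqrt{\rho_0}*J_{\sqrt\v})^2(y)\le (J_{\sqrt\v}*\rho_0)(y)$; Fubini then reduces the moment to $\int\!\!\int \rho_0(y)J_{\sqrt\v}(z)k_\alpha(1+|y+z|^2)\,\dd z\,\dd y$, and a comparison such as $k_\alpha(1+|y+z|^2)\le C(1+k_\alpha(1+|y|^2))$ uniformly for $|z|\le\sqrt\v\le 1$ yields the bound from \eqref{0.81}. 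For $\alpha\in(0,n-1)$, the two HLS estimates are immediate: applied to $|\cdot|^{-\alpha}$ with exponents $(p,r)=(\tfrac{2n}{2n-\alpha},\tfrac{2n}{\alpha})$ and to $|\cdot|^{-(\alpha+1)}$ with $(p,r)=(\tfrac{2n}{2n-\alpha},\tfrac{2n}{\alpha+2})$, they yield
\[
\|\Phi_\alpha*(\rho^\v_0-\rho_0)\|_{L^{2n/\alpha}}+\|\nabla\Phi_\alpha*(\rho^\v_0-\rho_0)\|_{L^{2n/(\alpha+2)}}\le C\|\rho^\v_0-\rho_0\|_{L^{2n/(2n-\alpha)}};
\]
since $\rho_0\in L^{2n/(2n-\alpha)}$ (directly when $\kappa=-1$, and by interpolation with $\gamma>\frac{n+\alpha}{n}>\frac{2n}{2n-\alpha}$ when $\kappa=1$), Lemma \ref{appendix1} delivers the desired convergence.

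For $\alpha\in(-1,0]$ and $K\Subset\mathbb{R}^n$, say $K\subset\overline{B_D}$, I would split the convolution into near and far parts. For the $L^\infty(K)$ piece, write for $\boldsymbol{x}\in K$,
\[
|\Phi_\alpha*(\rho^\v_0-\rho_0)(\boldsymbol{x})|\le\int_{B_1(\boldsymbol{x})}|\Phi_\alpha|\,|\rho^\v_0-\rho_0|\,\dd\boldsymbol{y}+\int_{B_1(\boldsymbol{x})^c}|\Phi_\alpha|\,|\rho^\v_0-\rho_0|\,\dd\boldsymbol{y}.
\]
On the near piece, H\"older's inequality against $|\Phi_\alpha(\cdot)|\in L^{\gamma/(\gamma-1)}(B_1)$ (since $\gamma>1$) bounds by $C\|\rho^\v_0-\rho_0\|_{L^\gamma}$. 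On the far piece, the estimates \eqref{logtingaling} and \eqref{minus} yield $|\Phi_\alpha(\boldsymbol{x}-\boldsymbol{y})|\le C(D)(1+k_\alpha(1+|\boldsymbol{y}|^2))$, reducing the contribution to $C(D)\bigl(\|\rho^\v_0-\rho_0\|_{L^1}+\int k_\alpha(1+|\boldsymbol{y}|^2)|\rho^\v_0-\rho_0|\,\dd\boldsymbol{y}\bigr)$. For the gradient piece in $L^{2n/(2n-\alpha)}(K)$, I split the kernel itself as $\nabla\Phi_\alpha=(\nabla\Phi_\alpha)\mathds{1}_{B_R}+(\nabla\Phi_\alpha)\mathds{1}_{B_R^c}$ with $R>0$ fixed: the inner part is controlled by Young's inequality using $|\cdot|^{-(\alpha+1)}\mathds{1}_{B_R}\in L^p$ for any $p<n/(\alpha+1)$, and the outer part is controlled pointwise by $R^{-(\alpha+1)}\|\rho^\v_0-\rho_0\|_{L^1}$.

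The main obstacle is obtaining the moment convergence $\int k_\alpha(1+|\boldsymbol{y}|^2)|\rho^\v_0-\rho_0|\,\dd\boldsymbol{y}\to 0$ that appears in the far-field term. This is not supplied by Lemma \ref{appendix1} directly, so I would derive it via the uniform bound \eqref{epinitial1} just proved: for any $\eta>0$, choose $R$ large so that $\int_{|\boldsymbol{y}|>R}(\rho^\v_0+\rho_0)k_\alpha(1+|\boldsymbol{y}|^2)<\eta$ uniformly in $\v$, and then use the $L^1$ convergence on $B_R$ (where $k_\alpha(1+|\boldsymbol{y}|^2)$ is bounded) to handle the residue. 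Combining this with the previous splittings yields \eqref{logcon}, completing the proof.
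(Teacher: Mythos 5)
Your overall architecture is sound and, for the HLS part and the near-field H\"older estimate, coincides with the paper's argument; your proof of \eqref{epinitial1} by Cauchy--Schwarz ($(\sqrt{\rho_0}*J_{\sqrt{\v}})^2\le J_{\sqrt{\v}}*\rho_0$), Fubini, and the comparison $k_\alpha(1+|\boldsymbol{y}+\boldsymbol{z}|^2)\le C(1+k_\alpha(1+|\boldsymbol{y}|^2))$ is correct and in fact more direct than the paper's, which derives \eqref{epinitial1} \emph{from} the already-established $L^\infty(K)$ convergence via the identity $\int_{B_1^c}|\rho_0^\v-\rho_0|\,k_\alpha(|\boldsymbol{x}|^2)\,\dd\boldsymbol{x}\le C|\Phi_\alpha*(\rho_0^\v-\rho_0)|(\boldsymbol{0})$.

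There is, however, one genuine gap in your treatment of the far-field term for $\alpha\in(-1,0]$: you assert that the uniform bound \eqref{epinitial1} lets you ``choose $R$ large so that $\int_{|\boldsymbol{y}|>R}(\rho_0^\v+\rho_0)\,k_\alpha(1+|\boldsymbol{y}|^2)\,\dd\boldsymbol{y}<\eta$ uniformly in $\v$.'' A uniform bound on the full weighted integrals does \emph{not} imply uniform smallness of their tails (uniform tightness); a family can keep its weighted mass bounded while pushing it entirely beyond every fixed $R$. What rescues the claim here is the explicit structure of $\rho_0^\v$: inserting $\mathds{1}_{B_R^c}$ into your own Cauchy--Schwarz/Fubini computation gives $\int_{|\boldsymbol{y}|>R}\rho_0^\v\,k_\alpha(1+|\boldsymbol{y}|^2)\,\dd\boldsymbol{y}\le C\int_{|\boldsymbol{w}|>R-1}\rho_0(\boldsymbol{w})\big(1+k_\alpha(1+|\boldsymbol{w}|^2)\big)\,\dd\boldsymbol{w}+C\v^2\int_{|\boldsymbol{y}|>R}e^{-2|\boldsymbol{y}|^2}k_\alpha(1+|\boldsymbol{y}|^2)\,\dd\boldsymbol{y}$ for $\v\le1$, and both terms vanish as $R\to\infty$ uniformly in $\v$ by \eqref{0.81}. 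This is precisely the mechanism the paper uses for its far-field term $I_2$ (there phrased via Minkowski's integral inequality, transferring $(\mathds{1}_{B_{\hat{\sigma}}^c}|\Phi_\alpha|)*\rho_0^\v$ to the corresponding quantities for $\rho_0$ and the Gaussian). So the conclusion stands, but you must route the tail estimate through the mollified construction rather than through \eqref{epinitial1} alone. The remaining differences (your Young-inequality splitting of $\nabla\Phi_\alpha$ on $K$ versus the paper's HLS-with-interpolation trick) are cosmetic; both are valid on compact sets.
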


\begin{proof}
The first part is an application of the HLS inequality from Lemma \ref{simplehls}. For simplicity, we may assume that $K = B_D$ for some $D > 0$.
Then, for ${\hat{\sigma}} > 0$,
        \begin{align*}
        \begin{split}
            & \big\|\Phi_\alpha*\rho_0^\v - \Phi_\alpha * \rho_0 \big\|_{L^\infty(K)} \\
            & \leq \big\|(\mathds{1}_{B_{\hat{\sigma}}}|\Phi_\alpha|)*(\rho_0^\v - \rho_0)
  \big\|_{L^\infty(K)}
            + \big\|(\mathds{1}_{B_{\hat{\sigma}}^c}|\Phi_\alpha|)*(\rho_0^\v - \rho_0) \big\|_{L^\infty(K)} \\
            & =: I_1 + I_2.
        \end{split}
    \end{align*}
Since $\mathds{1}_{B_{\hat{\sigma}}}\Phi_\alpha \in L^\frac{\gamma}{\gamma - 1}$,
then, for $I_1$, by the H\" older inequality,
    \begin{equation}\label{I1}
        I_1 \leq \|\rho_0^\v - \rho_0\|_{L^\gamma} \|\mathds{1}_{B_{\hat{\sigma}}}\Phi_\alpha\|_{L^\frac{\gamma}{\gamma - 1}} \longrightarrow 0
         \qquad \text{ as } \varepsilon \to 0.
    \end{equation}

We now want to show that $\| (\mathds{1}_{B_{\hat{\sigma}}^c} |\Phi_\alpha|)*\rho_0 \|_{L^\infty(B_{D + 1})} \to 0$ as ${\hat{\sigma}} \to \infty$.
We take ${\hat{\sigma}}$ large enough so that, for all $\boldsymbol{x} \in B_{D + 1}$ and $\boldsymbol{y} \in B_{{\hat{\sigma}} - D - 1}^c$,
$1 \leq |\boldsymbol{x}-\boldsymbol{y}| \leq \min\{2|\boldsymbol{y}|,|\boldsymbol{y}|^2\}$.
Then, for any $\boldsymbol{x} \in B_{D + 1}$,
\begin{align*}
\begin{split}
& \big|(\mathds{1}_{B_{\hat{\sigma}}^c} |\Phi_\alpha|)*\rho_0\big|(\boldsymbol{x}) \\
                & = \int_{\mathbb{R}^n} \rho_0(\boldsymbol{y}) \mathds{1}_{B_{\hat{\sigma}}^c}(\boldsymbol{x} - \boldsymbol{y}) |k_\alpha(|\boldsymbol{x} - \boldsymbol{y}|)| \dd \boldsymbol{y}  \\
                & \leq \int_{B_{{\hat{\sigma}} - D - 1}^c} \rho_0(\boldsymbol{y}) |k_\alpha(|\boldsymbol{x} - \boldsymbol{y}|)| \dd \boldsymbol{y} \\
                & \leq C \int_{B_{{\hat{\sigma}} - D - 1}^c} \rho_0(\boldsymbol{y}) |k_\alpha(|\boldsymbol{y}|)| \dd \boldsymbol{y}.
\end{split}
\end{align*}
This implies
\begin{equation*}
\big\| (\mathds{1}_{B_{\hat{\sigma}}^c} |\Phi_\alpha|)*\rho_0\big\|_{L^\infty(B_{D + 1})}
\leq C \int_{B_{{\hat{\sigma}} - D - 1}^c} \rho_0(\boldsymbol{y}) |k_\alpha(|\boldsymbol{y}|)|
\dd \boldsymbol{y} \longrightarrow 0 \qquad \text{ as ${\hat{\sigma}} \to \infty$}.
\end{equation*}

By the Lebesgue dominated convergence theorem, we obtain
 \[
 \int_{\mathbb{R}^n} \tilde{\rho}^\varepsilon_0(\boldsymbol{x}) \dd \boldsymbol{x} \to M \qquad \text{ as $\v \to 0$}.
 \]
 Then, for $\varepsilon$ small enough,
 $\int_{\mathbb{R}^n} \tilde{\rho}^\varepsilon_0(\boldsymbol{x}) \dd \boldsymbol{x} \geq \frac{M}{2}$.
 Moreover, for $\varepsilon \in (0,1)$ small enough, if $\boldsymbol{x} \in K$, by a similar calculation to \eqref{boundedlog}, we have
 \begin{align*}
\begin{split}
 & \big|(\mathds{1}_{B_{\hat{\sigma}}^c}|\Phi_\alpha|)*\rho_0^\v\big|(\boldsymbol{x}) \\
            & \leq \frac{2M}{\int_{\mathbb{R}^n} \tilde{\rho}^\varepsilon_0(\boldsymbol{y}) \dd \boldsymbol{y}}
            \bigg( \Big( \int_{\mathbb{R}^n} J_{\sqrt{\varepsilon}} (\boldsymbol{z}) \sqrt{((\mathds{1}_{B_{\hat{\sigma}}^c}|\Phi_\alpha|) * \rho_0)(\boldsymbol{x} - \boldsymbol{z})} \dd \boldsymbol{z} \Big)^2
            + \int_{\mathbb{R}^n} \varepsilon^2 e^{-2|\boldsymbol{y}|^2} (\mathds{1}_{B_{\hat{\sigma}}^c}|\Phi_\alpha|)(\boldsymbol{x} - \boldsymbol{y}) \dd \boldsymbol{y} \bigg) \\
            & \leq 4 \Big(\big\| (\mathds{1}_{B_{\hat{\sigma}}^c} |\Phi_\alpha|)*\rho_0\big\|_{L^\infty(B_{D + 1})}
            + \big\| (\mathds{1}_{B_{\hat{\sigma}}^c} |\Phi_\alpha|)*e^{-2|\,\cdot\,|^2} \big\|_{L^\infty(B_{D + 1})} \Big).
            \end{split}
\end{align*}
This yields
\begin{equation}\label{I2}
        I_2 \leq 5 \big\| (\mathds{1}_{B_{\hat{\sigma}}^c} |\Phi_\alpha|)*\rho_0 \big\|_{L^\infty(B_{D + 1})}
         + 4 \big\| (\mathds{1}_{B_{\hat{\sigma}}^c} |\Phi_\alpha|)*e^{-2|\cdot|^2} \big\|_{L^\infty(B_{D + 1})}
         \longrightarrow 0 \qquad \text{ as ${\hat{\sigma}} \to \infty$}.
    \end{equation}
Thus, combining \eqref{I1} with \eqref{I2}, we obtain
\begin{equation*}
\big\|\Phi_\alpha*\rho_0^\v - \Phi_\alpha * \rho_0 \big\|_{L^\infty(K)} \longrightarrow 0
\qquad \text{ as $\varepsilon \to 0$}.
\end{equation*}

For the final part, take $1 < p < \min\{\frac{2n}{\alpha + 2},\gamma\}$, by applying the HLS inequality from Lemma \ref{simplehls}, the interpolation argument,
and \eqref{eq4}, we have
\begin{align*}
   \begin{split}
            & \big\|\nabla (\Phi_\alpha*\rho^\varepsilon_0 - \Phi_\alpha * \rho_0)\big\|_{L^\frac{2n}{\alpha + 2}(K)} \\
            & \leq \big\| (|\cdot|^{-(\alpha + 1)} \mathds{1}_{B_1}) * (\rho^\varepsilon_0 - \rho_0)\big\|_{L^\frac{2n}{\alpha + 2}(K)}
            + \big\| (|\cdot|^{-(\alpha + 1)} \mathds{1}_{B_1^c}) * (\rho^\varepsilon_0 - \rho_0)\big\|_{L^\frac{2n}{\alpha + 2}(K)} \\
            & \leq \big\| (|\cdot|^{-(1 + \alpha/2 + n(1-1/p))}) * (\rho^\varepsilon_0 - \rho_0)\big\|_{L^\frac{2n}{\alpha + 2}(K)}
            + \big\| \mathds{1}_{B_1^c} * (\rho^\varepsilon_0 - \rho_0)\big\|_{L^\frac{2n}{\alpha + 2}(K)} \\
            & \leq C_{n,1 + \alpha/2 + n(1-1/p)} \|\rho^\varepsilon_0 - \rho_0\|_{L^p(\mathbb{R}^n)} + |K|^{\frac{\alpha + 2}{2n}} \|\rho^\varepsilon_0 - \rho_0\|_{L^1(\mathbb{R}^n)} \\
            & \leq C_{n,1 + \alpha/2 + n(1-1/p)} \|\rho^\varepsilon_0 - \rho_0\|_{L^1(\mathbb{R}^n)}^{\frac{\gamma - p}{p(\gamma - 1)}} \|\rho^\varepsilon_0 - \rho_0\|_{L^\gamma(\mathbb{R}^n)}^{\frac{\gamma(p-1)}{p(\gamma - 1)}} + |K|^{\frac{\alpha + 2}{2n}} \|\rho^\varepsilon_0 - \rho_0\|_{L^1(\mathbb{R}^n)}.
        \end{split}
    \end{align*}
Thus, we conclude
    \begin{equation*}
        \| \nabla (\Phi_\alpha*\rho^\varepsilon_0 - \Phi_\alpha * \rho_0)\|_{L^\frac{2n}{\alpha + 2}(K)} \longrightarrow 0
        \qquad \text{ as $\varepsilon \to 0$}.
    \end{equation*}

Now, by using the initial assumption \eqref{0.81}, there exists $C>0$ such that
    \[
    \int_{\mathbb{R}^n} \rho_0(\boldsymbol{x}) k_\alpha(1 + |\boldsymbol{x}|^2) \dd \boldsymbol{x} \leq C.
    \]
Thus, applying the H\"older inequality, we have
 \begin{align*}
        \begin{split}
            & \int_{\mathbb{R}^n} \rho_0^\varepsilon(\boldsymbol{x}) k_\alpha(1 + |\boldsymbol{x}|^2) \dd \boldsymbol{x} \\
            & \leq \int_{\mathbb{R}^n} \rho_0(\boldsymbol{x}) k_\alpha(1 + |\boldsymbol{x}|^2) \dd \boldsymbol{x}
             + \Big|\int_{\mathbb{R}^n} \big(\rho_0^\varepsilon(\boldsymbol{x}) - \rho_0(\boldsymbol{x})\big) k_\alpha(1 + |\boldsymbol{x}|^2) \dd \boldsymbol{x} \Big| \\
            & \leq C + \Big|\int_{B_1} \big(\rho_0^\varepsilon(\boldsymbol{x}) - \rho_0(\boldsymbol{x})\big) k_\alpha(1 + |\boldsymbol{x}|^2) \dd \boldsymbol{x} \Big|
            + \Big|\int_{B_1^c} \big(\rho_0^\varepsilon(\boldsymbol{x}) - \rho_0(\boldsymbol{x})\big) k_\alpha(1 + |\boldsymbol{x}|^2) \dd \boldsymbol{x} \Big|\\
            & \leq C + k_\alpha(2) \|\rho_0^\varepsilon - \rho_0\|_{L^1(B_1)}
            + \Big|\int_{B_1^c} \big(\rho_0^\varepsilon(\boldsymbol{x}) - \rho_0(\boldsymbol{x})\big) k_\alpha(2|\boldsymbol{x}|^2) \dd \boldsymbol{x} \Big|\\
            & \leq C + k_\alpha(2) \|\rho_0^\varepsilon - \rho_0\|_{L^1(B_1)} + k_\alpha(2) \|\rho_0^\varepsilon - \rho_0\|_{L^1(B_1^c)}
            + C \Big|\int_{B_1^c} \big(\rho_0^\varepsilon(\boldsymbol{x}) - \rho_0(\boldsymbol{x})\big) k_\alpha(|\boldsymbol{x}|^2) \dd \boldsymbol{x}\Big|\\
            & = C\big( 1 + \|\rho_0^\varepsilon - \rho_0\|_{L^1(\mathbb{R}^n)} + |\Phi_\alpha * (\rho_0^\varepsilon - \rho_0)|(\boldsymbol{0})\big).
        \end{split}
\end{align*}
Now, by \eqref{epcon} and \eqref{logcon}, we have
    \[
    \|\rho_0^\varepsilon - \rho_0\|_{L^1(\mathbb{R}^n)} + |\Phi_\alpha * (\rho_0^\varepsilon - \rho_0)|(\boldsymbol{0}) \longrightarrow 0
    \qquad \mbox{as $\varepsilon \to 0$}.
    \]
Then there exists some $C>0$ such that, for small enough $\varepsilon$, \eqref{epinitial1} holds.
\end{proof}

As in the previous literature, in order to prove the BD entropy and the convergence of $\Omega^T$ to $[0,T] \times (0,\infty)$
in some sense,
we require that
\begin{equation}\label{b}
    \rho_0^{\varepsilon,b}(b) \cong b^{-(n-\beta)},
\end{equation}
where $\beta = \min \{ \frac{1}{2}, (1-\frac{1}{\gamma})n\}$.
To do this, we choose a cut-off function $S \in C^\infty(\mathbb{R})$, such that
\begin{equation*}
    \begin{cases}
    S(z) = 0 \quad \text{ if } z \in (-\infty,0], \\
    S(z) = 1 \quad \text{ if } z \in [1,\infty), \\
    S(z) \text{ is monotonic in } [0,1]. \\
\end{cases}
\end{equation*}
Now, as in \cite{Chen2021}, we define
\begin{equation*}
    \tilde{\rho}^{\varepsilon,b}_0(\boldsymbol{x}) :=
    \Big\{ \sqrt{\rho_0^{\varepsilon}(\boldsymbol{x})}\big(1 - S(2(|\boldsymbol{x}| - (b-1)))\big)
    + b^{-\frac{n-\beta}{2}}S(2(|\boldsymbol{x}| - (b-1))) \Big\}^2.
\end{equation*}
Then $\tilde{\rho}^{\varepsilon,b}_0$ is a radial function and satisfies $\tilde{\rho}^{\varepsilon,b}_0(b) = b^{-(n-\beta)}$.
Now, as before, we normalize $\tilde{\rho}^{\varepsilon,b}_0$ (but on the corresponding domain from problem \eqref{Eul}--\eqref{0.20}: the annulus $\{b^{-1} \leq |\boldsymbol{x}| \leq b\}$) to produce
\begin{equation}\label{approx2}
    \rho^{\varepsilon,b}_0(\boldsymbol{x}) :=\frac{M}{\int_{b^{-1} \leq |\boldsymbol{x}| \leq b} \tilde{\rho}^{\varepsilon,b}_0(\boldsymbol{x}) \dd \boldsymbol{x}} \mathds{1}_{\{b^{-1} \leq |\boldsymbol{x}| \leq b \}}(\boldsymbol{x}) \tilde{\rho}^{\varepsilon,b}_0(\boldsymbol{x}).
\end{equation}
As before, it is direct to see that $\eqref{b}$ is satisfied.
Similar to \cite{Chen2021}, one can show the following lemma:
\begin{lemma}\label{appendix2}
The defined function $\rho^{\varepsilon,b}_0(\boldsymbol{x})$ in \eqref{approx2}  is smooth.
Moreover, for any $q \in \{1,\gamma\}$ when $\kappa = 1$ and $q \in \{1,\gamma,\frac{2n}{2n - \alpha}\}$ when $\kappa = -1$,
the following estimates hold{\rm :}
    \begin{align*}
        \begin{split}
		&\int_{b^{-1}\leq |\boldsymbol{x}|\leq b} \rho_{0}^{\v,b}(\boldsymbol{x})\,\dd\boldsymbol{x}= M \qquad\mbox{for all $\v\in (0,1]$ and $b>1$},\\
		&\int_{|\boldsymbol{x}|\leq b} \Big(\big|\rho_0^{\v,b}(\boldsymbol{x})-\rho_0^\v(\boldsymbol{x})\big|^q
		+\Big| \sqrt{\rho_{0}^{\v,b}(\boldsymbol{x})}
		-\sqrt{\rho_{0}^{\v}(\boldsymbol{x})}\Big|^{2q}\Big)\,\dd \boldsymbol{x}\rightarrow0 \qquad\mbox{as $b\to \infty$},\\
		&\v\int_{|\boldsymbol{x}|\leq b}\Big|\nabla_{\boldsymbol{x}}\sqrt{\rho_0^{\v,b}(\boldsymbol{x})}\Big|^2 \dd\boldsymbol{x}
		\leq C\big(\|\rho_0\|_{L^1}+1\big).
        \end{split}
    \end{align*}
\end{lemma}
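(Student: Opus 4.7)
The plan is to argue the three bullets in order, exploiting the fact that the modification of $\rho_0^\varepsilon$ occurs only on the thin annular shell $A_b := \{\boldsymbol{x} \,:\, b-1 \leq |\boldsymbol{x}| \leq b\}$ where the cut-off $S(2(|\boldsymbol{x}|-(b-1)))$ is active, and that the prescribed boundary height $b^{-(n-\beta)/2}$ is, by the choice of $\beta \in (0,\tfrac12]$, small enough in $L^2$-sense on $A_b$ to be negligible as $b \to \infty$. Throughout, I denote by $Z_{\varepsilon,b} := \int_{b^{-1}\leq|\boldsymbol{x}|\leq b}\tilde\rho_0^{\varepsilon,b}(\boldsymbol{x})\dd \boldsymbol{x}$ the normalization factor, so that $\rho_0^{\varepsilon,b}=(M/Z_{\varepsilon,b})\,\mathds{1}_{\{b^{-1}\leq|\boldsymbol{x}|\leq b\}}\tilde\rho_0^{\varepsilon,b}$.

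Smoothness and mass conservation are the easy steps. Since $\rho_0^\varepsilon \geq \varepsilon^2 e^{-2|\boldsymbol{x}|^2}>0$ is smooth by the construction in \eqref{approx1}, $\sqrt{\rho_0^\varepsilon}$ is smooth, $S\in C^\infty$, and hence $\sqrt{\tilde\rho_0^{\varepsilon,b}}$ is a smooth radial function; squaring and multiplying by the positive constant $M/Z_{\varepsilon,b}$ preserves smoothness inside the open annulus. Mass conservation $\int \rho_0^{\varepsilon,b}\dd\boldsymbol{x} = M$ is immediate from the definition of $Z_{\varepsilon,b}$. A direct consequence of Lemma \ref{appendix1} and the Lebesgue dominated convergence theorem (using that $\tilde\rho_0^{\varepsilon,b}\leq 2\rho_0^\varepsilon + 2b^{-(n-\beta)}\mathds{1}_{A_b}$ and the latter has integral $\leq C b^{\beta-1}\to 0$) is that $Z_{\varepsilon,b} \to M$ as $b\to\infty$, so $M/Z_{\varepsilon,b}$ is bounded and converges to $1$.

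For the convergence estimate, I split the integral according to the regions $\{|\boldsymbol{x}|\leq b-1\}$, where $\tilde\rho_0^{\varepsilon,b}=\rho_0^\varepsilon$, and the shell $A_b$. On the former,
\begin{equation*}
\big|\rho_0^{\varepsilon,b}-\rho_0^\varepsilon\big|^q \leq \big|M/Z_{\varepsilon,b}-1\big|^q\,(\rho_0^\varepsilon)^q,
\end{equation*}
whose integral vanishes as $b\to\infty$ because $M/Z_{\varepsilon,b}\to 1$ and $\rho_0^\varepsilon\in L^q$ by \eqref{epcon}. On $A_b$, I bound $(\rho_0^{\varepsilon,b})^q + (\rho_0^\varepsilon)^q \leq C\big((\rho_0^\varepsilon)^q + b^{-q(n-\beta)}\mathds{1}_{A_b}\big)$; the integral of the first summand over $A_b$ goes to zero by dominated convergence (since $\rho_0^\varepsilon\in L^q(\mathbb{R}^n)$ and $|A_b|$ lives at infinity), while the second contributes $Cb^{-q(n-\beta)}|A_b|\leq C b^{n-1-q(n-\beta)}$, which vanishes for all relevant $q\geq 1$ since $\beta<1$. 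The argument for $|\sqrt{\rho_0^{\varepsilon,b}}-\sqrt{\rho_0^\varepsilon}|^{2q}$ is identical, using the elementary bound $|\sqrt{a}-\sqrt{b}|^{2q}\leq |a-b|^q + |\,|a|^{1/2}-|b|^{1/2}|^{2q}$ together with the same shell/non-shell decomposition.

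The main (though still routine) step is the uniform BD-type bound. Writing $w(\boldsymbol{x}) := 2(|\boldsymbol{x}|-(b-1))$, one has $|\nabla w|=2$ and $S'(w)$ is supported in $A_b$; a direct computation gives
\begin{equation*}
\nabla\sqrt{\tilde\rho_0^{\varepsilon,b}} = (1-S(w))\,\nabla\sqrt{\rho_0^\varepsilon} + \big(b^{-(n-\beta)/2}-\sqrt{\rho_0^\varepsilon}\,\big)\,S'(w)\,\nabla w,
\end{equation*}
so that $|\nabla\sqrt{\tilde\rho_0^{\varepsilon,b}}|^2 \leq 2|\nabla\sqrt{\rho_0^\varepsilon}|^2 + C\,\mathds{1}_{A_b}\big(b^{-(n-\beta)}+\rho_0^\varepsilon\big)$. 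Integrating and multiplying by $\varepsilon$, the first summand contributes at most $C(\|\rho_0\|_{L^1}+1)$ by \eqref{epcon}, the $\rho_0^\varepsilon$ term contributes at most $\varepsilon\|\rho_0^\varepsilon\|_{L^1} \leq CM$, and the pure shell term gives $\varepsilon\,C b^{-(n-\beta)}|A_b|\leq C\varepsilon b^{\beta-1}\leq C$. Finally, $\rho_0^{\varepsilon,b}=(M/Z_{\varepsilon,b})\tilde\rho_0^{\varepsilon,b}$ with $M/Z_{\varepsilon,b}$ uniformly bounded above yields the claimed estimate. The only point requiring care is keeping track that all the constants produced by the shell term stay uniform in both $\varepsilon$ and $b$, which is precisely what the condition $\beta \leq \tfrac12 < 1$ ensures.
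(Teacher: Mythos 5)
Your overall strategy is the natural one and matches what the paper intends: the paper itself gives no proof of this lemma, deferring to the analogous construction in \cite{Chen2021}, and your shell/non-shell decomposition together with the observation that $b^{-(n-\beta)}|A_b|\leq Cb^{\beta-1}\to 0$ (since $\beta\leq\tfrac12$) is exactly the mechanism the construction is designed to exploit. The computation of $\nabla\sqrt{\tilde\rho_0^{\varepsilon,b}}$ and the resulting uniform BD-type bound are correct, as is the identification of $Z_{\varepsilon,b}\to M$ as the key normalization fact.

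Two points need repair, though neither is fatal. First, your splitting of $\{|\boldsymbol{x}|\leq b\}$ into $\{|\boldsymbol{x}|\leq b-1\}$ and $A_b$ overlooks the inner ball $\{|\boldsymbol{x}|<b^{-1}\}$, on which $\rho_0^{\varepsilon,b}\equiv 0$ because of the indicator $\mathds{1}_{\{b^{-1}\leq|\boldsymbol{x}|\leq b\}}$ in \eqref{approx2}; there the pointwise bound $|\rho_0^{\varepsilon,b}-\rho_0^{\varepsilon}|^q\leq|M/Z_{\varepsilon,b}-1|^q(\rho_0^{\varepsilon})^q$ is simply false. You need the extra (trivial) term $\int_{|\boldsymbol{x}|<b^{-1}}(\rho_0^{\varepsilon})^q\,\dd\boldsymbol{x}\to 0$, which follows since $\rho_0^{\varepsilon}$ is locally bounded for fixed $\varepsilon$. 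Second, the ``elementary bound'' $|\sqrt{a}-\sqrt{b}|^{2q}\leq|a-b|^q+||a|^{1/2}-|b|^{1/2}|^{2q}$ is of the form $X\leq Y+X$ and gives nothing; what you want is $|\sqrt{a}-\sqrt{b}|^2\leq|a-b|$ for $a,b\geq0$, which yields $|\sqrt{\rho_0^{\varepsilon,b}}-\sqrt{\rho_0^{\varepsilon}}|^{2q}\leq|\rho_0^{\varepsilon,b}-\rho_0^{\varepsilon}|^q$ directly and reduces the second convergence to the first. With these two corrections the argument is complete; it would also be worth one sentence justifying that $M/Z_{\varepsilon,b}$ is bounded uniformly in $(\varepsilon,b)$ (e.g.\ via $(\sqrt{\rho_0}*J_{\sqrt{\varepsilon}})^2\leq\rho_0*J_{\sqrt{\varepsilon}}$, so the tails of $\tilde\rho_0^{\varepsilon}$ are controlled by those of $\rho_0$ uniformly in $\varepsilon$), since the final estimate must be uniform in both parameters.
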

We also have the convergence of the initial potentials, represented by the following lemma:

\begin{lemma}\label{kernel2}
$\Phi_\alpha*\rho^{\varepsilon,b}_0$ with $\alpha \in (0,n-1)$ satisfy
    \begin{equation*}
    \begin{split}
        & \|\Phi_\alpha*\rho_0^{\v,b} - \Phi_\alpha * \rho_0^\v \|_{L^{\frac{2n}{\alpha}}} \leq C \|\rho^{\varepsilon,b}_0 - \rho^\varepsilon_0\|_{L^\frac{2n}{2n - \alpha}} \longrightarrow 0 \qquad \text{ as $b \to \infty$}, \\
        & \| \nabla (\Phi_\alpha*\rho^{\varepsilon,b}_0 - \Phi_\alpha * \rho^\varepsilon_0)\|_{L^\frac{2n}{\alpha + 2}} \leq C \|\rho^{\varepsilon,b}_0 - \rho^\varepsilon_0\|_{L^\frac{2n}{2n - \alpha}} \longrightarrow 0 \qquad \text{ as $b \to \infty$}.
    \end{split}
    \end{equation*}
    For $\alpha \in (-1,0]$ and $K \Subset \mathbb{R}^n$,
    \begin{equation*}
        \|\Phi_\alpha*\rho_0^{\v,b} - \Phi_\alpha * \rho^\v_0\|_{L^\infty(K)}+ \| \nabla (\Phi_\alpha*\rho^{\varepsilon,b}_0 - \Phi_\alpha * \rho^\v_0)\|_{L^\frac{2n}{2n - \alpha}(K)} \longrightarrow 0 \qquad \text{ as $b \to \infty$},
    \end{equation*}
    Moreover, there exists a constant $C > 0$ such that, for $\varepsilon > 0$ sufficiently small and $b$ sufficiently large,
    \begin{equation}\label{epinitial2}
        \int_{\mathbb{R}^n} \rho_0^{\varepsilon,b}(\boldsymbol{x}) \log(1 + |\boldsymbol{x}|^2) \dd \boldsymbol{x} \leq C.
    \end{equation}
\end{lemma}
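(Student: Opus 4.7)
The overall strategy is to mirror the structure of the proof of Lemma \ref{kernel1}, but replacing the pair $(\rho_0^\v,\rho_0)$ by $(\rho_0^{\v,b},\rho_0^\v)$ and translating the smallness coming from mollification as $\v\to 0$ into smallness coming from the outer cutoff as $b\to\infty$. The key input in both cases will be the $L^q$-convergence provided by Lemma \ref{appendix2}, together with the explicit structure of $\rho_0^{\v,b}$: it coincides (up to a normalizing constant tending to $1$) with $\rho_0^\v$ on $\{|\boldsymbol{x}|\le b-1\}$, equals $b^{-(n-\beta)}$ at $|\boldsymbol{x}|=b$, vanishes for $|\boldsymbol{x}|\ge b$, and satisfies $\beta\le \tfrac12$.

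\textbf{Riesz case $\alpha\in(0,n-1)$.} The two bounds are immediate consequences of the HLS inequality (Lemma \ref{simplehls}) applied to the difference $\rho_0^{\v,b}-\rho_0^\v$. For the potential itself, the HLS exponent pair $(p,r)=(\tfrac{2n}{2n-\alpha},\tfrac{2n}{\alpha})$ gives the first estimate. For the gradient, use the pointwise bound $|\nabla\Phi_\alpha(\boldsymbol{x})|\le |\boldsymbol{x}|^{-(\alpha+1)}$ and the HLS pair $(\tfrac{2n}{2n-\alpha},\tfrac{2n}{\alpha+2})$. To conclude convergence, it then remains to show $\|\rho_0^{\v,b}-\rho_0^\v\|_{L^{2n/(2n-\alpha)}}\to 0$ as $b\to\infty$. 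When $\kappa=-1$ this is exactly Lemma \ref{appendix2}; when $\kappa=1$, the exponent $\tfrac{2n}{2n-\alpha}$ lies between $1$ and $\gamma$ in the ranges required by Theorem \ref{existence}, so we interpolate $L^{2n/(2n-\alpha)}$ between $L^1$ and $L^\gamma$ convergence, both of which are supplied by Lemma \ref{appendix2}.

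\textbf{Case $\alpha\in(-1,0]$ on compact sets.} Following the splitting in the proof of Lemma \ref{kernel1}, write $|\Phi_\alpha|=(\mathds{1}_{B_{\hat\sigma}}|\Phi_\alpha|)+(\mathds{1}_{B_{\hat\sigma}^c}|\Phi_\alpha|)$ inside the convolution. The near-field piece is controlled by H\"older's inequality using $\mathds{1}_{B_{\hat\sigma}}|\Phi_\alpha|\in L^{\gamma/(\gamma-1)}$ and the $L^\gamma$-convergence $\|\rho_0^{\v,b}-\rho_0^\v\|_{L^\gamma}\to 0$ from Lemma \ref{appendix2}. For the far-field piece, one shows $\sup_{\boldsymbol{x}\in K}|(\mathds{1}_{B_{\hat\sigma}^c}|\Phi_\alpha|)*\rho_0^\v|(\boldsymbol{x})\to 0$ as $\hat\sigma\to\infty$ by means of the moment estimate \eqref{epinitial1} from Lemma \ref{kernel1}; the analogous statement for $\rho_0^{\v,b}$ then follows uniformly in $b$ once \eqref{epinitial2} is established, since $\rho_0^{\v,b}$ enjoys the same tail bound. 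The gradient estimate is obtained by splitting $|\nabla\Phi_\alpha|\le |\cdot|^{-(\alpha+1)}\mathds{1}_{B_1}+|\cdot|^{-(\alpha+1)}\mathds{1}_{B_1^c}$ exactly as in the last paragraph of the proof of Lemma \ref{kernel1}: the first summand is controlled by HLS applied in an auxiliary $L^p$ space with $p\in(1,\min\{\tfrac{2n}{\alpha+2},\gamma\})$, and the second by the convolution with a bounded kernel and the $L^1$-convergence of $\rho_0^{\v,b}-\rho_0^\v$. Interpolation between $L^1$ and $L^\gamma$ convergence supplies the required $L^p$ convergence.

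\textbf{Moment estimate \eqref{epinitial2}.} Split $\{|\boldsymbol{x}|\le b\}=\{|\boldsymbol{x}|\le b-1\}\cup\{b-1<|\boldsymbol{x}|\le b\}$. On the inner region, $\rho_0^{\v,b}=c_{\v,b}\rho_0^\v$ with $c_{\v,b}\to 1$ as $b\to\infty$ (by the Lebesgue dominated convergence theorem applied to the definition \eqref{approx2}), so the contribution is bounded by $c_{\v,b}\int_{\mathbb{R}^n}\rho_0^\v(\boldsymbol{x})\log(1+|\boldsymbol{x}|^2)\dd\boldsymbol{x}$, which is uniformly bounded by \eqref{epinitial1}. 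On the thin shell $\{b-1<|\boldsymbol{x}|\le b\}$, the construction yields $\rho_0^{\v,b}(\boldsymbol{x})\le C\,b^{-(n-\beta)}$, and hence this contribution is dominated by $C\,b^{-(n-\beta)}\cdot\omega_n b^{n-1}\cdot\log(1+b^2)\lesssim b^{\beta-1}\log(1+b^2)$, which tends to $0$ as $b\to\infty$ because $\beta\le\tfrac12<1$.

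\textbf{Main obstacle.} The routine ingredient is HLS plus Lemma \ref{appendix2}; the delicate point is the far-field analysis in the case $\alpha\in(-1,0]$. Because $|\Phi_\alpha|$ grows (logarithmically or as a negative power at infinity), one cannot close the estimate by naive dominated convergence: we must prove the tail control for $\rho_0^{\v,b}$ uniformly in $b$, which is exactly \eqref{epinitial2}. Thus \eqref{epinitial2} is not merely the third item of the lemma but a prerequisite for the far-field step of the second item, and the order of proof should be: (i) Riesz convergence; (ii) \eqref{epinitial2}; (iii) logarithmic/negative-$\alpha$ convergence relying on (ii).
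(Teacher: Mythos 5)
Your proposal is correct and follows essentially the same route as the paper, whose entire proof is the two sentences "apply the HLS inequality" for the Riesz case and "the second part follows the same approach as for Lemma \ref{kernel1}"; your expansion of the far-field/near-field splitting, the interpolation to reach the exponent $\frac{2n}{2n-\alpha}$ when $\kappa=1$, and the observation that \eqref{epinitial2} must be proved before the logarithmic tail estimate are all consistent with what the paper implicitly intends. One small correction: on the transition shell $\{b-1<|\boldsymbol{x}|\le b\}$ the construction gives only $\tilde\rho_0^{\v,b}\le 2\big(\rho_0^{\v}+b^{-(n-\beta)}\big)$, not the pointwise bound $\rho_0^{\v,b}\le C\,b^{-(n-\beta)}$ you claim (the cutoff $S$ is not identically $1$ on the inner half of the shell), so the shell contribution to \eqref{epinitial2} acquires the extra term $2\int_{|\boldsymbol{x}|>b-1}\rho_0^{\v}(\boldsymbol{x})\log(1+|\boldsymbol{x}|^2)\,\dd\boldsymbol{x}$, which tends to $0$ by \eqref{epinitial1} and dominated convergence, so your conclusion (and the uniform-in-$b$ tail smallness needed for the far-field step) still stands.
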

\begin{proof}
    The first part is an application of the HLS inequality from Lemma \ref{simplehls}. The second part follows the same approach as for Lemma \ref{kernel1}.
\end{proof}

We now provide an approximation to the initial velocity. To avoid issues regarding the vacuum, we construct $u^\varepsilon_0$ like so
\begin{equation}\label{approx3}
    u^\varepsilon_0(\boldsymbol{x}) := \frac{1}{\sqrt{\rho_0^\varepsilon(\boldsymbol{x})}} \Big( \frac{m_0}{\sqrt{\rho_0}} \Big)(\boldsymbol{x}).
\end{equation}
As in \cite[Lemma A.8]{Chen2021}, we can prove the following:
\begin{lemma}
    As defined above, $u^\varepsilon_0$ satisfies
    \begin{align*}
  &\int_{\mathbb{R}^n} \rho^\v_0(\boldsymbol{x})|u_0^\varepsilon(\boldsymbol{x})|^2 \dd \boldsymbol{x} = \int_{\mathbb{R}^n} \frac{|m_0(\boldsymbol{x})|^2}{\rho_0(\boldsymbol{x})} \dd \boldsymbol{x}, \\[2mm]
  &\,\,\,\big\|\rho_0^\v u^\v_0 - m_0\big\|_{L^1} \longrightarrow 0 \qquad \text{ as $\varepsilon \to 0$}.
    \end{align*}
\end{lemma}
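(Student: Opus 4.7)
The plan is to exploit the explicit pointwise definition of $u_0^\varepsilon$ in \eqref{approx3} and then reduce both claims to elementary algebra plus the control already established in Lemma \ref{appendix1}. For the identity, I would simply substitute: by the construction in \eqref{approx1}, the quantity $\rho_0^\varepsilon$ is strictly positive everywhere (the summand $\varepsilon e^{-|\boldsymbol{x}|^2}$ in the definition of $\tilde{\rho}_0^\varepsilon$ ensures $\sqrt{\rho_0^\varepsilon} > 0$), so division by $\sqrt{\rho_0^\varepsilon}$ is unproblematic, and
\[
\rho_0^\varepsilon(\boldsymbol{x}) |u_0^\varepsilon(\boldsymbol{x})|^2 = \rho_0^\varepsilon(\boldsymbol{x}) \cdot \frac{1}{\rho_0^\varepsilon(\boldsymbol{x})} \cdot \frac{|m_0(\boldsymbol{x})|^2}{\rho_0(\boldsymbol{x})} = \frac{|m_0(\boldsymbol{x})|^2}{\rho_0(\boldsymbol{x})}
\]
holds pointwise on $\{\rho_0 > 0\}$, while on the vacuum set $\{\rho_0 = 0\}$ both sides vanish by the standing convention that $\frac{m_0}{\sqrt{\rho_0}} = \boldsymbol{0}$ there. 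Integration then yields the first equality.

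For the second claim, I would rewrite the difference as
\[
\rho_0^\varepsilon u_0^\varepsilon - m_0 \;=\; \sqrt{\rho_0^\varepsilon}\,\frac{m_0}{\sqrt{\rho_0}} - m_0 \;=\; \frac{m_0}{\sqrt{\rho_0}}\bigl(\sqrt{\rho_0^\varepsilon} - \sqrt{\rho_0}\bigr),
\]
again interpreting the right-hand side as $\boldsymbol{0}$ on $\{\rho_0 = 0\}$. Applying Cauchy--Schwarz gives
\[
\|\rho_0^\varepsilon u_0^\varepsilon - m_0\|_{L^1} \;\leq\; \Big\|\frac{m_0}{\sqrt{\rho_0}}\Big\|_{L^2} \, \big\|\sqrt{\rho_0^\varepsilon} - \sqrt{\rho_0}\big\|_{L^2}.
\]
The first factor is finite by the finite-energy hypothesis \eqref{0.7}, and the second factor converges to zero by Lemma \ref{appendix1}. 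Hence $\rho_0^\varepsilon u_0^\varepsilon \to m_0$ in $L^1$.

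There is no substantive obstacle here; the only point requiring care is bookkeeping on the vacuum set $\{\rho_0 = 0\}$, where the factor $\frac{m_0}{\sqrt{\rho_0}}$ is formally undefined but is assigned the value $\boldsymbol{0}$ by the convention in Definition \ref{definition}. Everything else is a direct algebraic consequence of the definition \eqref{approx3} together with the already proven $L^2$ convergence of $\sqrt{\rho_0^\varepsilon}$ toward $\sqrt{\rho_0}$.
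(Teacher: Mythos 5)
Your proof is correct and is essentially the argument the paper defers to via \cite[Lemma A.8]{Chen2021}: the pointwise identity $\rho_0^\v |u_0^\v|^2 = |m_0|^2/\rho_0$, the factorization $\rho_0^\v u_0^\v - m_0 = \frac{m_0}{\sqrt{\rho_0}}(\sqrt{\rho_0^\v}-\sqrt{\rho_0})$, Cauchy--Schwarz, and the $L^2$ convergence of $\sqrt{\rho_0^\v}$ from Lemma \ref{appendix1}. One cosmetic point: the finiteness of $\|\frac{m_0}{\sqrt{\rho_0}}\|_{L^2}$ is cleanest to draw from the standing assumption $\frac{\M_0}{\sqrt{\rho_0}}\in L^2(\mathbb{R}^n)$ stated after \eqref{0.4}, since in the attractive case \eqref{0.7} alone does not separately bound the kinetic term.
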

We can now define $\tilde{u}^{\v,b}_0$:
\begin{equation*}
    \tilde{u}^{\v,b}_0(\boldsymbol{x})
    = \frac{1}{\sqrt{\rho^{\v,b}_0(\boldsymbol{x})}} \int_{\mathbb{R}^n}
    \Big( \frac{m_0 \mathds{1}_{[4b^{-1},b - 2]}}{\sqrt{\rho_0}} \Big)(\boldsymbol{x} - \boldsymbol{y}) J_{b^{-1}}(\boldsymbol{y}) \dd \boldsymbol{y},
\end{equation*}
where we have used the indicator function $\mathds{1}_{[4b^{-1},\,b - 2]}$ so that $\supp (\tilde{u}^{\v,b}_0) \subset [2b^{-1},\,b-1]$,
which implies that the lower boundary condition is satisfied.
We can now add the stress-free boundary condition to the upper boundary.
In order to satisfy \eqref{boundary}, we construct an initial velocity that satisfies
\begin{equation*}
    \Big( p(\rho^{\v,b}_0) - \varepsilon \rho_0^{\v,b} (u^{\v,b}_{0,r} + \frac{n-1}{r}u^{\v,b}_0 ) \Big)(r) = 0
    \qquad \mbox{for $r \in (b - \frac{1}{4},b]$}.
\end{equation*}
Given a solution to the above, then
\begin{align*}
    (u^{\v,b}_0r^{n-1})_r = \frac{1}{\varepsilon} \frac{p(\rho^{\v,b}_0(r))}{\rho^{\v,b}_0(r)} r^{n-1}
    \qquad \mbox{for $r \in (b - \frac{1}{4},b)$.}
\end{align*}
Integrating over $(r,b)$ and choosing $u^{\v,b}_0(b) = 0$, we obtain
\begin{equation*}
    u^{\v,b}_0(r) = -\frac{1}{\varepsilon}\frac{1}{r^{n-1}}\int^b_r \frac{p(\rho^{\v,b}_0(z))}{\rho^{\v,b}_0(z)} z^{n-1} \dd z.
\end{equation*}
Hence, in order to add this boundary condition to $\tilde{u}^{\v,b}_0$, we use the construction from \cite{Chen2021}:
\begin{equation}\label{approx4}
    u^{\v,b}_0(\boldsymbol{x}) := \tilde{u}^{\v,b}_0(\boldsymbol{x}) - \frac{1}{\varepsilon} S(4(|\boldsymbol{x}| - (b - \frac{1}{2}))) 
    \frac{1}{|\boldsymbol{x}|^{n-1}}\int^b_{|\boldsymbol{x}|} \frac{p(\rho^{\v,b}_0(z))}{\rho^{\v,b}_0(z)} z^{n-1} \dd z.
\end{equation}
Then $u^{\v,b}_0$ satisfies \eqref{boundary}. As in \cite[Lemma A.9]{Chen2021}, we obtain the following lemma:
\begin{lemma}\label{appendix4}
    For given $\varepsilon > 0$,
    \begin{align*}
		&\lim_{b\rightarrow\infty}\int_{|\boldsymbol{x}|\leq b}
        \Big|\sqrt{\rho_{0}^{\v,b}(\boldsymbol{x})}u_{0}^{\v,b}(\boldsymbol{x})\Big|^2\dd \boldsymbol{x}
		=\int_{\mathbb{R}^n} \Big|\sqrt{\rho_0^\v(\boldsymbol{x})} u_0^\v(\boldsymbol{x})\Big|^2\dd\boldsymbol{x},\\[1mm]
		&\sqrt{\rho_{0}^{\v,b}(\boldsymbol{x})}u_{0}^{\v,b}(\boldsymbol{x})\longrightarrow \sqrt{\rho_{0}^{\v}(\boldsymbol{x})}u_{0}^{\v}(\boldsymbol{x})
		\qquad \mbox{in $L^2(\{|\boldsymbol{x}|\leq b\})\, $  as $b\rightarrow\infty$}.
    \end{align*}
\end{lemma}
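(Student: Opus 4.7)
The plan is to decompose $u^{\v,b}_0 = \tilde{u}^{\v,b}_0 + w^{\v,b}_0$, where $w^{\v,b}_0(\boldsymbol{x})$ denotes the stress-free boundary correction term in \eqref{approx4}, and to treat the two pieces separately. Multiplying through by $\sqrt{\rho^{\v,b}_0}$, one has the key identity
\[
\sqrt{\rho^{\v,b}_0(\boldsymbol{x})}\,\tilde{u}^{\v,b}_0(\boldsymbol{x})
=\Big(J_{b^{-1}}*\Big(\frac{m_0\,\mathds{1}_{[4b^{-1},\,b-2]}}{\sqrt{\rho_0}}\Big)\Big)(\boldsymbol{x}),
\]
so that this part is entirely independent of the vacuum behavior of $\rho^{\v,b}_0$. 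Since $\frac{m_0}{\sqrt{\rho_0}}\in L^2(\mathbb{R}^n)$ by \eqref{0.7}, the dominated convergence theorem gives $\frac{m_0\,\mathds{1}_{[4b^{-1},\,b-2]}}{\sqrt{\rho_0}}\to \frac{m_0}{\sqrt{\rho_0}}$ in $L^2(\mathbb{R}^n)$ as $b\to\infty$, and the standard $L^2$-mollification property then yields
\[
\sqrt{\rho^{\v,b}_0}\,\tilde{u}^{\v,b}_0 \longrightarrow \frac{m_0}{\sqrt{\rho_0}} = \sqrt{\rho^\v_0}\,u^\v_0
\qquad \text{in } L^2(\mathbb{R}^n),
\]
where we interpret $\sqrt{\rho^{\v,b}_0}\,\tilde{u}^{\v,b}_0$ as extended by zero outside $\{|\boldsymbol{x}|\leq b\}$.

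For the correction term, note that $w^{\v,b}_0$ is supported in the thin annulus $b-\tfrac14\leq|\boldsymbol{x}|\leq b$ by the cut-off $S(4(|\boldsymbol{x}|-(b-\tfrac12)))$. On this annulus, the construction \eqref{b} forces $\rho^{\v,b}_0(r)\cong b^{-(n-\beta)}$, and hence
\[
\frac{p(\rho^{\v,b}_0(z))}{\rho^{\v,b}_0(z)} = a_0\,(\rho^{\v,b}_0(z))^{\gamma-1}\leq C\,b^{-(n-\beta)(\gamma-1)}
\qquad \text{for } z\in[b-\tfrac14,b].
\]
Combining this with $|\boldsymbol{x}|^{-(n-1)}\leq C b^{-(n-1)}$ and integrating in $z$ over an interval of length at most $1/4$ yields the pointwise bound
\[
|w^{\v,b}_0(\boldsymbol{x})|\leq \frac{C}{\v}\,b^{-(n-\beta)(\gamma-1)}(b-|\boldsymbol{x}|).
\]
Multiplying by $\sqrt{\rho^{\v,b}_0(\boldsymbol{x})}\leq C\,b^{-(n-\beta)/2}$, squaring, and integrating in spherical coordinates over $[b-\tfrac14,b]$ with volume element $r^{n-1}\,\d r$ gives
\[
\int_{|\boldsymbol{x}|\leq b}\Big|\sqrt{\rho^{\v,b}_0(\boldsymbol{x})}\,w^{\v,b}_0(\boldsymbol{x})\Big|^2 \d \boldsymbol{x}
\leq \frac{C}{\v^2}\,b^{n-1-(n-\beta)(2\gamma-1)}.
\]
With $\beta=\min\{\tfrac12,(1-\tfrac1\gamma)n\}$ and $\gamma>1$, one checks that $(n-\beta)(2\gamma-1)>n-1$ in both regimes of $\beta$, so this quantity tends to $0$ as $b\to\infty$.

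Combining the two pieces, $\sqrt{\rho^{\v,b}_0}\,u^{\v,b}_0\to\sqrt{\rho^\v_0}\,u^\v_0$ in $L^2(\mathbb{R}^n)$ (with zero extension), proving the second assertion. The first assertion then follows from the second by taking $L^2$ norms and using continuity of the norm under $L^2$ convergence. The main technical point is the estimate for $w^{\v,b}_0$, which crucially relies on the sharp lower boundary decay rate $\rho^{\v,b}_0(b)\cong b^{-(n-\beta)}$ built into the construction \eqref{b}; without the specific choice of $\beta$, the exponent $n-1-(n-\beta)(2\gamma-1)$ would not be negative and the boundary correction could contribute a nontrivial amount in the limit.
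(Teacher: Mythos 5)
Your proof is correct and supplies in full the argument that the paper simply delegates to \cite[Lemma A.9]{Chen2021}: the exact cancellation $\sqrt{\rho^{\v,b}_0}\,\tilde u^{\v,b}_0=J_{b^{-1}}*\big(m_0\mathds{1}_{[4b^{-1},b-2]}/\sqrt{\rho_0}\big)$ reduces the main term to standard mollifier convergence in $L^2$, and the boundary correction is killed by the decay $\rho^{\v,b}_0\cong b^{-(n-\beta)}$ near $r=b$. One cosmetic slip: the cut-off $S(4(|\boldsymbol{x}|-(b-\tfrac12)))$ vanishes only for $|\boldsymbol{x}|\le b-\tfrac12$, so the correction is supported in the annulus $\{b-\tfrac12\le|\boldsymbol{x}|\le b\}$ rather than $\{b-\tfrac14\le|\boldsymbol{x}|\le b\}$; this changes nothing since the width is still $O(1)$ and the decisive inequality $(n-\beta)(2\gamma-1)\ge (n-\tfrac12)(2\gamma-1)>n-1$ holds for every $\gamma>1$.
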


We define the approximate initial energy as follows:
\begin{definition}\label{approxenergy}
Define the energy for the approximate initial data by
    \begin{align*}
        E_0^\v := \int_{\mathbb{R}^n} \bigg ( \frac{1}{2}\bigg | \frac{\M_0}{\sqrt{\rho_0^\v}} \bigg |^2 + \rho_0^\v e(\rho_0^\v) + \frac{\kappa}{2} \rho_0^\v (\Phi_\alpha * \rho_0^\v) \bigg )(\boldsymbol{x}) \dd \boldsymbol{x} < \infty,
    \end{align*}
    and
    \begin{align*}
        E_0^{\v,b} :=
            \int_{\mathbb{R}^n} \bigg ( \frac{1}{2}\bigg | \frac{\M_0}{\sqrt{\rho_0^{\v,b}}} \bigg |^2 + \rho_0^{\v,b} e(\rho_0^{\v,b}) + \frac{\kappa}{2} \rho_0^{\v,b} (\Phi_\alpha * \rho_0^{\v,b}) \bigg )(\boldsymbol{x}) \dd \boldsymbol{x} < \infty.
    \end{align*}
\end{definition}

Combining all together,  we obtain the following lemma:
\begin{lemma}\label{appendix5}
    Take $\rho^\v_0,\rho^{\v,b}_0,u^\v_0$, and $u^{\v,b}_0$ to be defined as in \eqref{approx1}, \eqref{approx2},
    \eqref{approx3}, and \eqref{approx4} respectively.
    Then $(\rho^{\v,b}_0,u^{\v,b}_0) \in C^\infty([b^{-1},b])$ and satisfies \eqref{boundary}.
    For $q \in \{1,\gamma\}$ when $\kappa = 1$ and $q \in \{1,\gamma,\frac{2n}{2n - \alpha}\}$ when $\kappa = -1$, the following statements hold{\rm :}
    \begin{itemize}
        \item[(a)] For all $\v\in(0,1]$,
		\begin{equation*}
			\int_0^\infty \rho_0^\v(r)\,\omega_n r^{n-1} \dd r=M,\qquad
			\v^2\int_{0}^\infty |\partial_r\sqrt{\rho^\v_{0}(r)}|^2\, r^{n-1}\dd r
			\leq C\v (M+1).
		\end{equation*}
		As $\v\rightarrow0$,
		\begin{align*}
                \begin{split}
			&E^\v_0\longrightarrow E_0, \\
			&(\rho_0^{\v}, m_0^{\v})(r)\longrightarrow (\rho_0, m_0)(r) \quad \mbox{in $L^q([0,\infty); r^{n-1}\dd r)\times L^1([0,\infty); r^{n-1}\dd r)$},\\
			&(\Phi_\alpha * \rho^\varepsilon_0)_r\to (\Phi_\alpha * \rho_0)_r \quad \mbox{in $L^\frac{2n}{\alpha + 2}([0,\infty); r^{n-1} \dd r)$}.
                \end{split}
            \end{align*}
            Then there exists $\varepsilon_0 > 0$ such that
            \begin{equation*}
			M<M_{\rm c}^{\v,\alpha}(\gamma) \qquad\mbox{for all $\v\in (0,\v_0]$ and $\gamma \in (\frac{2n}{2n - \alpha}, \frac{n + \alpha}{n}]$}.
		\end{equation*}
        \item[(b)] For all $\v\in(0,1]$,
        \begin{equation*}
			\int_{b^{-1}}^{b} \rho_{0}^{\v,b}(r)\,\omega_n r^{n-1} \dd r= M, \qquad
			\v^2\int_{0}^{b} \big|\partial_r\sqrt{\rho^{\v,b}_{0}(r)}\big|^2\, r^{n-1}\dd r
			\leq C\v (M+1).
		\end{equation*}
		Moreover, for any fixed $\v > 0$, as $b\rightarrow\infty$,
		\begin{align*}
			\begin{split}
				& E_0^{\v,b} \longrightarrow E_0^\v, \\
				&(\rho_0^{\v,b}, m_0^{\v,b})(r)\longrightarrow (\rho_0^\v, m^\v_0)(r) \qquad
				\mbox{\rm in $L^q([0, b]; r^{n-1}\dd r)\times L^1([0, b]; r^{n-1}\dd r)$},\\
				&(\Phi_\alpha * \rho^{\varepsilon,b}_0)_r\to (\Phi_\alpha * \rho^\varepsilon_0)_r \qquad \mbox{\rm in $L^\frac{2n}{\alpha + 2}([0,\infty); r^{n-1} \dd r)$}.
			\end{split}
		\end{align*}
            Then there exists $\mathfrak{B}(\v) > 0$ such that
            \begin{equation*}
			M<M_{\rm c}^{\v,b,\alpha}(\gamma) \qquad\mbox{for all $\v\in (0,\v_0]$, $b \geq \mathfrak{B}(\v)$, and $\gamma \in (\frac{2n}{2n - \alpha}, \frac{n + \alpha}{n}]$}.
		\end{equation*}
    \end{itemize}
\end{lemma}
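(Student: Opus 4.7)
The proof consolidates the component-wise convergence and boundedness results already established in Lemmas~\ref{appendix1}--\ref{appendix4} together with the convolution-convergence results in Lemmas~\ref{kernel1}--\ref{kernel2}. The plan is to first verify the structural properties (smoothness, boundary conditions, and mass conservation), then assemble the convergences of the densities, momenta, and potential gradients, from which the energy convergences $E_0^\v \to E_0$ and $E_0^{\v,b} \to E_0^\v$ follow, and finally use the continuity of $M_{\rm c}^{\v,\alpha}(\gamma)$ and $M_{\rm c}^{\v,b,\alpha}(\gamma)$ in the energies to extract the strict critical-mass inequalities.

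Smoothness of $(\rho_0^{\v,b},u_0^{\v,b})$ on $[b^{-1},b]$ is immediate from the explicit formulas \eqref{approx1}--\eqref{approx4}, which are compositions of smooth mollifications with the smooth cut-off $S$ together with strictly positive lower bounds baked into the construction (the term $\v e^{-|\boldsymbol{x}|^2}$ keeps $\rho_0^\v>0$, and the $b^{-(n-\beta)/2}$ term keeps $\rho_0^{\v,b}>0$). The mass identities $\int \rho_0^\v\,\dd\boldsymbol{x}=\int \rho_0^{\v,b}\,\dd\boldsymbol{x}=M$ hold by design of the normalising constants. The lower boundary condition $u_0^{\v,b}(b^{-1})=0$ follows from $\supp(\tilde u_0^{\v,b})\subset[2b^{-1},b-1]$ together with the correction term in \eqref{approx4} being supported in $|\boldsymbol{x}|\geq b-\tfrac12$, while the stress-free upper condition on $b$ is verified by the direct differentiation of the correction term just before \eqref{approx4}.

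For (a), the gradient bound for $\sqrt{\rho_0^\v}$ and the $L^q$--convergences $\rho_0^\v\to\rho_0$, $\sqrt{\rho_0^\v}\to\sqrt{\rho_0}$ (for the appropriate $q$) are exactly Lemma~\ref{appendix1}. The momentum convergence $\rho_0^\v u_0^\v\to m_0$ in $L^1$ follows by the factorisation $\rho_0^\v u_0^\v=\sqrt{\rho_0^\v}\cdot(m_0/\sqrt{\rho_0})$ combined with strong convergence of $\sqrt{\rho_0^\v}$ in $L^2_{\rm loc}$ and the hypothesis $m_0/\sqrt{\rho_0}\in L^2$. The $L^{\frac{2n}{\alpha+2}}$--convergence of $(\Phi_\alpha\ast\rho_0^\v)_r$ is the content of Lemma~\ref{kernel1}. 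Assembling the kinetic, internal, and potential energy differences yields $E_0^\v\to E_0$; the potential piece is handled via HLS for $\alpha\in(0,n-1)$ and via the local $L^\infty$--convergence plus the moment bound \eqref{epinitial1} for $\alpha\in(-1,0]$. Part (b) proceeds in a parallel fashion using Lemma~\ref{appendix2} for the density convergence and gradient bound, Lemma~\ref{appendix4} for the convergence of $\sqrt{\rho_0^{\v,b}}u_0^{\v,b}$, and Lemma~\ref{kernel2} for the potential, producing $E_0^{\v,b}\to E_0^\v$.

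For the critical mass conditions, I would argue as follows: in the endpoint case $\gamma=\frac{n+\alpha}{n}$ both $M_{\rm c}^{\v,\alpha}(\gamma)$ and $M_{\rm c}^{\v,b,\alpha}(\gamma)$ equal $B_{n,\gamma,\alpha}^{-n/(n-\alpha)}=M_{\rm c}^\alpha(\gamma)$ independently of $E_0^\v$, $E_0^{\v,b}$, so the hypothesis $M<M_{\rm c}^\alpha(\gamma)$ transfers immediately. In the range $\gamma\in(\frac{2n}{2n-\alpha},\frac{n+\alpha}{n})$, the defining formula \eqref{critical} depends on the energy continuously, so the convergences $E_0^\v\to E_0$ and $E_0^{\v,b}\to E_0^\v$ give $M_{\rm c}^{\v,\alpha}(\gamma)\to M_{\rm c}^\alpha(\gamma)$ and $M_{\rm c}^{\v,b,\alpha}(\gamma)\to M_{\rm c}^{\v,\alpha}(\gamma)$, and the strict inequality $M<M_{\rm c}^\alpha(\gamma)$ then produces $\v_0>0$ and, for each $\v\in(0,\v_0]$, a threshold $\mathfrak{B}(\v)$ such that the desired strict inequality persists. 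No single step here is genuinely hard; the main obstacle is rather the bookkeeping, namely ensuring that each of the assembled convergences is strong enough and in the correct Lebesgue space (in particular handling the additional $q=\frac{2n}{2n-\alpha}$ case when $\kappa=-1$) to let the energy convergence pass through and preserve the critical mass inequality.
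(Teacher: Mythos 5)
Your proposal is correct and follows essentially the same route as the paper, which presents Lemma \ref{appendix5} as a direct consolidation ("combining all together") of Lemmas \ref{appendix1}, \ref{appendix2}, \ref{kernel1}, \ref{kernel2}, \ref{appendix4} and the unnamed velocity lemma, with the critical-mass inequalities following from continuity of \eqref{critical} in the energy exactly as you describe. The only point worth noting is that for the endpoint $\gamma=\frac{n+\alpha}{n}$ the transfer is immediate since $M_{\rm c}$ is energy-independent there, which you correctly isolate.
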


\medskip
\noindent{\bf Acknowledgments.}
JAC, GQC, and DF acknowledge support by EPSRC grant EP/V051121/1. JAC was also partially supported by the Advanced Grant Nonlocal-CPD (Nonlocal PDEs for Complex Particle Dynamics: Phase Transitions, Patterns and Synchronization) of the European Research Council Executive Agency (ERC) under the European Union Horizon 2020 research and innovation programme (grant agreement No. 883363). GQC was also partially supported by EPSRC grants EP/L015811/1 and EP/V008854. DF was also partially supported by the Fundamental Research Funds for the Central Universities No. 2233100021 and No. 2233300008.
For the purpose of open access, the authors have applied a CC BY public copyright license to any Author Accepted Manuscript (AAM) version
arising from this submission.

\bigskip
\medskip
\noindent{\bf Conflict of Interest:} The authors declare that they have no conflict of interest.
The authors also declare that this manuscript has not been previously published,
and will not be submitted elsewhere before your decision.

\bigskip
\noindent{\bf Data availability:} Data sharing is not applicable to this article as no datasets were generated or analyzed during the current study.

\bigskip

\bigskip

\bibliography{bibliography}

\begin{thebibliography}{99}

\bibitem{Alberti_2009}
G. Alberti, R. Choksi, and F. Otto,
Uniform energy distribution for an isoperimetric problem with long-range interactions,
{\em J. Amer. Math. Soc.} {\bf 22(2)}(2009), 569--605.

\bibitem{alves_2024}
N.J. Alves, L. Grafakos, and A.E. Tzavaras, A bilinear fractional integral operator for Euler-Riesz systems,
Preprint arXiv:2409.18309, 2024.

\bibitem{Anderson_2010}
G. W. Anderson, A. Guionnet, and O. Zeitouni, \newblock {\em An Introduction to Random Matrices},
Cambridge Studies in Advanced Mathematics, {\bf 118}, Cambridge University Press: Cambridge, 2010.

\bibitem{Auchmuty_1971}
J.F.G. Auchmuty and R. Beals,
Variational solutions of some nonlinear free boundary problems,
{\em Arch. Rational Mech. Anal.} {\bf 43} (1971), 255--271.

\bibitem{BALAGUE20135}
 D. Balagu\'{e}, J.A. Carrillo, T. Laurent, and G. Raoul,
 Nonlocal interactions by repulsive-attractive potentials: radial ins/stability, {\em Phys. D.} {\bf 260} (2013), 5--25.

\bibitem{Bianchini_2005}
S. Bianchini and A. Bressan, Vanishing viscosity solutions of nonlinear hyperbolic systems,
{\em Ann. of Math.} {\bf 161(1)} (2005), 223--342.

\bibitem{Blanchet_2008}
A. Blanchet, J.A. Carrillo, and P. Lauren\c{c}ot,
Critical mass for a Patlak-Keller-Segel model with degenerate diffusion in higher dimensions,
{\em Calc. Var. Paritial Differential Equations}, {\bf 35} (2009), 133--168.

\bibitem{Bobylev}
A.V. Bobylev, P. Dukes, R. Illner, and H.D. Victory, On Vlasov-Manev equations,
I. Foundations, properties, and nonglobal existence,
{\em J. Statist. Phys.} {\bf 88} (1997), 885--911.

 \bibitem{Bresch02}
 D. Bresch and B. Desjardins,
 On viscous shallow-water equations (Saint-Venant model) and the quasi-geostrophic limit,
 {\it C. R. Math. Acad. Sci. Paris}, {\bf 335} (2002), 1079--1084.

 \bibitem{Bresch_2003} D. Bresch, B. Desjardins, and C.~K. Lin,
 On some compressible fluid models: Korteweg, lubrication, and shallow water systems,
 {\it Comm. Partial Diff. Eqs.} {\bf 28} (2003), 843--868.

 \bibitem{Calvez_2017}
V. Calvez, J.A. Carrillo, and F. Hoffmann,
Equilibria of homogeneous functionals in the fair-competition regime,
{\it Nonlinear Anal.} {\bf 159} (2017), 85--128.

\bibitem{Calvez_2021}
V. Calvez, J.A. Carrillo, and F. Hoffmann,
Uniqueness of stationary states for singular Keller--Segel type
models, {\it Nonlinear Anal.} {\bf 205} (2021), Paper No. 112222, 24 pp.

\bibitem{Campa_2014}
A. Campa, T. Dauxois, D. Fanelli, and S. Ruffo,
\newblock {\em Physics of Long-Range Interacting Systems},
Oxford University Press: Oxford, 2014.

\bibitem{Carrillo_2015}
J.A. Carrillo, D. Castorina, and B. Volzone,
Ground states for diffusion dominated free energies with logarithmic interaction,
{\em SIAM J. Math. Anal.} {\bf 47} (2015), 1--25.

 \bibitem{Carrillo_2021}
 J.A. Carrillo and Y.-P. Choi, Mean-Field Limits: From particle descriptions to macroscopic equations,
 {\em Arch. Rational Mech. Anal.} {\bf 241(3)} (2021), 1529--1573.

 \bibitem{Carrillo_2017}
 J.A. Carrillo, Y.-P. Choi, and S.P. Perez, A review on attractive{\textendash}repulsive
 hydrodynamics for consensus in collective behavior, In: \newblock {\em{Active Particles}}, Vol. 1:
 Advances in Theory, Models, and Applications, pp. 259--298,  Birkh\"{a}user/Springer: Cham, 2017.

 \bibitem{Carrillo_2018}
 J.A. Carrillo, F. Hoffmann, E. Mainini, and B. Volzone,
 Ground states in the diffusion-dominated regime, {\em Calc. Var. Paritial Differential Equations},
 {\bf 57}: 127 (2018).

 \bibitem{Carrillo_2019}
 J.A. Carrillo, S. Hittmeir,  B. Volzone, and Y. Yao,
 Nonlinear aggregation-diffusion equations: radial symmetry and long time asymptotics,
 {\em Invent. Math.} {\bf 218} (2019), 889--977.

 \bibitem{CCY19}
J.A. Carrillo, C. Katy, and Y. Yao,
Aggregation-diffusion equations: dynamics, asymptotics, and singular limits,
In: \newblock{\em{Active Particles, Vol. 2: Advances in Theory, Models, and
              Applications}}, {\em Model. Simul. Sci. Eng. Technol.}, pp. 65--108, Birkh\"auser/Springer: Cham, 2019.

\bibitem{Carrillo_2022}
J.A. Carrillo and R.W. Shu,  From radial symmetry to fractal behavior of aggregation equilibria
for repulsive{\textendash}attractive potentials,
{\em Calc. Var. Paritial Differential Equations}, {\bf 62}: 28 (2023).

\bibitem{Cercignani}
C. Cercignani, R. Illner, and M. Pulvirenti,
{\em The Mathematical Theory of Dilute Gases},
Applied Mathematical Sciences, {\bf 106}, Springer-Verlag: New York, 1994.


\bibitem{Chan_2020}
H. Chan, M.D.M. Gonz\'alez, Y.-H. Huang, E. Mainini, and B. Volzone,
Uniqueness of entire ground states for the fractional plasma problem,
{\em  Calc. Var. Paritial Differential Equations}, {\bf 59}: 195 (2020).

 \bibitem{Chandrabook}
 S. Chandrasekhar, \newblock {\em An Introduction to the Study of Stellar Structures,}
 University of Chicago Press: Chicago, 1938.

\bibitem{Chen1988} G.-Q. Chen, Convergence of the Lax-Friedrichs scheme for isentropic gas dynamics (III),
\textit{Acta Math. Sci.} \textbf{6B}(1986), 75--120 (in English); \textbf{8A} (1988), 243--276 (in Chinese).

 \bibitem{Chen-Feldman}
 G.-Q. Chen and M.~Feldman,
 \newblock {\em The Mathematics of Shock Reflection-Diffraction and von
 	Neumann's Conjectures},
 Research Monograph, Annals of Mathematics Studies, {\bf 197},
 Princeton University Press: Princeton, 2018.


\bibitem{Chen2021}
G.-Q. Chen, L. He, Y. Wang, and D. F. Yuan, Global solutions of the compressible Euler-Poisson equations
with large initial data of spherical symmetry, {\em Comm. Pure Appl. Math.} {\bf 77(6)} (2024), 2947--3025.
	
 \bibitem{Chen_2010}
 G.-Q. Chen and  M. Perepelista,
 Vanishing viscosity limit of the Navier-Stokes equations to the Euler equations
 for compressible fluid flow,
 {\em Comm. Pure Appl. Math.} {\bf 63} (2010), 1469--1504.

 \bibitem{Chen_2015} G.-Q. Chen and M. Perepelitsta,
 Vanishing viscosity limit solutions of the compressible Euler equations
 with spherical symmetry and large initial data,
 {\em Commun. Math. Phys.} {\bf 338} (2015), 771--800.

   \bibitem{Chen-Schrecker-2018}
 G.-Q. Chen and M. Schrecker,
 Vanishing viscosity approach to the compressible Euler equations for transonic nozzle
 and spherically symmetric flows,
 {\it Arch. Ration. Mech. Anal.} {\bf 229} (2018), 1239--1279.

  \bibitem{Chen-Wang-2018}
 G.-Q.  Chen and Y. Wang,
 Global solutions
 of the compressible Euler equations with large initial data of spherical symmetry and
 positive far-field density,
 {\it Arch. Ration. Mech. Anal.} {\bf 243} (2022), 1699--1771.

 \bibitem{Choi_2022a}
 Y.-P. Choi and I.-J. Jeong, On well-posedness and singularity formation
 for the Euler{\textendash}Riesz system, {\em J. Diff. Eqs.} {\bf 306} (2022), 296--332.

 \bibitem{Choi_2022b}
 Y.-P. Choi and J. Jung, The pressureless damped Euler-Riesz equations,
 {\em Ann. Inst. H. Poincar\'{e} Aanl. Non Lin\'{e}aire}, {\bf 40} (2023), 593--630.

 \bibitem{Choi_2024}
  Y.-P. Choi, J. Jung and Y. Lee, The global Cauchy problem for the Euler-Riesz equations,
  {\em Nonlinear Anal.} {\bf 253} (2025), 113724.

 \bibitem{Cotar_2013}
 C. Cotar, G. Friesecke, and C. Kl\"{u}ppelberg, Density functional theory and
 optimal transportation with Coulomb cost, {\em Comm. Pure Appl. Math.} {\bf 66(4)} (2013), 548--599.

 \bibitem{Cotar_2019}
 C. Cotar and M. Petrache, Next-order asymptotic expansion for N-marginal optimal transport
with Coulomb and Riesz costs, {\em Adv. Math.} {\bf 344} (2019), 137--233.

 \bibitem{Dafermos_2010}
 C.~M. Dafermos,
 {\em Hyperbolic Conservation Laws in Continuum
 Physics}, Springer-Verlag: Berlin, 2016.

 \bibitem{Danchin_2022}
 R. Danchin and B. Ducomet, On the global existence for the compressible Euler-Riesz system,
 {\em J. Math. Fluid Mech.} {\bf 24}: 48 (2022).

\bibitem{Ding1989} X. Ding, G.-Q. Chen, and P. Luo, Convergence of the Lax-Friedrichs scheme
for the isentropic gas dynamics (I)--(II), \textit{Acta Math. Sci.} \textbf{5B} (1985),
483--500, 501--540 (in English); \textbf{7A} (1987), 467--480; \textbf{8A} (1989), 61--94
(in Chinese).

 \bibitem{DiPerna83}
 R. J. DiPerna, Convergence of the viscosity method for isentropic gas dynamics,
 {\em Commun. Math. Phys.} {\bf 91} (1983), 1--30.

 \bibitem{Delgadino_2020}
 M. Delgadino, X. Yan, and Y. Yao, Uniqueness and nonuniqueness of steady states of
 aggregation-diffusion equations, {\em Comm. Pure Appl. Math.} {\bf 75} (2020), 3--59.

 \bibitem{Deng}
 Y. Deng, T. Liu, T. Yang, and Z. Yao, Solutions of Euler-Poisson equations for gaseous stars,
 {\em Arch. Ration. Mech. Anal.} {\bf 164}, (2002), 261--285.

 \bibitem{Duan_2015}
 Q. Duan and H.-L. Li,
 Global existence of weak solution for the compressible Navier-Stokes-Poisson system
 for gaseous stars,
 {\em J. Diff. Eqs.} {\bf 259} (2015), 5302--5330.

 \bibitem{Galdi}
G.P. Galdi,  \newblock {\em An Introduction to the Mathematical Theory of the Navier-Stokes Equations:
    Steady-State Problems},  Second Ed., Springer Monographs in Mathematics, Springer: New York, 2011.

\bibitem{Gelfand_1958}
I. M. Gelfand and G. E. Shilov. \newblock {\em Generalized Functions,} Vol. 1: Properties and Operations,
AMS Chelsea Publishing, Providence, RI, 2016.
Translated from the 1958 Russian original
[MR0097715] by Eugene Saletan, Reprint of the 1964 English translation [MR0166596].

\bibitem{Gilbarg}
 D. Gilbarg, The existence and limit behavior of the
 one-dimensional shock layer,
 {\em Amer. J. Math.} {\bf 73} (1951), 256--274.

 \bibitem{Glimm_1965}
 J. Glimm, Solutions in the large for nonlinear hyperbolic systems of equations,
 {\em Comm. Pure Appl. Math.} {\bf 18(4)}, 697--715.

 \bibitem{Goldreich_1980}
 P. Goldreich and S. Weber,
 Homologously collapsing stellar cores,
 {\em Astrophys. J.} {\bf 238} (1980), 991--997.

 \bibitem{Guo_1998}
 Y. Guo, Smooth irrotational flows in the large to the Euler-Poisson system
 in $\mathbb{R}^{3+1}$,
 {\em Commun. Math. Phys.} {\bf 195} (1998), 249--265.

 \bibitem{Guo_2020}
 Y. Guo, M. Hadzic, and J. Jang,
Larson-Penston self-similar gravitational collapse,
 {\em Commun. Math. Phys.} {\bf 386}, 1551--1601 (2021).


  \bibitem{Guo_2021}
 Y. Guo, M. Hadzic, and J. Jang,
 Continued gravitational collapse for Newtonian stars,
 {\em Arch. Ration. Mech. Anal.} {\bf 239}, 431--552 (2021).


  \bibitem{Guo_2022}
 Y. Guo, M. Hadzic, and J. Jang,
Gravitational collapse for polytropic gaseous stars: self-similar solutions,
 {\em Arch. Ration. Mech. Anal.} {\bf 246}, 957--1066 (2022).


 \bibitem{Guo_2016}
 Y. Guo, A.~D.  Ionescu, and B. Pausader,
 Global solutions of the Euler-Maxwell two-fluid system in 3D,
 {\em Ann. of Math. (2)}, {\bf 183} (2016), 377--498.

 \bibitem{Guo_2011}
 Y. Guo and  B. Pausader,
 Global smooth ion dynamics in the Euler-Poisson system,
 {\em Commun. Math. Phys.} {\bf 303} (2011), 89--125.

 \bibitem{Hoff-Liu}
 D. Hoff and T.-P. Liu,
 The inviscid limit for the Navier-Stokes equations of compressible, isentropic flow
 with shock data, {\em Indiana Univ. Math. J.} {\bf 38} (1989), 861--915.

 \bibitem{Had_i__2017}
 M. Hadzi\'c and J. Jang,
 Nonlinear stability of expanding star solutions of the radially symmetric
 mass-critical Euler-Poisson system,
 {\em Comm. Pure Appl. Math.} {\bf  71} (2018), 827--891.

 \bibitem{Had_i__2019}
 M. Hadzic and J. Jang,
 A class of global solutions to the Euler-Poisson system,
 {\em Commun. Math. Phys.} {\bf 370} (2019),  475--505.

 \bibitem{Hardin_2018}
D. P. Hardin, T. Lebl\'{e}, E. B. Saff, and S. Serfaty, Large deviation principles for hypersingular
Riesz gases, {\em Constr. Approx.} {\bf 48(1)} (2018), 61--100.

 \bibitem{Huang_2002}
 F.~M. Huang and Z. Wang,
 Convergence of viscosity solutions for isothermal gas dynamics,
 {\em SIAM J. Math. Anal.} {\bf 34} (2002), 595--610.

 \bibitem{Illner}
 R. Illner, Stellar dynamics, and plasma physics with corrected potentials:
 Vlasov, Manev, Boltzmann, Smoluchowski, hydrodynamic limits and related topics,
 Fields Inst. {\em Commun. Amer. Math. Soc.} 27 (2000), 95--108,  Providence: RI

 \bibitem{Ionescu2011}
 A. D. Ionescu and B. Pausader, The Euler-Poisson system in 2D: global stability of the constant equilibrium solution,
 {\em Int. Math. Res. Not.} {\bf 4} (2013), 761--826.


\bibitem{Jang2008}
J. Jang, Nonlinear instability in gravitational Euler-Poisson system for $\gamma=\frac{6}{5}$,
{\em Arch. Ration. Mech. Anal.} {\bf 188} (2008), 265--307.

\bibitem{Jang2014}
J. Jang, Nonlinear instability theory of Lane-Emden stars,
{\em Comm. Pure Appl. Math.} {\bf 67}, (2014), 1418--1465.

 \bibitem{Lax}
 P.~D. Lax, Shock wave and entropy, In: {\it Contributions to
 	Functional Analysis}, ed. E.~A. Zarantonello, pp. 603--634,
 Academic Press: New York, 1971.

 \bibitem{LeFloch2007}
P. LeFloch and  M. Westdickenberg,
Finite energy solutions to the isentropic Euler equations with geometric effects,
{\em J. Math. Pures Appl.} {\bf 88} (2007), 386--429.

 \bibitem{Lewin_2018}
 M. Lewin, E. H. Lieb, and R. Seiringer, Statistical mechanics of the uniform electron gas, {\em J. Ec.
polytech. Math.} {\bf 5} (2018), 79--116.

 \bibitem{Lewin_2022}
 M. Lewin, Coulomb and Riesz gases: the known and the unknown, {\em J. Math. Phys.} {\bf 63(6)} (2022),  061101.


 \bibitem{Lieb_1969}
 E. H. Lieb and J. L. Lebowitz, Existence of thermodynamics for real matter with coulomb forces,
 {\em Phys. Rev. Lett.} {\bf 22} (1969), 631--634.

 \bibitem{Lieb_2001}
 E.~H. Lieb and M. Loss, {\em Analysis}, Second Edition,
 AMS: Providence, 2001.

 \bibitem{Lieb_2010}
 E. H. Lieb and R. Seiringer, {\em The stability of matter in quantum mechanics,}
 Cambridge University Press: Cambridge, 2010.

\bibitem{Lin_2022}
Z. Lin and C. Zeng, Separable Hamiltonian PDEs and turning point principle
for stability of gaseous stars, {\em Comm. Pure Appl. Math.} {\bf 75} (2022), 2511--2572.

 \bibitem{Lin_2023}
 Z. Lin, Y. Wang, and H. Zhu, Nonlinear stability of non-rotating gaseous stars,
 {\em Math. Ann.} (2024). http.//doi.org/10.1007/s00208-024-02940-7

\bibitem{Lions_1984}
P.-L. Lions, The concentration-compactness principle in the calculus of variations: I. The locally compact case,
{\em Ann. Inst. H. Poincar$\acute{e}$ Anal. Non Lin$\acute{e}$aire}, {\bf 2} (1984), 109--145.

 \bibitem{Lions_1998}
 P.-L. Lions, B. Perthame, and P. Souganidis,
 Existence and stability of entropy solutions for the hyperbolic systems of isentropic gas dynamics
 in Eulerian and Lagrangian coordinates,
 {\em Comm. Pure Appl. Math.} {\bf 49} (1996), 599--638.
	
 \bibitem{Lions_1994}
 P.-L. Lions, B. Perthame, and E. Tadmor,
 Kinetic formulation of the isentropic gas dynamics and p-systems,
 {\em Commun. Math. Phys.} {\bf 163} (1994), 415--431.

 \bibitem{Luo_2008}
 T. Luo and J. Smoller, Nonlinear dynamical stability of Newtonian rotating and non-rotating white dwarfs
 and rotating supermassive stars, {\em Commun. Math. Phys.} {\bf 284} (2008), 425--457.

 \bibitem{Luo_2009}
 T. Luo and J. Smoller,  Existence and non-linear stability of rotating star solutions of
 the compressible Euler-Poisson equations, {\em Arch. Ration. Mech. Anal.} {\bf 191}(2009), 447--496.

\bibitem{Murat}
F. Murat, Compacit\'e par compensation,
{\it Ann. Scuola Norm. Sup. Pisa Sci. Fis. Mat.} {\bf  5} (1978), 489--507.

 \bibitem{Nguyen_2022}
  Q.H. Nguyen, M. Rosenzweig, and S. Serfaty, Mean-field limits of Riesz-type
  singular flows, {\em Ars Inveniendi Analytica}, {\bf 4} (2022).

  \bibitem{Plessis_1970}
Nicolaas Du Plessis, {\em An Introduction to Potential Theory},
University Mathematical Monographs, {\bf 7},
Hafner Publishing Co., Darien, CT; Oliver and Boyd, Edinburgh, 1970.

 \bibitem{Rein_2003}
 G. Rein, Non-linear stability of gaseous stars, {\em Arch. Ration. Mech. Anal.} {\bf 168(2)} (2003), 115--130.

  \bibitem{Schrecker}
 M. Schrecker,
 Spherically symmetric solutions of the multidimensional compressible, isentropic Euler equations,
 {\em Trans. Amer. Math. Soc.} {\bf 373} (2020), 727--746.

 \bibitem{Serfaty_2020}
 S. Serfaty, Mean field limit for Coulomb-type flows (Appendix with Mitia Duerinckx),
 {\em Duke Math. J.} {\bf 169(15)} (2020), 2887--2935.

 \bibitem{Serfaty:2024ojp}
S.~Serfaty,
Lectures on Coulomb and Riesz gases, Preprint arXiv:2407.21194, 2024.

\bibitem{Tartar} L. Tartar, Compensated compactness and applications to partial
differential equations, In: \textit{Research Notes in Mathematics, Nonlinear Analysis and Mechanics},
Herriot-Watt Symposium, Vol. {4}, R.~J. Knops, ed., Pitman Press, 1979.
\end{thebibliography}

\end{document}